\newcommand{\ID}{
\raisebox{-.3cm}{
\begin{tikzpicture}
\fill[gray!50] (0,0) rectangle (.6,.8);
\draw (0,0)--++(0,.8);
\draw (.6,0)--++(0,.8);
\end{tikzpicture}} 
}
\newcommand{\JP}{
\raisebox{-.3cm}{
\begin{tikzpicture}
\fill[gray!50] (0,0) arc (180:0:.3);
\fill[gray!50] (0,.8) arc (-180:0:.3);
\draw (0,0) arc (180:0:.3);
\draw (0,.8) arc (-180:0:.3);
\end{tikzpicture}}
}
\newcommand{\be}{\begin{equation}}
\newcommand{\ee}{\end{equation}}
\newcommand{\ben}{\begin{equation*}}
\newcommand{\een}{\end{equation*}}
\begin{document}

\newtheorem{theorem}{Theorem}[section]
\newtheorem{main}{Main Theorem}[section]
\newtheorem{proposition}[theorem]{Proposition}
\newtheorem{corollary}[theorem]{Corollary}
\newtheorem{definition}[theorem]{Definition}
\newtheorem{notation}[theorem]{Notation}
\newtheorem{lemma}[theorem]{Lemma}
\newtheorem{example}[theorem]{Example}
\newtheorem{remark}[theorem]{Remark}
\newtheorem{question}[theorem]{Question}
\newtheorem{conjecture}[theorem]{Conjecture}
\newtheorem{fact}[theorem]{Fact}
\newtheorem*{ac}{Acknowledgements}

\newcommand\M{\mathcal{M}}
\newcommand\N{\mathcal{N}}
\newcommand\PA{\mathscr{P}}
\newcommand\SA{\mathscr{S}}
\newcommand\F{\mathfrak{F}}
\newcommand\D{\mathfrak{D}}
\newcommand\FS{\mathfrak{F}_S}
\newcommand\C{\mathscr{C}}
\newcommand\Z{\mathbb{Z}}

\title{Block Maps and Fourier Analysis}

\author{Chunlan Jiang}
\address{Department of Mathematics\\ Hebei Normal University}
\email{cljiang@mail.hebtu.edu.cn}
\author{Zhengwei Liu}
\address{Department of Mathematics and Department of Physics\\ Harvard University}
\email{zhengweiliu@fas.harvard.edu}
\author{Jinsong Wu}
\address{School of Mathematical Sciences\\ University of Science and Technology of China}
\email{wjsl@ustc.edu.cn}

%
%\author{Chunlan Jiang \\
%{\small Department of Mathematics}\\
%{\small Hebei Normal University}\\
%{\small cljiang@mail.hebtu.edu.cn}
%\and Zhengwei Liu \\
%{\small Department of Mathematics and Department of Physics}\\
%{\small Harvard University}\\
%{\small zhengweiliu@fas.harvard.edu}\\
%\and Jinsong Wu \\
%{\small School of Mathematical Sciences}\\
%{\small University of Science and Technology of China}\\
%{\small wjsl@ustc.edu.cn}
%}

%MSC:46L37, 43A30, (82D40, 81U40)

\maketitle
\begin{abstract}
We introduce block maps for subfactors and study their dynamic systems. We prove that the limit points of the dynamic system are positive multiples of biprojections and zero. For the $\Z_2$ case, the asymptotic phenomenon of the block map coincides with that of that 2D Ising model. 
The study of block maps requires a further development of the recent work of the authors on the Fourier analysis of subfactors.
We generalize the notion of sum set estimates in additive combinatorics for subfactors and prove the exact inverse sum set theorem. Using this new method, we characterize the extremal pairs of Young's inequality for subfactors, as well as the extremal operators of the Hausdorff-Young inequality. 
\end{abstract}

\textbf{Keywords:} Subfactors, Fourier analysis, planar algebras,  block maps, quantum groups

\section{Introduction}
Onsager solved the partition function of the 2D Ising model analytically and find an order-disorder phase transition at the critical temperature $T_c=\displaystyle \frac{2}{ln (\sqrt{2}+1)}$ in 1944  \cite{Ons44}. This
%(Curie temperature) 
critical temperature was observed by Kramers and Wannier using their duality \cite{KraWan}.

Subfactor theory provides a framework to study the quantum symmetry \cite{JonSun,EvaKaw}.
Jones introduced graph planar algebras for finite bipartite graphs \cite{Jon00}.  The 2D Ising model can be generalized from $\mathbb{Z}_2$ to subfactors using graph planar algebras. The partition function can be represented as a planar diagram $D$ in the graph planar algebra $\mathscr{G}$ as shown in Fig.~\ref{Lattice to Graph}. For the 2D Ising model, the bipartite graph is the principal graph of the $\Z_2$ subfactor, i.e., the Dynkin diagram $A_3$. 

The crossing 
\raisebox{-.3cm}{
\begin{tikzpicture}
\fill[gray!50] (0,0)--++(.3,.4)--++(.3,-.4);
\fill[gray!50] (0,.8)--++(.3,-.4)--++(.3,.4);
\draw (0,0)--++(.6,.8);
\draw (.6,0)--++(-.6,.8);
\end{tikzpicture}}
in the planar diagram $D$ is labelled by a 2-box element $B(T,J)$ in $\mathscr{G}_{\bullet}$, determined by the temperature $T$ and the neighborhood interaction $J$ of the lattice model. For the ferromagnetic 2D Ising model, 
%we take $J(++)=J(--)=-1$, $J(+-)=J(-+)=1$ and $\beta=T^{-1}$, then 
$B(T,J)=(e^{\beta}-e^{-\beta})\ID+e^{-\beta} \sqrt{2} \JP$, where $\beta=T^{-1}$ is the inverse temperature.

\begin{figure}\label{Lattice to Graph}
\begin{tikzpicture}
\foreach \x in {0,1,2,3}
{
\draw (\x,0)--+(0,3);
\foreach \y in {0,1,2,3}
{
\draw (0,\y)--+(3,0);
\node at (\x,\y) {$\bullet$};
}}
\fill[white] (-1,-1) rectangle (-.5,4);
\fill[white] (4,-1) rectangle (3.5,4);
\fill[white] (-1,-1) rectangle (4,-.5);
\fill[white] (-1,4) rectangle (4,3.5);

\begin{scope}[shift={(6.3,1.5)},xscale=.8,yscale=.8]

%\node at (5,3/2) {${\longrightarrow}$};
\draw (0,0)--+(-2,0);
\draw (0,0)--+(-.3,.3);
\draw (0,0)--+(-.3,-.3);
\end{scope}
\begin{scope}[shift={(8,0)}]

\foreach \x in {0,1,2,3}
{
\foreach \y in {0,1,2,3}
{
\fill[gray!50]  (\x-.5,\y)--(\x,\y-.5)--(\x+.5,\y)--(\x,\y+.5);
}}

\foreach \x in {0,1,2,3}
{
\draw (-1,-.5+\x) --++ (4.5-\x,4.5-\x);
\draw (\x-.5,-1) --++ (4.5-\x,4.5-\x);

\draw (-1,3.5-\x) --++ (4.5-\x,-4.5+\x);
\draw (\x-.5,4) --++ (4.5-\x,-4.5+\x);
}

%
%\fill[white] (-1.5,-1) rectangle (-.5,4.5);
%\fill[white] (4.5,-1) rectangle (3.5,4.5);
%\fill[white] (-1.5,-1.5) rectangle (4.5,-.5);
%\fill[white] (-1.5,4.5) rectangle (4.5,3.5);

\fill[white] (-1,-1) rectangle (-.2,4);
\fill[white] (4,-1) rectangle (3.2,4);
\fill[white] (-1,-1) rectangle (4,-.2);
\fill[white] (-1,4) rectangle (4,3.2);

\end{scope}
%\node at (13,1) {.};
\end{tikzpicture}
\caption{From lattice models to graph planar algebras: The vertices, edges and faces in the 2D lattice correspond to shaded regions, crossings and unshaded regions of $D$ in the graph planar algebra $\mathscr{G}_{\bullet}$.} 
\end{figure}
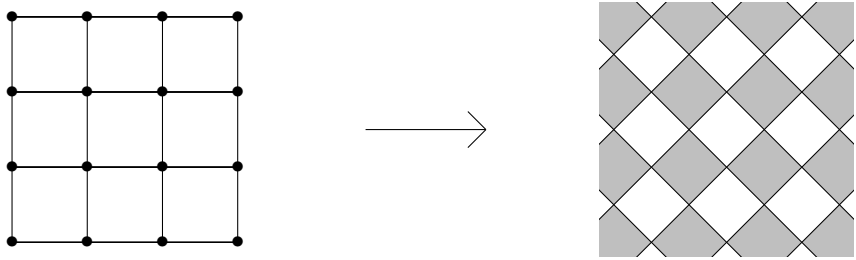

When $T \to 0$, $B(T,J)$ is dominated by 
\raisebox{-.3cm}{
\begin{tikzpicture}
\fill[gray!50] (0,0) rectangle (.6,.8);
\draw (0,0)--++(0,.8);
\draw (.6,0)--++(0,.8);
\end{tikzpicture}}.
In the limit case, the shaded regions are all connected. So all spins in shaded regions are same and the Ising model is ordered.
When $T\to \infty$, $B(T,J)$ is dominated by 
\raisebox{-.3cm}{
\begin{tikzpicture}
\fill[gray!50] (0,0) arc (180:0:.3);
\fill[gray!50] (0,.8) arc (-180:0:.3);
\draw (0,0) arc (180:0:.3);
\draw (0,.8) arc (-180:0:.3);
\end{tikzpicture}}.
In the limit case, the shaded regions are all disconnected. So all spins in shaded regions are independent and the Ising model is disordered.
%
%The Kramers-Wannier duality changes the lattices to its dual lattice for the Ising model. From the planar algebra point of view it switches the shadings in the graph planar algebra. Therefore it switches the coefficients of the two diagrams of $B(T,J)$. This gives a higher temperature and lower temperature duality for Ising model. The critical temperature $T=T_c$ is obtained at the fixed point, i.e., the solution of $e^{\beta}-e^{-\beta}=e^{-\beta}\sqrt{2}$.

The neighboor interaction $J$ involves the $\Z_2$ symmetry. In planar algebras, that means the 2-box $B(T,J)$ is a flat element with respect to the connection associated with the $\Z_2$ subfactors. 
In general, the flat elements of a graph planar algebra with respect to a connection is a subfactor planar algebra $\mathscr{P}_{\bullet}$  \cite{Ocn88,Jon99}. The 2-box space of $\mathscr{P}_{\bullet}$ is denoted by $\mathscr{P}_{2,\pm}$, where the sign $\pm$ indicates the shading.

People use renormalization groups to study the scaling limit of lattice models, see \cite{MarKad78}.  The idea is rescaling the size of the lattice by combining four vertices of a 2D lattice to one vertex.
Motivated by this idea and the square relation in \cite{JLW}, we introduce the block maps $B_{\lambda}$, $0\leq \lambda\leq 1$, on $\mathscr{P}_{2,\pm}$, which combines four 2-boxes to one. 

For any $x\in\mathscr{P}_{2,\pm}$, we define
\begin{align*}
B_{cm}(x)&=\frac{\delta^2}{\|x\|_1\|x\|_2^2}(\overline{x^*}*x)(\overline{x}*x^*) \;,\\
B_{mc}(x)&=\frac{\delta}{\|x\|_\infty\|x\|_2^2}(\overline{x}\overline{x^*})*(x^*x) \;,\\
B_{\lambda}&=\lambda B_{cm}+(1-\lambda) B_{mc} \;.
\end{align*}
The block maps $B_{\lambda}$ have not been studied for cyclic groups. 
%We study the asymptotic phenomenon of $B_{\lambda}$.
We believe that the asymptotic phenomenon of block maps will shed light on the scaling limit of lattice models. 

We prove that the limit points are multiples of biprojections, a notion introduced by Bisch  for subfactors as a generalization of indicator functions on subgroups \cite{Bis94}.

\begin{theorem}\label{Thm:main1}[Proposition \ref{teq}, Theorem \ref{Thm:RM}]
Suppose $\mathscr{P}_{\bullet}$ is an irreducible subfactor planar algebra.
Then for any $x\in\mathscr{P}_{2,\pm}$ and $B_\lambda$ on $\mathscr{P}_{2,\pm}$, $0\leq \lambda\leq 1$, the sequence $\{B^b_{\lambda}(x)\}_{n\geq 1}$ converges to $0$ or a positive multiple of a biprojection. Moreover, for a non-zero $x$, $B_{\lambda}(x)=x$ if and only if $x$ is a positive multiple of a biprojection.
\end{theorem}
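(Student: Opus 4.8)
The plan is to treat the two directions of the fixed-point equivalence separately, with the positivity and the trace behaviour of $B_\lambda$ as the common backbone; the convergence assertion then falls out of the same monotonicity. First I would record the preliminary facts that make the normalisation meaningful. For every $x\in\mathscr{P}_{2,\pm}$ the elements $\overline{x^*}*x$, $\overline{x}\,\overline{x^*}$ and $x^*x$ are positive, and $\overline{x}*x^*=(\overline{x^*}*x)^*$, so that $B_{cm}(x)$, $B_{mc}(x)$, and hence $B_\lambda(x)$ for every $\lambda\in[0,1]$, are positive operators; in particular the whole orbit from $B_\lambda(x)$ onward lies in the positive cone, where $\|\cdot\|_1=\mathrm{tr}$. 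Next, using the convolution–trace identity $\mathrm{tr}(a*b)=\delta^{-1}\mathrm{tr}(a)\mathrm{tr}(b)$ and $\mathrm{tr}(x^*x)=\|x\|_2^2$, together with Young's inequality $\|\overline{x^*}*x\|_2\le \delta^{-1}\|x\|_1\|x\|_2$ for the $cm$-term and the operator inequality $\|x\|_2^2\le\|x\|_\infty\|x\|_1$ for the $mc$-term, I would show
\[
\mathrm{tr}(B_{cm}(x))\le\|x\|_1,\qquad \mathrm{tr}(B_{mc}(x))=\frac{\|x\|_2^2}{\|x\|_\infty}\le\|x\|_1,
\]
so that $\mathrm{tr}(B_\lambda(x))\le\|x\|_1$ for all $\lambda$, giving a monotone Lyapunov functional along positive orbits. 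The precise powers of $\delta$ in the two normalising constants are exactly what make these bounds sharp.

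For the \emph{if} direction, let $b$ be a biprojection and $c>0$. Using the defining relations $b=b^*=b^2$, $\overline{b}=b$, $b*b=\delta^{-1}\|b\|_1\,b$ and the norm identities $\|b\|_1=\|b\|_2^2=\mathrm{tr}(b)$, $\|b\|_\infty=1$, a direct substitution shows $B_{cm}(cb)=cb$ and $B_{mc}(cb)=cb$, whence $B_\lambda(cb)=cb$. The normalising scalars collapse to $1$ precisely because $b$ saturates both the Young inequality and the inequality $\|b\|_2^2\le\|b\|_\infty\|b\|_1$.

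For the \emph{only if} direction, suppose $x\ne 0$ and $B_\lambda(x)=x$. Then $x=B_\lambda(x)\ge 0$, so $\|x\|_1=\mathrm{tr}(x)$ and the preliminary bounds give
\[
\mathrm{tr}(x)=\lambda\,\mathrm{tr}(B_{cm}(x))+(1-\lambda)\,\mathrm{tr}(B_{mc}(x))\le\|x\|_1=\mathrm{tr}(x).
\]
For $\lambda\in(0,1)$ both summand inequalities are saturated: the $mc$-equality $\|x\|_2^2=\|x\|_\infty\|x\|_1$ forces $x$ to be a positive multiple of a projection $p$, while the $cm$-equality saturates Young's inequality, so by the paper's characterization of the extremal pairs of Young's inequality $x$ is a bishift of a biprojection; combining these with $x\ge 0$ identifies $p$ as a biprojection and yields $x=cp$. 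At the endpoints I would instead feed the single available equality back into the fixed-point equation: at $\lambda=0$ the $mc$-equality already gives $x=cp$, and then $B_{mc}(cp)=cp$ reduces to $\overline{p}*p=\delta^{-1}\mathrm{tr}(p)\,p$, which together with $p$ a projection is exactly Bisch's biprojection condition; at $\lambda=1$ the Young extremizer is a bishift of a biprojection, and positivity together with $B_{cm}(x)=x$ collapses the bishift to a genuine biprojection.

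Finally, for convergence I would use that $\mathscr{P}_{2,\pm}$ is finite-dimensional and the positive orbit is norm-bounded, so it has limit points; the monotone functional $\mathrm{tr}(B_\lambda^{n}(x))$ and the continuity of $B_\lambda$ force every limit point to saturate the trace bound, i.e.\ to be a fixed point, which by the equivalence above is $0$ or a positive multiple of a biprojection, and uniqueness of the limit would follow from the rigidity of this extremal set. \emph{The main obstacle} is the only-if direction at the endpoints $\lambda\in\{0,1\}$: there only one extremality condition is available, so the full biprojection structure must be recovered by playing the inverse-sum-set/Young-extremizer analysis against the fixed-point equation itself, and by checking that a bishift of a biprojection which is simultaneously positive and $B_\lambda$-invariant can only be an unshifted biprojection.
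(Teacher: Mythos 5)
Your overall architecture (positivity, a monotone functional coming from the norm inequalities of Proposition \ref{DyIneq}, saturation forcing extremal structure, finite-dimensionality for convergence) matches the paper's, but there is a genuine gap in the only-if direction that already occurs for $\lambda\in(0,1)$. From the two trace equalities you correctly extract that $x$ is a positive multiple of a projection and (via the diagonal Young extremizer result, Proposition \ref{1tyoungeq} applied to $\overline{x^*}*x$ at exponent $2$ --- not Theorem \ref{youngeq}, whose hypothesis requires all three exponents strictly between $1$ and $\infty$) that $x$ is a bi-shift of a biprojection. But your next step, ``combining these with $x\ge 0$ identifies $p$ as a biprojection,'' is false: in the group case the coset indicator $\mathbf{1}_{Hk}$ with $k\notin H$ is a positive projection, is a bi-shift of the biprojection $\mathbf{1}_{H}$, and saturates both of your trace inequalities, yet it is not a biprojection (and, consistently, is not a fixed point of $B_\lambda$). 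The ingredient you are missing is \emph{bi-positivity}: $x=B_\lambda(x)$ forces $\mathfrak{F}(x)\geq 0$ as well (Schur product theorem, Proposition \ref{Prop:bi-positive}), and it is positivity of the Fourier transform --- not of $x$ itself --- that kills the shift and pins down the unshifted biprojection. The same defect infects your convergence argument: a limit point that merely saturates the trace bound need not be a fixed point (same counterexample), so there too you must carry bi-positivity of the orbit to the limit.

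Two further points of comparison. First, the paper never uses the trace ($t=1$) as the Lyapunov functional; Proposition \ref{DyIneq} gives $\|B_\lambda(x)\|_t\leq\|x\|_t$ for all $1\leq t\leq\infty$, and Proposition \ref{teq} works at an interior exponent (in the convergence proof, $t=2$), where \emph{each} of the equalities $\|B_{cm}(x)\|_t=\|x\|_t$ and $\|B_{mc}(x)\|_t=\|x\|_t$ separately implies that $x$ is a bi-shift of a biprojection via Proposition \ref{1tyoungeq}. This is exactly what makes the endpoints $\lambda\in\{0,1\}$ --- which you rightly flag as the main obstacle for your $t=1$ functional, since there only one equality survives and the $mc$ one at $t=1$ is too weak --- no harder than the interior case. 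Second, uniqueness of the limit is not automatic ``rigidity'': the paper needs that there are only finitely many biprojections (finitely many intermediate subfactors), takes the minimal $2$-norm gap between distinct normalized biprojections, and combines this with uniform continuity of $B_\lambda$ on the unit ball to exclude a second accumulation point. Your ``if'' direction is correct as written.
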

%It would be interesting to apply this result to the hidden subgroup problem in quantum information.
From dynamic system point of view, we consider the set of $x\in\mathscr{P}_{2,\pm}$ whose limits are zeros as a Julia set, and its complement as a Fatou set.
We consider Theorem \ref{Thm:main1} as a 2D quantum phase transition.

\begin{theorem}\label{Thm:mass gap}
For the $\Z_2$ case, 
%
%any 2-box element is a linear sum of \raisebox{-.3cm}{
%\begin{tikzpicture}
%\fill[gray!50] (0,0) rectangle (.6,.8);
%\draw (0,0)--++(0,.8);
%\draw (.6,0)--++(0,.8);
%\end{tikzpicture}} 
%and
%\raisebox{-.3cm}{
%\begin{tikzpicture}
%\fill[gray!50] (0,0) arc (180:0:.3);
%\fill[gray!50] (0,.8) arc (-180:0:.3);
%\draw (0,0) arc (180:0:.3);
%\draw (0,.8) arc (-180:0:.3);
%\end{tikzpicture}}.
%The two diagrams give the two biprojections corresponding to the two limit cases.
%Moreover,
\begin{align}
\lim_{n\to \infty} B^n_{1/2}(B(T,J))=\left\{ 
\begin{aligned}
c_1(\beta ) \JP ~, & \quad \text{~for~}  0< \beta < T_c^{-1} ; \\
0 ~, & \quad   \text{~for~}  \beta = T_c^{-1} ; \\
c_2(\beta) \ID ~, & \quad  \text{~for~}  \beta > T_c^{-1} . 
\end{aligned}
\right.
\end{align}
Moreover, both $c_1(\beta )$ and $c_2(\beta )$ are positive. 
\end{theorem}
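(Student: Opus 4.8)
The plan is to reduce everything to the two–dimensional algebra $\mathscr{P}_{2,\pm}$ of the $\Z_2$ subfactor, where $\delta=\sqrt2$ and $\{\ID,\JP\}$ is a basis, and then to analyze $B_{1/2}$ as an explicit dynamical system on this plane. Writing $x=a\,\ID+b\,\JP$ with $a=e^{\beta}-e^{-\beta}>0$ and $b=\sqrt2\,e^{-\beta}>0$, I would first record the two products: multiplication satisfies $\JP\cdot\JP=\delta\,\JP$, so $\delta^{-1}\JP$ is the Jones projection and $\ID,\JP$ are precisely the two biprojections (for the full group and the trivial subgroup, respectively), while convolution is the Fourier–dual product. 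Since all coefficients are real and $\ID,\JP$ are both self-adjoint and self-conjugate, every bar and adjoint in the definitions collapses, so on this family
\[
B_{cm}(x)=\frac{\delta^{2}}{\|x\|_1\|x\|_2^{2}}\,(x*x)^{2},\qquad
B_{mc}(x)=\frac{\delta}{\|x\|_\infty\|x\|_2^{2}}\,(x^{2})*(x^{2}),
\]
where $(\,\cdot\,)^{2}$ denotes the multiplicative square and $*$ the convolution. Computing $x*x$ and $x^{2}$ in the basis $\{\ID,\JP\}$ then yields a rational self-map of the projective coordinate $s=b/a$, and it is this one–variable recursion whose fixed points I would study.

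The key structural input is that $\lambda=\tfrac12$ is the Fourier–self-dual value. The Fourier transform $\mathcal{F}$ interchanges multiplication and convolution, hence interchanges $B_{cm}$ and $B_{mc}$, so $\mathcal{F}\,B_{1/2}=B_{1/2}\,\mathcal{F}$; in particular the self-dual line $\{\mathcal{F}x=x\}$ is both $\mathcal{F}$–invariant and $B_{1/2}$–invariant. This line is exactly the Kramers--Wannier self-dual point: solving $\mathcal{F}\,B(T,J)\propto B(T,J)$, equivalently $\sinh 2\beta=1$, gives $\beta=\tfrac12\ln(\sqrt2+1)=T_c^{-1}$. I would use self-duality twice: to locate the exceptional fixed point of the projective recursion, and to pin the threshold at exactly $T_c^{-1}$ without tracking every normalizing constant by hand.

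For the dynamics I would identify three fixed points of the recursion on $s$: the directions $s=0$ (that of $\ID$) and $s=\infty$ (that of $\JP$), which are genuine fixed directions of $B_{1/2}$ because $\ID$ and $\JP$ are biprojections (Theorem~\ref{Thm:main1}), together with the self-dual value $s_c$ corresponding to $\beta=T_c^{-1}$. A monotonicity and linearization computation should show that $s=0$ and $s=\infty$ are attracting while $s_c$ is strictly repelling. Consequently, for $0<\beta<T_c^{-1}$ (high temperature, $b$ dominant) the orbit flows to the $\JP$ direction, and for $\beta>T_c^{-1}$ (low temperature, $a$ dominant) it flows to the $\ID$ direction; in either case the limit is nonzero, and then Theorem~\ref{Thm:main1} guarantees it is a \emph{positive} multiple of the corresponding biprojection, giving $c_1(\beta)\JP$ and $c_2(\beta)\ID$ with $c_1,c_2>0$ for free. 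At $\beta=T_c^{-1}$ the element already lies on the $B_{1/2}$–invariant self-dual line, so its limit must again be self-dual; since that limit cannot be a biprojection (neither $\ID$ nor $\JP$ is self-dual), Theorem~\ref{Thm:main1} forces it to be $0$.

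The delicate point, and the main obstacle, is the interaction between the nonlinear normalizing norms and $\mathcal{F}$: the identity $\mathcal{F}B_{1/2}=B_{1/2}\mathcal{F}$ is exact only once one knows how $\|\mathcal{F}x\|_\infty$ compares with $\|x\|_1$ (and $\|\mathcal{F}x\|_1$ with $\|x\|_\infty$), which is governed by the Hausdorff--Young inequality whose extremizers are the biprojections. On the self-dual line $\mathcal{F}x=x$ all $p$–norms transform by the same scalar, so this discrepancy disappears and the critical case is clean; off the self-dual line I expect to establish the attracting/repelling dichotomy directly from the explicit $2\times2$ recursion rather than by appealing to equivariance. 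Verifying that $s_c$ is \emph{strictly} repelling, so that arbitrarily small temperature perturbations escape the critical line to the correct biprojection, and confirming the exact threshold $\sinh 2\beta=1$ from the explicit recursion, are the steps most likely to require care.
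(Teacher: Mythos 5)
Your overall strategy is the same as the paper's: restrict to the two--dimensional space spanned by $\ID$ and $\JP$, reduce $B_{1/2}$ to an explicit rational recursion on the projective coordinate $t=b/a$, identify the three fixed directions $t=0,1,\infty$, show the recursion moves monotonically away from $t=1$ (the paper computes $t_1=t+t(t^2-1)R(t)$ with $R(t)>0$ for $t>0$, which is exactly your monotonicity step), handle $t>1$ by the duality $\F B_{1/2}=B_{1/2}\F$ of Proposition \ref{ReFour}, and use Theorem \ref{Thm:RM} to identify the limit. Your identification of the critical line $t=1$ with the Kramers--Wannier point $\sinh 2\beta=1$, i.e.\ $\beta=T_c^{-1}$, is correct, as is the argument that the limit on the self-dual line must be $0$.

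The genuine gap is the off-critical case, where you assert that ``the limit is nonzero, and then Theorem \ref{Thm:main1} guarantees it is a positive multiple of the corresponding biprojection, giving $c_1,c_2>0$ for free.'' Theorem \ref{Thm:main1} explicitly allows the limit to be $0$, so convergence of the direction $t_n\to 0$ (or $\infty$) does not by itself rule out that the whole sequence collapses to $0$; indeed at $t=1$ the direction is constant and the sequence nevertheless converges to $0$, so non-vanishing is a substantive claim, and it is precisely the assertion $c_1(\beta),c_2(\beta)>0$ in the statement. The paper closes this by expanding the recursion near $t=0$: writing $B_{1/2}(a\ID+b\JP)=a'\ID+b'\JP$, one finds $a'=a+t\,(4-\delta-\delta^{-1})+o(t^2)$, and since $4-\delta-\delta^{-1}=4-3/\sqrt 2>0$, the coefficient $a_n$ is eventually increasing, hence bounded below by a positive constant, which forces the limit to be $c_2\ID$ with $c_2>0$ (the $\JP$ case then follows by $\F$-duality). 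You need to supply this estimate, or some other positive lower bound on $\|B_{1/2}^n(x)\|_2$ away from criticality; the rest of your outline matches the paper's argument.
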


The asymptotic phenomenon of block maps coincides with that of the 2D Ising model explained above. It will be interesting to see a relation between the positive scalars $c_1(\beta )$ and $c_2(\beta )$ and the mass gap. 
We also observe that this renormalization procedure approximates Gaussian functions on $\mathbb{R}^n$.

In general, a biprojection can be represented by a Bisch-Jones diagram \raisebox{-.3cm}{
\begin{tikzpicture}
\begin{scope}[xscale=1.5]
\fill[blue!40] (0,0) rectangle (.6,.8);
\begin{scope}[shift={(.15,0)},xscale=.5]
\fill[gray!50] (0,0) arc (180:0:.3);
\fill[gray!50] (0,.8) arc (-180:0:.3);
\draw[] (0,0) arc (180:0:.3);
\draw[] (0,.8) arc (-180:0:.3);
\end{scope}
\draw[] (0,0)--++(0,.8);
\draw[] (.6,0)--++(0,.8);
\end{scope}
\end{tikzpicture}} up to a positive scalar \cite{BisJon97}. A string in the planar diagram is splitted into a parallel pair of strings in the Bisch-Jones diagram. The region between the pair is colored by a third shading. Moreover, the principal graph and the graph planar algebra become refined \cite{MorWal,Liu15}. 

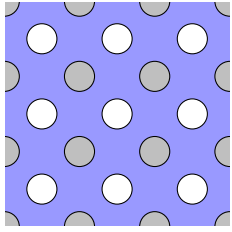
\begin{figure}\label{BJ-partition function}
\begin{tikzpicture}

\fill[blue!40] (0,0) rectangle (3,3);
\foreach \x in {0,1,2,3}
{
\foreach \y in {0,1,2,3}
{
\fill[gray!50] (\x,\y) circle (.2);
\draw (\x,\y) circle (.2);
}}

\foreach \x in {0,1,2,3,4}
{
\foreach \y in {0,1,2,3,4}
{
\fill[white!50] (\x-.5,\y-.5) circle (.2);
\draw (\x-.5,\y-.5) circle (.2);
}}

\fill[white] (-1,-1) rectangle (-.0,4);
\fill[white] (4,-1) rectangle (3.0,4);
\fill[white] (-1,-1) rectangle (4,-.0);
\fill[white] (-1,4) rectangle (4,3.0);
\end{tikzpicture}
\caption{The Bisch-Jones diagrammatic representation of the partition associated with a biprojection.}
\end{figure}

From this point of view, we can represent the planar diagram $D$ by the Bisch-Jones diagram shown in Fig~\ref{BJ-partition function}, when $B(T,J)$ is a biprojection. 
This defines a different limit case of the lattice model, which is partially ordered and partially disordered. 

The proof of theorem \ref{Thm:main1} requires a further development of the Fourier analysis for subfactors. 
The authors  have studied uncertainty principles for subfactors recently in \cite{JLW} and proved Young's inequality and the Hausdorff-Young inequality there. We also introduced bi-shifts of biprojections to characterize the extremizers of various uncertainty principles. For the group case, bi-shifts of biprojections are translations of subcharacters.

In this paper, we characterize the extremal pairs of Young's inequality and extremal operators of the Hausdorff-Young inequality on $\PA_{2,\pm}$. We have not found such characterizations on non-commutative algebras in any literature, even for the representations of a finite group.

\begin{theorem}[Theorem \ref{youngeq}]\label{main2}
Suppose $\mathscr{P}$ is an irreducible subfactor planar algebra.
Let $x,y$ be nonzero in $\mathscr{P}_{2,\pm}$.
Then the following are equivalent:
\begin{itemize}
\item[(1)] $\|x*y\|_r=\frac{1}{\delta}\|x\|_t\|y\|_s$ for some $1<r,t,s<\infty$ such that $\frac{1}{r}+1=\frac{1}{t}+\frac{1}{s}$;
\item[(2)] $\|x*y\|_r=\frac{1}{\delta}\|x\|_t\|y\|_s$ for any $1\leq r,t,s\leq \infty$ such that $\frac{1}{r}+1=\frac{1}{t}+\frac{1}{s}$;
\item[(3)] both $x$ and $y$ are bi-shifts of biprojections, and $\mathcal{R}((\mathfrak{F}^{-1}(x))^*)=\mathcal{R}(\mathfrak{F}^{-1}(y))$;
\item[(4)] there exists a biprojection $B$ in $\mathscr{P}_{2,\pm}$ such that $x=(a_x\ _hB)*\mathfrak{F}(\widetilde{B}_g)$ and $y=\mathfrak{F}(\widetilde{B}_g)*(a_y B_f)$, where $B_g,B_f$ are right shifts of $B$, $_hB$ is a left shift of $B$ and $a_x, a_y$ are elements in $\mathscr{P}_{2,\pm}$ such that $x,y$ are nonzero.
\end{itemize}
\end{theorem}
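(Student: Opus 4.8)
The plan is to establish that the four conditions are equivalent via the cycle $(2)\Rightarrow(1)\Rightarrow(3)\Rightarrow(2)$ together with the separate equivalence $(3)\Leftrightarrow(4)$. Three of these links are comparatively routine. The implication $(2)\Rightarrow(1)$ is immediate, since (1) is just (2) specialized to a single admissible triple $(r,t,s)$. For $(3)\Leftrightarrow(4)$ I would unfold the definition of a bi-shift of a biprojection: by construction a bi-shift is, up to the left and right shift operations, a convolution of a scaled biprojection with the Fourier transform of a shifted biprojection, so condition (4) is essentially the explicit coordinate form of the assertion in (3) that $x$ and $y$ are bi-shifts of the \emph{same} biprojection with matching ranges. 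The common factor $\mathfrak{F}(\widetilde{B}_g)$ appearing in both $x$ and $y$ is precisely what encodes the range-matching condition $\mathcal{R}((\mathfrak{F}^{-1}(x))^*)=\mathcal{R}(\mathfrak{F}^{-1}(y))$. For $(3)\Rightarrow(2)$ I would compute $x*y$ directly from the bi-shift normal form: convolving two bi-shifts of a common biprojection with compatible ranges again produces a bi-shift of that biprojection, whose $L^p$-norms are explicitly computable through traces of the biprojection, so that the norms $\|x*y\|_r$, $\|x\|_t$, $\|y\|_s$ all factor accordingly and the claimed identity holds for every admissible triple.

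The heart of the theorem is $(1)\Rightarrow(3)$, and here the strategy is to revisit the proof of Young's inequality and extract rigidity from each inequality out of which it is built. Young's inequality is obtained by passing to the Fourier picture, where $\mathfrak{F}(x*y)=\delta\,\mathfrak{F}(x)\mathfrak{F}(y)$ turns convolution into the non-commutative product, and then chaining the Hausdorff--Young inequality applied to $x*y$, a Hölder inequality for the product $\mathfrak{F}(x)\mathfrak{F}(y)$, and the Hausdorff--Young inequality applied to $x$ and to $y$ separately. An equality in (1) forces simultaneous equality in every link of this chain. The Hausdorff--Young equalities, via the characterization of the extremal operators of the Hausdorff--Young inequality established earlier in the paper, force each of $x$, $y$ (and $x*y$) to be a bi-shift of a biprojection.

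It then remains to show that $x$ and $y$ are bi-shifts of the same biprojection and that their ranges match, and this is where the exact inverse sum set theorem enters. The equality case of the Hölder step for the product $\mathfrak{F}(x)\mathfrak{F}(y)$ constrains the range projections of $\mathfrak{F}(x)$ and $\mathfrak{F}(y)$: the trace of the range projection of the product must saturate the sum set estimate against the ranges of the two factors. The exact inverse sum set theorem identifies precisely this saturation with the situation in which the relevant supports are governed by a single biprojection, thereby pinning down a common $B$ and yielding the range identity $\mathcal{R}((\mathfrak{F}^{-1}(x))^*)=\mathcal{R}(\mathfrak{F}^{-1}(y))$ of (3).

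The main obstacle I anticipate is twofold. First, the equality in (1) is assumed only for a single interior triple $1<r,t,s<\infty$, whereas both the extremizer characterization of Hausdorff--Young and the sum set estimate are cleanest at the endpoints; one must therefore either propagate the interior equality to the endpoints using strict convexity / log-convexity of the $L^p$-norms along the interpolation scale, or run the inverse sum set argument directly at interior exponents. Second, the equality analysis for the non-commutative Hölder inequality $\|\mathfrak{F}(x)\mathfrak{F}(y)\|_{r'}=\|\mathfrak{F}(x)\|_a\|\mathfrak{F}(y)\|_b$ is genuinely more delicate than in the commutative case: it requires controlling not merely the moduli but the partial isometries in the polar decompositions of $\mathfrak{F}(x)$ and $\mathfrak{F}(y)$, so that their product does not drop rank, and it is exactly this rank and range bookkeeping that the exact inverse sum set theorem is designed to handle. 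Verifying that the extremal structure forced on the Fourier side transports back under $\mathfrak{F}^{-1}$ to the bi-shift normal form in $\mathscr{P}_{2,\pm}$ is the step I would treat with the most care.
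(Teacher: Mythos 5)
Your overall architecture (equality analysis in a chain of inequalities, passage to the Fourier picture, range bookkeeping via Hölder) is in the right spirit, but there are two genuine gaps in the crucial implication $(1)\Rightarrow(3)$. First, the chain ``Hausdorff--Young for $x*y$, then Hölder for $\mathfrak{F}^{-1}(x)\mathfrak{F}^{-1}(y)$, then Hausdorff--Young for $x$ and $y$'' is only a valid derivation of Young's inequality at $r=2$, where Plancherel makes the first link an identity; for a general interior triple $(r,t,s)$ it is not a proof of the inequality at that triple, so ``equality forces equality in every link'' has no content there. One must therefore first transport the hypothesis from the given triple to triples with $r=2$ (and to triples with $t=1$ or $s=1$). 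Your suggested fix via log-convexity of $p\mapsto\log\|\cdot\|_p$ does not work as stated, because the right-hand side $\frac{1}{\delta}\|x\|_t\|y\|_s$ is itself convex (not affine) along interpolation segments, so touching it at an interior point forces nothing at the endpoints. The paper's Propositions \ref{p1p} and \ref{rts} resolve this with a Stein-type analytic family $z\mapsto w_x|x|^{t(1+z)/2}$ and the maximum modulus principle; note that what they propagate is equality for \emph{modified} pairs (powers of $|x|$ and $|y|$) at other exponents, which is then what feeds the $r=2$, $s=2$, $t=1$ computation of Proposition \ref{212} and ultimately forces $|x|$ and $|y|$ to be shifts of a common biprojection (after the reduction to positive elements via Lemma \ref{absolute}, which your sketch omits). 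Without some version of this step your argument cannot be launched from hypothesis (1).

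Second, your appeal to ``the characterization of the extremal operators of the Hausdorff--Young inequality established earlier in the paper'' is circular in this paper's logical order: Theorem \ref{hauseq} is proved in \S\ref{Sec:Hausdorff-Young} \emph{using} Theorem \ref{youngeq}. What the paper actually does in Proposition \ref{partyoung} is obtain the Hausdorff--Young equalities \eqref{eqhaus} for the whole range $1\leq t,s\leq 2$ (possible only after step one above), differentiate at $t=2$ to land on the Hirschman--Beckner uncertainty principle, and invoke Main Theorem 2 of \cite{JLW} to conclude that $x$ and $y$ are bi-shifts of biprojections; a single Hausdorff--Young equality at one exponent would not suffice even if the extremizer theorem were available. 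Relatedly, the exact inverse sum set theorem does not enter where you place it: the range-matching condition $\mathcal{R}((\mathfrak{F}^{-1}(x))^*)=\mathcal{R}(\mathfrak{F}^{-1}(y))$ comes directly from the equality case of the noncommutative Hölder inequality for the product $\mathfrak{F}^{-1}(x)\mathfrak{F}^{-1}(y)$ (Proposition \ref{holder}), which yields $|\mathfrak{F}^{-1}(x)|=|\mathfrak{F}^{-1}(y)^*|$; the sum set machinery (Theorem \ref{lowerb}, Theorem \ref{lowerm}, Corollary \ref{Cor:leftshift}) is instead used on the convolution side to identify $|x|$ and $|y|$ as multiples of left and right shifts of a single biprojection. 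Your treatment of $(2)\Rightarrow(1)$, $(3)\Leftrightarrow(4)$ and $(3)\Rightarrow(2)$ is essentially correct in outline and matches Proposition \ref{partyoung}.
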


\begin{theorem}[Theorem \ref{hauseq}]\label{main3}
Suppose $\mathscr{P}$ is an irreducible subfactor planar algebra.
Let $x$ be nonzero in $\mathscr{P}_{2,\pm}$.
Then the following are equivalent:
\begin{itemize}
\item[(1)] $\|\mathfrak{F}(x)\|_{\frac{t}{t-1}}=\left(\frac{1}{\delta}\right)^{\frac{2}{t}-1}\|x\|_t$ for some $1< t<2$.
\item[(2)] $\|\mathfrak{F}(x)\|_{\frac{t}{t-1}}=\left(\frac{1}{\delta}\right)^{\frac{2}{t}-1}\|x\|_t$ for any $1\leq t\leq 2$.
\item[(3)] $x$ is a bi-shift of a biprojection.
\end{itemize}
\end{theorem}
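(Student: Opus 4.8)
The plan is to prove the equivalences by establishing $(2)\Rightarrow(1)$ (immediate, since a statement ``for any $t$'' specializes to ``for some $t$''), $(3)\Rightarrow(2)$ by a direct computation, and $(1)\Rightarrow(3)$ as the substantive step, which I would reduce to the already-established characterization of Young's extremal pairs, Theorem \ref{main2}. Note at the outset that the exponent arithmetic is consistent with that theorem: the Hausdorff--Young conjugacy $\tfrac1t+\tfrac1{t'}=1$ with $t'=\tfrac{t}{t-1}$ is the boundary case of Young's relation $\tfrac1r+1=\tfrac1t+\tfrac1s$, and at $t=2$ the constant $(1/\delta)^{2/t-1}=(1/\delta)^0=1$ reduces condition (2) to the Plancherel identity, which holds for all $x$. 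Hence the genuine content of (1) and (2) lives on the open range $1<t<2$ together with the endpoint $t=1$.

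For $(3)\Rightarrow(2)$ I would use that, by \cite{JLW}, $\mathfrak{F}$ sends a bi-shift of a biprojection to a scalar multiple of a bi-shift of a biprojection, and that such elements are \emph{flat}: both $|x|$ and $|\mathfrak{F}(x)|$ are multiples of projections supported on the ranges built from the underlying biprojection $B$. Consequently $\|x\|_t$ and $\|\mathfrak{F}(x)\|_{t'}$ are explicit powers of $\delta$ times traces of those support projections, and one checks that their ratio is exactly $(1/\delta)^{2/t-1}$ for every $t\in[1,2]$; the endpoints $t=2$ (Plancherel) and $t=1$ (the coherent-addition case $\|\mathfrak{F}(x)\|_\infty=\tfrac1\delta\|x\|_1$) are included. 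This is bookkeeping with no real obstruction once the flatness of bi-shifts is in hand.

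For $(1)\Rightarrow(3)$, the heart of the theorem, I would first promote equality at a single interior exponent $t_0\in(1,2)$ to equality on the whole interval $[1,2]$, using the rigidity (log-convexity, i.e. the equality case of the complex interpolation) underlying the proof of the Hausdorff--Young inequality in \cite{JLW}; in particular this yields the endpoint equality $\|\mathfrak{F}(x)\|_\infty=\tfrac1\delta\|x\|_1$. The virtue of $t=1$ is its rigidity: it forces the mass of $x$ to add up in phase at the point realizing $\|\mathfrak{F}(x)\|_\infty$, and combined with the strict convexity of the $t'$-norm it forces $|\mathfrak{F}(x)|$ to be a multiple of a projection. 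I would then recast this saturation as an equality in Young's inequality by forming a suitable self-convolution, for instance $x*\overline{x^*}$, whose Fourier transform is the positive operator $\mathfrak{F}(x)\mathfrak{F}(x)^*$, so that the flatness of $|\mathfrak{F}(x)|$ translates into equality for the pair $(x,\overline{x^*})$ with the intervening Hölder steps forced to be equalities. Applying Theorem \ref{main2} to this pair then forces $x$ to be a bi-shift of a biprojection, the range condition $\mathcal{R}((\mathfrak{F}^{-1}(x))^*)=\mathcal{R}(\mathfrak{F}^{-1}(\overline{x^*}))$ being automatic for the self-paired construction.

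I expect the main obstacle to be exactly this reduction step. Equality at an interior $t$ does not, a priori, pin down the flatness of $|x|$ or $|\mathfrak{F}(x)|$ directly; one must exploit that $x$ and its Fourier transform are \emph{simultaneously} extremal so that the equality conditions propagate to the endpoint, and then verify carefully that the chain Hausdorff--Young $\to$ self-convolution $\to$ Young respects all the exponent constraints and that each Hölder inequality invoked is saturated. Establishing this simultaneous rigidity --- equivalently, showing that the interpolation extremizer must itself be flat --- is where the real work lies, and it is precisely the input that makes the Young extremizer characterization of Theorem \ref{main2} applicable.
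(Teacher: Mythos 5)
Your overall strategy --- propagate the single-exponent equality by the rigidity of the complex-interpolation proof of Hausdorff--Young, then convert the result into an equality case of Young's inequality via a self-convolution and invoke Theorem \ref{main2} --- is the same skeleton as the paper's proof (Proposition \ref{hausde} followed by applying Theorem \ref{youngeq} to the pair $(y^*,\overline{y})$ with $y=w_x|x|^{3t/4}$ at exponents $r=2$, $t=s=4/3$). The directions $(3)\Rightarrow(2)$ and $(2)\Rightarrow(1)$ are handled the same way in both.

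The gap is in your step (a). The three-lines rigidity does not hold the operator fixed: the analytic family is $z\mapsto w_x|x|^{t(1+z)/2}$ paired with the exponent $2/(1+\Re z)$, and this family equals $x$ only at the original parameter $z=2/t-1$. So from equality at one interior $t_0$ you obtain, for each $\theta\in[0,1]$, an identity $\|\F(w_x|x|^{t_0(1+\theta)/2})\|_{2/(1-\theta)}=\delta^{-\theta}\|w_x|x|^{t_0(1+\theta)/2}\|_{2/(1+\theta)}$ --- an equality for a \emph{different} operator at each exponent --- not the statement that $x$ itself saturates Hausdorff--Young at every $t\in[1,2]$. That latter statement is precisely the implication $(1)\Rightarrow(2)$, which is only reachable after $(3)$ is established; ``log-convexity'' does not supply it, since for fixed $x$ the relevant quantity is a difference of two log-convex functions of $1/t$ and need not be convex. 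Moreover, even granting the endpoint identity $\|\F(x)\|_\infty=\frac{1}{\delta}\|x\|_1$, that only says $x$ is extremal, a strictly larger class than bi-shifts of biprojections (every bi-positive element is extremal), so it cannot by itself ``force $|\F(x)|$ to be a multiple of a projection.'' The paper's fix is to run your steps (c)--(d) not on $x$ but on the interpolated operator $y=w_x|x|^{3t/4}$ (the family at $\Re z=1/2$): the available equality $\|\F(y)\|_4=\delta^{-1/2}\|y\|_{4/3}$ gives $\|y^**\overline{y}\|_2=\frac{1}{\delta}\|y\|_{4/3}^2$, i.e.\ condition (1) of Theorem \ref{main2} at interior exponents $(4/3,4/3,2)$, and flatness of $|y|=|x|^{3t/4}$ then transfers to $|x|$. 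If you replace your endpoint detour with this choice of interpolation parameter, your argument closes.
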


Our proof of the characterization of the extremal pairs in this non-commutative (and non-cocommutative) setting involves the characterizations of extremizers of uncertainty principles and sum set estimates. This is different from previous proofs on commutative algebras, where the corresponding results on Young's inequality, uncertainty principles, and sum set estimates were obtained independently.

The sum set estimate is a new ingredient in subfactor theory from additive combinatorics \cite{TaoVu}.
We prove the sum set estimate and the exact inverse sum set theorem for subfactors:
\begin{theorem}[Theorem \ref{lowerb}, Proposition \ref{upperb}, Theorem \ref{lowerm}, Proposition \ref{1pyoung}]\label{main1}
Suppose $\mathscr{P}$ is an irreducible subfactor planar algebra.
Let $p,q$ be projections in $\mathscr{P}_{2,\pm}$.
Then
\be \label{Sumsetestimate1}
\max\{tr_2(p),tr_2(q)\}\leq \mathcal{S}(p*q)\leq tr_2(p)tr_2(q),
\ee
where $\mathcal{S}(x)=tr_2(\mathcal{R}(x))$ and $\mathcal{R}(x)$ is the range projection of $x$ in $\mathscr{P}_{2,\pm}$.
Moreover, the following statements are equivalent:
\begin{itemize}
\item[(1)] $\mathcal{S}(p*q)=tr_2(p)$;
\item[(2)] $\frac{\delta}{tr_2(q)}p*q$ is a projection;
\item[(3)] $\mathcal{S}(p*(q*\overline{q})^{*(m)})*q^{*(j)})=tr_2(p)$, for some $m\geq 0$, $j\in\{0,1\}$, $m+j>0$, where $q^{*(0)}=e_1$, where $e_1$ is the Jones projection;
\item[(4)] $\mathcal{S}(p*(q*\overline{q})^{*(m)})*q^{*(j)})=tr_2(p)$, for any $m\geq 0$, $j\in\{0,1\}$, $m+j>0$;
\item[(5)] there exists a biprojection $B$ in $\mathscr{P}_{2,\pm}$ such that $q$ is a right subshift of $B$ and $p=\mathcal{R}(x*B)$ for some $x>0$;
\item[(6)] $\|p*q\|_t=\frac{1}{\delta}\|p\|_t\|q\|_1$ for some $1<t<\infty$;
\item[(7)] $\|p*q\|_t=\frac{1}{\delta}\|p\|_t\|q\|_1$ for any $1\leq t\leq \infty$;
\item[(8)] $B_1\leq B_2$,
\end{itemize}
where $B_1$ be the biprojection generated by $q*\overline{q}$ and $B_2$ be the spectral projection of $\overline{p}*p$ corresponding to $\frac{tr_2(p)}{\delta}$.
\end{theorem}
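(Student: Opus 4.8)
The plan is to prove the two-sided estimate \eqref{Sumsetestimate1} by soft analytic inequalities, then to organize the eight equivalences as statements all equivalent to the single analytic condition~(2), with the genuinely new work concentrated in the passage from~(2) to the biprojection description in~(5) and~(8).

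For the lower bound I would use only the trace Cauchy--Schwarz inequality and Young's inequality. For any positive $x$ one has $tr_2(\mathcal{R}(x))\geq\|x\|_1^2/\|x\|_2^2$, since the nonzero spectrum satisfies $(\sum\lambda_i)^2\leq(\#\lambda_i)(\sum\lambda_i^2)$. Applying this to $x=p*q$, using the exact identity $\|p*q\|_1=\frac{1}{\delta}tr_2(p)tr_2(q)$ (valid because $p,q\geq 0$) and the special case $\|p*q\|_2\leq\frac{1}{\delta}\|p\|_2\|q\|_1$ of Young's inequality, gives $\mathcal{S}(p*q)\geq tr_2(p)$; exchanging $p$ and $q$ produces the $\max$. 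For the upper bound --- the analogue of $|A+B|\leq|A||B|$ --- I would decompose $q=\sum_k q_k$ into minimal projections, use subadditivity of range projections $\mathcal{R}(p*q)\leq\bigvee_k\mathcal{R}(p*q_k)$ to get $\mathcal{S}(p*q)\leq\sum_k\mathcal{S}(p*q_k)$, and reduce to a single-block estimate $\mathcal{S}(p*q_k)\leq tr_2(p)tr_2(q_k)$, the subfactor counterpart of covering $A+B$ by translates of $A$.

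The equivalence (1) $\Leftrightarrow$ (2) is the equality analysis of the lower bound. If $\mathcal{S}(p*q)=tr_2(p)$ then the Cauchy--Schwarz step is an equality, so all nonzero eigenvalues of $p*q$ coincide and $p*q$ is a scalar multiple of a projection; the $L^1$ identity then pins the scalar to $tr_2(q)/\delta$, which is precisely~(2). Conversely~(2) forces $p*q=\frac{tr_2(q)}{\delta}r$ with $r$ a projection, and the $L^1$ identity gives $tr_2(r)=tr_2(p)=\mathcal{S}(p*q)$. The analytic conditions~(6) and~(7) fall out of the same computation: $p*q=\frac{tr_2(q)}{\delta}r$ immediately gives $\|p*q\|_t=\frac{1}{\delta}\|p\|_t\|q\|_1$ for every $t$, and the converse --- equality in Young's inequality with an $L^1$ factor at a single exponent forcing $p*q$ to be a multiple of a projection --- is the content I would isolate as Proposition~\ref{1pyoung}.

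The heart of the theorem, and the main obstacle, is the exact inverse step from~(2) to the structural conditions~(5) and~(8). Here I would construct the stabilizing biprojection directly: with $B_1$ the biprojection generated by $q*\overline{q}$ and $B_2$ the spectral projection of $\overline{p}*p$ at $tr_2(p)/\delta$, I would show that $p*q$ being a multiple of a projection is equivalent to $q$ being a right subshift of $B_1$ together with $p$ being a union of $B_1$-cosets, i.e.\ $p=\mathcal{R}(x*B)$, and that this in turn is exactly $B_1\leq B_2$. The equivalences with~(3) and~(4) would then follow by monotonicity: convolving $q$ with further copies of $q*\overline{q}$ and of $q$ keeps everything inside $B_1$, so by the lower bound the value $tr_2(p)$ is attained for one pair $(m,j)$ iff for all. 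The difficulty is that this reconstruction must be carried out with only the biprojection and bi-shift calculus and range-projection estimates, without invoking the characterization of extremizers of Young's inequality (Theorem~\ref{youngeq}), which itself rests on this theorem; proving that $B_1$ is exactly the right stabilizer while keeping the argument non-circular is where the real work lies.
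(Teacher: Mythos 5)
Your lower bound is essentially the paper's argument: the inequality $tr_2(\mathcal{R}(x))\geq \|x\|_1^2/\|x\|_2^2$ is exactly H\"older's inequality $\|x\|_1\leq \|\mathcal{R}(x)\|_2\|x\|_2$, combined with the $L^2$ case of Young's inequality and the exact $L^1$ identity, and the equality analysis giving (1)$\Leftrightarrow$(2) and then (6),(7) via spectral decomposition matches Proposition~\ref{1pyoung}. The two other components have genuine gaps. For the upper bound, reducing to minimal projections $q_k$ buys you nothing: the ``single-block estimate'' $\mathcal{S}(p*q_k)\leq tr_2(p)\,tr_2(q_k)$ is not any easier than the general statement, because in a subfactor a minimal projection is not group-like --- its trace is usually greater than $1$ and convolution with it does not act as a translation --- which is precisely the obstruction the paper points out when explaining why the additive-combinatorics proof does not transfer. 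The paper's actual proof of Proposition~\ref{upperb} is different and does not decompose at all: it writes $p*q=vv^*$ where $v$ is the half-coproduct diagram built from $p$ and $q$, and uses $\mathcal{S}(vv^*)=\mathcal{S}(v^*v)$ together with $\mathcal{R}(v^*v)\leq p\otimes q$ to conclude $\mathcal{S}(p*q)\leq tr_2(p)tr_2(q)$. Without that (or some substitute), your upper bound is unproved.

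The second gap is the one you yourself flag: the passage from (2) to (5) and (8) is stated as a goal, not argued, and this is the exact inverse sum set theorem --- the core of the result. The missing mechanism in the paper (Theorem~\ref{lowerm}, $(1)\Rightarrow(4)$) is a two-step bootstrap. First, setting $p_1=\frac{\delta}{tr_2(q)}p*q$ (a projection by (2)), the cyclic trace identity of Lemma~\ref{permu} gives $tr_2((p_1*\overline{q})p)=tr_2((p*q)p_1)=tr_2(p_1*\overline{q})$, so H\"older forces $\mathcal{R}(p_1*\overline{q})\leq p$; the lower bound then squeezes $\mathcal{S}(p_1*\overline{q})=tr_2(p)$ and yields $p=\frac{\delta}{tr_2(q)}p_1*\overline{q}$, i.e.\ $p$ absorbs $q*\overline{q}$ under convolution. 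Second, one must upgrade absorption of $q*\overline{q}$ to absorption of the whole biprojection $B_1(q)$ generated by it; this is done via the right-absorbing-support Proposition~\ref{Prop:RAS}, which in turn rests on the Ces\`aro-mean idempotent analysis and the convolution-equation results of \S\ref{ce} (Propositions~\ref{converge}, \ref{idem}, \ref{idem2}). That machinery is what keeps the argument non-circular (it never touches Theorem~\ref{youngeq}), and it is also what drives (4)$\Rightarrow$(3) for arbitrary $(m,j)$; your ``monotonicity'' remark for (3),(4) is correct in spirit but presupposes exactly this absorption statement. As written, the proposal establishes the lower bound and the equivalence of (1), (2), (6), (7), but not the upper bound and not the equivalences with (3), (4), (5), (8).
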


The sum set estimate \eqref{Sumsetestimate1} is obvious on the set of group elements, but it is non-trivial for subfactors, even for the representations of a finite group, see Corollary \ref{Cor:sumset}.
One obtains interesting results while applying these general results on subfactors to particular examples, see \S \ref{Sec:Concluding Remarks} for a short discussion.

The paper is organized as follows.
In \S \ref{Sec:Preliminary}, we recall some notations for subfactors and related results on its Fourier analysis.
In \S \ref{ce}, we study the convolution equation $a*b=a$ and characterize its solutions.
In \S \ref{ssumset}, we prove a sum set estimate and the exact inverse sum set theorem for subfactors.
In \S \ref{Sec:Young}, we characterize the extremal pairs of Young's inequality for subfactor.
In \S \ref{Sec:Hausdorff-Young}, we characterize the extremal operators of the Hausdorff-Young inequality for subfactors.
In \S \ref{Sec:Block Maps}, we introduce the block maps for subfactors and study their dynamic systems.
In \S \ref{Sec:Concluding Remarks}, we discuss the difference between Fourier analysis on subfactors and commutative algebras. We summarize the characterizations of bi-shifts of biprojections in Theorem \ref{Thm:bishifts}.

\begin{ac}
Zhengwei Liu would like to thank V. F. R. Jones for constant encouragement and T. Tao for helpful suggestions.
Parts of the work was done during visits of Zhengwei Liu and Jinsong Wu to Hebei Normal University.
Chunlan Jiang was supported in part by NSFC (Grant no. A010602) and Foundation for the Author of National
Excellent Doctoral Dissertation of China (Grant no. 201116).
Zhengwei Liu was supported by a grant from Templeton Religion Trust and an AMS-Simons Travel Grant.
Jinsong Wu was supported by NSFC (Grant no. A010602) and partially supported by ``PCSIRT" and the Fundamental Research Funds for the Central Universities (WK001000004).
\end{ac}

\section{Preliminaries}\label{Sec:Preliminary}
\subsection{Subfactors and the dual pair of $C^*$-algebras}
Jones classified the Jones index $\delta^2$ of a subfactor in \cite{Jon83}. It generalizes the order of a group and can be non-integers.
Each subfactor defines a pair of finite dimensional $C^*$-algebras which generalize a finite group and its dual, namely the representation category of the group.
This pair are related by the Fourier transform which was introduced by Ocneanu for subfactors \cite{Ocn88}.

%The pair of $C^*$-algebras are captured by the 2-box space $\PA_{2,\pm}$ of the planar algebra of the subfactor \cite{Jon99}. In planar algebra framework, an element in $\PA_{2,\pm}$ is represented by a planar 4-valent diagram.  The multiplication and the coproduct are composing 4-valent diagrams vertically and horizontally.
%The Fourier transform becomes a clockwise one-string rotation, called the {\it string Fourier transform} (SFT) $\F$. When we represent 2-box operators as 4-valent planar diagrams on the two dimensional plan, Moreover, the SFT is a geometric rotation by $90^{\circ}$.

Given a finite index type II$_1$ subfactor $\N\subset \M$, one obtains an $\N-\N$ bimodule $L^2(\M)$. The bimodule maps $\hom_{\N-\N}(L^2(M))$ forms a $C^*$-algebra. On the other hand, the multiplication on $M$ defines a bounded $\N-\N$ bimodule map $\gamma$ from the Connes fusion $L^2(\M)\otimes_{\N} L^2(\M)$ to $L^2(\M)$. The associativity of the multiplication on $M$ tells that $\gamma$ is a Frobenius algebra in the $\N-\N$ bimodule category. For $x,y \in \hom_{\N-\N}(L^2(M))$, their convolution is defined as $\gamma (x\otimes y) \gamma^*$. Then $\hom_{\N-\N}(L^2(M))$ forms another $C^*$-algebra, where the involution is defined by the modular conjugation in Tomita-Takesaki theory. The identity map on the space $\hom_{\N-\N}(L^2(M))$ from one $C^*$-algebra to the other plays the role of the Fourier transform.
The projection from $L^2(\M)$ to $L^2(\N)$ is a bimodule map, called the Jones projection. The Fourier transform switches the identity and the Jones projection in the pair of $C^*$ algebras.
Furthermore, both $C^*$-algebras are equipped with a Markov trace which can be defined as the pull back of the delta function on the Jones projection by the Fourier transform.

In the algebraic framework, this can also be formalized by a Frobenius algebra $\gamma$ in a rigid $C^*$-tensor category in the same way \cite{Mug03}. Then the target space $\hom_{\N-\N}(L^2(M))$ becomes $\hom(\gamma,\gamma)$.

When $\M=\N\rtimes G$, for an outer action of a finite group $G$, $\hom_{\N-\N}(L^2(\M))\cong L^2(G)$.
The Jones index of $\N\subset \M$ is the order of the group $G$.
The convolution coincides with the usual convolution on $L^2(G)$. The Markov trace is defined by the discrete measure on $G$. The $C^*$-algebra defined by the convolution is the group algebra acting on the left regular representation of $G$. The Markov trace is defined by trace of the matrix.

In general, this pair of $C^*$-algebras are captured by the 2-box space of the planar algebra $\mathscr{P}=\{\mathscr{P}_{n,\pm}\}_{n\geq 0}$ of the subfactor \cite{Jon99}, where readers can find the definition and examples.

\begin{notation}
In this paper, the $\$$ signs are always on the left side of discs of planar tangles.
We omit the output disc and the $\$$ signs.
\end{notation}

In the planar algebra framework, the space $\hom_{\N-\N}(L^2(M))$ is the 2-box space $\PA_{2,+}$. Its element $x$ is represented by a shaded diagram with four boundary points:
\raisebox{-.3cm}{
\begin{tikzpicture}
  \fill[gray!50] (.1,-.2) rectangle (.4,.7);
  \draw (.1,-.2)--++(0,.9);
  \draw (.4,-.2)--++(0,.9);
  \fill[white] (0,0) rectangle (.5,.5);
  \draw(0,0) rectangle (.5,.5);
  \node at (.25,.25) {$x$};
\end{tikzpicture}}.
The identity is represented by
\raisebox{-.3cm}{
\begin{tikzpicture}
  \fill[gray!50] (.1,-.2) rectangle (.4,.7);
  \draw (.1,-.2)--++(0,.9);
  \draw (.4,-.2)--++(0,.9);
\end{tikzpicture}}.
The multiplication $xy$ is represented by
\raisebox{-.6cm}{
\begin{tikzpicture}
  \fill[gray!50] (.1,-.2) rectangle (.4,1.4);
  \draw (.1,-.2)--++(0,1.6);
  \draw (.4,-.2)--++(0,1.6);
  \fill[white] (0,0) rectangle (.5,.5);
  \draw(0,0) rectangle (.5,.5);
  \node at (.25,.25) {$x$};
  \fill[white] (0,.7) rectangle (.5,1.2);
  \draw(0,.7) rectangle (.5,1.2);
  \node at (.25,.95) {$y$};
\end{tikzpicture}}.
The coproduct $x*y$ is represented by
\raisebox{-.3cm}{
\begin{tikzpicture}
  \fill[gray!50] (.1,-.2) rectangle (1.1,.7);
  \draw (.1,-.2)--++(0,.9);
  \draw (1.1,-.2)--++(0,.9);
  \fill[white] (.4,0) to [bend left=-30] (.8,0)--(.8,.5) to [bend left=-30] (.4,.5);
  \draw (.4,0) to [bend left=-30] (.8,0)--(.8,.5) to [bend left=-30] (.4,.5);
  \fill[white] (0,0) rectangle (.5,.5);
  \draw(0,0) rectangle (.5,.5);
  \node at (.25,.25) {$x$};
  \fill[white] (.7,0) rectangle (1.2,.5);
  \draw(.7,0) rectangle (1.2,.5);
  \node at (.95,.25) {$y$};
\end{tikzpicture}}.
For the group case, $\delta x*y$ is the convolution of functions in $L^2(G)$.
The Markov trace $tr_2(x)$ is represented by
\raisebox{-.6cm}{
\begin{tikzpicture}
  \fill[gray!50] (.1,0)--++(0,.5) arc (180:0:.5)--++(0,-.5) arc (0:-180:.5);
  \draw (.1,0)--++(0,.5) arc (180:0:.5)--++(0,-.5) arc (0:-180:.5);
  \fill[white] (.4,0)--++(0,.5) arc (180:0:.2)--++(0,-.5) arc (0:-180:.2);
  \draw (.4,0)--++(0,.5) arc (180:0:.2)--++(0,-.5) arc (0:-180:.2);
  \fill[white] (0,0) rectangle (.5,.5);
  \draw(0,0) rectangle (.5,.5);
  \node at (.25,.25) {$x$};
\end{tikzpicture}}.
The adjoint operation is represented by a vertical reflection.

The elements in $\PA_{2,-}$ are represented diagrams with the opposite shading.
The string Fourier transform (SFT) $\mathfrak{F}$ from $\mathscr{P}_{2,\pm}$ onto $\mathscr{P}_{2,\mp}$ is the clockwise $1$-string rotation, or a $90^\circ$ rotation geometrically.
For any $x,y$ in $\mathscr{P}_{2,\pm}$, we have $(\mathfrak{F}(x))^*=\mathfrak{F}^{-1}(x^*)$,
and
$$\F(xy)=\F(x)*\F(y).$$
We denote by $\overline{x}:=\F^2(x)$ the contragredient of $x$

An advantage of planar algebras is that we can study this pair of $C^*$-algebras in a bigger space $\PA_{n,\pm}$ with compatible topological properties.
The tensor product $x\otimes y$ is represented by
\raisebox{-.3cm}{
\begin{tikzpicture}
  \fill[gray!50] (.1,-.2) rectangle (.4,.7);
  \draw (.1,-.2)--++(0,.9);
  \draw (.4,-.2)--++(0,.9);
  \fill[white] (0,0) rectangle (.5,.5);
  \draw(0,0) rectangle (.5,.5);
  \node at (.25,.25) {$x$};
  \begin{scope}[shift={(.7,0)}]
  \fill[gray!50] (.1,-.2) rectangle (.4,.7);
  \draw (.1,-.2)--++(0,.9);
  \draw (.4,-.2)--++(0,.9);
  \fill[white] (0,0) rectangle (.5,.5);
  \draw(0,0) rectangle (.5,.5);
  \node at (.25,.25) {$y$};
  \end{scope}
\end{tikzpicture}}.

We will ignore the alternating shading to simplify the pictures in the rest of the paper.

\subsection{Previous results}
We briefly recall some notations and results in \cite{Bis94,Liu16,JLW}.
Suppose $\mathscr{P}=\{\mathscr{P}_{n,\pm}\}_{n\geq 0}$ is a subfactor planar algebra.
For any $x\in \mathscr{P}_{2,\pm}$, we denote by $w_x|x|$ the polar decomposition of $x$, $\mathcal{R}(x)$ the range projection of $x$, and $\mathcal{S}(x)=tr_2(\mathcal{R}(x))$.
We say $x\sim y$ if $\mathcal{R}(x)=\mathcal{R}(y)$ and $x\preceq y$ if $\mathcal{R}(x)\leq \mathcal{R}(y)$.

%\subsection{Biprojections}
A projection $B$ in $\mathscr{P}_{2,\pm}$ is called a biprojection if $\mathfrak{F}(B)$ is a multiple of a projection \cite{Bis94}. It generalizes the indicator function on subgroups of a finite group.

A biprojection $B$ generated by an element $x$ in $\mathscr{P}_{2,\pm}$ is the smallest biprojection satisfying $BxB=x$ \cite{Liu16}, where the existence of the smallest one is proved.

\begin{notation}
For a positive operator $x$ in $\mathscr{P}_{2,\pm}$,
we define $B_1(x)$ to be the biprojection generated $x*\overline{x}$.
\end{notation}

%\subsection{Bi-shifts of biprojections}
We introduced bi-shifts of biprojections in \cite{JLW} to generalize the translations of subcharacters on finite abelian groups.
A projection $p$ in $\mathscr{P}_{2,\pm}$ is called a left (or right) shift of a biprojection $B$, if $tr_2(p)=tr_2(B)$ and $p*B=\frac{tr_2(B)}{\delta}p$ (or $B*p=\frac{tr_2(B)}{\delta}p$).
Let $\tilde{B}$ be the range projection of $\mathfrak{F}(B)$ in $\mathscr{P}_{2,\mp}$.
A nonzero element $x$ in $\mathscr{P}_{2,\pm}$ is a bi-shift of the biprojection $B$ if there exists a right shift $B_g$ of $B$ and a right shift $\tilde{B}_h$ of $\tilde{B}$ and an element $y$ in $\mathscr{P}_{2,\pm}$ such that $x=\mathfrak{F}(\tilde{B}_h)*(yB_g)$.
That means the range projections of $x$ and $\F(x)$ are shifts of $B$ and $\tilde{B}$ respectively. The uniqueness of such element for given range projections is proved by the Hardy uncertainty principle \cite{JLW}.

\begin{theorem}\label{Thm:Schur}[Schur Product Theorem]
Suppose that $\mathscr{P}$ is a subfactor planar algebra and $a,b\in \mathscr{P}_{2,\pm}$ are positive.
Then $a*b>0$.
\end{theorem}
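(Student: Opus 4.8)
The plan is to establish positivity by exhibiting $a*b$ as a sum of squares in the finite-dimensional $C^*$-algebra $\PA_{2,\pm}$, and then to rule out $a*b=0$ by a trace computation. Since convolution is bilinear and $a,b$ are positive, I would first pass to a rank-one spectral decomposition $a=\sum_i \xi_i\xi_i^*$, $b=\sum_j \eta_j\eta_j^*$ with $\xi_i,\eta_j\in\PA_{2,\pm}$, so that $a*b=\sum_{i,j}(\xi_i\xi_i^*)*(\eta_j\eta_j^*)$. It then suffices to treat a single product $(\xi\xi^*)*(\eta\eta^*)$ and show it is of the form $z^*z$. Equivalently, one may substitute $a=c^*c$, $b=d^*d$ with $c=a^{1/2}$, $d=b^{1/2}$ positive and insert a resolution of the identity on the strand that is contracted by the coproduct; this turns the single coproduct diagram into a sum $\sum_k z_k^*z_k$ rather than one indefinite expression.

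The diagrammatic heart of the argument is the second step. Substituting the decompositions into the planar tangle that defines $x*y$, the element $(\xi\xi^*)*(\eta\eta^*)$ becomes a closed-up diagram in which the four boxes $\xi,\xi^*,\eta,\eta^*$ are joined by cups and caps. I would then apply a planar isotopy together with a single clockwise $90^\circ$ rotation, i.e. the string Fourier transform $\F$, to bring the diagram into a form whose lower half is the vertical reflection of its upper half. Since the vertical reflection is precisely the adjoint on $\PA_{2,\pm}$, this displays the element as $z^*z$ for an explicit $2$-box $z$ assembled from $\xi$ and $\eta$ by capping the appropriate pair of strings. Hence each summand is positive, and summing over $i,j$ gives $a*b\ge 0$.

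To upgrade $a*b\ge 0$ to $a*b>0$, I would compute the trace directly from the closed diagram: the trace of a coproduct factorizes, giving $tr_2(a*b)=\tfrac{1}{\delta}tr_2(a)tr_2(b)$. As $a$ and $b$ are nonzero positive operators, $tr_2(a)>0$ and $tr_2(b)>0$, so $tr_2(a*b)>0$ and in particular $a*b\ne 0$. Together with positivity this yields $a*b>0$.

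The main obstacle is the planar rearrangement in the second step. The difficulty is bookkeeping: one must track the alternating shading and the half-click rotations carefully so that the two halves of the coproduct diagram are genuinely adjoint to one another, and check that the Fourier rotation introduces no spurious scalar or orientation mismatch. Once the correct building block $z$ is identified the positivity is immediate, so essentially all of the work lies in verifying this isotopy and in choosing the resolution of the identity so that the convolution diagram factors as a sum of squares.
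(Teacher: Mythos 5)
Your strategy---decompose $a$ and $b$ into squares and exhibit each coproduct of squares as $vv^*$, then rule out $a*b=0$ by a trace identity---is exactly the standard argument; the paper itself does not reprove the theorem (it simply cites Theorem 4.1 of \cite{Liu16}), but the key identity $vv^*=p*q$, with $v$ the element obtained by placing the two boxes side by side and capping the inner pair of strings, appears verbatim in the proof of Proposition \ref{upperb}. The step you describe as the diagrammatic heart, however, contains a genuine error: no $90^\circ$ rotation should be applied, and applying one would derail the proof. Writing $a=c^*c$, $b=d^*d$ with $c=a^{1/2}$, $d=b^{1/2}$ and substituting into the coproduct tangle, the resulting picture is \emph{already} cut by its horizontal midline into one half containing $c,d$ joined by one arc and the other half containing $c^*,d^*$ joined by the other arc; the latter is precisely the adjoint (vertical reflection with starred labels) of the former, so $a*b=vv^*$ with no isotopy, no rotation, and no resolution of the identity (a single square suffices, not a sum). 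If you instead rotate by $90^\circ$, i.e.\ apply $\F$, you are computing $\F(a*b)\in\mathscr{P}_{2,\mp}$, a different element whose diagram is symmetric under a \emph{horizontal} reflection---which is not the adjoint---and which is in fact not positive in general: already for $\Z_3$ the SFT of the positive element $\delta_g*\delta_h$ is a multiple of a translation operator with non-real spectrum, so no decomposition $z^*z$ of the rotated picture can exist. The rotation cannot be repaired; it must simply be omitted.

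One further caveat: the factorization $tr_2(a*b)=\frac{1}{\delta}tr_2(a)tr_2(b)$ you invoke for nondegeneracy uses $\dim\mathscr{P}_{1,\pm}=1$, i.e.\ irreducibility, which Theorem \ref{Thm:Schur} as stated does not assume. In the reducible case the closed trace diagram only expresses $tr_2(a*b)$ as a pairing of two partial traces landing in the higher-dimensional algebra $\mathscr{P}_{1,\pm}$, and this pairing can vanish for nonzero positive $a,b$ whose partial traces have orthogonal supports; only the semidefiniteness $a*b\geq 0$ survives unconditionally. Since every other quantitative statement in the paper carries the irreducibility hypothesis, your nondegeneracy argument is correct in the setting where the theorem is actually used, but to prove the statement as literally written you would need to add that hypothesis (or weaken the conclusion to $a*b\geq 0$). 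Also note that ``positive'' in the hypothesis must be read as positive and nonzero for the conclusion $a*b>0$ to be meaningful.
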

\begin{proof}
This is Theorem 4.1 in \cite{Liu16}.
\end{proof}

\begin{lemma}\label{permu}
Suppose $\mathscr{P}$ is a subfactor planar algebra.
Let $a,b,c$ be in $\mathscr{P}_{2,\pm}$.
Then
\begin{eqnarray*}
&&tr_2((a*b)\overline{c})=tr_2((b*c)\overline{a})=tr_2((c*a)\overline{b})\\
&=& tr_2((\overline{c}*\overline{b})a)=tr_2((\overline{a}*\overline{c})b)=tr_2((\overline{b}*\overline{a})c)
\end{eqnarray*}
\end{lemma}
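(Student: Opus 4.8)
The plan is to reduce the whole six-fold identity to the single ``cyclic'' equality
\[
tr_2((a*b)\overline{c})=tr_2((b*c)\overline{a}),
\]
and to prove that one equality diagrammatically. First I would observe that $tr_2((a*b)\overline{c})$ is a closed planar diagram with no external boundary: the coproduct tangle glues one pair of strings of $a$ to one pair of $b$, multiplication by $\overline{c}=\mathfrak{F}^2(c)$ stacks $\overline c$ on top, and $tr_2$ caps off the remaining strings. The key point is that, once $c$ is replaced by its contragredient $\overline c$ (a $180^\circ$ rotation of the box), the three labelled boxes $a,b,c$ sit in rotationally symmetric position around a central region, so that the whole closed diagram is invariant, up to planar isotopy on the disc (equivalently on $S^2$), under the $2\pi/3$ rotation cyclically carrying $a\mapsto b\mapsto c\mapsto a$. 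Reading off the rotated diagram gives $tr_2((a*b)\overline c)=tr_2((b*c)\overline a)=tr_2((c*a)\overline b)$, which is the first line.

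The second line I would then deduce formally, using only that $\overline{(\cdot)}=\mathfrak{F}^2$ is the $180^\circ$ rotation. This rotation is an \emph{anti}-homomorphism for both multiplication and coproduct, $\overline{uv}=\overline v\,\overline u$ and $\overline{u*v}=\overline v*\overline u$; it is an involution, $\overline{\overline u}=u$; and it preserves the trace, $tr_2(\overline u)=tr_2(u)$. All four facts are immediate from the pictures, each being a single $180^\circ$ rotation of a $2$-box or closed diagram. Combining them with cyclicity of $tr_2$ yields
\[
tr_2((a*b)\overline c)=tr_2\big(\overline{(a*b)\overline c}\big)=tr_2\big(c\,(\overline b*\overline a)\big)=tr_2\big((\overline b*\overline a)c\big).
\]
Applying the first-line cyclic identity to the triple $(\overline b,\overline a,\overline c)$ and using $\overline{\overline c}=c$, $\overline{\overline b}=b$, $\overline{\overline a}=a$ then turns $tr_2((\overline b*\overline a)c)$ into $tr_2((\overline a*\overline c)b)$ and $tr_2((\overline c*\overline b)a)$, which are exactly the three terms of the second line; hence all six quantities coincide.

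The only genuine content, and the step I expect to be the main obstacle, is the $\mathbb{Z}_3$ rotational symmetry underlying the first line: one must verify, with the alternating shading in force, that the $2\pi/3$ rotation really carries the labelled closed diagram for $tr_2((a*b)\overline c)$ onto that for $tr_2((b*c)\overline a)$, and in particular that $\overline{(\cdot)}$ is the correct ``twist'' reconciling the $90^\circ$-periodicity of $\mathfrak{F}$ with the $120^\circ$ rotation of three $2$-boxes. The shading bookkeeping around the central region is where errors are easiest to make. As a sanity check I would confirm the identity in the group case $\mathscr{P}_{2,+}\cong \mathbb{C}[G]$, where $tr_2((a*b)\overline c)$ is proportional to $\sum_{xyz=e}a(x)b(y)c(z)$; this is manifestly invariant under cyclic permutation of $(a,b,c)$ and, after the substitution $x\mapsto x^{-1}$, under the $(\overline b,\overline a,\overline c)$ symmetry as well, validating both lines.
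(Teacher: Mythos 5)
Your proof is correct. The paper gives no argument of its own here (it only cites Lemma 4.6 of \cite{Liu16} and Lemma 3.4 of \cite{JLW}), and your argument --- realizing $tr_2((a*b)\overline{c})$ as a closed ``necklace'' of the three $2$-boxes whose $\mathbb{Z}_3$ rotational symmetry gives the first line, then using that $\overline{\,\cdot\,}=\mathfrak{F}^2$ is a trace-preserving anti-homomorphism for both product and coproduct to deduce the second line --- is exactly the standard diagrammatic proof given in those references.
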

\begin{proof}
This is the Lemma 4.6 in \cite{Liu16}, Lemma 3.4 in \cite{JLW}.
\end{proof}

\begin{lemma}\label{ran}
Suppose $\mathscr{P}$ is a subfactor planar algebra.
Let $x,y$ be in $\mathscr{P}_{2,\pm}$.
Then
$$\mathcal{R}(x*y)\leq \mathcal{R}(\mathcal{R}(x)*\mathcal{R}(y)).$$
\end{lemma}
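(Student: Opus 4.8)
The plan is to reduce the whole statement to a single operator inequality and then pass from operators to their support projections. Since $\mathcal{R}(z)=\mathcal{R}(zz^*)$ for every $z$, and since $\mathcal{R}(xx^*)=\mathcal{R}(x)$, $\mathcal{R}(yy^*)=\mathcal{R}(y)$, it suffices to produce the chain
\[
\mathcal{R}(x*y)=\mathcal{R}\big((x*y)(x*y)^*\big)\le \mathcal{R}(xx^* * yy^*)\le \mathcal{R}(\mathcal{R}(x)*\mathcal{R}(y)),
\]
after which the lemma is immediate. Thus I split the work into the middle inequality (a Cauchy--Schwarz/sandwich estimate comparing $x*y$ with the positive convolution $xx^* * yy^*$) and the last inequality (an order-preservation statement that replaces $xx^*,yy^*$ by their range projections).

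First I would prove the middle inequality, which I regard as the crux. I would present the convolution through the Frobenius multiplication map $\gamma\colon L^2(\M)\otimes_{\N}L^2(\M)\to L^2(\M)$, namely $a*b=\gamma(a\otimes b)\gamma^*$ (up to a fixed positive normalization). Then
\[
(x*y)(x*y)^*=\gamma(x\otimes y)\,(\gamma^*\gamma)\,(x\otimes y)^*\gamma^*,
\]
and since $\gamma^*\gamma\le \|\gamma\|^2\,\mathrm{id}$ (automatic in this finite dimensional setting), sandwiching gives $(x\otimes y)(\gamma^*\gamma)(x\otimes y)^*\le \|\gamma\|^2\,(x\otimes y)(x\otimes y)^*$. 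Applying the completely positive map $T\mapsto \gamma T\gamma^*$ and using $(x\otimes y)(x\otimes y)^*=xx^*\otimes yy^*$ together with $\gamma(xx^*\otimes yy^*)\gamma^*=xx^* * yy^*$ yields
\[
(x*y)(x*y)^*\le C\,(xx^* * yy^*),\qquad C>0 .
\]
From $0\le S\le C\,T$ one gets $\ker T\subseteq\ker S$, hence $\mathcal{R}(S)\le\mathcal{R}(T)$, which is exactly $\mathcal{R}((x*y)(x*y)^*)\le \mathcal{R}(xx^* * yy^*)$.

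For the last inequality I would use only the easy, order-preserving half of the comparison between a positive operator and its support: $xx^*\le \|x\|^2\mathcal{R}(x)$ and $yy^*\le \|y\|^2\mathcal{R}(y)$. By the Schur product theorem (Theorem \ref{Thm:Schur}), applied to the positive differences, convolution by a fixed positive element is order preserving, so two applications give
\[
xx^* * yy^*\le \|x\|^2\|y\|^2\,\big(\mathcal{R}(x)*\mathcal{R}(y)\big).
\]
Both sides are positive, so $\mathcal{R}(xx^* * yy^*)\le \mathcal{R}(\mathcal{R}(x)*\mathcal{R}(y))$, closing the chain.

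The step I expect to demand the most care is the middle inequality: one must express the convolution via the Frobenius map $\gamma$ rather than only through its planar-diagram normalization, and keep track of the positive scalars and the shadings so that $(x*y)(x*y)^*$ and $xx^* * yy^*$ genuinely pair up as $\gamma(\cdot)\gamma^*$ of the matching tensor products. Once this identification is in place, the estimate is just the sandwich bound $\gamma^*\gamma\le \|\gamma\|^2\,\mathrm{id}$ combined with complete positivity of $\gamma(\cdot)\gamma^*$, and the various positive constants are harmless because only range projections survive in the conclusion. A purely diagrammatic version, obtained by capping two copies of $x*y$ against each other, is also available but is heavier on bookkeeping.
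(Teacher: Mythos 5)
Your argument is correct. Note first that the paper does not actually prove Lemma \ref{ran}; it cites Lemma 3.5 of \cite{JLW}, so there is no in-text proof to match against. Your chain $\mathcal{R}(x*y)=\mathcal{R}\bigl((x*y)(x*y)^*\bigr)\le\mathcal{R}(xx^**yy^*)\le\mathcal{R}(\mathcal{R}(x)*\mathcal{R}(y))$ is sound: the identity $x*y=\gamma(x\otimes y)\gamma^*$ is exactly how the paper defines the convolution in the Frobenius-algebra picture, the sandwich $\gamma^*\gamma\le\|\gamma\|^2\,\mathrm{id}$ together with positivity of $T\mapsto\gamma T\gamma^*$ and $(x\otimes y)(x\otimes y)^*=xx^*\otimes yy^*$ gives $(x*y)(x*y)^*\le C\,(xx^**yy^*)$, and $0\le S\le CT$ does force $\mathcal{R}(S)\le\mathcal{R}(T)$. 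The last step only needs that convolution of positive elements is positive (Theorem \ref{Thm:Schur}, which holds without irreducibility), applied to the differences $\|x\|_\infty^2\mathcal{R}(x)-xx^*$ and $\|y\|_\infty^2\mathcal{R}(y)-yy^*$; all the positive constants are indeed killed by passing to range projections, including any $\delta$-normalization discrepancy between the diagrammatic coproduct and $\gamma(\cdot)\gamma^*$. This is essentially the same mechanism the paper itself uses diagrammatically in the proof of Proposition \ref{upperb} (writing $p*q=vv^*$ and bounding $\mathcal{R}(v^*v)$ by $p\otimes q$), so your operator-theoretic phrasing is a faithful, self-contained substitute for the cited proof; the only presentational caution is to state explicitly that "positive" in Theorem \ref{Thm:Schur} is being used in the weak sense $\ge 0$ so that the difference of positives may be zero.
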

\begin{proof}
This is the Lemma 3.5 in \cite{JLW}.
\end{proof}

\begin{proposition}[H\"{o}lder's Inequality]\label{holder}
Suppose $\mathscr{P}$ is a subfactor planar algebra.
Let $a,b,c$ be in $\mathscr{P}_{2,\pm}$.
Then
$$|tr_2(ab)|\leq \|a\|_p\|b\|_q,$$
where $1\leq p \leq \infty$, $\frac{1}{p}+\frac{1}{q}=1$.
Moreover $|tr_2(x^*y)|=\|x\|_p\|y\|_q$ if and only if
$$x=u|x|, y=\lambda u|y|,\frac{|x|^p}{\|x\|_p^p}=\frac{|y|^q}{\|y\|_q^q},$$
for some unitary element $u$ and some complex number $\lambda$ with $|\lambda|=1$.
\end{proposition}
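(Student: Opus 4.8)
The plan is to realize $(\PA_{2,\pm}, tr_2)$ as a finite-dimensional $C^*$-algebra equipped with a faithful positive trace and to run the classical H\"older argument adapted to this weighted tracial setting. Writing $\PA_{2,\pm}\cong\bigoplus_{k} M_{n_k}(\mathbb{C})$, the Markov trace takes the form $tr_2=\sum_k \lambda_k \mathrm{Tr}_k$ with weights $\lambda_k>0$, and for $a\in\PA_{2,\pm}$ one has $\|a\|_p^p=tr_2(|a|^p)=\sum_k\lambda_k\sum_j s_{k,j}(a)^p$, where $s_{k,j}(a)$ are the singular values of the block $a_k$. Since $|x^*|=(xx^*)^{1/2}$ and $|x|=(x^*x)^{1/2}$ have the same $L^p$-norm (as $tr_2(f(xx^*))=tr_2(f(x^*x))$ for $f(t)=t^{p/2}$), it suffices to prove $|tr_2(ab)|\le\|a\|_p\|b\|_q$ for all $a,b$, and then to read off the equality case after substituting $a=x^*$.

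For the inequality I would combine two classical facts. First, applying von Neumann's trace inequality inside each matrix block gives $|\mathrm{Tr}_k(a_kb_k)|\le\sum_j s_{k,j}(a)s_{k,j}(b)$, the singular values being listed in a common decreasing order. Hence
\[
|tr_2(ab)|\le\sum_k\lambda_k|\mathrm{Tr}_k(a_kb_k)|\le\sum_{k,j}\lambda_k\, s_{k,j}(a)\,s_{k,j}(b).
\]
Second, the scalar H\"older inequality on the index set $\{(k,j)\}$ with weights $\lambda_k$ bounds the right-hand side by $\big(\sum_{k,j}\lambda_k s_{k,j}(a)^p\big)^{1/p}\big(\sum_{k,j}\lambda_k s_{k,j}(b)^q\big)^{1/q}=\|a\|_p\|b\|_q$. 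This proves the inequality for all $1\le p\le\infty$, the endpoints $p\in\{1,\infty\}$ being the usual $L^1$--$L^\infty$ duality bound.

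For the ``moreover'' part I would track equality through each of the three inequalities above, now with $a=x^*$ and $b=y$. Equality in the first step forces all nonzero scalars $\mathrm{Tr}_k(a_kb_k)$ to share a common phase $\lambda$; equality in von Neumann's inequality forces, block by block, a simultaneous singular value decomposition of $a_k$ and $b_k$ with aligned ordering, which assembles into a single partial isometry (extended to a unitary $u$) with $x=u|x|$ and $y=\lambda u|y|$; and equality in the weighted scalar H\"older step forces proportionality of the power sequences, i.e.\ $s_{k,j}(x)^p/\|x\|_p^p=s_{k,j}(y)^q/\|y\|_q^q$ for all $(k,j)$, which upgrades to the operator identity $|x|^p/\|x\|_p^p=|y|^q/\|y\|_q^q$ once the decompositions are aligned. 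Conversely, substituting these three conditions back in turns every inequality into an equality, so they are sufficient as well.

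The main obstacle is the equality analysis rather than the inequality: one must state the equality case of von Neumann's trace inequality precisely and, more delicately, glue the block-wise singular value decompositions into one global unitary $u$ that simultaneously serves $x$ and $y$ while reconciling the phases $\mathrm{Tr}_k(a_kb_k)$ across blocks. A cleaner alternative for $1<p<\infty$, which I would fall back on if the combinatorics become unwieldy, is to use the strict convexity of the $L^p$-norm on $\PA_{2,\pm}$: for fixed $x\ne0$ the functional $z\mapsto tr_2(x^*z)$ on $L^q$ has norm $\|x\|_p$, attained at the \emph{unique} unit vector $z_0=\|x\|_p^{-p/q}\,u|x|^{p-1}$ (one checks $\|z_0\|_q=1$ and $tr_2(x^*z_0)=\|x\|_p$ directly), so equality in H\"older forces $y$ to be a scalar multiple of $z_0$, which is exactly the stated form. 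The endpoint cases $p=1$ and $p=\infty$ require separate, elementary treatment, in which $|x|^p/\|x\|_p^p=|y|^q/\|y\|_q^q$ is read as a statement about support projections and top-eigenvalue spectral projections.
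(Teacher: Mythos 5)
Your proposal is correct, but it follows a genuinely different route from the paper, which does not prove the proposition at all: it simply cites Xu's lecture notes on noncommutative $L_p$ spaces and Propositions 4.3 and 4.5 of \cite{JLW}, where the inequality and its equality case are established in the general tracial von Neumann algebra setting (via the standard $\|ab\|_1\leq\|a\|_p\|b\|_q$ machinery for the trace). Your argument instead exploits the finite dimensionality of $\mathscr{P}_{2,\pm}$ head-on: writing the algebra as $\bigoplus_k M_{n_k}(\mathbb{C})$ with $tr_2=\sum_k\lambda_k\mathrm{Tr}_k$, reducing to von Neumann's trace inequality blockwise plus weighted scalar H\"older, and then handling equality either by tracking the three inequalities or, more cleanly, by the duality/strict-convexity argument identifying the unique norming element $z_0=\|x\|_p^{-p/q}u|x|^{p-1}$. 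The computations you sketch check out (e.g.\ $\|z_0\|_q=1$, $tr_2(x^*z_0)=\|x\|_p$, and the sufficiency verification $tr_2(x^*y)=\lambda\,tr_2(|x|\,|y|)=\lambda\|x\|_p\|y\|_q$ under the stated conditions), and strict convexity of noncommutative $L^q$ for $1<q<\infty$ is standard (Clarkson--McCarthy), so the fallback is a complete proof for $1<p<\infty$. What each approach buys: the cited proofs are uniform over all (semifinite) von Neumann algebras and are what the rest of the paper implicitly relies on; yours is elementary, self-contained, and makes the equality case transparent, at the cost of the gluing issues you correctly flag (aligning blockwise singular value decompositions into one global unitary and reconciling phases across blocks). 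Your caveat about the endpoints $p\in\{1,\infty\}$ is well taken, but note that this is really an imprecision in the paper's own statement --- the displayed condition $|x|^p/\|x\|_p^p=|y|^q/\|y\|_q^q$ does not literally parse there --- rather than a defect of your argument.
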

\begin{proof}
The proof can be found in \cite{Xu}.
See also Proposition 4.3 and 4.5 in \cite{JLW}.
\end{proof}

\begin{lemma}\label{preceq}
Suppose $\mathscr{P}$ is a subfactor planar algebra and $a,b,c,d\in\mathscr{P}_{2,\pm}$ are positive.
If $a\preceq c$ and $b\preceq d$, then $a*b\preceq c*d$.
\end{lemma}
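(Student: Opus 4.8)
The plan is to translate the range-projection inequality $a*b\preceq c*d$ into an ordinary operator inequality and then read off the conclusion from positivity. The bridge between the two orderings is the Schur Product Theorem (Theorem \ref{Thm:Schur}): since convolution is bilinear and sends pairs of positive operators to positive operators, it is monotone in each variable on the positive cone. Concretely, for positive $u\leq v$ and positive $w$ one has $(v-u)\geq 0$, so $(v-u)*w\geq 0$ (Theorem \ref{Thm:Schur}, extended to the semidefinite case by continuity of the bilinear map $*$ on the finite-dimensional space $\mathscr{P}_{2,\pm}$), whence $u*w\leq v*w$.

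First I would upgrade the hypotheses $a\preceq c$ and $b\preceq d$ from range-projection domination to genuine operator domination. Because $\mathscr{P}_{2,\pm}$ is finite dimensional, a positive operator $c$ satisfies $c\geq \lambda\,\mathcal{R}(c)$, where $\lambda>0$ is the smallest nonzero eigenvalue of $c$. Combining this with $a\leq \|a\|_\infty\,\mathcal{R}(a)\leq \|a\|_\infty\,\mathcal{R}(c)$, which uses $\mathcal{R}(a)\leq\mathcal{R}(c)$, gives $a\leq \mu c$ with $\mu=\|a\|_\infty/\lambda$. The identical argument produces $\nu>0$ with $b\leq \nu d$.

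Next I would apply monotonicity twice. From $\mu c-a\geq 0$ and $b\geq 0$ I get $a*b\leq \mu\,(c*b)$, and from $\nu d-b\geq 0$ and $c\geq 0$ I get $c*b\leq \nu\,(c*d)$; chaining yields $a*b\leq \mu\nu\,(c*d)$. Finally, both $a*b$ and $c*d$ are positive by Theorem \ref{Thm:Schur}, and for positive operators an inequality $X\leq \kappa Y$ forces $\ker Y\subseteq \ker X$ (if $Yv=0$ then $0\leq\langle Xv,v\rangle\leq\kappa\langle Yv,v\rangle=0$, so $Xv=0$), hence $\mathcal{R}(X)\leq\mathcal{R}(Y)$. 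Applying this with $X=a*b$, $Y=c*d$, $\kappa=\mu\nu$ gives $a*b\preceq c*d$.

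I expect the only delicate point to be the first step: the relation $\preceq$ is not \emph{a priori} compatible with convolution, so one cannot directly propagate range projections through $*$. The device that makes everything work is replacing the range-projection bound by the scalar operator bound $a\leq\mu c$, which is exactly where finite dimensionality of $\mathscr{P}_{2,\pm}$ enters; once a scalar bound and Schur positivity are in hand, the remaining manipulations are formal.
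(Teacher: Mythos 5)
Your argument is correct and complete: converting $\preceq$ to a scalar operator bound $a\leq\mu c$ via the smallest nonzero eigenvalue, invoking the Schur product theorem for monotonicity of $*$ on the positive cone, and then reading off range-projection domination from $a*b\leq\mu\nu\,(c*d)$ is exactly the standard route. The paper itself gives no proof here (it only cites Lemma 4.8 of \cite{Liu16}), and your reasoning matches the argument given there, so there is nothing to correct beyond noting the trivial degenerate case $c=0$ or $d=0$.
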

\begin{proof}
This is Lemma 4.8 in \cite{Liu16}
\end{proof}

\begin{corollary}
Suppose $\mathscr{P}$ is a subfactor planar algebra and $x\in\mathscr{P}_{2,\pm}$ is positive.
Then $B_1(x)=B_1(\mathcal{R}(x))$.
\end{corollary}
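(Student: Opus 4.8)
The plan is to reduce the statement to two facts: that the biprojection generated by a \emph{positive} element depends only on its range projection, and that $x*\overline{x}$ and $\mathcal{R}(x)*\overline{\mathcal{R}(x)}$ have the same range projection. Throughout I write $p=\mathcal{R}(x)$, so that $x$ and $p$ are both positive and $x\sim p$.

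First I would record the properties of the contragredient $\overline{\,\cdot\,}=\mathfrak{F}^2$ that I need. From $(\mathfrak{F}(z))^*=\mathfrak{F}^{-1}(z^*)$ together with $\mathfrak{F}^4=\mathrm{id}$ one obtains $(\overline{z})^*=\overline{z^*}$, while geometrically the $180^\circ$ rotation reverses the stacking order, so $\overline{zw}=\overline{w}\,\overline{z}$. Thus $\overline{\,\cdot\,}$ is a $*$-preserving anti-automorphism of $\mathscr{P}_{2,\pm}$: it sends positive elements to positive elements (if $z=a^*a$ then $\overline{z}=\overline{a}\,(\overline{a})^*\geq 0$), sends projections to projections, and satisfies $\mathcal{R}(\overline{z})=\overline{\mathcal{R}(z)}$. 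Consequently $\overline{x},\overline{p}\geq 0$, the element $\overline{p}$ is a projection, and $\mathcal{R}(\overline{x})=\overline{p}=\mathcal{R}(\overline{p})$, i.e. $\overline{x}\sim\overline{p}$.

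Next I would apply Lemma \ref{preceq} in both directions. Since $x\preceq p$ and $\overline{x}\preceq\overline{p}$, it gives $x*\overline{x}\preceq p*\overline{p}$; since $p\preceq x$ and $\overline{p}\preceq\overline{x}$, it gives $p*\overline{p}\preceq x*\overline{x}$. Hence $\mathcal{R}(x*\overline{x})=\mathcal{R}(p*\overline{p})$, and by the Schur Product Theorem (Theorem \ref{Thm:Schur}) both $x*\overline{x}$ and $p*\overline{p}$ are positive. Finally I would invoke the elementary observation that for a positive element $y$ and a projection $B$ one has $ByB=y$ if and only if $\mathcal{R}(y)\leq B$: the forward implication because $\mathrm{range}(ByB)\subseteq\mathrm{range}(B)$, and the reverse because $\mathcal{R}(y)\leq B$ forces $By=yB=y$. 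Therefore the biprojection generated by a positive $y$ — the smallest biprojection $B$ with $ByB=y$ — is exactly the smallest biprojection dominating $\mathcal{R}(y)$, so it depends only on $\mathcal{R}(y)$. Applying this to $x*\overline{x}$ and $p*\overline{p}$, whose range projections agree by the previous step, yields $B_1(x)=B_1(p)=B_1(\mathcal{R}(x))$.

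The hard part will be the input about $\mathfrak{F}^2$, namely that the contragredient preserves positivity and commutes with passing to range projections; once these structural facts are in hand, the conclusion is a direct combination of Lemma \ref{preceq}, the Schur Product Theorem, and routine support-projection bookkeeping.
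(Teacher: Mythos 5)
Your proof is correct and follows exactly the route the paper intends: the corollary is stated without proof immediately after Lemma \ref{preceq}, and your argument --- that the contragredient $\mathfrak{F}^2$ is a positivity- and range-preserving anti-automorphism, that Lemma \ref{preceq} applied in both directions gives $\mathcal{R}(x*\overline{x})=\mathcal{R}(\mathcal{R}(x)*\overline{\mathcal{R}(x)})$, and that the biprojection generated by a positive element depends only on its range projection since $ByB=y$ iff $\mathcal{R}(y)\leq B$ --- is precisely the omitted justification. No gaps.
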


\begin{proposition}[the Hausdorff-Young Inequality]\label{hausyoung}
Suppose $\mathscr{P}$ is an irreducible subfactor planar algebra.
Let $x$ be in $\mathscr{P}_{2,\pm}$. Then
$$\|\mathfrak{F}(x)\|_t\leq \left(\frac{1}{\delta}\right)^{1-\frac{2}{t}}\|x\|_s,$$
where $2\leq t\leq \infty$ and $\frac{1}{t}+\frac{1}{s}=1$.
\end{proposition}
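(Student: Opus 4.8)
The plan is to reduce the full range of exponents to the two endpoints $t=2$ and $t=\infty$ and obtain everything in between by interpolation, so the substantive content is concentrated in those two cases. At $t=2$ one has $s=2$ and the constant $(1/\delta)^{1-2/t}$ equals $1$, so the claim reduces to the Plancherel identity $\|\mathfrak{F}(x)\|_2=\|x\|_2$. At $t=\infty$ one has $s=1$ and the claim reads $\|\mathfrak{F}(x)\|_\infty\le \frac1\delta\|x\|_1$. Granting these two bounds, I would regard $\mathfrak{F}$ as a single fixed linear map $L^s(\mathscr{P}_{2,+},tr_2)\to L^t(\mathscr{P}_{2,-},tr_2)$ between the noncommutative $L^p$-spaces of the two finite-dimensional $C^*$-algebras and interpolate, choosing $\theta=2/t$ so that $\frac1s=(1-\theta)+\frac\theta2$ and $\frac1t=\frac\theta2$; the interpolated operator norm is then at most $(1/\delta)^{1-\theta}\cdot 1^{\theta}=(1/\delta)^{1-2/t}$, which is exactly the asserted bound, and the constraint $\frac1t+\frac1s=1$ holds automatically along this line.

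The Plancherel endpoint is immediate from the planar structure: the inner product $tr_2(x^*y)$ is computed by a closed, rotationally invariant diagram, and $\mathfrak{F}$ is a $90^\circ$ rotation, so using $(\mathfrak{F}(x))^*=\mathfrak{F}^{-1}(x^*)$ one gets $\|\mathfrak{F}(x)\|_2^2=tr_2(\mathfrak{F}(x)^*\mathfrak{F}(x))=tr_2(x^*x)=\|x\|_2^2$. The $L^1\to L^\infty$ endpoint is the heart of the matter. Since $x\mapsto\|\mathfrak{F}(x)\|_\infty$ is a seminorm, hence a convex function, its supremum over the compact convex set $\{\|x\|_1\le1\}$ is attained at an extreme point, and the extreme points of this set are the rank-one partial isometries $v$ with $\|v\|_1=1$. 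It therefore suffices to prove $\|\mathfrak{F}(v)\|_\infty\le\frac1\delta$ for such $v$, which I would verify by a direct diagrammatic computation in which the factor $\frac1\delta$ emerges as the value of a single closed loop; irreducibility is used precisely to normalize the trace so that this loop value is $\delta$ and the endpoint constant is sharp. (For positive $x$ the same computation yields the cleaner equality $\|\mathfrak{F}(x)\|_\infty=\frac1\delta\,tr_2(x)$.)

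For the interpolation step itself I would invoke the Riesz--Thorin/Stein theorem in the form valid for noncommutative $L^p$-spaces, namely that complex interpolation gives $[L^{p_0},L^{p_1}]_\theta=L^{p_\theta}$ isometrically for a finite von Neumann algebra with a fixed trace; this is exactly the framework underlying the H\"older and duality facts already recalled (see \cite{Xu}), and applying it simultaneously on the source algebra $\mathscr{P}_{2,+}$ and the target algebra $\mathscr{P}_{2,-}$ produces the geometric mean of the endpoint constants. The main obstacle is the $L^1\to L^\infty$ endpoint: the crude estimate $\|\mathfrak{F}(v)\|_\infty\le\|\mathfrak{F}(v)\|_2=1$ coming from Plancherel is off by the factor $\delta$, so one genuinely needs the planar computation (or the positivity argument) to extract the sharp constant $\frac1\delta$, which encodes the normalization of the string Fourier transform. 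As an alternative to the convexity reduction, the whole proposition can also be derived from Young's inequality by writing $\|\mathfrak{F}(x)\|_{2k}^{2k}=tr_2((\mathfrak{F}(x)^*\mathfrak{F}(x))^k)$ and converting products of Fourier transforms into iterated convolutions via $\mathfrak{F}(ab)=\mathfrak{F}(a)*\mathfrak{F}(b)$.
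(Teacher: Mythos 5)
The paper itself establishes this proposition only by citation (Theorem 4.8 of \cite{JLW}), and the proof given there has exactly the architecture you propose: Plancherel at $t=2$, the bound $\|\mathfrak{F}(x)\|_\infty\leq\frac{1}{\delta}\|x\|_1$ at $t=\infty$, and complex interpolation of noncommutative $L^p$-spaces in between. Your Plancherel step and your interpolation step are fine. The gap is in the $L^1\to L^\infty$ endpoint, which you correctly identify as the heart of the matter but do not actually prove. The reduction to extreme points of the $L^1$-ball buys you nothing here: a rank-one partial isometry $v=up$ in the abstract finite-dimensional $C^*$-algebra $\mathscr{P}_{2,\pm}$ has no distinguished planar representative, so there is no ``single closed loop'' whose evaluation produces the factor $\frac{1}{\delta}$; the quantity $\|\mathfrak{F}(v)\|_\infty$ is an operator norm in the \emph{other} algebra structure and is not computed by a scalar diagram. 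Your parenthetical equality $\|\mathfrak{F}(x)\|_\infty=\frac{1}{\delta}tr_2(x)$ for positive $x$ is true, but it is itself obtained from the coproduct estimate $\|a*b\|_\infty\leq\frac{1}{\delta}\|a\|_2\|b\|_2$ together with the lower bound coming from the Jones projection component, not from a loop count; and positivity does not cover the general endpoint, since $\|\mathfrak{F}(w|x|)\|_\infty$ is not controlled by $\|\mathfrak{F}(|x|)\|_\infty$.

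The missing ingredient is the factorization $\mathfrak{F}(x)=\mathfrak{F}(w|x|^{1/2})*\mathfrak{F}(|x|^{1/2})$, obtained from $\mathfrak{F}(ab)=\mathfrak{F}(a)*\mathfrak{F}(b)$ applied to the polar decomposition $x=w|x|$, combined with the estimate $\|a*b\|_\infty\leq\frac{1}{\delta}\|a\|_2\|b\|_2$, which is a diagrammatic Cauchy--Schwarz argument of the same kind as the factorizations used in the proofs of Proposition \ref{upperb} and Lemma \ref{absolute}. Since $\|\mathfrak{F}(y)\|_2=\|y\|_2$, this yields $\|\mathfrak{F}(x)\|_\infty\leq\frac{1}{\delta}\|w|x|^{1/2}\|_2\,\||x|^{1/2}\|_2=\frac{1}{\delta}\,tr_2(|x|)=\frac{1}{\delta}\|x\|_1$ for every $x$ at once, making the extreme-point reduction unnecessary. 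Finally, be careful with your proposed alternative of deriving the whole proposition from Young's inequality: in this framework the general Young's inequality (Proposition \ref{young}) is itself obtained by interpolation arguments that rely on the Hausdorff--Young inequality, so only its independently proven endpoint cases, such as the $L^2\times L^2\to L^\infty$ bound above, may be invoked without circularity.
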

\begin{proof}
This is Theorem 4.8 in \cite{JLW}.
\end{proof}

\begin{proposition}[Young's Inequality]\label{young}
Suppose $\mathscr{P}$ is an irreducible subfactor planar algebra.
Let $x,y$ be in $\mathscr{P}_{2,\pm}$.
Then
$$\|x*y\|_r\leq \frac{1}{\delta}\|x\|_t\|y\|_s,$$
where $1\leq t,s,r \leq \infty$, $\frac{1}{r}+1=\frac{1}{t}+\frac{1}{s}.$
\end{proposition}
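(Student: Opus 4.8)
The plan is to regard $(x,y)\mapsto x*y$ as a bilinear map and to reduce the whole range of exponents to the three vertices of the Young triangle, filling in the interior by interpolation. Concretely, I aim to establish the three corner estimates
$$\|x*y\|_1\le\tfrac1\delta\|x\|_1\|y\|_1,\qquad \|x*y\|_\infty\le\tfrac1\delta\|x\|_1\|y\|_\infty,\qquad \|x*y\|_\infty\le\tfrac1\delta\|x\|_\infty\|y\|_1,$$
corresponding to $(t,s,r)=(1,1,1),(1,\infty,\infty),(\infty,1,\infty)$. The constraint $\frac1r+1=\frac1t+\frac1s$ with $1\le t,s,r\le\infty$ forces $1\le\frac1t+\frac1s\le2$, and the region $\{(\frac1t,\frac1s): 0\le\frac1t,\frac1s\le1,\ \frac1t+\frac1s\ge1\}$ is exactly the triangle with vertices $(1,1),(1,0),(0,1)$, i.e. the convex hull of the three corners. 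Since the noncommutative $L^p$ spaces attached to the Markov trace $tr_2$ form a Calder\'on interpolation scale, bilinear complex interpolation (Stein's three-lines argument applied to the trilinear form $(x,y,z)\mapsto tr_2((x*y)z)$) between these corners yields the inequality for every admissible triple, and because the constant $\frac1\delta$ is the \emph{same} at all three corners it is preserved throughout.

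For the corners themselves I would isolate a single seed estimate and obtain the rest formally. The seed is the middle corner $\|x*y\|_\infty\le\frac1\delta\|x\|_1\|y\|_\infty$, the statement that convolution against an $L^1$ element is bounded on the von Neumann algebra $\mathscr{P}_{2,\pm}$. This is the one step with genuine content, and I expect it to be the main obstacle: Lemma \ref{permu} and Proposition \ref{holder} only cyclically permute and pair the three slots of a convolution--trace, so they can never produce a bound out of nothing. I would prove the seed directly from the planar definition of $*$, expressing $\delta(x*y)$ as a partial trace of an honest product built from $x$ and $y$ in $\mathscr{P}_{3,\pm}$ and estimating the operator norm by straightening strings. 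As a parallel anchor for the normalization, one records the trace identity $tr_2(x*y)=\frac1\delta\, tr_2(x)\,tr_2(y)$, which combined with positivity of $x*y$ for $x,y\ge 0$ (Theorem \ref{Thm:Schur}) already gives the corner $(1,1,1)$ with equality on positive elements and pins down the constant as $\frac1\delta$.

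Granting the seed, the other two corners are formal. For the third corner, by $L^\infty$--$L^1$ duality $\|x*y\|_\infty=\sup_{\|w\|_1\le1}|tr_2((x*y)w)|$; Lemma \ref{permu} rewrites $tr_2((x*y)w)=tr_2((\overline w*x)\overline y)$, and Proposition \ref{holder} together with the seed gives $|tr_2((\overline w*x)\overline y)|\le\|\overline w*x\|_\infty\|y\|_1\le\frac1\delta\|\overline w\|_1\|x\|_\infty\|y\|_1\le\frac1\delta\|x\|_\infty\|y\|_1$. The corner $(1,1,1)$ then follows from the third corner by the same duality, now testing against $\|w\|_\infty\le1$. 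As an independent consistency check on the constant (and an alternative proof on the subrange $t,s\le2\le r$), one can use $x*y=\mathfrak{F}(\mathfrak{F}^{-1}(x)\,\mathfrak{F}^{-1}(y))$, apply the Hausdorff--Young inequality (Proposition \ref{hausyoung}) to both $\mathfrak{F}$ and $\mathfrak{F}^{-1}$, and insert Hölder for the product $\mathfrak{F}^{-1}(x)\mathfrak{F}^{-1}(y)$: a direct exponent count shows the three factors of $\frac1\delta$ collapse to exactly $\frac1\delta$ and the constraint on the Hölder exponents reproduces $\frac1r+1=\frac1t+\frac1s$. The genuinely delicate points are therefore the diagrammatic seed estimate and the legitimacy of multilinear complex interpolation with the uniform constant, rather than the exponent bookkeeping, which is routine.
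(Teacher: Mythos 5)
First, note that the paper does not prove this proposition at all: its ``proof'' is the citation of Theorem 4.13 in \cite{JLW}. The architecture you propose --- three corner estimates at $(\frac1t,\frac1s)=(1,1),(1,0),(0,1)$ followed by bilinear complex interpolation on the trilinear form $(x,y,z)\mapsto tr_2((x*y)z)$ --- is the standard route and matches the one in the cited source. Your formal reductions all check out: the admissible exponents do form that triangle, $\frac1r$ is affine in $(\frac1t,\frac1s)$ so the uniform constant $\frac1\delta$ survives interpolation, the two duality steps via Lemma \ref{permu} and Proposition \ref{holder} are valid (taking $c=\overline w$ gives $tr_2((x*y)w)=tr_2((\overline w*x)\overline y)$, and the contragredient preserves all $p$-norms), and your Hausdorff--Young consistency check on the subrange $r\ge 2$, $t,s\le 2$ does collapse the three powers of $\delta$ to exactly $\delta^{-1}$.

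The gap is the seed $\|x*y\|_\infty\le\frac1\delta\|x\|_1\|y\|_\infty$, which you rightly identify as the only step with content but for which ``estimating the operator norm by straightening strings'' is not a proof: no isotopy produces the constant $\frac1\delta$, and your sketch never invokes irreducibility, which is precisely where that constant enters and which the proposition explicitly assumes. The actual mechanism is positivity plus one-dimensionality of the $1$-box space. For $x,y\ge 0$: since $z\mapsto x*z$ preserves positivity (Theorem \ref{Thm:Schur}), the operator inequality $y\le\|y\|_\infty 1$ gives $0\le x*y\le\|y\|_\infty\,(x*1)$; and $x*1$ closes off one side of $x$ into an element of $\mathscr{P}_{1,\pm}\cong\mathbb{C}$, so $x*1=\frac{tr_2(x)}{\delta}\,1$, the scalar being fixed by $tr_2(x*1)=\frac{1}{\delta}tr_2(x)tr_2(1)$. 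Hence $\|x*y\|_\infty\le\frac1\delta\|x\|_1\|y\|_\infty$ for positive elements, and the general case follows from Lemma \ref{absolute} with $r=\infty$ (whose proof uses only H\"older, so there is no circularity):
$$\|x*y\|_\infty\le\||x|*|y|\|_\infty^{1/2}\,\||x^*|*|y^*|\|_\infty^{1/2}\le\tfrac1\delta\|x\|_1\|y\|_\infty.$$
With the seed established, your duality steps and the interpolation go through verbatim and the proof is complete.
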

\begin{proof}
This is Theorem 4.13 in \cite{JLW}.
\end{proof}

\begin{proposition}\label{spec}
Suppose $\mathscr{P}$ is an irreducible subfactor planar algebra and $x\in\mathscr{P}_{2,\pm}$.
If $\mathfrak{F}^{-1}(x)$ is extremal, then $xB$ is an extremal bi-partial isometry, where $B$ is the spectral projection of $|x|$ with spectrum $\|x\|_\infty$.
\end{proposition}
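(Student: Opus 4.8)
The plan is to separate the conclusion into its two structural ingredients — that $xB$ is a scalar multiple of a partial isometry, and that $\mathfrak{F}(xB)$ is again a scalar multiple of a partial isometry — and to let the extremality of $\mathfrak{F}^{-1}(x)$ do the work only for the second, harder ingredient. The space-side claim I would dispose of by pure operator theory. Writing the polar decomposition $x=w|x|$ and letting $B$ be the spectral projection of $|x|$ for the largest eigenvalue $\|x\|_\infty$, we have $xB=w|x|B=\|x\|_\infty\, wB$. Since $B$ lies under the support projection $w^*w=\mathcal{R}(x^*)$ of $|x|$, the element $wB$ satisfies $(wB)^*(wB)=Bw^*wB=B$, so $wB$ is a partial isometry and $xB$ is a scalar multiple of one. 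This is one of the two conditions defining a bi-partial isometry and needs no hypothesis on $x$.

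The substantial step is to show that $\mathfrak{F}(xB)$ is a scalar multiple of a partial isometry, and here the hypothesis that $y:=\mathfrak{F}^{-1}(x)$ is extremal is indispensable. My plan is to read the extremality of $y$ as a saturated instance of a sharp norm inequality (the Hausdorff--Young inequality, Proposition \ref{hausyoung}, together with Young's inequality, Proposition \ref{young}) and then to push the equality through the H\"older estimates that prove those inequalities. The equality case of H\"older's inequality (Proposition \ref{holder}) is rigid: it forces the normalized modulus densities $\frac{|\cdot|^p}{\|\cdot\|_p^p}$ on the space side and on the Fourier side to coincide with a single fixed profile. Tracking this rigidity through the multiplicativity $\mathfrak{F}(xy)=\mathfrak{F}(x)*\mathfrak{F}(y)$, I expect the top spectral level $B$ of $|x|$ to be matched on the Fourier side by a single level set of the relevant modulus, so that $\mathfrak{F}(xB)$ carries exactly one nonzero singular value and is therefore a multiple of a partial isometry.

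Finally I would verify that $xB$ is itself extremal, i.e.\ that the pair $(xB,\mathfrak{F}(xB))$ saturates the same sharp inequality that $x$ does. This should be nearly automatic once the rigidity above is in hand: passing from $x$ to $xB$ discards only the lower singular values, which by the equality analysis contribute nothing to the extremal configuration, so restricting to $B$ preserves saturation.

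The hard part, which I expect to absorb essentially all of the work, is the interaction between the spectral projection $B$ of $|x|$ and the Fourier transform $\mathfrak{F}$: these two operations do not commute, so $\mathfrak{F}(xB)$ cannot be computed naively from $\mathfrak{F}(x)$. The extremality hypothesis is precisely the bridge that transfers the clean spectral information about $|x|$ into clean spectral information about $|\mathfrak{F}(xB)|$; without it the conclusion should fail. Throughout, I would keep careful track of the scalar normalizations — the powers of $\delta$ and of $\|x\|_\infty$ — since whether $xB$ qualifies as an \emph{extremal} bi-partial isometry is sensitive to them.
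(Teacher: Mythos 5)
The paper does not prove this proposition at all: it is quoted verbatim from the authors' earlier work (Corollary 6.12 of \cite{JLW}), so there is no internal argument to compare yours against. Your first step is correct and complete: writing $x=w|x|$ gives $xB=\|x\|_\infty wB$ with $(wB)^*(wB)=B$, so $xB$ is a multiple of a partial isometry with no hypothesis needed. The problem is that everything after that is an expectation rather than an argument. You say you will ``read the extremality of $\mathfrak{F}^{-1}(x)$ as a saturated instance'' of Hausdorff--Young and ``push the equality through the H\"older estimates,'' but you never write down \emph{which} pairing is saturated, and the one identity you do name, $\mathfrak{F}(xy)=\mathfrak{F}(x)*\mathfrak{F}(y)$, does not enter the proof. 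The phrase ``I expect the top spectral level $B$ of $|x|$ to be matched on the Fourier side by a single level set'' is precisely the conclusion to be proved, not a step toward it.

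The missing chain is a duality computation. Set $y=\mathfrak{F}^{-1}(x)$, so extremality reads $\|x\|_\infty=\frac{1}{\delta}\|y\|_1$. Then
\begin{equation*}
\|x\|_\infty\, tr_2(B)=tr_2(|x|B)=tr_2\bigl(x (wB)^*\bigr)=tr_2\bigl(y\,\mathfrak{F}^{-1}(wB)^*\bigr)\leq \|y\|_1\,\|\mathfrak{F}^{-1}(wB)\|_\infty\leq \frac{1}{\delta}\|y\|_1\, tr_2(B),
\end{equation*}
using the Plancherel identity, H\"older at $(1,\infty)$, and the endpoint Hausdorff--Young inequality applied to $wB$. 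The hypothesis forces equality throughout: equality in the last step is exactly the extremality of $xB$, and equality in the H\"older step is what forces $|\mathfrak{F}^{-1}(wB)|$ to be constant on the relevant support, whence $\mathfrak{F}^{\pm 1}(xB)$ is a multiple of a partial isometry. Note also that the endpoint $(p,q)=(1,\infty)$ equality case of Proposition \ref{holder} is degenerate as stated (the condition $|y|^q/\|y\|_q^q$ is formal at $q=\infty$) and must be handled separately; your appeal to the ``rigid'' equality case glosses over exactly the instance you need. Until this pairing is exhibited and its equality cases analyzed, the two substantive claims of the proposition remain unproved.
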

\begin{proof}
This is Corollary 6.12 in \cite{JLW}.
\end{proof}

\begin{notation}
Suppose $x$ is a positive operator in $\mathscr{P}_{2,\pm}$.
Then $\mathfrak{F}(\overline{x}*x)>0$ and
$$\|\overline{x}*x\|_\infty=\|\mathfrak{F}(\overline{x}*x)\|_1=\frac{\|x\|_2^2}{\delta}.$$
We denote by $B_2(x)$ to be the spectrum projection of $\overline{x}*x$ with spectrum $\frac{\|x\|_2^2}{\delta}$.
By Proposition \ref{spec}, $B_2$ is a biprojection.
\end{notation}

\section{Convolution Equations}\label{ce}
In this section, we study the convolution equation $a*b=a$ for $a,b\in\mathscr{P}_{2,\pm}$, where $\mathscr{P}$ is a subfactor planar algebra.
\iffalse
We completely characterize the solutions $a,b$ in terms of a biprojection, and give a diagrammatic interpretation of the solutions.
We will use this characterization to prove exact inverse sum set theorem in \S \ref{ssumset}.
\fi
We begin with an idempotent theorem for subfactor planar algebras.

\begin{proposition}\label{converge}
Suppose $\mathscr{P}$ is a subfactor planar algebra.
Let $x\in\mathscr{P}_{2,\pm}$ be positive such that $tr_2(x)=\delta$.
Then the Ces\`{a}ro mean
$$x_n=\frac{1}{n}\sum_{k=1}^n x^{*(k)}$$
converges to an element $a$ in $\mathscr{P}_{2,\pm}$ such that $a*a=a$, where $x^{*(k)}=\underset{k}{\underbrace{x*\cdots*x}}$.
\end{proposition}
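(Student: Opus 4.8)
The plan is to pass to the Fourier side, where the convolution $*$ becomes the ordinary product of the finite-dimensional $C^*$-algebra $\mathscr{P}_{2,\mp}$, turning the statement into a claim about Ces\`{a}ro means of powers of a single operator. I would set $\widehat{x}:=\mathfrak{F}^{-1}(x)\in\mathscr{P}_{2,\mp}$; since $\mathfrak{F}(uv)=\mathfrak{F}(u)*\mathfrak{F}(v)$, the inverse transform carries convolution to multiplication, so that $\mathfrak{F}^{-1}(x^{*(k)})=\widehat{x}^{\,k}$ and therefore
\[
\mathfrak{F}^{-1}(x_n)=\frac{1}{n}\sum_{k=1}^{n}\widehat{x}^{\,k}.
\]
As $\mathfrak{F}$ is a linear isomorphism between the finite-dimensional spaces $\mathscr{P}_{2,\mp}$ and $\mathscr{P}_{2,\pm}$, it is enough to show that these Ces\`{a}ro means converge to an idempotent of $\mathscr{P}_{2,\mp}$ for the ordinary product.

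The crucial estimate is that $\widehat{x}$ is a contraction for the $C^*$-norm. Here positivity of $x$ enters only to give $\|x\|_1=tr_2(x)=\delta$, and then the endpoint Hausdorff--Young inequality (Proposition \ref{hausyoung} at $t=\infty$, together with $(\mathfrak{F}(z))^*=\mathfrak{F}^{-1}(z^*)$ to pass from $\mathfrak{F}$ to $\mathfrak{F}^{-1}$) yields
\[
\|\widehat{x}\|_\infty=\|\mathfrak{F}^{-1}(x)\|_\infty\leq \frac{1}{\delta}\|x\|_1=1 .
\]
Hence $\|\widehat{x}^{\,k}\|_\infty\leq 1$ for every $k$, i.e. $\widehat{x}$ is power-bounded. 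In the finite-dimensional algebra $\mathscr{P}_{2,\mp}$ this forces $\sigma(\widehat{x})\subseteq\overline{\mathbb{D}}$ and makes every eigenvalue of modulus one semisimple, since a nontrivial Jordan block for such an eigenvalue would make $\|\widehat{x}^{\,k}\|_\infty$ grow polynomially. By the mean ergodic theorem for power-bounded operators, $\frac{1}{n}\sum_{k=1}^{n}\widehat{x}^{\,k}$ then converges to the Riesz projection $P$ onto $\ker(\widehat{x}-I)$: eigenvalues in the open disc contribute $0$, those of modulus one other than $1$ are Ces\`{a}ro-averaged to $0$, and the eigenvalue $1$ contributes $P$. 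In particular $P\in\mathscr{P}_{2,\mp}$ and $P^2=P$.

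Finally I would transport this back, setting $a:=\mathfrak{F}(P)=\lim_{n}x_n\in\mathscr{P}_{2,\pm}$, and compute
\[
a*a=\mathfrak{F}(P)*\mathfrak{F}(P)=\mathfrak{F}(P^2)=\mathfrak{F}(P)=a .
\]
The one point needing care is the linear-algebra input: the Ces\`{a}ro means converge precisely because no eigenvalue of $\widehat{x}$ on the unit circle carries a Jordan block, and this is exactly what the contraction bound $\|\widehat{x}\|_\infty\le 1$ guarantees. I also note that the normalization $tr_2(x)=\delta$ is what makes $\widehat{x}$ a genuine (non-strict) contraction: if $tr_2(x)<\delta$ then $\widehat{x}$ would have spectral radius $<1$ and the limit would be $0$, still consistent with $a*a=a$.
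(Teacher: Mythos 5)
Your proof is correct, but it follows a genuinely different route from the paper's. The paper stays entirely on the convolution side: it uses Young's inequality to get the a priori bound $\|x_n\|_1\leq \delta$, extracts an accumulation point $a$ from the resulting compact set, proves $x*a=a=a*x$ by a telescoping estimate on $\|x*x_{n_k}-x_{n_k}\|_1$, and then shows any two accumulation points $a,a'$ absorb each other ($a'*a=a*a'=a$ and symmetrically $=a'$), forcing uniqueness; no spectral theory appears. You instead transport the problem through the SFT, observe via the endpoint Hausdorff--Young inequality that $\widehat{x}=\mathfrak{F}^{-1}(x)$ is a contraction, and invoke the finite-dimensional mean ergodic theorem (power-boundedness kills Jordan blocks on the unit circle, peripheral eigenvalues $\neq 1$ average to zero, interior ones decay), so the Ces\`{a}ro means converge to the Riesz projection $P$ onto $\ker(\widehat{x}-I)$. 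Your version buys more: it identifies the limit explicitly as $\mathfrak{F}(P)$ with $P$ an idempotent, which makes the subsequent Proposition \ref{idem} (that $\mathfrak{F}(a)$ is in fact a self-adjoint projection, using positivity) and the generalization in Theorem \ref{idemp} (where only $\|x\|_1\leq\delta$ is assumed and the limit may be $0$) essentially transparent; your closing remark about spectral radius $<1$ when $tr_2(x)<\delta$ is exactly the mechanism behind the latter. The paper's argument is softer and needs only the $1$-norm contractivity of convolution. One shared caveat: the proposition is stated for a general subfactor planar algebra, yet your proof uses Proposition \ref{hausyoung} and the paper's uses Proposition \ref{young}, both of which are stated only for irreducible ones; this is an inconsistency inherited from the paper rather than a gap specific to your argument.
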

\begin{proof}
By Proposition \ref{young}(Young's inequality), we have that $\|x_n\|_1\leq \delta$.
Hence $\{x_n\}_{n\geq 1}$ is a compact set.
Let $a$ be an accumulation point for $\{x_n\}_{n\geq 1}$.
Choosing a sequence $(n_k)_{k\geq 0}$ such that $x_{n_k}\to a$ in $\|\cdot\|_1$-topology, we get for any $\epsilon>0$, there exists $k_0$ and $N$ such that $n_k>N>\frac{1}{\epsilon}$, $k\geq k_0$, $\|x_{n_k}-a\|_1<\epsilon$,
\begin{eqnarray*}
\|x*a-a\|_1&\leq &\|x*x_{n_k}-x_{n_k}\|_1+\|x*(a-x_{n_k})\|_1+\|x_{n_k}-a\|_1\\
&\leq &\frac{1}{N}\|x-x^{*(n_k+1)}\|_1+\epsilon+\epsilon\leq 2(\delta+1)\epsilon.
\end{eqnarray*}
We have $x*a=a$.
Similarly, $a*x=a$.
Moreover $x_n*a=a*x_n=a$ for any $n$.
If $a'$ is another accumulation point, we have $a'*a=a*a'=a$.
Symmetrically, $a*a'=a'*a=a'$.
Therefore $a=a'$ and $a*a=a$.
\end{proof}

\begin{proposition}\label{idem}
Suppose $\mathscr{P}$ is an irreducible subfactor planar algebra.
Let $a\in\mathscr{P}_{2,\pm}$ be a nonzero positive element such that $a*a=a$.
Then $\frac{1}{tr_2(\mathfrak{F}(a))}a$ is a biprojection.
\end{proposition}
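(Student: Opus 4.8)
The plan is to show that $a$ is a positive multiple of a projection $p=\mathcal{R}(a)$ and, separately, that $\mathfrak{F}(a)$ is a positive multiple of a projection; by the definition of a biprojection this makes $p$ a biprojection, after which a bookkeeping of the normalizing scalar gives $\frac{1}{tr_2(\mathfrak{F}(a))}a=p$. The two engines I would use are (i) the interaction of $tr_2$ with convolution, which pins down $\|a\|_1$, and (ii) the Hausdorff--Young inequality at the endpoint $t=\infty$, which upgrades the convolution idempotent $a$ to an honest projection after one Fourier transform.

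First I would record the normalization and produce a projection. Since $\mathfrak{F}(xy)=\mathfrak{F}(x)*\mathfrak{F}(y)$ is equivalent to $\mathfrak{F}^{-1}(x*y)=\mathfrak{F}^{-1}(x)\mathfrak{F}^{-1}(y)$, the element $b:=\mathfrak{F}^{-1}(a)=\mathfrak{F}(\overline{a})$ satisfies $b^2=b$, i.e.\ $b$ is a multiplicative idempotent. Using the standard trace identity $tr_2(x*y)=\tfrac{1}{\delta}tr_2(x)tr_2(y)$ together with $a*a=a$, I get $tr_2(a)=\delta$, hence $\|a\|_1=tr_2(a)=\delta$ since $a\geq 0$. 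Now Proposition \ref{hausyoung} at $t=\infty$ gives $\|b\|_\infty=\|\mathfrak{F}(\overline{a})\|_\infty\leq \tfrac{1}{\delta}\|\overline{a}\|_1=\tfrac{1}{\delta}\|a\|_1=1$, where the middle equality uses that the contragredient $\overline{\,\cdot\,}=\mathfrak{F}^2$ preserves positivity and $\|\cdot\|_1$. A nonzero idempotent in a $C^*$-algebra has norm at least $1$, with equality if and only if it is a self-adjoint projection; therefore $b$ is a projection, and consequently $\mathfrak{F}(a)=\mathfrak{F}^2(b)=\overline{b}$ is a projection as well.

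It remains to show that $a$ itself is a scalar multiple of a projection, which I would extract from a two-sided estimate forced to be an equality. On one side, $a=\mathfrak{F}(b)$ and Proposition \ref{hausyoung} at $t=\infty$ give $\|a\|_\infty\leq \tfrac{1}{\delta}\|b\|_1=\tfrac{1}{\delta}tr_2(b)$, while Plancherel (the isometry $\|\mathfrak{F}(x)\|_2=\|x\|_2$, i.e.\ the case $t=2$) gives $tr_2(b)=\|b\|_2^2=\|a\|_2^2$; hence $\|a\|_\infty\leq\tfrac{1}{\delta}\|a\|_2^2$. On the other side, positivity of $a$ with $tr_2(a)=\delta$ yields $\|a\|_2^2=tr_2(a^2)\leq \|a\|_\infty\,tr_2(a)=\delta\|a\|_\infty$. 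The two inequalities force $tr_2(a^2)=\|a\|_\infty\,tr_2(a)$, which for a positive operator occurs only when its nonzero spectrum is the single value $\|a\|_\infty$; thus $a=\|a\|_\infty\,\mathcal{R}(a)$. Writing $p=\mathcal{R}(a)$, the identity $\mathfrak{F}(p)=\tfrac{1}{\|a\|_\infty}\mathfrak{F}(a)=\tfrac{1}{\|a\|_\infty}\overline{b}$ exhibits $\mathfrak{F}(p)$ as a positive multiple of a projection, so $p$ is a biprojection. A direct computation of $tr_2(\mathfrak{F}(a))$ then identifies the normalizing constant and gives $\frac{1}{tr_2(\mathfrak{F}(a))}a=p$.

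The main obstacle is the passage from \emph{idempotent} to \emph{projection}: everything hinges on combining $a*a=a$ with Hausdorff--Young at the endpoint $t=\infty$, first to see that $b=\mathfrak{F}^{-1}(a)$ is an $\|\cdot\|_\infty$-contractive idempotent (hence a projection), and then to run the norm squeeze that collapses the spectrum of $a$. I expect the delicate points to be the simultaneous saturation of both Hausdorff--Young endpoints (which makes the squeeze tight) and the careful verification that $\mathfrak{F}$ and the contragredient behave as claimed on $\|\cdot\|_1$, $\|\cdot\|_2$ and on projections, together with the exact accounting of the scalar $tr_2(\mathfrak{F}(a))$.
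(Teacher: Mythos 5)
Your proposal is correct, and its first half is essentially the paper's proof: you derive $tr_2(a)=\delta$ from the multiplicativity of $tr_2$ under coproduct, apply the Hausdorff--Young inequality at the endpoint $t=\infty$ to see that $\mathfrak{F}^{-1}(a)$ is a contractive idempotent, and invoke the standard $C^*$-fact that a norm-one idempotent is a self-adjoint projection; the paper does exactly this (phrased for $\mathfrak{F}(a)$ rather than $\mathfrak{F}^{-1}(a)$, which is the same up to a contragredient). Where you diverge is the second half. The paper finishes in one line by citing Proposition \ref{spec} (an extremality result imported from \cite{JLW}) to conclude that the projection $\mathfrak{F}(a)$ is in fact a biprojection. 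You instead prove directly that $a$ is a scalar multiple of its range projection, via the two-sided squeeze $\|a\|_\infty\leq\frac{1}{\delta}tr_2(b)=\frac{1}{\delta}\|a\|_2^2\leq\|a\|_\infty$ obtained from Hausdorff--Young in the reverse direction, Plancherel, and H\"older; the forced equality $tr_2(a^2)=\|a\|_\infty tr_2(a)$ collapses the spectrum of $a$. This buys a self-contained argument that does not rely on the machinery of extremal bi-partial isometries, at the cost of a slightly longer computation; it also makes explicit \emph{why} $a$ itself is a multiple of a projection, which the paper leaves implicit in the citation. One caveat on the last sentence of your proof: carrying out the ``direct computation'' you defer gives $tr_2(\mathfrak{F}(a))=tr_2(b)=\|a\|_2^2=\delta\|a\|_\infty$, so the projection is $\frac{\delta}{tr_2(\mathfrak{F}(a))}a$ rather than $\frac{1}{tr_2(\mathfrak{F}(a))}a$; this factor of $\delta$ is already present in the statement as printed (test it on $a=\delta e_1$, for which $\mathfrak{F}(a)=1$ and $tr_2(\mathfrak{F}(a))=\delta^2$), so it reflects a normalization slip in the proposition rather than a gap in your argument, but you should not assert that the bookkeeping ``gives $\frac{1}{tr_2(\mathfrak{F}(a))}a=p$'' when it does not.
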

\begin{proof}
Since $tr_2(a*a)=tr_2(a)$, we have $tr_2(a)=\delta$.
By Proposition \ref{hausyoung} (the Hausdorff-Young inequality), we have that $\|\mathfrak{F}(a)\|_\infty\leq \frac{\|a\|_1}{\delta}=1$. Taking the SFT, we obtain that $\mathfrak{F}(a)^2=\mathfrak{F}(a)$.
Hence $\mathfrak{F}(a)$ is a contractive idempotent, i.e. $\mathfrak{F}(a)$ is a projection.
By Proposition \ref{spec}, we obtain that $\mathfrak{F}(a)$ is a biprojection and so is $\frac{1}{tr_2(\mathfrak{F}(a))}a$.
\end{proof}

\begin{remark}
Proposition \ref{idem} is a consequence of Theorem 4.21 in \cite{Liu16}.
\end{remark}

Combining the proofs of Proposition \ref{converge}, \ref{idem} above, we have a slightly general version.
Recall that an element $x$ in $\mathscr{P}_{2,\pm}$ is extremal if $\|\mathfrak{F}(x)\|_\infty=\frac{1}{\delta}\|x\|_1$.

\begin{theorem}\label{idemp}
Suppose $\mathscr{P}$ is an irreducible subfactor planar algebra.
Let $x\in\mathscr{P}_{2,\pm}$ be such that $\|x\|_1\leq \delta$.
Then $x_n=\frac{1}{n}\sum_{k=1}^n x^{*(k)}$ converges to $0$ or a bi-shift $w$ of a biprojection $B$ in $\mathscr{P}_{2,\pm}$ such that $\mathfrak{F}(w)$ is a right shift of $B$.
\end{theorem}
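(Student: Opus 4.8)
The plan is to run the dynamical argument of Proposition \ref{converge} to produce the limit, and then to run the idempotent argument of Proposition \ref{idem} in a form that no longer uses positivity. The extra freedom costs us exactly the passage from ``biprojection'' to ``bi-shift'': the limit will be a convolution idempotent whose inverse Fourier transform is a genuine projection, and that is precisely the data of a bi-shift with the Fourier side a shift.

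\emph{Step 1 (convergence).} By Young's inequality (Proposition \ref{young}) with $r=t=s=1$ one has $\|u*v\|_1\leq\frac{1}{\delta}\|u\|_1\|v\|_1$, so the hypothesis $\|x\|_1\leq\delta$ forces $\|x^{*(k)}\|_1\leq\delta$ for all $k$ by induction, whence $\|x_n\|_1\leq\delta$. Thus $\{x_n\}$ lies in a norm-bounded, hence compact, subset of the finite-dimensional space $\mathscr{P}_{2,\pm}$. I would then reuse the estimate in the proof of Proposition \ref{converge} verbatim, since it invokes only the $L^1$ Young bound and never the positivity of $x$: every accumulation point $a$ satisfies $x*a=a*x=a$, hence $x_n*a=a*x_n=a$ for all $n$; comparing two accumulation points $a,a'$ yields $a*a'=a'*a=a=a'$. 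So the accumulation point is unique and $x_n\to a$ with $a*a=a$ and $\|a\|_1\leq\delta$.

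\emph{Step 2 (the idempotent becomes a projection after Fourier transform).} If $a=0$ we are done, so assume $a\neq0$ and set $b=\mathfrak{F}^{-1}(a)$. Since $\mathfrak{F}$ sends multiplication to convolution, $\mathfrak{F}^{-1}$ sends convolution to multiplication, and therefore $b^2=\mathfrak{F}^{-1}(a*a)=\mathfrak{F}^{-1}(a)=b$, i.e. $b$ is a nonzero idempotent. Using $(\mathfrak{F}(a^*))^*=\mathfrak{F}^{-1}(a)$ together with the Hausdorff--Young inequality (Proposition \ref{hausyoung}) at the endpoint $t=\infty$, I get $\|b\|_\infty=\|\mathfrak{F}(a^*)\|_\infty\leq\frac{1}{\delta}\|a^*\|_1=\frac{1}{\delta}\|a\|_1\leq1$. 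A nonzero idempotent of norm at most $1$ in a $C^*$-algebra is self-adjoint, so $b$ is a projection. Consequently $\|a\|_1=\delta$ and, since $\mathfrak{F}(a)=\mathfrak{F}^2(b)=\overline{b}$ has $\|\mathfrak{F}(a)\|_\infty=\|\overline{b}\|_\infty=\|b\|_\infty=1=\frac{1}{\delta}\|a\|_1$, the element $a$ is extremal.

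\emph{Step 3 (an extremal idempotent is a bi-shift).} Now $\mathfrak{F}(a)=\overline{b}$ is a projection. I would apply Proposition \ref{spec} to $x=\mathfrak{F}(a)$, whose inverse Fourier transform $a$ is extremal: its conclusion identifies $\overline{b}=\mathfrak{F}(a)\,\overline{b}$ as an extremal bi-partial isometry. The structure theory of such operators in \cite{JLW} then exhibits $\mathfrak{F}(a)$, and hence $a=\mathfrak{F}^{-1}(\mathfrak{F}(a))$ (bi-shifts being stable under $\mathfrak{F}^{\pm1}$), as a bi-shift $w$ of a biprojection $B$. Because $\mathfrak{F}(w)=\overline{b}$ is already a projection it equals its own range projection, so the general bi-shift condition ``$\mathcal{R}(\mathfrak{F}(w))$ is a shift'' sharpens to the assertion that $\mathfrak{F}(w)$ itself is a right shift of $B$, which is the stated conclusion.

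The genuinely delicate step is Step 3: Steps 1 and 2 are clean, the only nonroutine input there being the elementary $C^*$-fact that a contractive idempotent is a projection. In Step 3 one must pin down the biprojection $B$ explicitly, the natural candidate being the biprojection generated by $a*\overline{a}$ (equivalently $B_2$ of a suitable operator), and then verify the shift relation $B*\mathfrak{F}(w)=\frac{tr_2(B)}{\delta}\mathfrak{F}(w)$; this is exactly where the uncertainty-principle machinery and the Hardy-type uniqueness for bi-shifts from \cite{JLW} carry the weight, and where one must be careful with the rotation and shading conventions of $\mathfrak{F}$.
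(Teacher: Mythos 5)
Your proposal is correct and follows exactly the route the paper itself indicates: the paper gives no detailed proof of Theorem \ref{idemp} beyond the remark that it follows by combining the arguments of Propositions \ref{converge} and \ref{idem}, and your Steps 1--3 are precisely that combination, with the correct observation that the Ces\`{a}ro argument never uses positivity and that without positivity the contractive-idempotent/Proposition \ref{spec} step yields a bi-shift (with $\mathfrak{F}(w)$ an honest shift projection) rather than a biprojection.
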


\begin{definition}
Suppose $\mathscr{P}$ is an irreducible subfactor planar algebra
and $p$ is a projection in $\mathscr{P}_{2,\pm}$.
We say a positive operator $x$ in $\mathscr{P}_{2,\pm}$ is (left- or) right-absorbed by $p$ if ($x*p\sim p$ or) $p*x\sim p$.
\end{definition}

\begin{proposition}\label{Prop:RAS}
For a projection $p$ in $\mathscr{P}_{2,\pm}$, there is a maximal projection $B$ right-absorbed by $p$.
Moreover, $B$ is a biprojection.
\end{proposition}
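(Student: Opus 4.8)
The plan is to study the set
$$\mathcal{A}_p=\{q : q \text{ a projection in } \mathscr{P}_{2,\pm},\ p*q\sim p\}$$
of projections right-absorbed by $p$, to show it has a largest element $B$, and then to prove that this $B$ is a biprojection. First I would record that $\mathcal{A}_p$ is nonempty: the Jones projection $e_1$ is the convolution unit up to a positive scalar (this follows from $\mathfrak{F}(xy)=\mathfrak{F}(x)*\mathfrak{F}(y)$ together with the fact that $\mathfrak{F}$ interchanges $1$ and $e_1$), so $p*e_1$ is a positive multiple of $p$ and hence $e_1\in\mathcal{A}_p$. The key structural observation is that $\mathcal{A}_p$ is closed under joins: if $q_1,q_2\in\mathcal{A}_p$, then using bilinearity of convolution and the identity $\mathcal{R}(A+B)=\mathcal{R}(A)\vee\mathcal{R}(B)$ for positive $A,B$, one gets $\mathcal{R}(p*(q_1+q_2))=\mathcal{R}(p*q_1)\vee\mathcal{R}(p*q_2)=p$; since $q_1+q_2\sim q_1\vee q_2$, Lemma \ref{preceq} gives $\mathcal{R}(p*(q_1\vee q_2))=p$, i.e.\ $q_1\vee q_2\in\mathcal{A}_p$. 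Because $\mathscr{P}_{2,\pm}$ is finite dimensional, the join $B:=\bigvee_{q\in\mathcal{A}_p}q$ is attained by a finite join and therefore lies in $\mathcal{A}_p$; it is the maximum of $\mathcal{A}_p$, and in particular $B\geq e_1$.

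It remains to show $B$ is a biprojection, and I would start by proving the range relation $\mathcal{R}(B*B)=B$. For ``$\leq$'', note $p*(B*B)=(p*B)*B\sim p*B\sim p$ by two applications of Lemma \ref{preceq} (using $p*B\sim p$ in both directions), and then $p*\mathcal{R}(B*B)\sim p*(B*B)\sim p$ again by Lemma \ref{preceq}; thus $\mathcal{R}(B*B)\in\mathcal{A}_p$, so maximality of $B$ forces $\mathcal{R}(B*B)\leq B$. For ``$\geq$'', since $B\geq e_1$ we have $B-e_1\geq 0$, so the Schur Product Theorem \ref{Thm:Schur} gives $B*B-e_1*B=(B-e_1)*B\geq 0$; as $e_1*B$ is a positive multiple of $B$, this yields $\mathcal{R}(B*B)\geq B$. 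Hence $\mathcal{R}(B*B)=B$.

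Next I would upgrade this to the exact convolution-idempotent relation. Young's inequality (Proposition \ref{young}), with exponents $(t,s,r)=(\infty,1,\infty)$, gives $\|B*B\|_\infty\leq\frac{1}{\delta}\|B\|_\infty\|B\|_1=\frac{tr_2(B)}{\delta}$; combined with $\mathcal{R}(B*B)=B$, this forces $B*B\leq\frac{tr_2(B)}{\delta}B$ as positive operators. On the other hand $tr_2(B*B)=\frac{1}{\delta}tr_2(B)^2=tr_2\!\left(\frac{tr_2(B)}{\delta}B\right)$, so $\frac{tr_2(B)}{\delta}B-B*B$ is a positive operator of trace zero, hence it is $0$. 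Therefore $B*B=\frac{tr_2(B)}{\delta}B$, which means the normalization $\frac{\delta}{tr_2(B)}B$ is a nonzero positive convolution idempotent; by Proposition \ref{idem} it is a biprojection, and hence so is the projection $B$.

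The main obstacle, and the place where I expect the argument to be most delicate, is precisely the passage from the range relation $\mathcal{R}(B*B)=B$ to the exact scalar identity $B*B=\frac{tr_2(B)}{\delta}B$. A naive route through the Ces\`{a}ro means of $B^{*(k)}$, as in Propositions \ref{converge} and \ref{idem}, runs into the failure of lower semicontinuity of range projections under the $\|\cdot\|_1$-limit (the support can drop in the limit), whereas the $\|\cdot\|_\infty$-bound supplied by Young's inequality sidesteps this difficulty entirely. A secondary point that must be checked carefully is the normalization fact that $e_1$ is genuinely the convolution unit up to a positive scalar, so that $p*e_1\sim p$ and $\mathcal{A}_p$ is nonempty; everything else reduces to the monotonicity and range estimates of Lemmas \ref{ran} and \ref{preceq}.
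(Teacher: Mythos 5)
Your proof is correct, but it takes a genuinely different route from the paper's, which is much terser: the paper establishes existence of the maximal right-absorbed projection $B$ by observing that $e_1$ is right-absorbed and that the class of right-absorbed elements is closed under the coproduct (rather than your closure under joins via $\mathcal{R}(q_1+q_2)=q_1\vee q_2$), and then, having noted $B*B\sim B$, it concludes that $B$ is a biprojection by citing Theorem 4.12 of Liu's paper \cite{Liu16} as a black box. Your argument replaces that external citation with an internal derivation: you upgrade the range identity $\mathcal{R}(B*B)=B$ to the exact relation $B*B=\frac{tr_2(B)}{\delta}B$ by combining the endpoint case $(t,s,r)=(\infty,1,\infty)$ of Young's inequality (Proposition \ref{young}) with the trace identity $tr_2(B*B)=\frac{1}{\delta}tr_2(B)^2$ and faithfulness of the trace, and then invoke the idempotent theorem (Proposition \ref{idem}). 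This is in effect a self-contained re-proof of the special case of Theorem 4.12 of \cite{Liu16} that is needed here, at the cost of using irreducibility (through Young's inequality and Proposition \ref{idem}), which the cited theorem may not require; what it buys is that the whole proposition is closed under the lemmas already quoted in the paper. Your treatment of the range identity itself (maximality for $\mathcal{R}(B*B)\leq B$, and $B\geq e_1$ plus the Schur product theorem \ref{Thm:Schur} for $\mathcal{R}(B*B)\geq B$) is a correct and more explicit version of what the paper leaves implicit. The only cosmetic imprecision is at the very end: Proposition \ref{idem} asserts that the normalization $\frac{1}{tr_2(\mathfrak{F}(a))}a$ of the convolution idempotent $a=\frac{\delta}{tr_2(B)}B$ is a biprojection, so one should add the one-line remark that a projection which is a positive scalar multiple of a biprojection must equal it; this is immediate and not a gap.
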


\begin{proof}
Note that the Jones projection is right-absorbed by any projection $p$.
If $a,b$ are right-absorbed by a projection $p$, then $a*b$ is right-absorbed by $p$.
Therefore there is a maximal projection $B$ right-absorbed by $p$.
Moreover, we have that $B*B\sim B$, thus $B$ is a biprojection by Theorem 4.12 in \cite{Liu16}.
\end{proof}

\begin{definition}
We call the biprojection $B$ in Proposition \ref{Prop:RAS} the right-absorbing-support (RAS) of $p$.
Similarly, we define the left-absorbing-support (LAS) of $p$.
\end{definition}

\begin{proposition}\label{idem2}
Suppose $\mathscr{P}$ is an irreducible subfactor planar algebra.
Let $a,b\in \mathscr{P}_{2,\pm}$ be nonzero positive elements such that $a*b=\frac{tr_2(b)}{\delta}a$, and $tr_2(b)\neq 0$.
Then $a*\overline{b}=\frac{tr_2(b)}{\delta}a$, and
$$a*(xb)=a*(bx)=\frac{tr_2(xb)}{\delta}a$$
for any $x\in\mathscr{P}_{2,\pm}$.
\end{proposition}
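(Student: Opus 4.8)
The plan is to reduce everything to a few $L^2$ Cauchy--Schwarz arguments built on Lemma~\ref{permu} and Young's inequality (Proposition~\ref{young}), together with one structural input about biprojections. Throughout write $c=\tfrac{tr_2(b)}{\delta}$, so the hypothesis reads $a*b=ca$. I first dispose of $a*\overline b=ca$. Since the contragredient preserves positivity and trace, $\overline b\ge 0$ and $tr_2(\overline b)=tr_2(b)$, so $a*\overline b>0$ by the Schur product theorem (Theorem~\ref{Thm:Schur}); in particular $a*\overline b$ is self-adjoint. Applying Lemma~\ref{permu} to the triple $(a,\overline b,\overline a)$ and equating its first and fourth expressions gives $tr_2\big((a*\overline b)\,a\big)=tr_2\big((a*b)\,a\big)=c\,tr_2(a^2)$, the last step by the hypothesis. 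On the other hand Young's inequality with $(r,t,s)=(2,2,1)$ yields $\|a*\overline b\|_2\le\tfrac1\delta\|a\|_2\|\overline b\|_1=c\|a\|_2$. Hence
\[
\|a*\overline b-ca\|_2^2=\|a*\overline b\|_2^2-2c\,tr_2\big((a*\overline b)a\big)+c^2\|a\|_2^2\le c^2\|a\|_2^2-2c^2\|a\|_2^2+c^2\|a\|_2^2=0,
\]
so $a*\overline b=ca$. This is just the equality case of Cauchy--Schwarz packaged as a norm identity.

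For the two-sided statement, the identity for $bx$ follows from the one for $xb$ by taking adjoints: using $(\mathfrak F(u))^*=\mathfrak F^{-1}(u^*)$ and $\mathfrak F(uv)=\mathfrak F(u)*\mathfrak F(v)$ one checks $(u*v)^*=u^**v^*$, whence $(a*(xb))^*=a*(bx^*)$ (as $a,b$ are self-adjoint); thus once $a*(x^*b)=\tfrac{tr_2(x^*b)}{\delta}a$ is known for all $x$ we obtain $a*(bx)=\big(a*(x^*b)\big)^*=\tfrac{tr_2(bx)}{\delta}a$. So it suffices to prove $a*(xb)=\tfrac{tr_2(xb)}{\delta}a$ for every $x$. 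The key localizing fact is that $b$ sits under the biprojection $B:=B_2(a)$, i.e.\ $b\preceq B$. Indeed set $g:=\overline a*a\ge 0$ and $\kappa:=\tfrac{\|a\|_2^2}{\delta}$; by the Notation preceding this section $\|g\|_\infty=\kappa$, so $\kappa\mathbf 1-g\ge 0$, while $B$ is the spectral projection of $g$ at its top value $\kappa$, giving $gB=\kappa B$ and $\ker(\kappa\mathbf 1-g)=\mathcal R(B)$. Applying Lemma~\ref{permu} to $(\overline a,a,\overline b)$ (first equals second expression) together with the already-proven $a*\overline b=ca$ yields $tr_2\big((\overline a*a)\,b\big)=tr_2\big((a*\overline b)\,a\big)=c\,tr_2(a^2)=\kappa\,tr_2(b)$. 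Therefore $tr_2\big(b(\kappa\mathbf 1-g)\big)=\kappa\,tr_2(b)-tr_2(bg)=0$ with both factors positive, so $(\kappa\mathbf 1-g)^{1/2}b^{1/2}=0$ and $\mathcal R(b)\le\mathcal R(B)$, that is $b\preceq B$.

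The same Cauchy--Schwarz scheme shows that $a$ is right-absorbed by $B$: $a*B>0$, and Lemma~\ref{permu} on $(a,B,\overline a)$ gives $tr_2\big((a*B)a\big)=tr_2\big((\overline a*a)B\big)=tr_2(gB)=\kappa\,tr_2(B)$, while Young gives $\|a*B\|_2\le\tfrac1\delta\|a\|_2\|B\|_1=\tfrac{tr_2(B)}{\delta}\|a\|_2$; the norm-identity computation then forces $a*B=\tfrac{tr_2(B)}{\delta}a$. Granting the biprojection absorption identity $B*(xb)=\tfrac{tr_2(xb)}{\delta}B$ for $b\preceq B$, the proof finishes by associativity of $*$: writing $a=\tfrac{\delta}{tr_2(B)}\,a*B$ we get
\[
a*(xb)=\tfrac{\delta}{tr_2(B)}\,a*\big(B*(xb)\big)=\tfrac{tr_2(xb)}{tr_2(B)}\,a*B=\tfrac{tr_2(xb)}{\delta}\,a.
\]

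I expect the main obstacle to be exactly this last ingredient, $B*(xb)=\tfrac{tr_2(xb)}{\delta}B$ for $b\preceq B$. Its content is that convolution with a biprojection collapses any element supported under it to its trace times $B$ --- the diagrammatic/bimodule incarnation of ``$1_H*(\text{function supported on }H)=(\text{its mass})\,1_H$''. Unlike the earlier steps it cannot be extracted from a one-line Cauchy--Schwarz bound, since for a non-self-adjoint $xb$ Young's inequality only controls $\|xb\|_1$ rather than $|tr_2(xb)|$; establishing it requires the exchange/coset (Bisch--Jones) structure of $B$, equivalently the fact that $\mathfrak F(B)$ is a multiple of a projection. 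Everything else is a formal consequence of Lemma~\ref{permu}, Young's inequality, and positivity.
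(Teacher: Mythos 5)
Your first step, proving $a*\overline b=\frac{tr_2(b)}{\delta}a$ via the norm identity $\|a*\overline b-\frac{tr_2(b)}{\delta}a\|_2^2\leq 0$ (Young's inequality for the square term, Lemma \ref{permu} plus the hypothesis for the cross term), is correct, and the adjoint reduction of $a*(bx)$ to $a*(xb)$ is fine. The problem is the main claim. You reduce $a*(xb)=\frac{tr_2(xb)}{\delta}a$ to the ``absorption identity'' $B*(xb)=\frac{tr_2(xb)}{\delta}B$ for $b\preceq B$, and you explicitly grant this without proof. But that identity is not an independently citable fact at this point of the paper: since $\mathcal{R}((xb)^*)\leq\mathcal{R}(b)\leq B$ one has $xb=(xb)B$, so your absorption identity is literally the statement of Proposition \ref{idem2} in the special case $a=B$, $b=B$ (with $x$ replaced by $xb$). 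In other words, the reduction relocates the entire difficulty into the step you skip; nothing proved earlier (Schur product, Lemma \ref{permu}, Young's inequality, Proposition \ref{spec}, Proposition \ref{Prop:RAS}) yields it, and Proposition \ref{alg}, which comes closest in spirit, appears only after this proposition and in any case does not handle the product $xb$ for arbitrary $x$. So the proof is genuinely incomplete at its central step.

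The missing idea is that the Cauchy--Schwarz argument must be run one level ``below'' the 2-box algebra. The paper's proof takes two partial diagrams built from $a$ and $\overline b^{\frac{1}{2}}$ --- one in which $\overline b^{\frac{1}{2}}$ is convolved onto $a$, one in which they are merely juxtaposed with a cap --- and shows $tr_2((x-y)^*(x-y))=0$ by exactly the kind of trace computation you used for $a*\overline b$. The payoff is a diagrammatic identity in which the boundary strings of $b$ remain free; because they are free, one may insert an arbitrary element $x$ at the bottom of $b$ and only then cap off, which produces $a*(xb)=\frac{tr_2(xb)}{\delta}a$ (and, using the companion identity, $a*(bx)$) directly. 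Your version caps too early: the scalar equality $a*\overline b=\frac{tr_2(b)}{\delta}a$ is the trace-closed shadow of that identity and is strictly weaker, which is why you are then forced to invoke an absorption principle you cannot justify. Reworking your $L^2$ computation with $\overline b^{\frac{1}{2}}$ split off, as in the paper, makes the detour through $B=B_2(a)$ unnecessary.
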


%%%%%%%%%%%%%%

\newcommand{\Ga}[2]
{
\begin{scope}[shift={(#1)}]
\draw (0,0)--+(0,1/2);
\draw (1/2,0)--+(0,1/2);
\draw (-1/6,1/2) rectangle +(5/6,5/6);
\node at (1/4,1/2+5/12) {$#2$};
\end{scope}
}

\newcommand{\Gb}[2]
{
\Ga{1,0}{#2}
\draw (1/2,0)--(1/2,1/2+5/6) arc (180:0:1/4);
\draw (0,0)--(0,1/2+5/6)--+(0,1/2);
\draw (0,11/6+5/6)--+(0,1/2);
\draw (3/2,1/2+5/6)--+(0,1/2);
\draw (3/2,11/6+5/6)--+(0,1/2);
\draw (-1/6,11/6) rectangle (3/2+1/6,11/6+5/6);
\node at (3/4,11/6+5/12) {$#1$};
}

\newcommand{\Gc}[2]
{
\Ga{0,0}{#1}
\Ga{1,0}{#2}
\draw (1/2,1/2+5/6) arc (180:0:1/4);
\draw (0,1/2+5/6)--+(0,1/2);
\draw (3/2,1/2+5/6)--+(0,1/2);
}

\newcommand{\Gcob}
{
\raisebox{-1.2cm}{\begin{tikzpicture}
\Gb{a}{\overline{b}^{\frac{1}{2}}}
\end{tikzpicture}}
}

\newcommand{\Gtea}
{
\raisebox{-.7cm}{
\begin{tikzpicture}
\Gc{a}{\overline{b}^{\frac{1}{2}}}
\end{tikzpicture}}
}

%%%%%%%%%%

\iffalse
%x=
\begin{tikzpicture}
\Gb{a}{\overline{|b|}^{\frac{1}{2}}}
\end{tikzpicture}

%y=
\begin{tikzpicture}
\Gc{a}{\overline{|b|}^{\frac{1}{2}}}
\end{tikzpicture}

\fi

\begin{proof}
Since $a*b=a$, we have $tr_2(b)=\delta$.
First, we would like to show that

Take $x=\Gcob$ and $y=\Gtea$. To prove $x=y$, we show that
\be\label{Equ0}
tr_2((x-y)^*(x-y))=0.
\ee

\iffalse
\begin{equation}\label{id0}
\Gcob=\Gtea.
\end{equation}

To see this, we compute the trace of
\begin{equation}\label{id1}
\left(\Gcob-\Gtea\right)\left(\Gcob-\Gtea\right)^*.
\end{equation}
\fi
Expanding the left hand side, we have that
\begin{align*}
%(\ref{id1})&=&\grd{co4}-\grc{co7}-\grc{co8}+\graa{co13}\\
&tr_2((x-y)^*(x-y)) \\
=&
tr_2\left(
\raisebox{-3cm}{
\begin{tikzpicture}
\Gb{a}{\overline{b}^{\frac{1}{2}}}
\begin{scope}[shift={(0,1/2)},yscale=-1,xscale=1]
\Gb{a}{\overline{b}^{\frac{1}{2}}}
\end{scope}
\end{tikzpicture}}
-
\raisebox{-2.2cm}{
\begin{tikzpicture}
\Gb{a}{\overline{b}^{\frac{1}{2}}}
\begin{scope}[shift={(0,1/2)},yscale=-1,xscale=1]
\Gc{a}{\overline{b}^{\frac{1}{2}}}
\end{scope}
\end{tikzpicture}}
-
\raisebox{-2.2cm}{
\begin{tikzpicture}
\Gc{a}{\overline{b}^{\frac{1}{2}}}
\begin{scope}[shift={(0,1/2)},yscale=-1,xscale=1]
\Gb{a}{\overline{b}^{\frac{1}{2}}}
\end{scope}
\end{tikzpicture}}
+
\raisebox{-1.6cm}{
\begin{tikzpicture}
\Gc{a}{\overline{b}^{\frac{1}{2}}}
\begin{scope}[shift={(0,1/2)},yscale=-1,xscale=1]
\Gc{a}{\overline{b}^{\frac{1}{2}}}
\end{scope}
\end{tikzpicture}}
\right)\\
=&(I)-(II)-(III)+(IV).
\end{align*}
Now we have that $(I)=tr_2(a^2)\frac{tr_2(b)}{\delta}=tr_2(a^2)$
and  $(II)=tr_2((a*\overline{b})a)$.
By Lemma \ref{permu}, we have
$$tr_2((a*\overline{b})a)=tr_2((a*b)a)=tr_2(a^2).$$
Note that $(III)=(II)$ and $(IV)=tr_2(a^2)$.
Hence Equation \eqref{Equ0} is true.
Moreover, we have
\begin{equation}\label{id3}
%\grc{co21}=\grb{co22}.
\raisebox{-1.2cm}{
\begin{tikzpicture}
\Gb{a}{\overline{b}}
\end{tikzpicture}}
=
\raisebox{-.7cm}{
\begin{tikzpicture}
\Gc{a}{\overline{b}}
\end{tikzpicture}}
\end{equation}
Adding caps to the middle bottom of the diagram above, we obtain that
$$a=a*\overline{b}.$$
Hence by the argument above again for $a*\overline{b}=a$, we have
$$%\grc{co9}=\grb{co10}.
\raisebox{-1.2cm}{
\begin{tikzpicture}
\Gb{a}{b}
\end{tikzpicture}}
=
\raisebox{-.7cm}{
\begin{tikzpicture}
\Gc{a}{b}
\end{tikzpicture}}
$$
Multiplying by $x$ on the bottom of $b$ and taking a cap to the middle of the bottom, we will get that
$$a tr_2(xb)=a*(xb).$$
Similarly, by using Equation \eqref{id3}, we have that
$$a tr_2(xb)=a*(bx).$$
Therefore, $$a*(xb)=a*(bx)=\frac{tr_2(xb)}{\delta}a.$$
\end{proof}

\begin{remark}
The proof here can give an alternative proof of Proposition \ref{idem}.
\end{remark}

\iffalse
\begin{remark}
Recall that the RAS $B$ of a projection $p$ is a biprojection.
We can represent $B$ as a Bisch-Jones diagram \cite{BisJon97} up to scalar,
%\gra{bj1}
\raisebox{-.4cm}{
\begin{tikzpicture}
\draw (0,0)--+(0,1);
\draw (1,0)--+(0,1);
\draw (1/3,0)--(1/3,1/6) arc (180:0:1/6)--(2/3,0);
\draw (1/3,1)--(1/3,5/6) arc (-180:0:1/6)--(2/3,1);
\end{tikzpicture}}.

Since $a*B=a$, $a$ can be represented as
%\grb{bj3}
\raisebox{-.4cm}{
\begin{tikzpicture}
\draw (-1/6,1/3) rectangle (5/6,2/3);
\foreach \x in {0,1/3,2/3}
{
\draw (\x,0)--+(0,1/3);
\draw (\x,2/3)--+(0,1/3);
}
\draw (1,0)--(1,1);
\end{tikzpicture}}
.
Since $b=BbB$, $b$ can be represented as
%\grb{bj2}
\raisebox{-.4cm}{
\begin{tikzpicture}
\draw (-1/6,1/3) rectangle (7/6,2/3);
\foreach \x in {0,1}
{
\draw (\x,0)--+(0,1/3);
\draw (\x,2/3)--+(0,1/3);
}
\draw (1/3,0) arc (180:0:1/6)--(2/3,0);
\draw (1/3,1) arc (-180:0:1/6)--(2/3,1);
\end{tikzpicture}}
.
One can see that $a*b=a$ from this diagrammatic representation.
\end{remark}
\fi

\begin{remark}
In Proposition \ref{idem2}, the $a$ can be any element in $\mathscr{P}_{2,\pm}$ such that $tr_2(a)\neq 0$.
\end{remark}

\iffalse
\begin{proposition}\label{}
Suppose $\mathscr{P}$ is an irreducible subfactor planar algebra.
Let $y\in\mathscr{P}_{2,\pm}$ be positive such that $tr_2(y)=\delta$ and let $B$ be the biprojection generated by $y$.
Then
$B=\mathcal{R}(\sum_{k=1}^n y^{*(k)})$ for sufficient large $n$.
\end{proposition}
\begin{proof}
This is Lemma 4.11 in \cite{Liu16}.
\end{proof}

\begin{proposition}
Suppose  $\mathscr{P}$ is an irreducible subfactor planar algebra.
Let $a,b\in\mathscr{P}_{2,\pm}$ be positive and
let $B$ be the biprojection generated by $b$.
Then $a*(\frac{\delta}{tr_2(b)}b)=a$ if and only if $a*(\frac{\delta}{tr_2(B)}B)=a$.
\end{proposition}
\begin{proof}
We assume that $tr_2(b)=\delta$.
If $a*b=a$, then $a*(\frac{1}{n}\sum_{k=1}^n b^{*(k)})=a$.
By Proposition \ref{idem2} and Proposition \ref{generator}, we have $a*(\frac{\delta}{tr_2(B)}B)=a$.

If $a*\frac{\delta}{tr_2(B)}B=a$, then by Proposition \ref{idem2} and \ref{generator} we have $a*b=\frac{tr_2(b)}{\delta}a=a$.
\end{proof}
\fi

\begin{proposition}\label{alg}
Suppose $\mathscr{P}$ is an irreducible subfactor planar algebra.
Let $B$ be a biprojection in $\mathscr{P}_{2,\pm}$.
Then $\{a\in\mathscr{P}_{2,\pm}|a*(\frac{\delta}{tr_2(B)}B)=a\}=\mathfrak{A}$ is a C$^*$-algebra.
\end{proposition}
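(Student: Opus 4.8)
The plan is to exhibit $\mathfrak{A}$ as the image of a completely positive, contractive, idempotent projection and then invoke the Choi--Effros theorem. Write $\widehat{B}=\frac{\delta}{tr_2(B)}B$ and define $E\colon\mathscr{P}_{2,\pm}\to\mathscr{P}_{2,\pm}$ by $E(x)=x*\widehat{B}$, so that $\mathfrak{A}=\{x:E(x)=x\}$. Since $B$ is a biprojection we have $B*B=\frac{tr_2(B)}{\delta}B$, hence $\widehat{B}*\widehat{B}=\widehat{B}$; by associativity of convolution $E^2(x)=x*(\widehat{B}*\widehat{B})=x*\widehat{B}=E(x)$, so $E$ is idempotent and its range equals its fixed-point set $\mathfrak{A}$. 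The strategy is then to verify that $E$ is completely positive, contractive and $*$-preserving, so that by Choi--Effros the range $\mathfrak{A}$ carries a $C^*$-algebra structure with product $a\circ b=E(ab)$, the involution inherited from $\mathscr{P}_{2,\pm}$, and the operator norm.

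First I would record the three properties of $E$. For complete positivity, recall from the preliminaries that the convolution is $x*y=\gamma(x\otimes y)\gamma^*$ up to a positive scalar; fixing $\widehat{B}\ge 0$, the map $x\mapsto x\otimes\widehat{B}$ is completely positive and the compression $z\mapsto\gamma z\gamma^*$ is completely positive, so $E$ is completely positive (this refines the positivity in Theorem \ref{Thm:Schur}). For contractivity, apply Young's inequality (Proposition \ref{young}) with exponents $t=\infty$, $s=1$, $r=\infty$: since $\widehat{B}\ge 0$ we have $\|\widehat{B}\|_1=tr_2(\widehat{B})=\delta$, whence $\|E(x)\|_\infty=\|x*\widehat{B}\|_\infty\le\frac{1}{\delta}\|x\|_\infty\|\widehat{B}\|_1=\|x\|_\infty$. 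For the $*$-property, the adjoint is the vertical reflection and the coproduct tangle is reflection symmetric, giving $(x*y)^*=x^* * y^*$; as $\widehat{B}^*=\widehat{B}$ this yields $E(x)^*=E(x^*)$.

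With these in hand Choi--Effros applies and $\mathfrak{A}$ is a $C^*$-algebra; in particular it is closed under $\circ$ and ${}^*$, is complete (being finite dimensional), and has unit $E(1)=1*\widehat{B}$. I expect the main subtlety to lie in the complete positivity of $E$: although Theorem \ref{Thm:Schur} already yields positivity, one must see that the $\gamma(\,\cdot\otimes\widehat{B}\,)\gamma^*$ description (or an amplified version of the diagrammatic argument) delivers positivity of every matrix amplification, and one must invoke the non-unital form of Choi--Effros correctly. Finally, I would remark that the Choi--Effros product $a\circ b=E(ab)$ coincides with the original product $ab$ exactly when $\mathfrak{A}$ is closed under the multiplication of $\mathscr{P}_{2,\pm}$; using Proposition \ref{idem2} together with the exchange relation of the biprojection $B$ in its Bisch--Jones form, one checks $(ab)*\widehat{B}=ab$ for $a,b\in\mathfrak{A}$, so that $\mathfrak{A}$ is in fact a $C^*$-subalgebra of $\mathscr{P}_{2,\pm}$ under the inherited operations.
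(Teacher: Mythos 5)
Your reduction to a completely positive contractive idempotent is set up correctly: with $\widehat{B}=\frac{\delta}{tr_2(B)}B$ the map $E(x)=x*\widehat{B}$ is indeed idempotent (since $B*B=\frac{tr_2(B)}{\delta}B$), positive by the Schur product theorem, completely positive via the $\gamma(\,\cdot\otimes\widehat{B}\,)\gamma^*$ description, contractive by Young's inequality with $(r,t,s)=(\infty,\infty,1)$, and $*$-preserving since $(x*y)^*=x^**y^*$ and $\widehat{B}^*=\widehat{B}$. But this route proves the wrong statement: Choi--Effros endows the range $\mathfrak{A}$ with the \emph{modified} product $a\circ b=E(ab)$, whereas the proposition (and every later use of it) asserts that $\mathfrak{A}$ is a C$^*$-subalgebra of $\mathscr{P}_{2,\pm}$ for the \emph{original} multiplication. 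The entire content of the proposition is the closure $(ab)*\widehat{B}=ab$, and your treatment of exactly that point is the gap. Proposition \ref{idem2} does not apply: it concerns identities of the form $a*(xb)=\frac{tr_2(xb)}{\delta}a$, i.e.\ convolving an absorbed element with a product $xb$, and says nothing about the operator product $ab$ of two elements that are each convolution-invariant under $\widehat{B}$. The appeal to ``the exchange relation of $B$ in its Bisch--Jones form'' presupposes that every $a\in\mathfrak{A}$ can be drawn as a Bisch--Jones diagram with a through-string of the intermediate shading; that representability is essentially equivalent to what is being proved, and the paper's own remark flags this by explicitly offering an \emph{algebraic} proof instead of the diagrammatic one.

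For comparison, the paper closes the multiplicative gap by passing through the Fourier transform: $a*\widehat{B}=a$ is equivalent to $\mathfrak{F}^{-1}(a)\mathfrak{F}^{-1}(B)\frac{\delta}{tr_2(B)}=\mathfrak{F}^{-1}(a)$, i.e.\ $\mathcal{R}(\mathfrak{F}^{-1}(a)^*)\leq \tilde{B}$ where $\tilde{B}=\mathcal{R}(\mathfrak{F}^{-1}(B))$; then $\mathfrak{F}^{-1}(ab)^*=\mathfrak{F}^{-1}(b)^**\mathfrak{F}^{-1}(a)^*$ together with Lemma \ref{ran} and $\mathcal{R}(\tilde{B}*\tilde{B})=\tilde{B}$ gives $\mathcal{R}(\mathfrak{F}^{-1}(ab)^*)\leq\tilde{B}$, which translates back to $(ab)*\widehat{B}=ab$. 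If you supply this (or an equivalent) argument, your Choi--Effros scaffolding becomes unnecessary: once $\mathfrak{A}$ is a $*$-closed subspace closed under the original product, it is a C$^*$-subalgebra of the finite-dimensional C$^*$-algebra $\mathscr{P}_{2,\pm}$ with nothing further to check.
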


\begin{remark}
If we consider the biprojection $B$ as a Bisch-Jones diagram \cite{BisJon97} (up to a scalar),
\raisebox{-.4cm}{
\begin{tikzpicture}
\draw (0,0)--+(0,1);
\draw (1,0)--+(0,1);
\draw (1/3,0)--(1/3,1/6) arc (180:0:1/6)--(2/3,0);
\draw (1/3,1)--(1/3,5/6) arc (-180:0:1/6)--(2/3,1);
\end{tikzpicture}},
then one can represent $a$ as a Bisch-Jones diagram of the following form,
\raisebox{-.4cm}{
\begin{tikzpicture}
\draw (-1/6,1/3) rectangle (5/6,2/3);
\foreach \x in {0,1/3,2/3}
{
\draw (\x,0)--+(0,1/3);
\draw (\x,2/3)--+(0,1/3);
}
\draw (1,0)--(1,1);
\end{tikzpicture}}. We see that all such $a$'s form a $C^*$-algebra. Here we give an algebraic proof, which maybe useful for general case.
\end{remark}

\begin{proof}
Let $a,b\in\mathscr{P}_{2,\pm}$ be such that
$$a*(\frac{\delta}{tr_2(B)}B)=a, \quad b*(\frac{\delta}{tr_2(B)}B)=b.$$
Then
$$a^**(\frac{\delta}{tr_2(B)}B)=a^*,\quad (a+b)*(\frac{\delta}{tr_2(B)}B)=a+b,\quad \lambda a*(\frac{\delta}{tr_2(B)}B)=\lambda a$$
for any $\lambda\in \mathbb{C}$.

Now we will show that $(ab)*(\frac{\delta}{tr_2(B)}B)=ab$.
Since $a*(\frac{\delta}{tr_2(B)}B)=a$, by taking the SFT, we have
$\mathfrak{F}^{-1}(a)\mathfrak{F}^{-1}(B)\frac{\delta}{tr_2(B)}=\mathfrak{F}^{-1}(a)$.
Let $\tilde{B}$ be the range projection of $\mathfrak{F}^{-1}(B)$.
Then $\mathcal{R}(\mathfrak{F}^{-1}(a)^*)\leq \tilde{B}$.
Similarly, $\mathcal{R}(\mathfrak{F}^{-1}(b)^*)\leq \tilde{B}$.
By Lemma \ref{ran}, we have that
\begin{eqnarray*}
\mathcal{R}(\mathfrak{F}^{-1}(b)^**\mathfrak{F}^{-1}(a)^*)
&\leq& \mathcal{R}(\mathcal{R}(\mathfrak{F}^{-1}(b)^*)*\mathcal{R}(\mathfrak{F}^{-1}(a))^*)\\
&\leq & \mathcal{R}(\tilde{B}*\tilde{B})=\tilde{B}.
\end{eqnarray*}
On the other hand
\begin{eqnarray*}
\mathfrak{F}^{-1}(b)^**\mathfrak{F}^{-1}(a)^*&=&\mathfrak{F}(b^*)*\mathfrak{F}(a^*)\\
&=&\mathfrak{F}(b^*a^*)=\mathfrak{F}^{-1}(ab)^*.
\end{eqnarray*}
We see that $\mathfrak{F}^{-1}(ab)\mathfrak{F}^{-1}(B)\frac{\delta}{tr_2(B)}=\mathfrak{F}^{-1}(ab)$, i.e. $(ab)*(\frac{\delta}{tr_2(B)}B)=ab$.
\end{proof}

\section{The Exact Inverse Sum Set Theorem}\label{ssumset}
In this section, we prove a sum set estimate and the exact inverse sum set theorem for subfactor planar algebras.
First let us recall the results on finite abelian groups which have been well-studied in additive combinatorics \cite{TaoVu}.

Let $A, B$ be additive sets with common ambient finite additive group $G$.
A fundamental problem in additive combinatorics is the inverse sum set problem: if $A+B$ or $A-B$ is small, what can one say about $A$ and $B$?
The sum set estimates are given by
\be \label{Sumsetestimate}
\max\{|A|,|B|\}\leq |A+B|,|A-B|\leq |A||B|,
\ee
where $|A|$ is the cardinality of $A$.
The exact inverse sum set theorem \cite{TaoVu} says that the following statements are equivalent:
\begin{itemize}
\item[(1)] $|A+B|=|A|$;
\item[(2)] $|A-B|=|A|$;
\item[(3)] $|A-nB-mB|=|A|$ for at least one pair of integers $(n,m)\neq (0,0)$;
\item[(4)] $|A-nB-mB|=|A|$ for all integers $n,m$;
\item[(5)] there exists a finite subgroup $H$ of $G$ such that $B$ is contained in a coset of $H$, and $A$ is a union of cosets of $H$.
\end{itemize}

The sum set estimate is obvious in the group case, since the convolution with a group element preserves the cardinality of the set.
However, it is non-trivial on subfactor planar algebras.
In subfactor planar algebras, a 2-box projection is a group element if and only if it has trace one.
Usually the trace of a minimal projection is greater than 1. The original proof in additive combinatorics does not apply to subfactors. We give a new proof of these results using Young's inequalities and uncertainty principles.
We apply these results to characterize the extremal pairs of Young's inequalities in \S \ref{Sec:Young}.

\begin{theorem}\label{lowerb}[Sum set estimate]
Suppose $\mathscr{P}$ is an irreducible subfactor planar algebra. Let $p$, $q$ be projections in $\mathscr{P}_{2,\pm}$. Then
$$\max\{tr_2(p),tr_2(q)\}\leq \mathcal{S}(p*q).$$
Moreover, $\mathcal{S}(p*q)=tr_2(q)$ if and only if $\frac{\delta}{tr_2(p)}p*q$ is a projection
\end{theorem}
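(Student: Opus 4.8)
The plan is to derive both the bound and its equality case from three ingredients: the exact trace identity $tr_2(p*q)=\frac{1}{\delta}tr_2(p)tr_2(q)$, the $L^\infty$ endpoint of Young's inequality (Proposition \ref{young}), and the elementary fact that a positive operator $a$ in $\mathscr{P}_{2,\pm}$ satisfies $tr_2(a)\le \|a\|_\infty\,tr_2(\mathcal{R}(a))$, with equality precisely when $a=\|a\|_\infty\mathcal{R}(a)$ is a positive multiple of a projection. First I would note that $p*q>0$ by the Schur Product Theorem (Theorem \ref{Thm:Schur}), so that $\mathcal{R}(p*q)$ and $\mathcal{S}(p*q)$ are meaningful and $p*q\le\|p*q\|_\infty\mathcal{R}(p*q)$ as positive operators.

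For the lower bound, Young's inequality with $(r,t,s)=(\infty,1,\infty)$ gives $\|p*q\|_\infty\le\frac{1}{\delta}\|p\|_1\|q\|_\infty=\frac{1}{\delta}tr_2(p)$, and symmetrically with $(\infty,\infty,1)$ it gives $\|p*q\|_\infty\le\frac{1}{\delta}tr_2(q)$, whence $\|p*q\|_\infty\le\frac{1}{\delta}\min\{tr_2(p),tr_2(q)\}$. Combining this with the trace identity and the spectral bound yields
\begin{equation*}
\frac{1}{\delta}tr_2(p)tr_2(q)=tr_2(p*q)\le\|p*q\|_\infty\,tr_2(\mathcal{R}(p*q))\le\frac{1}{\delta}\min\{tr_2(p),tr_2(q)\}\,\mathcal{S}(p*q),
\end{equation*}
so dividing gives $\mathcal{S}(p*q)\ge\frac{tr_2(p)tr_2(q)}{\min\{tr_2(p),tr_2(q)\}}=\max\{tr_2(p),tr_2(q)\}$, as desired.

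For the equality statement I would rerun the same chain but retain only the bound $\|p*q\|_\infty\le\frac{1}{\delta}tr_2(p)$, yielding $tr_2(q)\le\mathcal{S}(p*q)$. If $\mathcal{S}(p*q)=tr_2(q)$, every inequality in the chain must be an equality: the middle equality forces $p*q=\|p*q\|_\infty\mathcal{R}(p*q)$ (a positive operator whose trace equals that of the dominating operator $\|p*q\|_\infty\mathcal{R}(p*q)\ge p*q$ must coincide with it), and the final equality forces $\|p*q\|_\infty=\frac{1}{\delta}tr_2(p)$; together these give $\frac{\delta}{tr_2(p)}p*q=\mathcal{R}(p*q)$, a projection. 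Conversely, if $P:=\frac{\delta}{tr_2(p)}p*q$ is a projection then $\mathcal{R}(p*q)=P$, so by the trace identity $\mathcal{S}(p*q)=tr_2(P)=\frac{\delta}{tr_2(p)}tr_2(p*q)=tr_2(q)$.

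The routine points are the two Young endpoints and the spectral rigidity implication $tr_2(a)=\|a\|_\infty tr_2(\mathcal{R}(a))\Rightarrow a=\|a\|_\infty\mathcal{R}(a)$. The one step deserving genuine care — and the main obstacle — is the \emph{exactness} of the identity $tr_2(p*q)=\frac{1}{\delta}tr_2(p)tr_2(q)$: the $L^1$ case of Young's inequality only delivers ``$\le$'', whereas the argument crucially needs the equality (otherwise the first link of the chain breaks). I would establish it either diagrammatically, by closing up the coproduct tangle and observing that the resulting trace factors, or via Lemma \ref{permu} by pairing $p*q$ against the identity and using $\mathfrak{F}(\mathbf{1})=\delta e_1$. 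Once this exact identity is secured, the remainder of both the bound and its equality case is essentially forced.
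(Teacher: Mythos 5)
Your proof is correct, but it takes a genuinely different route from the paper's for the lower bound. The paper pairs H\"{o}lder's inequality in the form $\|p*q\|_1\leq\|\mathcal{R}(p*q)\|_2\|p*q\|_2$ with the $L^2$ case of Young's inequality ($\|p*q\|_2\leq\frac{1}{\delta}\|p\|_1\|q\|_2$) to obtain $\mathcal{S}(p*q)\geq tr_2(q)$, and then upgrades to the maximum via the contragredient symmetry $\mathcal{S}(p*q)=\mathcal{S}(\overline{q}*\overline{p})$; you instead use the $L^\infty$ endpoints of Young's inequality together with the elementary spectral bound $tr_2(a)\leq\|a\|_\infty\,tr_2(\mathcal{R}(a))$, which delivers $\max\{tr_2(p),tr_2(q)\}$ in one pass with no symmetry step. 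Both arguments hinge on the same exact identity $tr_2(p*q)=\frac{1}{\delta}tr_2(p)tr_2(q)$, which the paper uses without comment and which you correctly flag as the one step requiring independent justification (it does hold, diagrammatically or via Lemma \ref{permu}). Your equality analysis is parallel in spirit to the paper's --- forcing equality through a chain of inequalities --- but the rigidity you exploit is the spectral one ($tr_2(a)=\|a\|_\infty tr_2(\mathcal{R}(a))$ iff $a$ is a multiple of its range projection) rather than the equality case of H\"{o}lder; both yield $p*q=\frac{tr_2(p)}{\delta}\mathcal{R}(p*q)$, and your converse coincides with the paper's. The trade-off: your argument is arguably more elementary (no $L^2$ interpolation, no equality case of H\"{o}lder), while the paper's H\"{o}lder-based template is the one it reuses for the partial-isometry variant (Corollary \ref{lowerbv}), where positivity of the convolution is unavailable and your $L^\infty$/spectral argument would not transfer as directly. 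Do make explicit the standing assumption that $p$ and $q$ are nonzero (needed to divide by $\min\{tr_2(p),tr_2(q)\}$ and by $\mathcal{S}(p*q)$), which the paper also leaves implicit.
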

\begin{proof}
From Proposition \ref{holder} (H\"{o}lder's inequality), we obtain
$$\|p*q\|_1\leq \|\mathcal{R}(p*q)\|_2\|p*q\|_2.$$
Note that
$$\|p*q\|_1=tr_2(p*q)=\frac{tr_2(p)tr_2(q)}{\delta},$$
$$\|\mathcal{R}(p*q)\|_2^2=\mathcal{S}(p*q).$$
By Proposition \ref{young} (Young's inequality), we have
$$\|p*q\|_2\leq \frac{\|p\|_1\|q\|_2}{\delta}=\frac{tr_2(p)tr_2(q)^{1/2}}{\delta}.$$
Combining them, we obtain
$$\frac{tr_2(p)tr_2(q)}{\delta}\leq \mathcal{S}(p*q)^{1/2}\frac{tr_2(p)tr_2(q)^{1/2}}{\delta},$$
i.e. $\mathcal{S}(p*q)\geq tr_2(q)$. Since $\mathcal{S}(p*q)=\mathcal{S}(\overline{p*q})=\mathcal{S}(\overline{q}*\overline{p})$, we see that
$$\max\{tr_2(p),tr_2(q)\}\leq \mathcal{S}(p*q).$$

If $\mathcal{S}(p*q)=tr_2(q)$, we then have the equalities in the inequalities above. Thus we have $p*q=\lambda \mathcal{R}(p*q)$ and $\|p*q\|_2=\frac{tr_2(p)tr_2(q)^{1/2}}{\delta}$, i.e.
$$p*q=\frac{tr_2(p)}{\delta}\mathcal{R}(p*q).$$

If $\frac{\delta}{tr_2(p)}p*q$ is a projection, then
$$\mathcal{S}(p*q)=tr_2(p*q)\frac{\delta}{tr_2(p)}=tr_2(q).$$
\end{proof}

\begin{corollary}
Suppose $\mathscr{P}$ is an irreducible subfactor planar algebra.
Let $p,q$ be projections in $\mathscr{P}_{2,\pm}$ such that $tr_2(p)+tr_2(q)>\delta^2$.
Then $\mathcal{R}(p*q)=\mathcal{R}(p*\overline{q})=1$.
\end{corollary}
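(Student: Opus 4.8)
The plan is to reduce everything to the sum set lower bound of Theorem~\ref{lowerb} together with the permutation identity of Lemma~\ref{permu}. Since $tr_2(\overline{q})=tr_2(q)$ and $\overline{q}$ is again a projection, the hypothesis $tr_2(p)+tr_2(q)>\delta^2$ is symmetric in $q$ and $\overline{q}$, so it suffices to prove $\mathcal{R}(p*q)=1$; the identity $\mathcal{R}(p*\overline{q})=1$ then follows by applying the same argument verbatim to the pair $(p,\overline{q})$.

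I would argue by contradiction. Suppose $\mathcal{R}(p*q)\neq 1$ and set $r=1-\mathcal{R}(p*q)$, a nonzero projection with $tr_2(r)=\delta^2-\mathcal{S}(p*q)$. Since $p*q$ is positive by the Schur Product Theorem (Theorem~\ref{Thm:Schur}) and $r$ is orthogonal to its range projection, we have $(p*q)r=0$, hence $tr_2((p*q)r)=0$. Writing $r=\overline{\overline{r}}$ and applying Lemma~\ref{permu} with $a=p$, $b=q$, $c=\overline{r}$ transfers this vanishing trace to
\[
tr_2((q*\overline{r})\,\overline{p})=tr_2((p*q)\,r)=0.
\]

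Now both $q*\overline{r}$ and $\overline{p}$ are positive (the former again by the Schur Product Theorem, the latter being a projection), so a vanishing trace of their product forces their range projections to be orthogonal: $\mathcal{R}(q*\overline{r})\leq 1-\overline{p}$. Taking traces gives $\mathcal{S}(q*\overline{r})\leq \delta^2-tr_2(\overline{p})=\delta^2-tr_2(p)$. On the other hand, $q$ and $\overline{r}$ are nonzero projections, so the sum set estimate of Theorem~\ref{lowerb} yields $\mathcal{S}(q*\overline{r})\geq \max\{tr_2(q),tr_2(\overline{r})\}\geq tr_2(q)$. Combining the two bounds produces $tr_2(p)+tr_2(q)\leq\delta^2$, contradicting the hypothesis; therefore $\mathcal{R}(p*q)=1$.

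The one step that carries the real content is the conversion via Lemma~\ref{permu}: it is what lets me trade the statement ``$p*q$ misses the projection $r$'' for the statement ``$q*\overline{r}$ misses the projection $\overline{p}$,'' after which positivity of the convolution and the already-established lower bound close the argument immediately. Everything else is bookkeeping with traces of products of positive operators. I expect no genuine obstacle beyond getting the indices in the permutation identity to line up correctly, in particular checking $\overline{\overline{r}}=r$, which holds because the contragredient $\mathfrak{F}^2$ is an involution on $\mathscr{P}_{2,\pm}$ preserving $tr_2$.
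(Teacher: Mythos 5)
Your proof is correct and follows essentially the same route as the paper's: contradiction via a projection orthogonal to $\mathcal{R}(p*q)$, transfer of the vanishing trace by the permutation identity of Lemma~\ref{permu}, positivity from the Schur Product Theorem, and the sum set lower bound of Theorem~\ref{lowerb}. The only cosmetic difference is which of the six equal expressions in Lemma~\ref{permu} you isolate (you bound $\mathcal{S}(q*\overline{r})\geq tr_2(q)$ where the paper bounds $\mathcal{S}(\overline{p_1}*p)\geq tr_2(p)$), which changes nothing of substance.
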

\begin{proof}
We assume that $\mathcal{R}(p*q)\neq 1$. Then there is a projection $p_1$  such that $(p*q)p_1=0$.
By Lemma \ref{permu}, we have that $tr_2((\overline{p_1}*p)\overline{q})=0$.
This implies that $(\overline{p_1}*p)\overline{q}=0$, i.e. $\mathcal{R}(\overline{p_1}*p)\overline{q}=0$.
Hence $\delta^2 <tr_2(p)+tr_2(q)\leq \mathcal{S}(\overline{p_1}*p)+tr_2(q)\leq \delta^2$. This leads a contradiction.
Therefore $\mathcal{R}(p*q)=1$.
The equation $\mathcal{R}(p*\overline{q})=1$ can be obtained similarly.
\end{proof}

\begin{corollary}\label{lowerbv}
Suppose that $\mathscr{P}$ is an irreducible subfactor planar algebra.
Let $v, w$ be partial isometries in $\mathscr{P}_{2,\pm}$ such that $\|v*w\|_1=\|v\|_1\|w\|_1$.
Then
$$\max\{tr_2(|v|),tr_2(|w|)\}\leq \mathcal{S}(v*w).$$
Moreover $\mathcal{S}(v*w)=tr_2(|w|)$ if and only if $\frac{\delta}{tr_2(|v|)}v*w$ is a partial isometry.
\end{corollary}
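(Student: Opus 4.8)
The plan is to transport the proof of Theorem~\ref{lowerb} from projections to partial isometries, using the extremal hypothesis to compensate for the fact that $v*w$ need not be positive (so that $\|v*w\|_1$ can no longer be identified with $tr_2(v*w)$). First I would record the elementary facts for a partial isometry: since $v^*v=|v|$ is a projection, one has $\|v\|_1=tr_2(|v|)$ and $\|v\|_2=tr_2(|v|)^{1/2}$, and likewise for $w$. The hypothesis, read as the equality case of Young's inequality $\|v*w\|_1=\frac{1}{\delta}\|v\|_1\|w\|_1=\frac{1}{\delta}tr_2(|v|)tr_2(|w|)$, will play exactly the role that the identity $tr_2(p*q)=\frac{1}{\delta}tr_2(p)tr_2(q)$ played for projections in Theorem~\ref{lowerb}.

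For the lower bound I would run the same two inequalities. Writing $x=v*w$, Hölder's inequality (Proposition~\ref{holder}), just as in Theorem~\ref{lowerb}, gives $\|x\|_1\le\mathcal{S}(x)^{1/2}\|x\|_2$, while Young's inequality (Proposition~\ref{young}) with exponents $(r,t,s)=(2,1,2)$ gives $\|x\|_2\le\frac{1}{\delta}\|v\|_1\|w\|_2=\frac{1}{\delta}tr_2(|v|)tr_2(|w|)^{1/2}$. Feeding these into the hypothesis $\|x\|_1=\frac{1}{\delta}tr_2(|v|)tr_2(|w|)$ and cancelling yields $\mathcal{S}(v*w)\ge tr_2(|w|)$. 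Applying Young instead with $(r,t,s)=(2,2,1)$, so that $\|x\|_2\le\frac{1}{\delta}tr_2(|v|)^{1/2}tr_2(|w|)$, gives symmetrically $\mathcal{S}(v*w)\ge tr_2(|v|)$, proving the maximum. (Alternatively the second bound follows from the first applied to $\overline{w}*\overline{v}=\overline{v*w}$, since $\mathfrak{F}^2$ preserves $\|\cdot\|_1$, preserves $\mathcal{S}$, sends partial isometries to partial isometries, and preserves the traces $tr_2(|\cdot|)$.)

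For the equality statement I would argue both directions directly. In the forward direction, $\mathcal{S}(v*w)=tr_2(|w|)$ forces equality in the Hölder step $\|x\|_1=\mathcal{S}(x)^{1/2}\|x\|_2$; this is a Cauchy--Schwarz equality $tr_2(\mathrm{supp}(|x|)\,|x|)=tr_2(\mathrm{supp}(|x|))^{1/2}\|x\|_2$, which forces $|x|=c\,\mathrm{supp}(|x|)$ to be a scalar multiple of a projection, i.e.\ $x=v*w$ is a multiple of a partial isometry. The scalar is pinned down by $\|x\|_1=c\,tr_2(\mathrm{supp}(|x|))=c\,\mathcal{S}(v*w)=c\,tr_2(|w|)$ together with the hypothesis, giving $c=\frac{tr_2(|v|)}{\delta}$; hence $\frac{\delta}{tr_2(|v|)}v*w$ is a partial isometry. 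Conversely, if $u=\frac{\delta}{tr_2(|v|)}v*w$ is a partial isometry, then the equality of source and range traces gives $\mathcal{S}(v*w)=tr_2(uu^*)=tr_2(u^*u)=\|u\|_1$, and comparing $\|v*w\|_1=\frac{tr_2(|v|)}{\delta}\|u\|_1=\frac{tr_2(|v|)}{\delta}\mathcal{S}(v*w)$ with the hypothesis yields $\mathcal{S}(v*w)=tr_2(|w|)$ (here $v\neq0$, so that $tr_2(|v|)\neq0$).

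The two norm inequalities are routine given Propositions~\ref{holder} and~\ref{young}; the step I expect to require the most care is the equality analysis, namely extracting from the Cauchy--Schwarz/Hölder equality that $v*w$ is a genuine scalar multiple of a partial isometry and computing the multiplier correctly, which relies on $\mathrm{supp}(|v*w|)$ having trace exactly $\mathcal{S}(v*w)$. One must also keep track throughout that, unlike in Theorem~\ref{lowerb}, the extremal hypothesis is indispensable precisely because $v*w$ is no longer self-adjoint, so positivity cannot be invoked to identify $\|v*w\|_1$ with $tr_2(v*w)$.
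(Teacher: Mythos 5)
Your proposal is correct and is exactly the adaptation the paper intends: its entire proof of this corollary is the one-line remark ``The proof is similar to the one of Theorem~\ref{lowerb},'' and your argument carries out that adaptation faithfully, with the Young-extremality hypothesis (correctly read as $\|v*w\|_1=\frac{1}{\delta}\|v\|_1\|w\|_1$, fixing an evident typo in the statement) substituting for the identity $tr_2(p*q)=\frac{1}{\delta}tr_2(p)tr_2(q)$ that positivity provided in the projection case. The equality analysis, including the identification $tr_2(\mathrm{supp}(|v*w|))=\mathcal{S}(v*w)$ and the computation of the scalar $\frac{tr_2(|v|)}{\delta}$, matches the structure of the paper's proof of Theorem~\ref{lowerb}.
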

\begin{proof}
The proof is similar to the one of Theorem \ref{lowerb}.
\end{proof}

\begin{remark}
Without the assumption $\|v*w\|_1=\|v\|_1\|w\|_1$, one can find a counterexample in the $\mathbb{Z}_2$ case.
\end{remark}

\begin{corollary}
Suppose $\mathscr{P}$ is an irreducible subfactor planar algebra.
Let $v$ be a partial isometry in $\mathscr{P}_{2,\pm}$ such that $\mathcal{S}(v*\overline{v^*})=tr_2(|v|)$ and $\|v*\overline{v^*}\|_1=\frac{1}{\delta}\|v\|_1^2$.
Then $v$ is a bi-shift of a biprojection.
\end{corollary}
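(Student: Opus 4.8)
The plan is to feed the two hypotheses into Corollary \ref{lowerbv} with $w=\overline{v^*}$, pass to the Fourier side to recognize an extremal configuration, and then read off the bi-shift structure from the inverse sum set phenomenon together with the known characterization of extremizers of the uncertainty principle. First I would record the elementary facts about $w=\overline{v^*}$. Since the contragredient is a trace-preserving $\ast$-(anti)automorphism sending partial isometries to partial isometries, $w$ is a partial isometry with $\|w\|_1=\|v^*\|_1=\|v\|_1$ and $tr_2(|w|)=tr_2(|v^*|)=tr_2(|v|)$ (using that $v^*v$ and $vv^*$ are projections of equal trace). Hence the hypothesis $\|v*\overline{v^*}\|_1=\frac1\delta\|v\|_1^2$ is exactly the extremal equality $\|v*w\|_1=\frac1\delta\|v\|_1\|w\|_1$ in Young's inequality needed to apply Corollary \ref{lowerbv}, and the hypothesis $\mathcal{S}(v*\overline{v^*})=tr_2(|v|)=tr_2(|w|)$ is precisely its equality case. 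Corollary \ref{lowerbv} then yields that $s:=\frac{\delta}{tr_2(|v|)}\,v*\overline{v^*}$ is a partial isometry.

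Next I would take the string Fourier transform. Because $\mathfrak{F}$ intertwines convolution and multiplication and $\mathfrak{F}(\overline{v^*})=\mathfrak{F}^3(v^*)=\mathfrak{F}^{-1}(v^*)=\mathfrak{F}(v)^*$, we obtain $\mathfrak{F}(v*\overline{v^*})=\mathfrak{F}(v)\mathfrak{F}(v)^*\geq 0$. Thus $s$ is a partial isometry whose Fourier transform $\mathfrak{F}(s)=\frac{\delta}{tr_2(|v|)}\mathfrak{F}(v)\mathfrak{F}(v)^*$ is positive, with $tr_2(\mathfrak{F}(s))=\frac{\delta}{tr_2(|v|)}\|v\|_2^2=\delta$ by Plancherel. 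In particular $\|\mathfrak{F}^{-1}(\mathfrak{F}(s))\|_\infty=\|s\|_\infty=1=\frac1\delta\|\mathfrak{F}(s)\|_1$, so the positive element $\mathfrak{F}(s)$ saturates the Hausdorff--Young inequality at the endpoint; equivalently the autocorrelation $v*\overline{v^*}$ is extremal. This is the hinge of the argument: it places us in the extremal regime where biprojections must appear, and it converts the problem into understanding a partial isometry with positive Fourier transform and mass $\delta$.

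Finally I would extract the biprojection and identify the shifts. The equality $\mathcal{S}(v*\overline{v^*})=tr_2(|v|)$ is the \emph{minimal} value permitted by the sum set estimate (Theorem \ref{lowerb}, Corollary \ref{lowerbv}); by the exact inverse sum set analysis of this section, this minimal spreading should force $\mathcal{R}(v)$ to be a shift of a biprojection $B$ with $tr_2(B)=tr_2(|v|)$, and dually force $\mathcal{R}(\mathfrak{F}(v))=\mathcal{R}(\mathfrak{F}(v)\mathfrak{F}(v)^*)$ to be a shift of $\widetilde{B}$. Concretely, I would feed the positive element $\mathfrak{F}(s)$ of trace $\delta$ into the convolution-equation machinery of \S\ref{ce} (Propositions \ref{idem}, \ref{idem2} and the right-absorbing-support construction, Proposition \ref{Prop:RAS}) to produce $B$, and use Lemma \ref{ran} to control the relevant range projections. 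Once $\mathcal{R}(v)$ and $\mathcal{R}(\mathfrak{F}(v))$ are recognized as shifts of $B$ and $\widetilde{B}$ with $tr_2(B)\,tr_2(\widetilde{B})=\delta^2$, the support uncertainty principle is saturated, and its extremizer characterization from \cite{JLW} --- equivalently the Hardy-uncertainty uniqueness of an element with prescribed range projections of $x$ and $\mathfrak{F}(x)$ --- identifies $v$ as a bi-shift of $B$.

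The main obstacle is this last step: upgrading the single scalar equality $\mathcal{S}(v*\overline{v^*})=tr_2(|v|)$ to the structural statement that both $\mathcal{R}(v)$ and $\mathcal{R}(\mathfrak{F}(v))$ are shifts of one biprojection and its Fourier dual. Unlike the group case, where convolving by a single group element is trivially cardinality-preserving, here one must run the full inverse sum set argument, tracking the equality cases in Hölder's and Young's inequalities (Propositions \ref{holder}, \ref{young}) to promote flatness of the autocorrelation $v*\overline{v^*}$ to flatness of $v$ itself on a coset of a biprojection. I expect this to be the delicate part, while the reduction via Corollary \ref{lowerbv} and the Fourier computation above are routine.
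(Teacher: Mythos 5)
Your first two steps coincide with the paper's proof: applying Corollary \ref{lowerbv} with $w=\overline{v^*}$ to get that $x:=\frac{\delta}{tr_2(|v|)}v*\overline{v^*}$ is a partial isometry, and observing that $\mathfrak{F}(x)=\frac{\delta}{tr_2(|v|)}\mathfrak{F}(v)\mathfrak{F}(v)^*\geq 0$ is extremal, so that (via Proposition \ref{spec}, which you should cite explicitly here) $x$ is an extremal bi-partial isometry. Up to this point the argument is correct and is exactly what the paper does.

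The gap is in your final step, and you have flagged it yourself as the part you cannot close. The route you propose --- running the exact inverse sum set machinery and the convolution-equation results of \S\ref{ce} to show that $\mathcal{R}(v)$ and $\mathcal{R}(\mathfrak{F}(v))$ are shifts of $B$ and $\widetilde{B}$ --- faces two concrete problems: Theorem \ref{lowerm} is stated for a pair of \emph{projections}, not for the partial isometry $v$ against its own contragredient adjoint, and even granting a version for partial isometries it would only control the ``spatial'' support $\mathcal{R}(v)$, leaving you with no handle on $\mathcal{R}(\mathfrak{F}(v))$, which you also need before the uniqueness theorem of \cite{JLW} can be invoked. The paper avoids all of this with one short observation that you are missing: Main Theorem 2 of \cite{JLW}, applied to the extremal bi-partial isometry $x$, already yields $\mathcal{S}(x)\mathcal{S}(\mathfrak{F}(x))=\delta^2$; the hypothesis $\mathcal{S}(v*\overline{v^*})=tr_2(|v|)$ says precisely that $\mathcal{S}(x)=\mathcal{S}(v)$, while $\mathcal{S}(\mathfrak{F}(x))=\mathcal{S}(\mathfrak{F}(v))$ holds automatically because $\mathfrak{F}(x)$ is a positive multiple of $\mathfrak{F}(v)\mathfrak{F}(v)^*$. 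Hence $\mathcal{S}(v)\mathcal{S}(\mathfrak{F}(v))=\delta^2$, i.e.\ $v$ itself saturates the Donoho--Stark uncertainty principle, and a second application of Main Theorem 2 of \cite{JLW} identifies $v$ as a bi-shift of a biprojection. In short: you do not need to exhibit the biprojection and the shifts directly; you only need to transfer the support sizes from $x$ to $v$, which the two hypotheses of the corollary are designed to make immediate.
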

\begin{proof}
By Corollary \ref{lowerbv}, we have that $\frac{\delta}{tr_2(|v|)}v*\overline{v^*}(=x)$ is partial isometry and $tr_2(|x|)=tr_2(|v|)$.
Note that $\mathfrak{F}(x)=\frac{\delta}{tr_2(|v|)}|\mathfrak{F}(v)|^2>0$, thus $\mathfrak{F}(x)$ is extremal.
By Proposition \ref{spec}, $x$ is an extremal bi-partial isometry.
By Main Theorem 2 in \cite{JLW},
$$\mathcal{S}(\mathfrak{F}(x))\mathcal{S}(x)=\delta^2.$$
Note that
$$\mathcal{S}(\mathfrak{F}(x))=\mathcal{S}(\mathfrak{F}(v))\quad \mathcal{S}(x)=\mathcal{S}(v),$$ so
$\mathcal{S}(\mathfrak{F}(v))\mathcal{S}(v)=\delta^2$.
By Main Theorem 2 in \cite{JLW}, $v$ is a bi-shift of a biprojection.
\end{proof}

\begin{corollary}\label{Cor:leftshift}
Suppose $\mathscr{P}$ is an irreducible subfactor planar algebra.
Let $x\in \mathscr{P}_{2,\pm}$ be positive.
If $x*\overline{x}=\frac{tr_2(B)}{\delta}B$ for some biprojection $B$ in $\mathscr{P}$, then $x$ is a multiple of a left shift of $B$.
\end{corollary}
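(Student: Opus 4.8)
The plan is to move everything to the Fourier side, use the hypothesis to force $\mathfrak F(x)$ to be a scalar multiple of a partial isometry supported exactly on $\widetilde B$, and then pin $x$ down by combining three norm/trace identities until it is forced to be a projection that is a left shift of $B$.

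First I would apply the string Fourier transform to the hypothesis. Write $c=\tfrac{tr_2(B)}{\delta}$. Using $\mathfrak F^{-1}(a*b)=\mathfrak F^{-1}(a)\mathfrak F^{-1}(b)$, the identity $(\mathfrak F x)^*=\mathfrak F^{-1}(x^*)=\mathfrak F^{-1}(x)$ (valid since $x\geq 0$), and the biprojection normalizations $\mathfrak F(B)=c\,\widetilde B$ and $\mathfrak F^{-1}(B)=\mathfrak F(B)$ (here $B$ is self-contragredient, so $\widetilde B=\mathcal R(\mathfrak F(B))$ satisfies $\overline{\widetilde B}=\widetilde B$), one obtains
\[
(\mathfrak F x)^*(\mathfrak F x)=\mathfrak F^{-1}(x)\,\mathfrak F^{-1}(\overline x)=\mathfrak F^{-1}(x*\overline x)=c\,\mathfrak F^{-1}(B)=c^2\,\widetilde B .
\]
Hence $|\mathfrak F(x)|=c\,\widetilde B$, so $\mathfrak F(x)=c\,v$ for a partial isometry $v$ with source projection $v^*v=\widetilde B$. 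Next I would record the scalars. From $tr_2(x*\overline x)=\tfrac{tr_2(x)^2}{\delta}$ and $tr_2(x*\overline x)=\tfrac{tr_2(B)^2}{\delta}$ we get $\|x\|_1=tr_2(x)=tr_2(B)$; from the Plancherel identity $\|\mathfrak F(x)\|_2=\|x\|_2$ together with $tr_2(B)\,tr_2(\widetilde B)=\delta^2$ we get $\|x\|_2^2=c^2\,tr_2(\widetilde B)=tr_2(B)$. In particular $\|\mathfrak F(x)\|_\infty=c=\tfrac1\delta\|x\|_1$, so $x$ is extremal.

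Now I would extract the support size. Applying Proposition \ref{spec} to $y=\mathfrak F(x)$ (whose spectral projection of $|y|$ at $\|y\|_\infty$ is exactly $\widetilde B$) shows that $\mathfrak F(x)\widetilde B=\mathfrak F(x)$ is an extremal bi-partial isometry, so Main Theorem 2 of \cite{JLW} yields $\mathcal S(\mathfrak F(x))\,\mathcal S(x)=\delta^2$. Since $\mathcal S(\mathfrak F(x))=tr_2(\widetilde B)=\delta^2/tr_2(B)$, this forces $\mathcal S(x)=tr_2(\mathcal R(x))=tr_2(B)$. Writing $p=\mathcal R(x)$ and $x=\sum_i\lambda_i e_i$ with $\lambda_i>0$ and $\sum_i e_i=p$, the three identities $\sum_i\lambda_i\,tr_2(e_i)=tr_2(x)=tr_2(B)$, $\sum_i\lambda_i^2\,tr_2(e_i)=\|x\|_2^2=tr_2(B)$ and $\sum_i tr_2(e_i)=\mathcal S(x)=tr_2(B)$ combine into $\sum_i(\lambda_i-1)^2\,tr_2(e_i)=0$; since $tr_2(e_i)>0$, every $\lambda_i=1$ and $x=p$ is a projection with $tr_2(p)=tr_2(B)$.

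It remains to show $p$ is a \emph{left} shift of $B$, which is where positivity does the final work. The source projection of $\mathfrak F(p)$ is $\widetilde B$, i.e. $\mathcal R(\mathfrak F^{-1}(p))=\mathcal R((\mathfrak F p)^*)=\widetilde B$; since $\mathfrak F^{-1}(p)=\overline{\mathfrak F(p)}$ and $\mathcal R(\overline T)=\overline{\mathcal R(T)}$, this gives $\overline{\mathcal R(\mathfrak F(p))}=\widetilde B$, hence $\mathcal R(\mathfrak F(p))=\overline{\widetilde B}=\widetilde B$. Consequently the source of $\mathfrak F^{-1}(p)$, namely $\mathcal R((\mathfrak F^{-1}p)^*)=\mathcal R(\mathfrak F(p))$, equals $\widetilde B$, so $\mathfrak F^{-1}(p)\widetilde B=\mathfrak F^{-1}(p)$. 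Applying $\mathfrak F^{-1}$ to $p*B$ then gives $\mathfrak F^{-1}(p*B)=\mathfrak F^{-1}(p)\mathfrak F^{-1}(B)=c\,\mathfrak F^{-1}(p)\widetilde B=c\,\mathfrak F^{-1}(p)$, i.e. $p*B=\tfrac{tr_2(B)}{\delta}p$. Thus $p$ is a left shift of $B$ and $x=p$ is a positive multiple of it.

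The main obstacle is the middle step: upgrading the soft statement ``$\mathfrak F(x)$ is a scalar times a partial isometry on $\widetilde B$'' to the sharp equality $\mathcal S(x)=tr_2(B)$, which is exactly where the uncertainty-principle input (Main Theorem 2 of \cite{JLW}) and Proposition \ref{spec} are indispensable; the rest is bookkeeping with $\mathfrak F$, the contragredient, and the normalizations $\mathfrak F(B)=\tfrac{tr_2(B)}{\delta}\widetilde B$, $tr_2(B)tr_2(\widetilde B)=\delta^2$. A secondary subtlety is keeping the $\mathfrak F/\mathfrak F^{-1}$ and source/range accounting straight so that positivity of $x$, entering through $(\mathfrak F x)^*=\mathfrak F^{-1}(x)$, is precisely what forces $\mathcal R(\mathfrak F(p))=\widetilde B$ and hence the left (rather than right) shift relation.
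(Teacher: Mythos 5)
Your reduction to the statement ``$x$ is a projection of trace $tr_2(B)$'' is correct and in fact more self-contained than the paper's own argument. The paper gets $\mathcal{S}(x)\leq tr_2(B)$ from the sum set estimate (Theorem \ref{lowerb}) and then invokes the Donoho--Stark uncertainty principle together with Main Theorems 1, 2 and Theorem 6.13 of \cite{JLW}; you instead transform the hypothesis to $|\mathfrak{F}(x)|^2=c^2\widetilde{B}$, feed this into Proposition \ref{spec} and Main Theorem 2 to get $\mathcal{S}(x)=tr_2(B)$, and then your ``three moments'' computation $\sum_i(\lambda_i-1)^2tr_2(e_i)=0$ is a clean, elementary way to force $x$ to be exactly a projection. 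Up to that point everything checks.

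The genuine gap is in the last step. You assert $\mathcal{R}(\overline{T})=\overline{\mathcal{R}(T)}$, but the contragredient is \emph{anti}-multiplicative ($\overline{ab}=\overline{b}\,\overline{a}$), so $\mathcal{R}(\overline{T})=\mathcal{R}(\overline{T}\,\overline{T}^{\,*})=\mathcal{R}(\overline{T^*T})=\overline{\mathcal{R}(T^*)}$: the contragredient swaps range and source projections. Consequently your chain only reproduces the tautology that the source projection of $\mathfrak{F}(p)$ is self-contragredient; it does not give $\mathcal{R}(\mathfrak{F}(p))=\widetilde{B}$. Indeed, $\mathfrak{F}(p)\mathfrak{F}(p)^*=\mathfrak{F}^{-1}(\overline{p})\mathfrak{F}^{-1}(p)=\mathfrak{F}^{-1}(\overline{p}*p)$, so the range of $\mathfrak{F}(p)$ is controlled by $\overline{p}*p$, which the hypothesis $p*\overline{p}=cB$ does not determine. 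What \emph{does} follow directly from $(\mathfrak{F}p)^*(\mathfrak{F}p)=c^2\widetilde{B}$ is that the range of $\mathfrak{F}^{-1}(p)=(\mathfrak{F}p)^*$ is $\widetilde{B}$, whence $\mathfrak{F}^{-1}(B*p)=c\,\widetilde{B}\,\mathfrak{F}^{-1}(p)=c\,\mathfrak{F}^{-1}(p)$, i.e.\ $B*p=\frac{tr_2(B)}{\delta}p$ --- the \emph{right}-shift relation in the paper's Definition, consistent with Corollary \ref{sub2}, which pairs $p*\overline{p}\preceq B$ with right shifts. (In the group model, $p=1_{Hg}$ satisfies $p*\overline{p}\propto 1_H$ but $1_{Hg}*1_H\propto 1_{HgH}\neq 1_{Hg}$ for non-normal $H$, so the left-shift identity genuinely fails from this hypothesis alone.) The paper's statement and Corollary \ref{sub2} are not obviously compatible on this left/right bookkeeping, so you have in effect surfaced an ambiguity in the source; but as written, your justification for the final ``left'' conclusion is invalid, and the step needs to be replaced either by the right-shift conclusion or by an additional argument controlling $\overline{p}*p$.
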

\begin{proof}
Note that $\mathcal{S}(\mathfrak{F}(x))=\mathcal{S}(\mathfrak{F}(B))$.
By Theorem \ref{lowerb}, we have that
$$\mathcal{S}(x)\leq \mathcal{S}(\overline{x}*x)=tr_2(B).$$
Hence $$\mathcal{S}(\mathfrak{F}(x))\mathcal{S}(x)\leq \mathcal{S}(\mathfrak{F}(B))tr_2(B)=\delta^2.$$
By Main Theorems 1 and 2 and Theorem 6.13 in \cite{JLW}, we have that $x$ is a left shift of $B$.
\end{proof}

\begin{proposition}\label{upperb}
Suppose $\mathscr{P}$ is a subfactor planar algebra.
Let $p,q$ be projections  in $\mathscr{P}_{2,\pm}$.
Then
$$\mathcal{S}(p*q))\leq tr_2(p)tr_2(q).$$
%Moreover, $tr_2(\mathcal{R}(p*q))=tr_2(p)tr_2(q)$ if and only if $tr_2(\mathcal{R}(p*\overline{q}))=tr_2(p)tr_2(q)$.
\end{proposition}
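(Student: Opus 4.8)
The plan is to realize the convolution as a Frobenius multiplication and then compare the ``rank'' of the image with that of the source, which is exactly where the upper bound becomes visible. Recall from the introduction that, writing $X=L^2(\M)$ for the $\N$--$\N$ bimodule and $\gamma\colon X\otimes_\N X\to X$ for the multiplication, the convolution of $p,q\in\mathscr{P}_{2,\pm}=\hom_{\N\text{-}\N}(X)$ equals $\gamma(p\otimes q)\gamma^*$ up to a positive scalar. Since a range projection is unchanged under positive rescaling, I would begin from $\mathcal{R}(p*q)=\mathcal{R}(\gamma(p\otimes q)\gamma^*)$. Setting $T=\gamma(p\otimes q)\colon X\otimes_\N X\to X$ and using that $p\otimes q$ is a projection, one has $\gamma(p\otimes q)\gamma^*=TT^*$, so $\mathcal{R}(p*q)=\mathcal{R}(TT^*)$. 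Diagrammatically $TT^*$ is a $2$-box while $T^*T=(p\otimes q)\gamma^*\gamma(p\otimes q)$ is a $4$-box, i.e.\ an element of $\mathscr{P}_4$.

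The key comparison step is a Murray--von Neumann argument across these two box spaces. The polar decomposition $T=w|T|$ yields a partial isometry $w$ with $w^*w=\mathcal{R}(T^*T)$ and $ww^*=\mathcal{R}(TT^*)$, so the two range projections are equivalent. By the consistency (sphericality) of the Markov trace one gets $tr_2(\mathcal{R}(TT^*))=tr_4(\mathcal{R}(T^*T))$. On the other hand $\mathcal{R}(T^*T)\le \mathcal{R}(p\otimes q)=p\otimes q$, hence by monotonicity of the trace $tr_4(\mathcal{R}(T^*T))\le tr_4(p\otimes q)$. Finally, closing up the side-by-side diagram $p\otimes q$ factorizes into the closures of $p$ and of $q$, giving $tr_4(p\otimes q)=tr_2(p)tr_2(q)$. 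Chaining these,
\[
\mathcal{S}(p*q)=tr_2(\mathcal{R}(p*q))=tr_2(\mathcal{R}(TT^*))=tr_4(\mathcal{R}(T^*T))\le tr_4(p\otimes q)=tr_2(p)tr_2(q),
\]
which is the desired estimate. Note that irreducibility is not needed here, consistent with the hypotheses of the Proposition, so this argument applies to any subfactor planar algebra.

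I expect the main obstacle to be the trace identity $tr_2(ww^*)=tr_4(w^*w)$ relating the $2$-box and $4$-box spaces, rather than any estimate: monotonicity and the factorization $tr_4(p\otimes q)=tr_2(p)tr_2(q)$ are routine, but the equality of traces of range projections living in different box spaces is precisely the place where the planar/subfactor structure is essential. This is the exact analogue of the trivial group-case bound $|A+B|\le|A|\,|B|$ (the support of a convolution is contained in the pointwise product of the supports), and it explains why the inequality is nontrivial for subfactors even though it is obvious for group elements. An alternative, more elementary phrasing would stay within the convolution formalism and bound $\mathcal{R}(p*q)$ directly via $\mathcal{R}(\mathcal{R}(p)*\mathcal{R}(q))$ from Lemma \ref{ran}, but that route does not by itself control the weighted trace, so I would favor the image-rank argument above.
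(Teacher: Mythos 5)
Your proof is correct and is essentially the paper's own argument: the paper likewise writes $p*q=vv^*$ for the $3$-box $v$ obtained by capping $p\otimes q$ with the multiplication, observes $\mathcal{R}(v^*v)\leq p\otimes q$, and concludes via the trace equality $tr_2(\mathcal{R}(vv^*))=tr_4(\mathcal{R}(v^*v))$ and $tr_4(p\otimes q)=tr_2(p)tr_2(q)$. You have merely made explicit the Murray--von Neumann equivalence and Markov-trace consistency that the paper leaves implicit.
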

\begin{proof}
Let $v$ be
$$%\graa{cs}
\raisebox{-.7cm}{
\begin{tikzpicture}
\Gc{p}{q}
\end{tikzpicture}}
.$$
Then $vv^*=p*q$ and
$$v^*v=
%\grc{cs1}
\raisebox{-1.6cm}{
\begin{tikzpicture}
\Gc{p}{q}
\begin{scope}[shift={(0,7/2)},yscale=-1,xscale=1]
\Gc{p}{q}
\end{scope}
\end{tikzpicture}}
.$$
Note that $\mathcal{R}(v^*v)\leq p\otimes q$.
Hence $\mathcal{S}(p*q)\leq tr_2(p)tr_2(q).$

%If $tr_2(\mathcal{R}(p*q))=tr_2(p)tr_2(q)$, we have that $\mathcal{R}(v^*v)=\gra{cs3}$. Rotating the second tensor component, we have that
%$$\mathcal{R}\left(\grc{cs2}\right)=p\otimes\overline{q},$$
%i.e. $tr_2(\mathcal{R}(p*\overline{q}))=tr_2(p)tr_2(q)$.
\end{proof}

\begin{remark}
Let $p=\sum_{k}p_k$ and $q=\sum_{j}q_j$, where $p_k,q_j$ are projections. Then
\begin{eqnarray*}
\mathcal{S}(p*q)&=&\mathcal{S}((\sum_kp_k)*(\sum_j q_j))\leq \mathcal{S}(\sum_{k,j}p_k*q_j)\\
&\leq& \sum_{k,j} \mathcal{S}(p_k*q_j)\leq \sum_{k,j}tr_2(p_k)tr_2(q_j)=tr_2(p)tr_2(q).
\end{eqnarray*}
Furthermore, if $\mathcal{S}(p*q)=tr_2(p)tr_2(q)$, then $\mathcal{S}(p_k*q_j)=tr_2(p_k)tr_2(q_j).$
\end{remark}

\begin{definition}
Suppose $\mathscr{P}$ is an irreducible subfactor planar algebra.
Let $B$ be a biprojection in $\mathscr{P}_{2,\pm}$.
A projection $q$ in $\mathscr{P}_{2,\pm}$ is said to be a right (left) subshift of the biprojection $B$ if there exists a right (left) shift $B_g$ of $B$ such that $q\leq B_g$.
\end{definition}

\begin{proposition}\label{subshift}
Suppose $\mathscr{P}$ is an irreducible subfactor planar algebra.
Let $B$ be a biprojection in $\mathscr{P}_{2,\pm}$ and $q$ a projection in $\mathscr{P}_{2,\pm}$.
Then
$$\mathcal{R}(q*\overline{q})\leq B \text{ if and only if }q \text{ is a right subshift of }B,$$
and
$$\mathcal{R}(\overline{q}*q)\leq B \text{ if and only if }q \text{ is a left subshift of }B.$$
\end{proposition}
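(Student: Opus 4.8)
We prove the first equivalence; the second then follows by applying it to the projection $\overline{q}=\mathfrak{F}^2(q)$. Indeed $\mathcal{R}(\overline{q}*\overline{\overline{q}})=\mathcal{R}(\overline{q}*q)$, and the contragredient of a right shift of $B$ is a left shift of $B$ (using $\overline{B}=B$), so ``$\overline{q}$ is a right subshift of $B$'' is the same as ``$q$ is a left subshift of $B$''.

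For the direction ``$q$ is a right subshift $\Rightarrow \mathcal{R}(q*\overline{q})\leq B$'', let $B_g$ be a right shift of $B$ with $q\leq B_g$. Taking the contragredient of the defining relation $B*B_g=\frac{tr_2(B)}{\delta}B_g$ yields $\overline{B_g}*B=\frac{tr_2(B)}{\delta}\overline{B_g}$, so $\overline{B_g}$ is a left shift of $B$. The key step is the identity
\be\label{Equ:shift}
B_g*\overline{B_g}=\frac{tr_2(B)}{\delta}B.
\ee
To obtain it, note $B_g*\overline{B_g}\geq 0$ by Theorem \ref{Thm:Schur}, and convolving the two absorption relations gives $B*(B_g*\overline{B_g})=(B_g*\overline{B_g})*B=\frac{tr_2(B)}{\delta}(B_g*\overline{B_g})$; combined with Proposition \ref{idem2} (applied to the left shift $\overline{B_g}$) and the equality case of Theorem \ref{lowerb}, this forces $B_g*\overline{B_g}$ to be a positive multiple of a projection whose range is exactly $B$. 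Granting \eqref{Equ:shift}, from $q\preceq B_g$ we get $\overline{q}\preceq\overline{B_g}$, so Lemma \ref{preceq} gives $q*\overline{q}\preceq B_g*\overline{B_g}$, whence $\mathcal{R}(q*\overline{q})\leq\mathcal{R}(B_g*\overline{B_g})=B$.

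For the converse, suppose $\mathcal{R}(q*\overline{q})\leq B$ and set $B_g:=\mathcal{R}(B*q)$, the candidate for the coset containing $q$. Since every biprojection dominates the Jones projection, $e_1\leq B$, so Lemma \ref{preceq} gives $q=\mathcal{R}(e_1*q)\leq\mathcal{R}(B*q)=B_g$; it remains to show $B_g$ is a right shift. By Theorem \ref{Thm:Schur}, $B*q>0$, and using $B*B=\frac{tr_2(B)}{\delta}B$ one checks that once $\frac{\delta}{tr_2(q)}B*q$ is a projection it automatically satisfies $B*B_g=\frac{tr_2(B)}{\delta}B_g$ and $tr_2(B_g)=tr_2(B)$, i.e.\ $B_g$ is a right shift. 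By the equality case of Theorem \ref{lowerb} (symmetrically, $\mathcal{S}(B*q)=tr_2(B)$ iff $\frac{\delta}{tr_2(q)}B*q$ is a projection) it therefore suffices to prove $\mathcal{S}(B*q)=tr_2(B)$. The lower bound $\mathcal{S}(B*q)\geq tr_2(B)$ is Theorem \ref{lowerb}; for the matching upper bound I would use the hypothesis, since $\mathcal{R}(q*\overline{q})\leq B$ and Lemma \ref{preceq} give $B*(q*\overline{q})\preceq B*B$, hence $\mathcal{R}(B*(q*\overline{q}))\leq B$, and feed this into the absorption identities of Proposition \ref{idem2} together with the Bisch--Jones representation of $B$ to pin $\mathcal{S}(B*q)$ down to $tr_2(B)$.

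The main obstacle is precisely this last exact size computation (the identity \eqref{Equ:shift} in the forward direction and $\mathcal{S}(B*q)=tr_2(B)$ in the converse). In the group case it is the triviality ``a coset has the same cardinality as the subgroup'', but for subfactors the trace of a minimal projection typically exceeds $1$, so no cardinality argument is available. One must instead extract the equality from the rigidity in the equality cases of Young's and H\"older's inequalities underlying Theorem \ref{lowerb}, driven by the two-sided absorption of Proposition \ref{idem2}; it is this rigidity, rather than any counting, that closes both directions.
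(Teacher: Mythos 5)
Your architecture is the paper's: reduce the second equivalence to the first by contragredients, set $B_g=\mathcal{R}(B*q)$ in the converse, and drive everything through the equality case of Theorem \ref{lowerb}. But at both decisive points you stop at a gesture, and the tools you gesture at are not the ones that close the argument. For the identity $B_g*\overline{B_g}=\frac{tr_2(B)}{\delta}B$ (which the paper does not reprove but simply quotes as Theorem 6.11 of \cite{JLW}), the equality case of Theorem \ref{lowerb} only becomes available once you know $\mathcal{S}(B_g*\overline{B_g})=tr_2(B_g)$, and the absorption relation $B*(B_g*\overline{B_g})=\frac{tr_2(B)}{\delta}(B_g*\overline{B_g})$ does not by itself yield the needed upper bound $\mathcal{S}(B_g*\overline{B_g})\leq tr_2(B)$; Proposition \ref{idem2} contributes nothing here. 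What closes it is a positivity--trace argument: by Lemma \ref{permu} and the left-shift property of $\overline{B_g}$, one has $tr_2\bigl((B_g*\overline{B_g})B\bigr)=tr_2\bigl((\overline{B_g}*B)\overline{B_g}\bigr)=\frac{tr_2(B)^2}{\delta}=\|B_g*\overline{B_g}\|_1$, hence $tr_2\bigl((B_g*\overline{B_g})(1-B)\bigr)=0$ and $\mathcal{R}(B_g*\overline{B_g})\leq B$ by positivity; only then does the sandwich $tr_2(B)=tr_2(B_g)\leq\mathcal{S}(B_g*\overline{B_g})\leq tr_2(B)$ let you invoke the equality case of Theorem \ref{lowerb} to conclude that $B_g*\overline{B_g}$ is the asserted multiple of $B$.

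In the converse, ``feed this into the absorption identities of Proposition \ref{idem2} together with the Bisch--Jones representation of $B$'' is not an argument: neither ingredient appears in the paper's proof, and the Bisch--Jones picture of $B$ says nothing about $q$. The missing step is again a one-line sandwich. With $p_1=\mathcal{R}(B*q)$, note $\overline{B*q}=\overline{q}*B$, so by Lemma \ref{ran}, Lemma \ref{preceq} and the hypothesis $q*\overline{q}\preceq B$ one gets $p_1*\overline{p_1}\preceq B*(q*\overline{q})*B\preceq B*B*B\sim B$, whence $\mathcal{S}(p_1*\overline{p_1})\leq tr_2(B)$; Theorem \ref{lowerb} then gives $tr_2(B)\leq\mathcal{S}(B*q)=tr_2(p_1)\leq\mathcal{S}(p_1*\overline{p_1})\leq tr_2(B)$, and its equality case makes $\frac{\delta}{tr_2(q)}B*q$ a projection, i.e.\ $p_1$ is a right shift containing $q$ (your verification that it then satisfies the right-shift axioms is correct). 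Your closing instinct --- that the equality cases of the sum set estimate, not counting, must do the work --- is exactly right, but the two exact size computations you yourself flag as the main obstacles are the proof; as written the proposal is a correct plan with its load-bearing steps left open.
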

\begin{proof}
Suppose that $\mathcal{R}(q*\overline{q})\leq B$.
Let $p_1=\mathcal{R}(B*q)$. Then $q\leq p_1$.
We shall show that $p_1$ is a right shift of $B$.
Note that
$$p_1*\overline{p_1}\sim B*q*\overline{B*q}=B*q*\overline{q}*B\sim B.$$
We then have $\mathcal{S}(p_1*\overline{p_1})=tr_2(B)$.
By Theorem \ref{lowerb}, we have
$$tr_2(B)\leq tr_2(p_1)\leq \mathcal{S}(p_1*\overline{p_1})=tr_2(B).$$
Therefore $tr_2(p_1)=tr_2(B)$.
By Theorem \ref{lowerb} again, we obtain $\frac{\delta}{tr_2(q)}B*q$ is a projection and $p_1=\frac{\delta}{tr_2(q)}B*q$.
Now $B*p_1=\frac{tr_2(B)}{\delta}p_1$ and $p_1$ is a right shift of $B$.

Suppose $q$ is a right subshift of $B$.
Let $p_1$ be the right subshift of $B$ such that $q\leq p_1$.
Then by Theorem 6.11 in \cite{JLW}, we have
$$q*\overline{q}\leq p_1*\overline{p_1}=\frac{tr_2(B)}{\delta}B$$
i.e. $\mathcal{R}(q*\overline{q})\leq B$.
\end{proof}

\begin{corollary}\label{sub2}
Suppose $\mathscr{P}$ is an irreducible subfactor planar algebra.
Let $p$ be a projection and $B$ a biprojection in $\mathscr{P}_{2,\pm}$.
Then $\mathcal{R}(p*B)$ is a left shift of $B$ if and only if $\mathcal{R}(\overline{p}*p)\leq B$ and $\mathcal{R}(B*p)$ is a right shift of $B$ if and only if $\mathcal{R}(p*\overline{p})\leq B$ .
\end{corollary}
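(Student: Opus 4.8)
The plan is to deduce both equivalences directly from Proposition \ref{subshift}, the only genuinely new ingredient being the elementary observation that a projection $p$ is always dominated by the range projections $\mathcal{R}(B*p)$ and $\mathcal{R}(p*B)$. I will spell out the statement about $\mathcal{R}(B*p)$ and $\mathcal{R}(p*\overline{p})$; the statement about $\mathcal{R}(p*B)$ and $\mathcal{R}(\overline{p}*p)$ then follows verbatim after interchanging left and right throughout, using the left-subshift half of Proposition \ref{subshift}.

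For the direction $\mathcal{R}(p*\overline{p})\le B \Rightarrow \mathcal{R}(B*p)$ is a right shift of $B$, I would simply quote the construction inside the proof of Proposition \ref{subshift}: under the hypothesis $\mathcal{R}(p*\overline{p})\le B$, that proof exhibits the concrete projection $p_1=\mathcal{R}(B*p)$ and shows it is a right shift of $B$. So this direction needs nothing beyond reusing that argument with $q=p$.

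For the converse, assume $\mathcal{R}(B*p)$ is a right shift of $B$. The key step is to establish $p\le \mathcal{R}(B*p)$; once this holds, $p$ is a right subshift of $B$ by definition, and Proposition \ref{subshift} immediately yields $\mathcal{R}(p*\overline{p})\le B$. To get $p\le\mathcal{R}(B*p)$ I would use that every biprojection dominates the Jones projection, $e_1\le B$, hence $e_1\preceq B$; combining this with $p\preceq p$ and Lemma \ref{preceq} gives $e_1*p\preceq B*p$, that is $\mathcal{R}(e_1*p)\le\mathcal{R}(B*p)$. Since $e_1$ is the convolution unit up to a positive scalar (indeed $\mathfrak{F}(e_1)$ is a multiple of the identity, so $\mathfrak{F}(e_1*p)=\mathfrak{F}(e_1)\mathfrak{F}(p)$ is a positive multiple of $\mathfrak{F}(p)$), we have $e_1*p\sim p$, whence $\mathcal{R}(e_1*p)=p$ and therefore $p\le\mathcal{R}(B*p)$.

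The main obstacle is precisely this monotonicity step: it rests on the two structural facts $e_1\le B$ and $e_1*p\sim p$, which are what let me compare $B*p$ against $p$ through the convolution unit, independently of the conclusion I am trying to reach. Everything else is bookkeeping on top of Proposition \ref{subshift} and its proof, and the second pair of equivalences is obtained by the mirror argument, replacing $B*p$, right shifts and $\mathcal{R}(p*\overline{p})$ by $p*B$, left shifts and $\mathcal{R}(\overline{p}*p)$.
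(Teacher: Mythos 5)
Your proof is correct and matches the paper's intent: the paper states this as an unproved corollary of Proposition \ref{subshift}, and both directions are exactly as you describe — one is the explicit construction of $p_1=\mathcal{R}(B*q)$ inside that proposition's proof, and the other reduces to the subshift criterion once one knows $p\le\mathcal{R}(B*p)$. Your justification of that last containment via $e_1\le B$, $e_1*p\sim p$ and Lemma \ref{preceq} correctly fills in the step the paper leaves implicit (it asserts $q\le p_1$ without comment in the proof of Proposition \ref{subshift}).
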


Recall that for projections $p,q$ in $\mathscr{P}_{2,\pm}$, $B_1(q)$ is the biprojection generated by $q*\overline{q}$ and $B_2(p)$ is the spectral projection of $\overline{p}*p$ corresponding to $\frac{tr_2(p)}{\delta}$.

\begin{theorem}\label{lowerm}
Suppose $\mathscr{P}$ is an irreducible subfactor planar algebra.
Let $p,q$ be projections in $\mathscr{P}_{2,\pm}$.
Then the following are equivalent:
\begin{itemize}
\item[(1)] $\mathcal{S}(p*q))=tr_2(p)$;
\item[(2)] $\mathcal{S}(p*(q*\overline{q})^{*(m)}*q^{*(j)})=tr_2(p)$, for some $m\geq 0$, $j\in\{0,1\}$, $m+j>0$, where $q^{*(0)}=e_1$;
\item[(3)] $\mathcal{S}(p*(q*\overline{q})^{*(m)}*q^{*(j)})=tr_2(p)$, for any $m\geq 0$, $j\in\{0,1\}$, $m+j>0$, where $q^{*(0)}=e_1$;
\item[(4)] there exists a biprojection $B$ in $\mathscr{P}_{2,\pm}$ such that $q$ is a right subshift of $B$ and $p=\mathcal{R}(x*B)$ for some $x>0$;
\item[(5)] $B_1(p)\leq B_2(q)$.
\end{itemize}
\end{theorem}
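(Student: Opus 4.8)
The plan is to collapse all five conditions onto a single containment of biprojections, organized around the identification of $B_2(p)$ (the top spectral projection of $\overline p*p$) with the right-absorbing support $\mathrm{RAS}(p)$ of $p$. The key lemma I would prove first is: for a biprojection $B$ one has $\mathcal S(p*B)=tr_2(p)$ if and only if $B\le B_2(p)$, and in that case $p*\frac{\delta}{tr_2(B)}B=p$. One direction uses the equality case of Theorem \ref{lowerb}, which forces $\frac{\delta}{tr_2(B)}p*B$ to be a projection necessarily equal to $p$; convolving $\overline p*p$ on the right by $\frac{\delta}{tr_2(B)}B$ then places $\overline p*p$ in the absorbing algebra $\mathfrak A$ of Proposition \ref{alg}, which pins $B$ below the top spectral projection $B_2(p)$. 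The reverse direction uses Proposition \ref{idem2} to propagate the absorption $p*\frac{\delta}{tr_2(B_2(p))}B_2(p)=p$ down to every $B\le B_2(p)$. This gives $B_2(p)=\mathrm{RAS}(p)$.

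Next I would upgrade the key lemma to arbitrary positive $y$: since $y\preceq\mathcal R(y)\preceq y$, Lemma \ref{preceq} gives $p*y\sim p*\mathcal R(y)$, so $\mathcal S(p*y)=\mathcal S(p*\mathcal R(y))$, while $B_1(y)=B_1(\mathcal R(y))$ by the corollary to Lemma \ref{preceq}. Thus it suffices to treat projections, for which the target equivalence is $\mathcal S(p*q)=tr_2(p)$ iff $\mathcal R(q*\overline q)\le B_2(p)$ iff $B_1(q)\le B_2(p)$, the latter two being tied by Proposition \ref{subshift}; this is condition $(5)$. The backward implication is clean: $B_1(q)\le B_2(p)$ produces, via Proposition \ref{subshift}, a right shift $B_g$ of $B:=B_2(p)$ with $q\le B_g$, so $p*q\preceq p*B_g$ by Lemma \ref{preceq}. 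Using the shift identity $B_g*\overline{B_g}=\frac{tr_2(B)}{\delta}B$ (as in the proof of Proposition \ref{subshift}) together with $p*\frac{\delta}{tr_2(B)}B=p$, one gets $(p*B_g)*\overline{B_g}\sim p$, whence $tr_2(p)=\mathcal S((p*B_g)*\overline{B_g})\ge\mathcal S(p*B_g)\ge tr_2(p)$ by Theorem \ref{lowerb}, so $\mathcal S(p*B_g)=tr_2(p)$ and hence $\mathcal S(p*q)=tr_2(p)$.

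The forward implication $(1)\Rightarrow(5)$ is the heart of the argument and the step I expect to be the main obstacle. From Theorem \ref{lowerb}, condition $(1)$ makes $p'=\frac{\delta}{tr_2(q)}p*q$ a projection with $tr_2(p')=tr_2(p)$. I would prove $\mathcal R(p*q*\overline q)=p$: the inclusion $p\le\mathcal R(p*q*\overline q)$ follows from $e_1\preceq q*\overline q$ and Lemma \ref{preceq} (convolution by $e_1$ preserves ranges), while the reverse inequality $\mathcal S(p*q*\overline q)\le tr_2(p)$ I would obtain by showing $p'*\overline q=\frac{tr_2(q)}{\delta}p$ through a trace computation with Lemma \ref{permu}, exactly parallel to the orthogonality argument $tr_2((x-y)^*(x-y))=0$ in Proposition \ref{idem2}. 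Once $\mathcal R(p*q*\overline q)=p$ holds, iterating shows $p$ right-absorbs each $(q*\overline q)^{*(k)}$, hence the generated biprojection $B_1(q)$, so $B_1(q)\le B_2(p)$ by the key lemma. The delicate point is the identity $p'*\overline q=\frac{tr_2(q)}{\delta}p$, where non-commutativity blocks the naive group argument (that right convolution by an element is a support-preserving bijection), and everything must be extracted from the equality cases of Theorem \ref{lowerb} via Lemma \ref{permu}.

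Finally, the remaining conditions fold in cheaply. For $(4)\Leftrightarrow(5)$: given $B$ as in $(4)$, Proposition \ref{subshift} gives $\mathcal R(q*\overline q)\le B$, so $B_1(q)\le B$, while $p=\mathcal R(x*B)$ forces $p*\frac{\delta}{tr_2(B)}B=p$ (as in the proof of Proposition \ref{alg}), so $B\le B_2(p)$ by the key lemma and $B_1(q)\le B_2(p)$; conversely $(5)$ yields $(4)$ with $B=B_2(p)$ and $x=p$. For $(2)$ and $(3)$, set $y_{m,j}=(q*\overline q)^{*(m)}*q^{*(j)}$; by the master equivalence of the second paragraph, $\mathcal S(p*y_{m,j})=tr_2(p)$ iff $B_1(y_{m,j})\le B_2(p)$, so it remains to verify $B_1(y_{m,j})=B_1(q)$ for every admissible $(m,j)$ with $m+j>0$. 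I would prove this by showing the two biprojections dominate each other's generators, using Lemma \ref{ran}, Lemma \ref{preceq}, and the fact that a biprojection absorbs its own convolution powers. Then $(1)$, $(2)$, $(3)$, $(5)$ are all equivalent to $B_1(q)\le B_2(p)$, the implication $(3)\Rightarrow(2)$ is immediate, and the proof closes.
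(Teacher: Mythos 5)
Your proposal is correct in substance and its engine is the same as the paper's: the equality case of Theorem \ref{lowerb} turns condition (1) into the statement that $p_1=\frac{\delta}{tr_2(q)}p*q$ is a projection of trace $tr_2(p)$, a trace computation with Lemma \ref{permu} shows $p_1*\overline{q}$ is supported under $p$, and a second application of Theorem \ref{lowerb} forces $p*(q*\overline{q})$ to be a multiple of $p$, after which one passes to the generated biprojection $B_1(q)$ via Proposition \ref{Prop:RAS}. What differs is the organization. The paper proves the cycle $(1)\Rightarrow(4)\Rightarrow(3)\Rightarrow(2)\Rightarrow(1)$ and attaches $(5)$ by $(4)\Leftrightarrow(5)$ through Propositions \ref{subshift} and \ref{sub2}; in particular $(4)\Rightarrow(3)$ is handled directly by Proposition \ref{idem2} (absorption of any element supported under $B$) and $(2)\Rightarrow(1)$ by the monotonicity $\mathcal{S}(p*(q*\overline{q})^{*(m)}*q^{*(j)})\geq\mathcal{S}(p*q)$ from Theorem \ref{lowerb}. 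You instead make condition $(5)$ the hub, which requires the extra lemma $B_1\bigl((q*\overline{q})^{*(m)}*q^{*(j)}\bigr)=B_1(q)$; that lemma is true (both biprojections must contain $\mathcal{R}(q*\overline{q})$, using $e_1\preceq q*\overline{q}$ and closure of biprojections under coproduct) but is work the paper avoids. Your explicit identification of $B_2(p)$ with the right-absorbing support of $p$ is a genuine clarification: the paper leaves this implicit in its one-line appeal to Corollary \ref{sub2} for $(4)\Leftrightarrow(5)$. You are also right to read $(5)$ as $B_1(q)\leq B_2(p)$; the printed statement has the letters transposed relative to the definitions recalled just before the theorem.

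One soft spot: for the target identity $p_1*\overline{q}=\frac{tr_2(q)}{\delta}p$ you propose an orthogonality computation $tr_2\bigl((x-y)^*(x-y)\bigr)=0$ in the style of Proposition \ref{idem2}, but the cross term $tr_2\bigl((p_1*\overline{q})^2\bigr)$ is not directly computable from the hypotheses. The paper's route avoids this: from $tr_2\bigl((p_1*\overline{q})p\bigr)=tr_2(p_1*\overline{q})=\|p_1*\overline{q}\|_1\|p\|_\infty$ the equality case of H\"older (Proposition \ref{holder}) gives $\mathcal{R}(p_1*\overline{q})\leq p$, and then the sum set estimate pins $\mathcal{S}(p_1*\overline{q})=tr_2(p)$ and upgrades the containment to the desired identity. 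You should substitute that H\"older-equality step for the orthogonality expansion; with that replacement the argument closes.
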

\begin{proof}
$(1)\Rightarrow (4)$: By Theorem \ref{lowerb}, we have that $\frac{\delta}{tr_2(q)}p*q=p_1$ is a projection.
Since
\begin{eqnarray*}
tr_2((p_1*\overline{q})p)&=&tr_2((p*q)p_1)\\
&=&\frac{tr_2(p)tr_2(q)}{\delta}=tr_2(p_1*\overline{q}),
\end{eqnarray*}
by Proposition \ref{holder}, we have that $\mathcal{R}(p_1*\overline{q})\leq p$.
Applying Theorem \ref{lowerb}, we have that
$$tr_2(p)\geq \mathcal{S}(p_1*\overline{q})\geq tr_2(p_1)=tr_2(p)$$
and hence
$$\frac{\delta}{tr_2(q)}p_1*\overline{q}=p, \quad \text{ i.e. } p*(\frac{\delta^2}{tr_2(q)^2}q*\overline{q})=p.$$

Take $B=B_1(q)$. Then by Proposition \ref{Prop:RAS}, we have $p*\frac{\delta}{tr_2(B)}B=p$. Hence $q$ is a right subshift of $B$ and $p=\mathcal{R}(p*B)$.

$(4)\Rightarrow (3)$: Let $p=\mathcal{R}(x*B)$.
Then $p*\frac{\delta}{tr_2(B)}B=p$.
Note that
$$\mathcal{R}((q*\overline{q})^{*(m+j)})\leq \mathcal{R}(B^{*(m+j)})=B.$$

Then by Proposition \ref{idem2}, $p*(\frac{\delta^{m+j}}{tr_2(q)^{m+j}}(q*\overline{q})^{m+j})=p$.
Hence
$$\mathcal{R}(\mathcal{R}(p*(q*\overline{q})^{*(m)}*q^{*(j)})*\overline{q}^{*(j)})=p.$$
By Theorem \ref{lowerb} again, we obtain
\begin{eqnarray*}
tr_2(p)&\leq & \mathcal{S}(p*(q*\overline{q})^{*(m)}*q^{*(j)})\\
&\leq& \mathcal{S}(\mathcal{R}(p*(q*\overline{q})^{*(m)}*q^{*(j)})*\overline{q}^{*(j)})=tr_2(p),
\end{eqnarray*}
i.e.
$$\mathcal{S}(p*(q*\overline{q})^{*(m)}*q^{*(j)}))=tr_2(p).$$

$(3)\Rightarrow (2)$: It is obvious.

$(2)\Rightarrow (1)$: By Theorem \ref{lowerb}, we have that
$$tr_2(p)=\mathcal{S}(p*(q*\overline{q})^{*(m)}*q^{*(j)})\geq \mathcal{S}(p*q)\geq tr_2(p).$$
Hence $\mathcal{S}(p*q)=tr_2(p)$.

$(4)\Leftrightarrow (5)$: By Proposition \ref{subshift}, we have that $B_1(q)\leq B$ if and only if $q$ is a right subshift of $B$.
By Corollary \ref{sub2}, we have that $B\leq B_2(p)$ if and only if $p$ is $\mathcal{R}(x*B)$ for some $x>0$.
\end{proof}

We give sufficient conditions to attain the upper bound of the sum set estimate attained in Proposition \ref{upperb}.
It will be interesting to have a necessary and sufficient description.

\begin{proposition}\label{upperm}
Suppose $\mathscr{P}$ is a subfactor planar algebra.
Let $p,q$ be projections in $\mathscr{P}_{2,\pm}$.
For the following statements:
\begin{itemize}
\item[(1)] $(\overline{p}*p)(q*\overline{q})=\frac{tr_2(p)tr_2(q)}{\delta^2}e_1$.
\item[(2)] $\delta p*q$ is a projection.
\item[(3)] $\mathcal{S}(p*q))=tr_2(p)tr_2(q)$.
\end{itemize}
We have $(1)\Leftrightarrow (2)\Rightarrow (3)$.
\end{proposition}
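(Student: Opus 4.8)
The plan is to funnel both $(1)$ and $(2)$ through the single quantity $\|p*q\|_2^2$, by showing that each of them is equivalent to the sharp identity $\|p*q\|_2^2=\frac{tr_2(p)tr_2(q)}{\delta^2}$; the implication $(2)\Rightarrow(3)$ is then an immediate computation. For $(2)\Rightarrow(3)$: by Theorem~\ref{Thm:Schur} we have $p*q\geq0$, so if $\delta\,p*q$ is a projection then $\mathcal{R}(p*q)=\delta\,p*q$ and $\mathcal{S}(p*q)=tr_2(\delta\,p*q)=\delta\,tr_2(p*q)=tr_2(p)tr_2(q)$, using $tr_2(p*q)=\frac1\delta tr_2(p)tr_2(q)$ as in the proof of Theorem~\ref{lowerb}.

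The main step toward $(1)\Leftrightarrow(2)$ is the trace identity $\|p*q\|_2^2=tr_2\big((\overline p*p)(q*\overline q)\big)$. Since $p*q$ is positive, $\|p*q\|_2^2=tr_2\big((p*q)(p*q)\big)$, and a careful application of Lemma~\ref{permu} rewrites both $tr_2\big((p*q)(p*q)\big)$ and $tr_2\big((\overline p*p)(q*\overline q)\big)$ as the common expression $tr_2\big((q*\overline q*\overline p)\,\overline p\big)$, which gives the identity. I would then isolate the $e_1$-component of the right-hand side. Since $B_2(p)$ is a biprojection it dominates the Jones projection, $e_1\leq B_2(p)$, and $B_2(p)$ is the spectral projection of $\overline p*p$ for its largest eigenvalue $\frac{tr_2(p)}{\delta}$, so $(\overline p*p)e_1=\frac{tr_2(p)}{\delta}e_1$; similarly (using $B_2(\overline q)$) $e_1(q*\overline q)=\frac{tr_2(q)}{\delta}e_1$. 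In particular $e_1$ commutes with both $\overline p*p$ and $q*\overline q$, so setting $f=1-e_1$ we obtain
\[
(\overline p*p)(q*\overline q)=\frac{tr_2(p)tr_2(q)}{\delta^2}\,e_1+(\overline p*p)\,f\,(q*\overline q),
\]
and $tr_2\big((\overline p*p)f(q*\overline q)\big)=tr_2\big((f(\overline p*p)f)(f(q*\overline q)f)\big)\geq0$, being the trace of a product of two positive operators, with equality iff $(\overline p*p)f(q*\overline q)=0$. Taking traces and using $tr_2(e_1)=1$ gives $\|p*q\|_2^2=\frac{tr_2(p)tr_2(q)}{\delta^2}+tr_2\big((\overline p*p)f(q*\overline q)\big)$, so the equality $\|p*q\|_2^2=\frac{tr_2(p)tr_2(q)}{\delta^2}$ holds if and only if $(\overline p*p)f(q*\overline q)=0$, that is, if and only if $(1)$ holds.

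For $(2)$ I would show the same equality characterizes it. Cauchy--Schwarz (Proposition~\ref{holder} with exponents $2,2$, applied to $p*q$ and $\mathcal{R}(p*q)$) gives $tr_2(p*q)^2\leq\|p*q\|_2^2\,\mathcal{S}(p*q)$, and Proposition~\ref{upperb} gives $\mathcal{S}(p*q)\leq tr_2(p)tr_2(q)$; combined with $tr_2(p*q)=\frac1\delta tr_2(p)tr_2(q)$ these force $\|p*q\|_2^2\geq\frac{tr_2(p)tr_2(q)}{\delta^2}$. Equality requires equality in Cauchy--Schwarz, i.e.\ $p*q=\lambda\,\mathcal{R}(p*q)$, together with $\mathcal{S}(p*q)=tr_2(p)tr_2(q)$; then $\lambda=tr_2(p*q)/\mathcal{S}(p*q)=\frac1\delta$, so $\delta\,p*q=\mathcal{R}(p*q)$ is a projection, which is $(2)$. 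Conversely $(2)$ makes $p*q=\frac1\delta\mathcal{R}(p*q)$ and gives the equality directly. Chaining the two equivalences yields $(1)\Leftrightarrow(2)$.

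The part I expect to be most delicate is the trace identity together with the eigenvalue facts: tracking the contragredients through Lemma~\ref{permu} so that both traces collapse to the same expression $tr_2\big((q*\overline q*\overline p)\,\overline p\big)$, and justifying $(\overline p*p)e_1=\frac{tr_2(p)}{\delta}e_1$ from $e_1\leq B_2(p)$ and the definition of $B_2$. Once these are secured, the positivity-of-trace step and the Cauchy--Schwarz/upper-bound step are routine.
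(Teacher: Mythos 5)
Your argument is correct, and it reaches the equivalence $(1)\Leftrightarrow(2)$ by a genuinely different route from the paper for the direction $(1)\Rightarrow(2)$. The paper proves that implication diagrammatically: it shows a difference of two $4$-box diagrams vanishes by expanding $tr_2\bigl((X-Y)^*(X-Y)\bigr)$ and invoking Lemma \ref{permu}, and then caps off to obtain $(p*q)^2=\frac{1}{\delta}\,p*q$. You instead funnel both $(1)$ and $(2)$ through the single scalar condition $\|p*q\|_2^2=\frac{tr_2(p)tr_2(q)}{\delta^2}$: the equivalence of this with $(1)$ via the $e_1$-component decomposition is essentially the paper's own $(2)\Rightarrow(1)$ argument (the paper writes it as $tr_2\bigl((\overline{p}*p-\frac{tr_2(p)}{\delta}e_1)(q*\overline{q}-\frac{tr_2(q)}{\delta}e_1)\bigr)=0$ plus positivity, which is the same computation), while the equivalence with $(2)$ via equality in Cauchy--Schwarz combined with Proposition \ref{upperb} is new relative to the paper's proof, though it reuses the technique of Theorem \ref{lowerb}. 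Both proofs pivot on the identity $tr_2\bigl((p*q)(p*q)\bigr)=tr_2\bigl((\overline{p}*p)(q*\overline{q})\bigr)$ from Lemma \ref{permu}, and your eigenvalue facts $(\overline{p}*p)e_1=\frac{tr_2(p)}{\delta}e_1$ and $e_1(q*\overline{q})=\frac{tr_2(q)}{\delta}e_1$ are exactly the content of the remark preceding Proposition \ref{pyoung} (and self-adjointness of $\overline{p}*p$, from the Schur product theorem, upgrades the one-sided identity to commutation with $e_1$, as you need). What the two approaches buy: the paper's diagrammatic computation establishes a stronger $4$-box identity before capping, which is reusable elsewhere, whereas your argument stays entirely at the $2$-box level, avoids drawing any pictures, and exposes as a byproduct that each of $(1)$ and $(2)$ is equivalent to the sharp norm identity $\|p*q\|_2^2=\frac{tr_2(p)tr_2(q)}{\delta^2}$ --- a clean quantitative reformulation the paper does not state.
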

\begin{proof}
$(1)\Rightarrow (2)$.
We shall show that
$$
%\grc{cs1}-\frac{1}{\delta}\grb{cs3}=0
\raisebox{-1.6cm}{
\begin{tikzpicture}
\Gc{p}{q}
\begin{scope}[shift={(0,7/2)},yscale=-1,xscale=1]
\Gc{p}{q}
\end{scope}
\end{tikzpicture}}
-\frac{1}{\delta}
\raisebox{-.8cm}{
\begin{tikzpicture}
\foreach \x in {0,1/2}
{
\draw (\x,0)--+(0,1/2);
\draw (\x,1/2+5/6)--+(0,1/2);
}
\draw (-1/6,1/2) rectangle +(5/6,5/6);
\node at (1/4,1/2+5/12) {$p$};
\begin{scope}[shift={(1,0)},yscale=1,xscale=1]
\foreach \x in {0,1/2}
{
\draw (\x,0)--+(0,1/2);
\draw (\x,1/2+5/6)--+(0,1/2);
}
\draw (-1/6,1/2) rectangle +(5/6,5/6);
\node at (1/4,1/2+5/12) {$q$};
\end{scope}
\end{tikzpicture}}
=0
.$$
To see this we will take the square of the left hand side of the equation above first. Expanding the square, we have
\begin{equation}\label{sum0}
%\grd{cs4}-\frac{2}{\delta}\grc{cs1}+\frac{1}{\delta}\grb{cs3}.
\raisebox{-2.2cm}{
\begin{tikzpicture}
\Gc{p}{q}
\begin{scope}[shift={(0,9/2+1/3)},yscale=-1,xscale=1]
\Gc{p}{q}
\end{scope}
\begin{scope}[shift={(0,3/2)}]
\draw (0,0)--+(0,1/2);
\draw (0,1/2+5/6)--+(0,1/2);
\draw (1/2,1/2) arc (-180:0:1/4);
\draw (1/2,1/2+5/6) arc (180:0:1/4);
\draw (-1/6,1/2) rectangle +(5/6,5/6);
\node at (1/4,1/2+5/12) {$p$};
\end{scope}
\begin{scope}[shift={(3/2,3/2)},xscale=-1]
\draw (0,0)--+(0,1/2);
\draw (0,1/2+5/6)--+(0,1/2);
\draw (-1/6,1/2) rectangle +(5/6,5/6);
\node at (1/4,1/2+5/12) {$q$};
\end{scope}
\end{tikzpicture}}
-\frac{2}{\delta}
\raisebox{-1.6cm}{
\begin{tikzpicture}
\Gc{p}{q}
\begin{scope}[shift={(0,7/2)},yscale=-1,xscale=1]
\Gc{p}{q}
\end{scope}
\end{tikzpicture}}
+\frac{1}{\delta}
\raisebox{-.8cm}{
\begin{tikzpicture}
\foreach \x in {0,1/2}
{
\draw (\x,0)--+(0,1/2);
\draw (\x,1/2+5/6)--+(0,1/2);
}
\draw (-1/6,1/2) rectangle +(5/6,5/6);
\node at (1/4,1/2+5/12) {$p$};
\begin{scope}[shift={(1,0)},yscale=1,xscale=1]
\foreach \x in {0,1/2}
{
\draw (\x,0)--+(0,1/2);
\draw (\x,1/2+5/6)--+(0,1/2);
}
\draw (-1/6,1/2) rectangle +(5/6,5/6);
\node at (1/4,1/2+5/12) {$q$};
\end{scope}
\end{tikzpicture}}\;.
\end{equation}
Now the trace of (\ref{sum0}) is
$$tr_2((p*q)(p*q))-\frac{2}{\delta}tr_2(p*q)+\frac{1}{\delta^2}tr_2(p)tr_2(q).$$
By Lemma \ref{permu}, we have that
\begin{eqnarray*}
tr_2((p*q)(p*q))&=&tr_2((q*\overline{q}*\overline{p})\overline{p})=tr_2((\overline{p}*p)(q*\overline{q}))\\
&=&\frac{tr_2(p)tr_2(q)}{\delta^2}=\frac{tr_2(p*q)}{\delta}.
\end{eqnarray*}
Hence the trace of $(\ref{sum0})$ is $0$ , i.e.
$$%\grc{cs1}=\frac{1}{\delta}\grb{cs3}
\raisebox{-1.6cm}{
\begin{tikzpicture}
\Gc{p}{q}
\begin{scope}[shift={(0,7/2)},yscale=-1,xscale=1]
\Gc{p}{q}
\end{scope}
\end{tikzpicture}}
=\frac{1}{\delta}
\raisebox{-.8cm}{
\begin{tikzpicture}
\foreach \x in {0,1/2}
{
\draw (\x,0)--+(0,1/2);
\draw (\x,1/2+5/6)--+(0,1/2);
}
\draw (-1/6,1/2) rectangle +(5/6,5/6);
\node at (1/4,1/2+5/12) {$p$};
\begin{scope}[shift={(1,0)},yscale=1,xscale=1]
\foreach \x in {0,1/2}
{
\draw (\x,0)--+(0,1/2);
\draw (\x,1/2+5/6)--+(0,1/2);
}
\draw (-1/6,1/2) rectangle +(5/6,5/6);
\node at (1/4,1/2+5/12) {$q$};
\end{scope}
\end{tikzpicture}}\;.
$$
By adding a cap to the middle of the top and the bottom respectively, we have
that $(p*q)^2=\frac{1}{\delta}p*q$, i.e. $\delta p*q$ is a projection.

$(2)\Rightarrow (1)$. Since $\delta p*q$ is a projection, we have that
$$tr_2((\overline{p}*p)(q*\overline{q}))=tr_2((p*q)(p*q))=\frac{tr_2(p)tr_2(q)}{\delta^2}.$$
Reformulating it, we obtain that
$$tr_2((\overline{p}*p-\frac{tr_2(p)}{\delta}e_1)(q*\overline{q}-\frac{tr_2(q)}{\delta}e_1))=0.$$
Note that $\overline{p}*p\geq \frac{tr_2(p)}{\delta}\geq 0$ and $q*\overline{q}\geq \frac{tr_2(q)}{\delta}\geq 0$.
Therefore
$$(\overline{p}*p-\frac{tr_2(p)}{\delta}e_1)(q*\overline{q}-\frac{tr_2(q)}{\delta}e_1)=0,$$
i.e. $(\overline{p}*p)(q*\overline{q})=\frac{tr_2(p)tr_2(q)}{\delta^2}e_1$.

$(2)\Rightarrow (3)$. Since $\delta p*q$ is a projection, $\mathcal{R}(p*q)=\delta p*q$ and $\mathcal{S}(p*q)=\delta tr_2(p*q)=tr_2(p)tr_2(q)$.
\end{proof}

\begin{remark}
In Proposition \ref{upperm}, $(3)$ usually does not implies $(2)$.
For instance, for the $3$-permutation group  $S_3=\{e,(12),(13),(23),(123),(132)\}$. We consider the group subfactor planar algebra $\mathscr{P}^{S_3}$.
Suppose that $\mathscr{P}_{2,+}^{S_2}$ is the group algebra.
Denote by $L_g$ the left multiplication operator.
Let $e_1=\frac{1}{6}\sum_{g\in S_3}L_g,$
$$ p_1=\frac{1}{2}(1+L_{(23)})-e_1,\quad p_2=\frac{1}{2}(1+L_{(13)})-e_1, \quad p_3=\frac{1}{2}(1+L_{(12)})-e_1,$$
and $q=\frac{1}{3}(1+L_{(123)}+L_{(132)})-e_1$.
Then
$tr_2(e_1)=1, tr_2(q_j)=2,j=1,2,3, tr_2(q)=1,$ and
$$\overline{p_j}=p_j,\quad  p_j*p_j=\frac{2}{\sqrt{6}}e_1+\frac{1}{\sqrt{6}}p_j, \quad p_i*p_j=\frac{1}{\sqrt{6}}(\frac{3}{2}-p_i-p_j-e_1)$$
We have that $\mathcal{S}(p_i*p_j)=4=tr_2(p_i)tr_2(p_j)$, but
$$(p_i*p_i)(p_j*p_j)=\frac{tr_2(p_i)tr_2(p_j)}{6}e_1+\frac{1}{6}p_ip_j.$$
\end{remark}

\section{Extremal Pairs of Young's inequality} \label{Sec:Young}
%We have established the sum set estimate and the exact inverse sum set theorem for subfactor planar algebras. We use them to further study Young's inequality and the Hausdorff-Young inequality for subfactor planar algebras, which were proved in \cite{JLW}.

Young initiated the study of Young's inequality on $\mathbb{R}^n$ in 1912 \cite{Young}.
Beckner  proved the sharp Young's inequality for convolution on $\mathbb{R}^n$ and showed that the extremal pairs are Gaussian functions \cite{Beck}.
On unimodular locally compact groups, 
%Russo showed that extremal operators of the Hausdorff-Young inequality are translations of subcharacters \cite{Russo}.
Fournier characterized the extremal pairs of Young's inequality in terms of translations of subcharacters subject to a constraint
%, and recovered the result of Russo 
\cite{Four}.

In this section we characterize extremal pairs for Young's inequality for subfactors in terms of bi-shifts of biprojections.
We begin with the case that the pair of operators are projections.

\begin{proposition}\label{1pyoung}
Suppose $\mathscr{P}$ is an irreducible subfactor planar algebra.
Let $p,q$ be projections in $\mathscr{P}_{2,\pm}$.
Then the following are equivalent:
\begin{itemize}
\item[(1)] $\|p*q\|_t=\frac{1}{\delta}\|p\|_t\|q\|_1$ for some $1<t<\infty$;
\item[(2)] $\|p*q\|_t=\frac{1}{\delta}\|p\|_t\|q\|_1$ for any $1\leq t\leq \infty$;
\item[(3)]$\mathcal{S}(p*q)=tr_2(p)$.
\end{itemize}
\end{proposition}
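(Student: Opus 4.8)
The plan is to prove the cycle of implications $(2)\Rightarrow(1)\Rightarrow(3)\Rightarrow(2)$. The implication $(2)\Rightarrow(1)$ is immediate, since (2) holds for all exponents and in particular for a single $t\in(1,\infty)$. Throughout I would set $x:=p*q$, which is strictly positive by Theorem \ref{Thm:Schur} (the Schur product theorem), and record two elementary facts: $\|x\|_1=tr_2(x)=\frac{1}{\delta}tr_2(p)tr_2(q)$, and, from Proposition \ref{young} (Young's inequality) with $r=\infty$, $s=1$, $t=\infty$, the bound $\|x\|_\infty\le \frac{1}{\delta}\|p\|_\infty\|q\|_1=\frac{tr_2(q)}{\delta}$.

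For $(1)\Rightarrow(3)$, which I expect to be the crux, the idea is that the $s=1$ case of Young's inequality factors through the trivial $L^\infty$ estimate above together with the log-convexity of the $L^t$-norms. Concretely, for the positive operator $x$ one has the interpolation inequality $\|x\|_t\le \|x\|_1^{1/t}\|x\|_\infty^{1-1/t}$, coming from $tr_2(x^t)=tr_2(x\,x^{t-1})\le \|x\|_\infty^{t-1}tr_2(x)$. Chaining this with the $L^\infty$ bound gives $\|x\|_t\le \|x\|_1^{1/t}\big(\tfrac{tr_2(q)}{\delta}\big)^{1-1/t}=\frac{1}{\delta}\|p\|_t\|q\|_1$, so the hypothesis (1) forces equality in both steps. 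The main obstacle is the equality analysis of the interpolation step: since $x\ge 0$ and $t>1$, equality in $tr_2(x\,x^{t-1})\le \|x\|_\infty^{t-1}tr_2(x)$ forces $x^{t-1}=\|x\|_\infty^{t-1}$ on the range of $x$, i.e. $x=\|x\|_\infty\,\mathcal{R}(x)$ is a positive multiple of the projection $\mathcal{R}(x)$. Writing $x=c\,\mathcal{R}(x)$ with $c=\|x\|_\infty$, the identities $c\,\mathcal{S}(x)=\|x\|_1=\frac{1}{\delta}tr_2(p)tr_2(q)$ and $c\,\mathcal{S}(x)^{1/t}=\|x\|_t=\frac{tr_2(q)}{\delta}tr_2(p)^{1/t}$ divide to give $\mathcal{S}(x)^{1-1/t}=tr_2(p)^{1-1/t}$; since $t\ne 1$ this yields $\mathcal{S}(p*q)=tr_2(p)$, which is (3). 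It is exactly here that positivity of $p*q$ is indispensable.

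For $(3)\Rightarrow(2)$, I would invoke the equality clause of Theorem \ref{lowerb}. That clause states $\mathcal{S}(p*q)=tr_2(q)$ iff $\frac{\delta}{tr_2(p)}p*q$ is a projection; applying it to the contragredient pair $(\overline{q},\overline{p})$ and using $\mathcal{S}(p*q)=\mathcal{S}(\overline{q}*\overline{p})$, $tr_2(\overline{p})=tr_2(p)$, together with the fact that $\overline{q}*\overline{p}=\overline{p*q}$ is a multiple of a projection exactly when $p*q$ is, one converts (3) into the statement that $\frac{\delta}{tr_2(q)}p*q$ is a projection. Thus $p*q=\frac{tr_2(q)}{\delta}P$ with $P=\mathcal{R}(p*q)$ a projection of trace $tr_2(P)=\frac{\delta}{tr_2(q)}tr_2(p*q)=tr_2(p)$. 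A direct computation then gives $\|p*q\|_t=\frac{tr_2(q)}{\delta}tr_2(P)^{1/t}=\frac{1}{\delta}\|p\|_t\|q\|_1$ for every $1\le t\le\infty$ (using $\|P\|_t=tr_2(P)^{1/t}$ for $t<\infty$ and $\|P\|_\infty=1$), which is (2).

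The structural point worth emphasizing is that, unlike the commutative treatment where the Young equality case is established independently, here the two endpoint estimates (the exact $L^1$ value and the $L^\infty$ bound from Young) already pin down the extremal shape, while the sum set identity $\mathcal{S}(p*q)=tr_2(p)$ is precisely the bridge (via Theorem \ref{lowerb}) between Young-type equality and the projection structure; this explains why the sum set estimate is the natural tool for the extremizer problem.
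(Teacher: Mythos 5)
Your proposal is correct and follows essentially the same route as the paper: for $(1)\Rightarrow(3)$ the paper also plays the exact $L^1$ value against the $L^\infty$ bound $\|p*q\|_\infty\leq\frac{1}{\delta}tr_2(q)$ to force $p*q$ to be a multiple of its range projection (phrased there via the spectral decomposition $\frac{\delta}{tr_2(q)}p*q=p_1+\sum_j\lambda_jp_j$ and comparing $\sum\lambda_j^t tr_2(p_j)$ with $\sum\lambda_j tr_2(p_j)$, which is the same computation as your equality analysis of $tr_2(x\,x^{t-1})\leq\|x\|_\infty^{t-1}tr_2(x)$), and for $(3)\Rightarrow(2)$ it likewise invokes the equality clause of Theorem \ref{lowerb} to conclude that $\frac{\delta}{tr_2(q)}p*q$ is a projection and then computes the norms. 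Your explicit justification of the contragredient symmetry step in $(3)\Rightarrow(2)$ is a detail the paper leaves implicit, but the substance is identical.
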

\begin{proof}
$(1)\Rightarrow (3)$:
Suppose that $\|p*q\|_t=\frac{1}{\delta}\|p\|_t\|q\|_1$ for some $1<t<\infty$.
Note that $\|p*q\|_\infty\leq \frac{1}{\delta}tr_2(q)$.
By the spectral decomposition, we have
$$\frac{\delta}{tr_2(q)}p*q=p_1+\sum_{j=2}^m\lambda_j p_j,$$
where $\{p_j\}$ is an orthogonal family of projections and $0\leq \lambda_j< 1$ are distinct and $m\geq 1$ is a fixed integer.
By assumption, we have that
$$tr_2(p_1)+\sum_{j=2}\lambda_j^ttr_2(p_j)=tr_2(p).$$
Note that $\|p*q\|_1=\frac{1}{\delta}\|p\|_1\|q\|_1$, i.e.
$$tr_2(p_1)+\sum_{j=2}\lambda_jtr_2(p_j)=tr_2(p).$$
We obtain that $tr_2(p_1)=tr_2(p)$ and $\lambda_j=0$ for $j\geq 2$, i.e. $\mathcal{S}(p*q)=tr_2(p_1)=tr_2(p)$.

$(3)\Rightarrow (2)$:
Suppose that $\mathcal{S}(p*q)=tr_2(p)$.
By Theorem \ref{lowerb}, we have that $\frac{\delta}{tr_2(q)}p*q$ is a projection.
Hence for any $1\leq t\leq \infty$
\begin{eqnarray*}
\frac{\delta}{tr_2(q)}\|p*q\|_t&=&\|\frac{\delta}{tr_2(q)}p*q\|_t=\|\mathcal{R}(p*q))\|_t\\
&=&\mathcal{S}(p*q)^{1/t}=tr_2(p)^{1/t}=\|p\|_t,
\end{eqnarray*}
i.e. $\|p*q\|_t=\frac{1}{\delta}\|p\|_t\|q\|_1$.

$(2)\Rightarrow (1)$: It is obvious.
\end{proof}

\begin{remark}
For the case that $r=\infty$ in Young's inequality, we have the following results.
For any $x$ in $\mathscr{P}_{2,\pm}$, we have that
$$\frac{1}{\delta}tr_2(xx^*)\leq \|x*\overline{x^*}\|_\infty\leq \frac{1}{\delta}\|x\|_2\|\overline{x^*}\|_2=\frac{1}{\delta}\|x\|_2^2,$$
since $\|x*\overline{x^*}\|_\infty\geq \|(x*\overline{x^*})e_1\|_\infty=\frac{1}{\delta}tr_2(xx^*)$.
Hence $\|x*\overline{x^*}\|_\infty=\frac{1}{\delta}\|x\|_2^2$ is true for any $x$ in $\mathscr{P}_{2,\pm}$.

In general, if $\|p*q\|_\infty=\frac{1}{\delta}\|p\|_{t}\|q\|_{s}$ for projections $p,q\in\mathscr{P}_{2,\pm}$, $\frac{1}{t}+\frac{1}{s}=1$, $t,s>1$ , then we have $p*\overline{p}=\overline{q}*q$.
\end{remark}

\begin{proposition}\label{pyoung}
Suppose $\mathscr{P}$ is an irreducible subfactor planar algebra.
Let $p,q$ be projections in $\mathscr{P}_{2,\pm}$.
Then the following are equivalent:
\begin{itemize}
\item[(1)] $\|p*q\|_r=\frac{1}{\delta}\|p\|_{t}\|q\|_{s}$ for some $1<r,t,s<\infty$ such that $\frac{1}{t}+\frac{1}{s}=1+\frac{1}{r}$;
\item[(2)] $\|p*q\|_r=\frac{1}{\delta}\|p\|_{t}\|q\|_{s}$ for any $1\leq r,t,s\leq \infty$ such that $\frac{1}{t}+\frac{1}{s}=1+\frac{1}{r}$;
\item[(3)] there exists a biprojection $B$ in $\mathscr{P}_{2,\pm}$ such that $p$ is a left shift of $B$ and $q$ is a right shift of $B$.
\end{itemize}
\end{proposition}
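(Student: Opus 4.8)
The plan is to establish $(2)\Rightarrow(1)$ trivially, $(3)\Rightarrow(2)$ by a direct norm computation, and $(1)\Rightarrow(3)$ by combining the exponent geometry of Young's inequality with the exact inverse sum set theorem (Theorem \ref{lowerm}). All three exponent conditions reduce to the same rigid configuration, so the content is in $(1)\Rightarrow(3)$.

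For $(3)\Rightarrow(2)$, suppose $p$ is a left shift and $q$ a right shift of a common biprojection $B$. Then $tr_2(p)=tr_2(q)=tr_2(B)$, the projection $q$ is in particular a right subshift of $B$, and $p*B=\frac{tr_2(B)}{\delta}p$ gives $p=\mathcal{R}(p*B)$. By Theorem \ref{lowerm} we get $\mathcal{S}(p*q)=tr_2(p)$, so by Theorem \ref{lowerb} the element $\frac{\delta}{tr_2(B)}p*q$ is a projection of trace $tr_2(B)$. Since every $\|\cdot\|_r$-norm of a projection $e$ is $tr_2(e)^{1/r}$, and $\frac{1}{t}+\frac{1}{s}=1+\frac{1}{r}$, both $\|p*q\|_r$ and $\frac{1}{\delta}\|p\|_t\|q\|_s$ equal $\frac{1}{\delta}tr_2(B)^{1+1/r}$ for every admissible $1\leq r,t,s\leq\infty$.

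For $(1)\Rightarrow(3)$ the first step, and the main obstacle, is to show $tr_2(p)=tr_2(q)$. Fix the exponent $r$ at which equality is assumed and vary $(t,s)$ along the line $\frac{1}{t}+\frac{1}{s}=1+\frac{1}{r}=:K$. For projections the Young bound reads $\|p*q\|_r\leq\frac{1}{\delta}tr_2(p)^{1/t}tr_2(q)^{1/s}=\frac{1}{\delta}tr_2(q)^{K}(tr_2(p)/tr_2(q))^{1/t}$, a log-linear, hence monotone, function of $1/t$ on the admissible interval $1/t\in[1/r,1]$, whose endpoints are $(t,s)=(r,1)$ and $(1,r)$. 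Young's inequality holds at every point of this line by Proposition \ref{young}, and the assumed interior point $(t_0,s_0)$ has $1/t_0\in(1/r,1)$. If $tr_2(p)\neq tr_2(q)$ the bound is strictly monotone, so its value at the minimizing endpoint is strictly below its value at $(t_0,s_0)$, which by hypothesis equals $\|p*q\|_r$; this contradicts Young's inequality at that endpoint. Hence $tr_2(p)=tr_2(q)=:\tau$, the bound is constant along the line, and equality propagates to the endpoint, giving $\|p*q\|_r=\frac{1}{\delta}\|p\|_r\|q\|_1$.

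Now Proposition \ref{1pyoung} gives $\mathcal{S}(p*q)=tr_2(p)=\tau$, so Theorem \ref{lowerm} supplies a biprojection $B=B_1(q)$ with $q$ a right subshift of $B$ and $p=\mathcal{R}(x*B)$ for some $x>0$, whence $p*B=\frac{tr_2(B)}{\delta}p$. The final step upgrades these subshifts to genuine shifts by a trace squeeze: from $p*B=\frac{tr_2(B)}{\delta}p$ we get $\mathcal{S}(p*B)=tr_2(p)$, so Theorem \ref{lowerb} forces $tr_2(p)\geq tr_2(B)$, while $q$ being a right subshift gives $tr_2(q)\leq tr_2(B)$. Since $tr_2(p)=tr_2(q)=\tau$, all three traces coincide, so $q$ is a full right shift and $p$ a full left shift of $B$, which is exactly $(3)$. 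I expect the equal-trace reduction to be the genuine difficulty: a pointwise Hölder/duality analysis of a single equality does not detect it, and one must exploit that Young's inequality is available simultaneously at all exponents along the constraint line, so that strict monotonicity of the bound excludes equality at an interior exponent unless the two traces already agree.
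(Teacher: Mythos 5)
Your proof is correct and follows essentially the same route as the paper: equality of $tr_2(p)$ and $tr_2(q)$ is forced by comparing against Young's inequality at the endpoint exponents $(r,1)$ and $(1,r)$ (your monotonicity phrasing is just a repackaging of the paper's two explicit inequalities), after which Proposition \ref{1pyoung} and Theorem \ref{lowerm} produce the biprojection and the trace identity upgrades the subshifts to shifts. The only cosmetic difference is in $(3)\Rightarrow(2)$, where the paper derives $q*\overline{q}=\frac{tr_2(B)}{\delta}B$ via Corollary \ref{sub2} and squeezes $\mathcal{S}(p*q)$, while you route through Theorem \ref{lowerm}(4)$\Rightarrow$(1); both land on $\frac{\delta}{tr_2(B)}p*q$ being a projection.
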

\begin{proof}
$(1)\Rightarrow (3)$:
By Proposition \ref{young} (Young's inequality), we have that
$$\|p*q\|_r\leq \frac{1}{\delta}\|p\|_1\|q\|_r,\quad \|p*q\|_r\leq \frac{1}{\delta}\|p\|_r\|q\|_1.$$
Hence $tr_2(q)^{\frac{1}{s}-\frac{1}{r}}\leq tr_2(p)^{1-\frac{1}{t}}$ and $tr_2(p)^{\frac{1}{t}-\frac{1}{r}}\leq tr_2(q)^{1-\frac{1}{s}}$, i.e. $tr_2(p)=tr_2(q)$.
Now $\|p*q\|_r=\frac{1}{\delta}tr_2(p)^{1+\frac{1}{r}}$.
By Proposition \ref{1pyoung}, we have that $\mathcal{S}(p*q)=tr_2(p)$.
By Theorem \ref{lowerm}, there exists a biprojection $B$ such that $p=\mathcal{R}(x*B)$ for some $x>0$ and $q$ is a right subshift of $B$.
Since $tr_2(p)=tr_2(q)$, we obtain that $p$ is a left shift of $B$ and $q$ is a right shift of $B$.

$(3)\Rightarrow (2)$:
By Corollary \ref{sub2} and Theorem \ref{lowerb}, we have that
$q*\overline{q}=\frac{tr_2(B)}{\delta}B$.
Hence
$$tr_2(p)\leq \mathcal{S}(p*q)\leq \mathcal{S}(\mathcal{R}(p*q)*\overline{q})=\mathcal{S}(p*q*\overline{q})=tr_2(p).$$
By Theorem \ref{lowerb} again, we see that $\frac{\delta}{tr_2(B)}p*q$ is a projection and $\|p*q\|_r=\frac{1}{\delta}\|p\|_t\|q\|_s$ for any $1\leq r,t,s\leq \infty$.

$(2)\Rightarrow (1)$: It is obvious.
\end{proof}

Next we consider partial isometries.
\begin{proposition}\label{1pv}
Suppose $\mathscr{P}$ is an irreducible subfactor planar algebra.
Let $v,w$ are partial isometries in $\mathscr{P}_{2,\pm}$.
Then the following are equivalent:
\begin{itemize}
\item[(1)] $\|v*w\|_t=\frac{1}{\delta}\|v\|_t\|w\|_1$ for some $1<t<\infty$
\item[(2)] $\|v*w\|_t=\frac{1}{\delta}\|v\|_t\|w\|_1$ for any $1\leq t\leq \infty$
\item[(3)] $\frac{\delta}{tr_2(|w|)}|v*w|$ is a projection and $\|v*w\|_1=\frac{1}{\delta}\|v\|_1\|w\|_1$.
\end{itemize}
\end{proposition}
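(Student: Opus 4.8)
The plan is to establish the cycle $(1)\Rightarrow(3)\Rightarrow(2)\Rightarrow(1)$, the implication $(2)\Rightarrow(1)$ being trivial. The only structural facts I need about partial isometries are that $|v|=v^*v$ and $|w|=w^*w$ are projections, so that $\|v\|_t=tr_2(|v|)^{1/t}$ and $\|w\|_t=tr_2(|w|)^{1/t}$ for $1\le t<\infty$, while $\|v\|_\infty=\|w\|_\infty=1$; I assume $v*w\ne 0$, the contrary case being routine. The engine of the proof is a single sandwich estimate obtained by interpolating the $L^t$-norm of $v*w$ between its $L^1$- and $L^\infty$-norms and then invoking Young's inequality at the two endpoints.

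For $(1)\Rightarrow(3)$, I would first apply H\"{o}lder's inequality (Proposition \ref{holder}) with the exponent pair $(1,\infty)$ to get
\[
\|v*w\|_t^t=tr_2\bigl(|v*w|\,|v*w|^{t-1}\bigr)\le \|v*w\|_1\,\|v*w\|_\infty^{\,t-1},
\]
and then feed in the two Young bounds from Proposition \ref{young}, namely $\|v*w\|_1\le\frac1\delta tr_2(|v|)tr_2(|w|)$ and $\|v*w\|_\infty\le\frac{tr_2(|w|)}{\delta}$, to arrive at
\[
\|v*w\|_t^t\le \frac1{\delta^t}tr_2(|v|)\,tr_2(|w|)^t=\Bigl(\tfrac1\delta\|v\|_t\|w\|_1\Bigr)^t .
\]
The hypothesis $(1)$ says the two ends are equal for some $1<t<\infty$, so every inequality is saturated. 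Because the exponent $t-1$ is strictly positive and all the norms are positive, saturation of the Young step forces the two endpoint bounds to be attained separately, giving $\|v*w\|_1=\frac1\delta\|v\|_1\|w\|_1$ (the second assertion of $(3)$) and $\|v*w\|_\infty=\frac{tr_2(|w|)}{\delta}$. Saturation of the H\"{o}lder step is the decisive point: writing $|v*w|=\sum_j\mu_j p_j$ spectrally with distinct eigenvalues $\mu_j\in[0,M]$, $M=\|v*w\|_\infty$, the equality $tr_2(|v*w|^t)=\|v*w\|_1 M^{t-1}$ rearranges to $\sum_j\mu_j\bigl(M^{t-1}-\mu_j^{t-1}\bigr)tr_2(p_j)=0$; every summand is nonnegative, so each vanishes and the only surviving eigenvalue is $M$. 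Hence $|v*w|=M\,\mathcal{R}(v*w)=\frac{tr_2(|w|)}{\delta}\mathcal{R}(v*w)$, i.e. $\frac{\delta}{tr_2(|w|)}|v*w|$ is a projection, which is the first assertion of $(3)$.

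For $(3)\Rightarrow(2)$, set $P=\frac{\delta}{tr_2(|w|)}|v*w|$; by hypothesis $P$ is a projection and $|v*w|=\frac{tr_2(|w|)}{\delta}P$. The $L^1$-hypothesis then reads $\frac{tr_2(|w|)}{\delta}tr_2(P)=\frac1\delta tr_2(|v|)tr_2(|w|)$, so $tr_2(P)=tr_2(|v|)$. Consequently, for every $1\le t<\infty$,
\[
\|v*w\|_t=\frac{tr_2(|w|)}{\delta}\,tr_2(P)^{1/t}=\frac1\delta\,tr_2(|v|)^{1/t}\,tr_2(|w|)=\frac1\delta\|v\|_t\|w\|_1,
\]
and the case $t=\infty$ follows from $\|P\|_\infty=1=\|v\|_\infty$; this is exactly $(2)$, and $(2)\Rightarrow(1)$ is immediate.

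The main obstacle is the equality analysis in $(1)\Rightarrow(3)$: a single extremal exponent must be made to yield two separate conclusions --- the rigidity that $|v*w|$ is a scalar multiple of a projection and the independent $L^1$-extremality $\|v*w\|_1=\frac1\delta\|v\|_1\|w\|_1$. Both are extracted from the sandwich by the spectral computation above, and it is precisely here that the strict bounds $0<1/t<1$ (equivalently $1<t<\infty$) are used, since equality in a product $A^{1/t}B^{1-1/t}$ with $A\ge\|v*w\|_1$ and $B\ge\|v*w\|_\infty$ forces equality in each factor only when both exponents are strictly positive. This also clarifies why, unlike the projection case of Proposition \ref{1pyoung} where the $L^1$-identity holds automatically, the partial-isometry statement must carry it explicitly in $(3)$.
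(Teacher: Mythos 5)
Your proof is correct and follows essentially the same route as the paper, which proves this proposition by adapting the spectral-decomposition argument of Proposition \ref{1pyoung}: bound $\|v*w\|_\infty$ by $\tfrac{tr_2(|w|)}{\delta}$ via Young's inequality, decompose $|v*w|$ spectrally, and compare the $t$-th powers of the eigenvalues with their first powers to force them into $\{0,\|v*w\|_\infty\}$. Your H\"older sandwich is just a repackaging of that comparison, and you correctly supply the one step where the partial-isometry case genuinely differs from the projection case, namely that the $L^1$-identity $\|v*w\|_1=\tfrac1\delta\|v\|_1\|w\|_1$ must be extracted from the saturated chain rather than assumed.
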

\begin{proof}
The proof is similar to that of Proposition \ref{1pyoung}.
\end{proof}

\begin{lemma}\label{absolute}
Suppose $\mathscr{P}$ is a subfactor planar algebra.
Let $x, y$ be in $\mathscr{P}_{2,\pm}$.
Then for any $1\leq r\leq \infty$,
$$\|x*y\|_r\leq \||x|*|y|\|_r^{1/2}\||x^*|*|y^*|\|_r^{1/2}.$$
Moreover, if $\|x*y\|_r=\frac{1}{\delta}\|x\|_t\|y\|_s$ for some $1\leq t,s\leq \infty$ and $\frac{1}{r}+1=\frac{1}{t}+\frac{1}{s}$, then
$$\||x|*|y|\|_r=\||x^*|*|y^*|\|_r=\frac{1}{\delta}\|x\|_t\|y\|_s.$$
\end{lemma}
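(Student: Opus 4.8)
The plan is to derive the ``moreover'' clause formally from the displayed inequality together with Young's inequality, reserving the real work for the inequality itself. Granting the inequality, suppose $\|x*y\|_r=\frac{1}{\delta}\|x\|_t\|y\|_s$. The norm $\|\cdot\|_t$ depends only on the absolute value, and since $|x^*|=w_x|x|w_x^*$ is unitarily equivalent to $|x|$ on the support of $|x|$, one has $\||x|\|_t=\||x^*|\|_t=\|x\|_t$, and similarly for $y$. Applying Proposition \ref{young} to the positive pairs $|x|,|y|$ and $|x^*|,|y^*|$ then gives $\||x|*|y|\|_r\le\frac{1}{\delta}\|x\|_t\|y\|_s$ and $\||x^*|*|y^*|\|_r\le\frac{1}{\delta}\|x\|_t\|y\|_s$. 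Combined with the inequality, the geometric mean of two quantities, each at most $\frac{1}{\delta}\|x\|_t\|y\|_s$, is bounded below by $\frac{1}{\delta}\|x\|_t\|y\|_s$; hence both factors equal $\frac{1}{\delta}\|x\|_t\|y\|_s$, which is the claim.

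For the inequality, I would first reduce it to an operator-matrix positivity statement. Using the polar decompositions $x=w_x|x|$ and $y=w_y|y|$, set $A=|x^*|*|y^*|$ and $C=|x|*|y|$; these are positive by the Schur Product Theorem (Theorem \ref{Thm:Schur}). The key claim is that the self-adjoint $2\times 2$ operator matrix $\begin{pmatrix} A & x*y \\ (x*y)^* & C\end{pmatrix}$ over $\mathscr{P}_{2,\pm}$ is positive. Granting this, positivity yields a factorization $x*y=A^{1/2}KC^{1/2}$ with $\|K\|_\infty\le 1$ (Douglas factorization for positive $2\times 2$ matrices), and then the noncommutative H\"older inequality (Proposition \ref{holder}), with $\frac{1}{2r}+\frac{1}{2r}=\frac{1}{r}$, gives $\|x*y\|_r\le\|A^{1/2}\|_{2r}\|C^{1/2}\|_{2r}=\|A\|_r^{1/2}\|C\|_r^{1/2}$, which is exactly the asserted bound (valid for all $1\le r\le\infty$).

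It remains to prove the matrix positivity, which is the heart of the argument and the main obstacle. I would establish it in its equivalent Cauchy--Schwarz form: for all vectors $\xi,\eta$ in the GNS space of $\mathscr{P}_{2,\pm}$,
\[
|\langle (x*y)\eta,\xi\rangle|^2\le \langle A\xi,\xi\rangle\,\langle C\eta,\eta\rangle.
\]
The left-hand side is a single closed planar diagram built from $x,y,\xi,\eta$; substituting $x=w_x|x|^{1/2}|x|^{1/2}$, $y=w_y|y|^{1/2}|y|^{1/2}$ and recalling $|x^*|=w_x|x|w_x^*$, $|y^*|=w_y|y|w_y^*$, I would cut this diagram along its natural middle and express $\langle (x*y)\eta,\xi\rangle$ as an inner product $\langle \Xi,H\rangle$ in an auxiliary Hilbert space whose two halves satisfy $\|\Xi\|^2=\langle A\xi,\xi\rangle$ and $\|H\|^2=\langle C\eta,\eta\rangle$; Cauchy--Schwarz then finishes it. The delicate point is the diagrammatic bookkeeping of this splitting: one must position the partial isometries $w_x,w_y$ and their adjoints so that the $\Xi$-half assembles precisely into $|x^*|*|y^*|$ (the phases recombining as $w_x|x|w_x^*$) while the $H$-half assembles into $|x|*|y|$, using Lemma \ref{permu} to match the rotated diagrams. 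Once this splitting is verified, both the inequality and, via the reduction above, the equality case follow.
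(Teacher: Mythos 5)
Your argument is correct, and its core construction coincides with the paper's: the ``moreover'' clause is handled identically (apply Proposition \ref{young} to the pairs $|x|,|y|$ and $|x^*|,|y^*|$, note $\||x|\|_t=\||x^*|\|_t=\|x\|_t$, and squeeze), and the diagrammatic splitting you describe --- one half carrying $w_x|x|^{1/2}$ and $w_y|y|^{1/2}$ so that its square recombines into $|x^*|*|y^*|=\left(w_x|x|w_x^*\right)*\left(w_y|y|w_y^*\right)$, the other half carrying $|x|^{1/2}$ and $|y|^{1/2}$ --- is exactly the paper's. The difference is the packaging. The paper writes $x*y=\tilde{y}\tilde{x}$ as an honest operator product in a larger box space, where $\tilde{x}$ is the open coproduct diagram of $|x|^{1/2}$ and $|y|^{1/2}$ and $\tilde{y}$ is its reflected companion built from $w_x|x|^{1/2}$ and $w_y|y|^{1/2}$; then $\tilde{x}^*\tilde{x}=|x|*|y|$, $\tilde{y}\tilde{y}^*=|x^*|*|y^*|$, and a single application of H\"{o}lder's inequality in that algebra gives $\|x*y\|_r\leq\|\tilde{y}\|_{2r}\|\tilde{x}\|_{2r}=\||x^*|*|y^*|\|_r^{1/2}\,\||x|*|y|\|_r^{1/2}$. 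Your detour through positivity of the $2\times 2$ operator matrix and the Douglas factorization $x*y=A^{1/2}KC^{1/2}$ is valid --- the matrix is the product of the column with entries $\tilde{y},\tilde{x}^*$ against its adjoint row, hence positive --- but it buys nothing: establishing the matrix positivity already requires producing the two halves, and once you have them the direct H\"{o}lder estimate is shorter and avoids both the factorization lemma and the three-term H\"{o}lder with the middle $\infty$-exponent. Two small remarks: Lemma \ref{permu} is not needed for the splitting (it is a straight cut of the coproduct diagram, not a rotation), and the GNS/vector-state formulation of the Cauchy--Schwarz step is superfluous in this finite-dimensional setting.
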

\begin{proof}
Let $\tilde{x}=
%\grb{SSc1}
\raisebox{-.8cm}{
\begin{tikzpicture}
\begin{scope}[shift={(0,7/2)},yscale=1,xscale=1.5]
\Gc{|x|^{\frac{1}{2}}}{|y|^{\frac{1}{2}}}
\end{scope}
\end{tikzpicture}}\;,
$
 and $\tilde{y}=
%\grb{SSc2}
\raisebox{-.8cm}{
\begin{tikzpicture}
\begin{scope}[shift={(0,7/2)},yscale=-1,xscale=1.5]
\Gc{w_x|x|^{\frac{1}{2}}}{w_y|y|^{\frac{1}{2}}}
\end{scope}
\end{tikzpicture}}\;
$.
By Proposition \ref{holder} (H\"{o}lder's inequality), we have that
$$\|x*y\|_r=\|\tilde{y}\tilde{x}\|_r\leq \|\tilde{x}\|_{2r}\|\tilde{y}\|_{2r}=\||x|*|y||\|_r^{1/2}\||x^*|*|y^*|\|_r^{1/2}.$$

If $\|x*y\|_r=\frac{1}{\delta}\|x\|_t\|y\|_s$, then
$$\|x*y\|_r\leq \||x|*|y|\|_r^{1/2}\||x^*|*|y^*|\|_r^{1/2}\leq \frac{1}{\delta}\|x\|_t\|y\|_s$$
implies that $\||x|*|y|\|_r=\||x^*|*|y^*|\|_r=\frac{1}{\delta}\|x\|_t\|y\|_s$.
\end{proof}

\begin{proposition}\label{partyoung}
Suppose $\mathscr{P}$ is an irreducible subfactor planar algebra.
Let $v,w$ be partial isometries.
Then the following are equivalent:
\begin{itemize}
\item[(1)] $\|v*w\|_r=\frac{1}{\delta}\|v\|_t\|w\|_s$ for some $1<r,t,s<\infty$ such that $\frac{1}{r}+1=\frac{1}{t}+\frac{1}{s}$;
\item[(2)] $\|v*w\|_r=\frac{1}{\delta}\|v\|_t\|w\|_s$ for any $1\leq r,t,s\leq \infty$ such that $\frac{1}{r}+1=\frac{1}{t}+\frac{1}{s}$;
\item[(3)] both $v$ and $w$ are bi-shifts of biprojections, and $\mathcal{R}((\mathfrak{F}^{-1}(v))^*)=\mathcal{R}(\mathfrak{F}^{-1}(w))$;
\item[(4)] there exists a biprojection $B$ such that $v=(y_v\ _hB)*\mathfrak{F}(\widetilde{B}_g)$ and $w=\mathfrak{F}(\widetilde{B}_g)*(y_w B_f)$, where $B_g,B_f$ are right shifts of $B$, $_hB$ is left shift of $B$ and $y_v, y_w$ are elements in $\mathscr{P}_{2,\pm}$ such that $v,w$ are nonzero partial isometries.
\end{itemize}
\end{proposition}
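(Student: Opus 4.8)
The plan is to establish the cycle $(2)\Rightarrow(1)\Rightarrow(3)\Rightarrow(4)\Rightarrow(2)$, at every stage pushing a statement about the partial isometries $v,w$ down to a statement about the four projections $|v|=v^*v$, $|v^*|=vv^*$, $|w|=w^*w$, $|w^*|=ww^*$, where the already-proved projection case (Proposition \ref{pyoung}) and its $\|\cdot\|_1$-refinement (Proposition \ref{1pv}) are available. The implication $(2)\Rightarrow(1)$ is immediate by specializing exponents, so the content lies in $(1)\Rightarrow(3)$, where a single extremal instance of Young's inequality must be upgraded to the full bi-shift structure together with the Fourier matching condition, and in the explicit factorization $(3)\Leftrightarrow(4)$ together with its use in $(4)\Rightarrow(2)$.

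For $(1)\Rightarrow(3)$ I would first record that, $v,w$ being partial isometries, all four of $|v|,|v^*|,|w|,|w^*|$ are projections and that $\|v\|_p=tr_2(|v|)^{1/p}=tr_2(|v^*|)^{1/p}=\|v^*\|_p$ for every $p$, since $tr_2(v^*v)=tr_2(vv^*)$; likewise for $w$. Feeding the hypothesis $\|v*w\|_r=\frac1\delta\|v\|_t\|w\|_s$ into the equality clause of Lemma \ref{absolute} then yields
\[
\||v|*|w|\|_r=\||v^*|*|w^*|\|_r=\tfrac1\delta\||v|\|_t\||w|\|_s,
\]
so that $(|v|,|w|)$ and $(|v^*|,|w^*|)$ are both extremal pairs of projections. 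Proposition \ref{pyoung} supplies biprojections for which these projections are left and right shifts, and by reading the resulting trace identities as the equality case of the uncertainty principle, $\mathcal S(v)\mathcal S(\mathfrak F(v))=\delta^2$ and $\mathcal S(w)\mathcal S(\mathfrak F(w))=\delta^2$, Main Theorem 2 of \cite{JLW} identifies $v$ and $w$ as bi-shifts of biprojections.

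The matching condition is the step I expect to be the main obstacle. The tool is the dual identity $\mathfrak F^{-1}(v*w)=\mathfrak F^{-1}(v)\,\mathfrak F^{-1}(w)$, obtained from $\F(xy)=\F(x)*\F(y)$: extremality of the convolution forces the noncommutative Hölder estimate inside the proof of Lemma \ref{absolute} to be saturated, and the equality clause of Proposition \ref{holder} then aligns the polar data of the two factors, which I would interpret as the coincidence of the column space $\mathcal R((\mathfrak F^{-1}(v))^*)$ with the row space $\mathcal R(\mathfrak F^{-1}(w))$, i.e. $\mathcal R((\mathfrak F^{-1}(v))^*)=\mathcal R(\mathfrak F^{-1}(w))$. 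The subtlety is twofold: one must reconcile the two a priori distinct biprojections produced from $(|v|,|w|)$ and from $(|v^*|,|w^*|)$ into the single biprojection $B$ appearing in $(3)$, and one must match the Hölder alignment with the geometry of the range projections on the Fourier side. I would carry out this bookkeeping using Theorems 6.11 and 6.13 together with Main Theorems 1 and 2 of \cite{JLW}.

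Finally, $(3)\Leftrightarrow(4)$ is an unpacking of the definition of a bi-shift: the shared condition $\mathcal R((\mathfrak F^{-1}(v))^*)=\mathcal R(\mathfrak F^{-1}(w))$ is exactly what lets the same factor $\mathfrak F(\widetilde{B}_g)$ sit on the right of $v$ and on the left of $w$. Granting $(4)$, I would prove $(2)$ by associativity of convolution, collapsing the middle through the idempotency $\mathfrak F(\widetilde{B}_g)*\mathfrak F(\widetilde{B}_g)=\mathfrak F(\widetilde{B}_g\,\widetilde{B}_g)=\mathfrak F(\widetilde{B}_g)$, so that $v*w=(y_v\,{}_hB)*\mathfrak F(\widetilde{B}_g)*(y_w B_f)$; its norms reduce once more, via Lemma \ref{absolute} with the Hölder step now tight by the matching, to the projection pieces $|v|,|v^*|,|w|,|w^*|$, where the all-exponent conclusion of Proposition \ref{pyoung} gives $\|v*w\|_r=\frac1\delta\|v\|_t\|w\|_s$ for every admissible triple $(r,t,s)$. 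This closes the cycle.
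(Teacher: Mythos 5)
There is a genuine gap in your implication $(1)\Rightarrow(3)$, at the point where you pass from the extremality of the pairs of moduli to the bi-shift property of $v$ and $w$ themselves. The equality clause of Lemma \ref{absolute} together with Proposition \ref{pyoung} only tells you that $|v|,|v^*|,|w|,|w^*|$ are shifts of biprojections; this controls $\mathcal{S}(v)$ but gives no upper bound whatsoever on $\mathcal{S}(\mathfrak{F}(v))$, so the asserted identity $\mathcal{S}(v)\mathcal{S}(\mathfrak{F}(v))=\delta^2$ does not follow --- the Donoho-Stark inequality only gives $\geq\delta^2$, and the reverse inequality requires information about the phase $w_v$ of $v$, which your reduction to the four range/support projections has discarded. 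Concretely, in the group case a unimodular function supported on all of $G$ has $|v|=|v^*|=1_G$, a biprojection, yet such a $v$ is a bi-shift of a biprojection only if it is a scalar times a character; so extremality of $(|v|,|w|)$ and $(|v^*|,|w^*|)$ alone cannot imply $(3)$. Relatedly, your derivation of the matching condition appeals to the equality case of the H\"{o}lder step \emph{inside} Lemma \ref{absolute}, but that H\"{o}lder inequality compares the three-box elements $\tilde{x},\tilde{y}$ and aligns \emph{their} polar data, not the polar data of $\mathfrak{F}^{-1}(v)$ and $\mathfrak{F}^{-1}(w)$.

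The paper closes exactly this hole by a different route: it first proves $(1)\Rightarrow(2)$ (Young's inequality forces $tr_2(|v|)=tr_2(|w|)$, and Proposition \ref{1pv} then upgrades a single exponent to all exponents), and then specializes $(2)$ to $r=2$, where Parseval turns $\|v*w\|_2$ into $\|\mathfrak{F}^{-1}(v)\mathfrak{F}^{-1}(w)\|_2$. Saturating the resulting H\"{o}lder and Hausdorff-Young chain on the Fourier side yields equality in the Hausdorff-Young inequality for $v$ and for $w$ at every $1\leq t,s\leq 2$; differentiating at $t=s=2$ shows that $v,w$ minimize the Hirschman-Beckner uncertainty principle, whence Main Theorem 2 of \cite{JLW} applies, and the equality case of H\"{o}lder in the two-box space gives $|\mathfrak{F}^{-1}(v)|=|\mathfrak{F}^{-1}(w)^*|$, i.e.\ the matching condition. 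This is the step your proposal would need to import; the remainder of your cycle ($(3)\Leftrightarrow(4)$ by unpacking the definition of bi-shifts, and $(4)\Rightarrow(2)$ by reversing the extremality chain) is in line with what the paper does.
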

\begin{proof}
$(1)\Rightarrow (2)$:
Suppose that $\|v*w\|_r=\frac{1}{\delta}\|v\|_t\|w\|_s$ for some $1<r,t,s<\infty$ such that $\frac{1}{r}+1=\frac{1}{t}+\frac{1}{s}$.
By Proposition \ref{young} (Young's inequality), we have
$$\|v*w\|_r\leq \frac{1}{\delta}\|v\|_r\|w\|_1,\quad \|v*w\|_r\leq \frac{1}{\delta}\|v\|_1\|w\|_r,$$
and hence $tr_2(|v|)=tr_2(|w|)$.
By Proposition \ref{1pv}, we see that $\|v*w\|_{\tilde{r}}=\frac{1}{\delta}tr_2(|v|)^{1+\frac{1}{\tilde{r}}}$ for any $1\leq \tilde{r}\leq \infty$.
Therefore $\|v*w\|_{\tilde{r}}=\frac{1}{\delta}\|v\|_{\tilde{t}}\|w\|_{\tilde{s}}$ for any $1\leq \tilde{r},\tilde{t},\tilde{s}\leq \infty$.

$(2)\Rightarrow (3)$:
Let $r=2$. Then $1\leq t,s\leq 2$.
By Proposition \ref{holder} (H\"{o}lder's inequality) and Proposition \ref{hausyoung} (the Hausdorff-Young inequality), we obtain that
\begin{eqnarray*}
\frac{1}{\delta}\|v\|_t\|w\|_s&=&\|v*w\|_2=\|\mathfrak{F}^{-1}(v)\mathfrak{F}^{-1}(w)\|_2\\
&\leq& \|\mathfrak{F}^{-1}(v)\|_{\frac{t}{t-1}}\|\mathfrak{F}^{-1}(w)\|_{\frac{s}{s-1}}\leq \frac{1}{\delta}\|v\|_t\|w\|_s.
\end{eqnarray*}
Hence for any $1\leq t,s\leq 2$
\begin{equation}\label{eqhaus}
\|\mathfrak{F}^{-1}(v)\|_{\frac{t}{t-1}}=\left(\frac{1}{\delta}\right)^{1-\frac{2(t-1)}{t}}\|v\|_t,\quad \|\mathfrak{F}^{-1}(w)\|_{\frac{s}{s-1}}=\left(\frac{1}{\delta}\right)^{1-\frac{2(s-1)}{s}}\|w\|_s,
\end{equation}
\begin{equation}\label{eqholder}
\|\mathfrak{F}^{-1}(v)\mathfrak{F}^{-1}(w)\|_2=\|\mathfrak{F}^{-1}(v)\|_{\frac{t}{t-1}}\|\mathfrak{F}^{-1}(w)\|_{\frac{s}{s-1}}.
\end{equation}
Differentiating Equations (\ref{eqhaus}) with respect to $t,s$ at $t=2,s=2$ respectively, we have that $v$ and $w$ are minimizers of Hirschman-Beckner uncertainty principle for subfactor planar algebras, (see Theorem 5.5 in \cite{JLW}.)
By Main Theorem 2 in \cite{JLW}, we see that $v,w$ are bi-shifts of biprojections.

For Equation (\ref{eqholder}), by Proposition \ref{holder}, we have that
$$|\mathfrak{F}^{-1}(v)|=|\mathfrak{F}^{-1}(w)^*|.$$
Hence $\mathcal{R}((\mathfrak{F}^{-1}(v))^*)=\mathcal{R}(\mathfrak{F}^{-1}(w)).$

$(3)\Rightarrow(4)$:
By the definition of bi-shifts of biprojections, $w$ is a bi-shift of a biprojection means that there is a biprojection $B$ such that $w=\mathfrak{F}(\widetilde{B}_g)*(y_wB_f)$ for some $y_w$, where $\widetilde{B}=\mathcal{R}(\mathfrak{F}(B))$,
$\mathcal{R}(w^*)$ is a right shift $B_f$ of $B$, and $\mathcal{R}(\mathfrak{F}^{-1}(w))$ is a right shift $\widetilde{B}_g$ of $\widetilde{B}$.

Since $v$ is a bi-shift of a biprojection and $\mathcal{R}((\mathfrak{F}^{-1}(v))^*)=\mathcal{R}(\mathfrak{F}^{-1}(w))=\widetilde{B}_g$, using the third form of bi-shifts of biprojections in the Appendix in \cite{JLW}, we have that $v=(y_v\ _hB)*\mathfrak{F}(\widetilde{B}_g)$, where $_hB$ is a left shift of $B$.

\iffalse
By Lemma \ref{absolute}, we have that
$$\|v*w\|_r= \||v|*|w|\|_r=\||v^*|*|w^*|\|_r= \frac{1}{\delta}\|v\|_t\|w\|_s.$$

By Proposition \ref{pyoung}, we have that there exists a biprojection $B$ such that $|v|$ is a left shift $_hB$ of $B$ and $|w|$ is a right shift $B_f$ of $B$.
By the definition of a bi-shift of a biprojection, we have that $w=\mathfrak{F}(\widetilde{B}_g)*(y_wB_f)$ for some right shift of $\widetilde{B}$.
By Lemma 6.7 in \cite{JLW},  $\mathcal{R}(\mathfrak{F}^{-1}(w))=\widetilde{B}_g$.
Hence $\mathcal{R}((\mathfrak{F}^{-1}(v))^*)$.
By the third graph in Appendix of \cite{JLW} and Theorem 6.16 (Uniqueness Theorem) in \cite{JLW}, we have that $v=(y_v\ _hB)*\mathfrak{F}(\widetilde{B}_g)$.
\fi

$(4)\Rightarrow (2)$:
Since $v,w$ are bi-shifts of the biprojection $B$, we have Equations (\ref{eqhaus}) are true.
By Lemma 6.7 in \cite{JLW}, we have that
$$\mathcal{R}(\mathfrak{F}^{-1}(w))=\mathcal{R}((\mathfrak{F}^{-1}(v))^*)=\widetilde{B}_g.$$
Note that $tr_2(|v|)=tr_2(\ _hB)=tr_2(B_f)=tr_2(|w|)$ and $\mathfrak{F}^{-1}(v),\mathfrak{F}^{-1}(w)$ are multiples of partial isometries.
We see that $|\mathfrak{F}^{-1}(v)|=|\mathfrak{F}^{-1}(w)^*|$ and then
$$\|v*w\|_2=\frac{1}{\delta}\|v\|_t\|w\|_s$$
for any $1\leq t,s\leq 2$ from the argument for "$(2)\Rightarrow (3)$".
By Proposition \ref{1pv}, we have that $\|v*w\|_r=\frac{1}{\delta}\|v\|_r\|w\|_1$ for any $1\leq r\leq \infty$.
Therefore $(2)$ is true.

"$(2)\Rightarrow (1)$". It is obvious.
\end{proof}

Now we will consider the general case.

\begin{proposition}\label{p1p}
Suppose $\mathscr{P}$ is an irreducible subfactor planar algebra.
Let $x,y\in\mathscr{P}_{2,\pm}$.
If $\|x*y\|_t=\frac{1}{\delta}\|x\|_t\|y\|_1$ for some $1<t<2$, then for any $0\leq \Re z\leq 1$
$$\|w_x|x|^{t\frac{1+z}{2}}*y\|_{\frac{2}{1+\Re z}}=\frac{1}{\delta}\|w_x|x|^{t\frac{1+z}{2}}\|_{\frac{2}{1+\Re z}}\|y\|_1.$$
If $\|x*y\|_t=\frac{1}{\delta}\|x\|_t\|y\|_1$ for some $2<t<\infty$, then for any $0\leq \Re z\leq 1$
$$\|w_x|x|^{t\frac{1-z}{2}}*y\|_{\frac{2}{1-\Re z}}=\frac{1}{\delta}\|w_x|x|^{t\frac{1-z}{2}}\|_{\frac{2}{1-\Re z}}\|y\|_1.$$
\end{proposition}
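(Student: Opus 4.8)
The plan is to read this as a \emph{propagation of equality} along a Stein-type analytic family, driven by the maximum modulus principle. Fix the first case $1<t<2$ (the second is symmetric). Write the polar decomposition $x=w_x|x|$ and, for $z$ in the strip $S=\{0\le \Re z\le 1\}$, set $x_z=w_x|x|^{t(1+z)/2}$ and $p(z)=\tfrac{2}{1+\Re z}$. At the interior point $z_0=\tfrac2t-1\in(0,1)$ we have $x_{z_0}=x$ and $p(z_0)=t$, so the desired identity at $z_0$ is exactly the hypothesis. By Proposition \ref{young} (Young's inequality with $s=1$, $r=p(z)$) we always have $\|x_z*y\|_{p(z)}\le\tfrac1\delta\|x_z\|_{p(z)}\|y\|_1$, so only the reverse inequality has to be propagated from $z_0$ to all of $S$.

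Next I would introduce the dual analytic family. Put $\alpha(z)=\tfrac{t(1-z)}{2}$ and, writing $x*y=w_{x*y}|x*y|$, define
$$F(z)=\delta\, tr_2\big(|x*y|^{\alpha(z)}\,w_{x*y}^*\,(x_z*y)\big).$$
In the finite-dimensional $2$-box algebra each of $|x|^{t(1+z)/2}$ and $|x*y|^{\alpha(z)}$ is an entire operator-valued function of $z$ (spectral calculus on the support), so $F$ is holomorphic on $S$. Applying Proposition \ref{holder} (Hölder) with conjugate exponents $p(z),\,p(z)'=\tfrac{2}{1-\Re z}$ and then Young's inequality gives
$$|F(z)|\le \delta\,\||x*y|^{\alpha(z)}\|_{p(z)'}\,\|x_z*y\|_{p(z)}\le \||x*y|^{\alpha(z)}\|_{p(z)'}\,\|x_z\|_{p(z)}\,\|y\|_1.$$
The exponents are chosen precisely so that these two norms are powers of fixed quantities: since $\Re\alpha(z)\,p(z)'=t$ and $\tfrac{t(1+\Re z)}{2}\,p(z)=t$, one computes $\||x*y|^{\alpha(z)}\|_{p(z)'}=\|x*y\|_t^{t(1-\Re z)/2}$ and $\|x_z\|_{p(z)}=\|x\|_t^{t(1+\Re z)/2}$. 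Hence $|F(z)|\le M(z)$ where $\log M(z)$ is affine in $\Re z$; writing $M(z)=|H(z)|$ for a non-vanishing holomorphic $H$ (an exponential of a linear function of $z$), the function $G=F/H$ is holomorphic on $S$ with $|G|\le 1$ throughout.

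Finally I would locate the interior maximum. At $z_0$ we have $\alpha(z_0)=t-1$, $x_{z_0}=x$, and $w_{x*y}^*(x*y)=|x*y|$, so $F(z_0)=\delta\, tr_2(|x*y|^{t-1}|x*y|)=\delta\|x*y\|_t^t$; the same substitution in $M$ together with the hypothesis $\|x*y\|_t=\tfrac1\delta\|x\|_t\|y\|_1$ yields $M(z_0)=\|x*y\|_t^{t-1}\|x\|_t\|y\|_1=\delta\|x*y\|_t^t$. Thus $|G(z_0)|=1=\sup_S|G|$ is attained at an interior point, so by the maximum modulus principle $G$ is constant of modulus one. Consequently $|F(z)|=M(z)$ on all of $S$, which forces equality in both the Hölder and the Young steps; in particular $\|x_z*y\|_{p(z)}=\tfrac1\delta\|x_z\|_{p(z)}\|y\|_1$ for every $z\in S$, as claimed. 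The case $2<t<\infty$ is identical after replacing $x_z$ by $w_x|x|^{t(1-z)/2}$, $p(z)$ by $\tfrac{2}{1-\Re z}$, and $\alpha(z)$ by $\tfrac{t(1+z)}{2}$, the base point now being $z_0=1-\tfrac2t\in(0,1)$. The only delicate point is the exponent bookkeeping that makes the Hölder/Young majorant $M(z)$ log-affine and exactly equal to $|F(z_0)|$ at the interior point; once the family is normalized correctly, holomorphy and boundedness are automatic in finite dimensions and the conclusion drops out of maximum modulus.
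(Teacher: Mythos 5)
Your proof is correct and is essentially the paper's own argument: you build the same analytic family $F(z)=\delta\,tr_2(|x*y|^{t(1-z)/2}w_{x*y}^*(w_x|x|^{t(1+z)/2}*y))$, bound it on the strip by H\"older plus Young, observe that the bound is attained at the interior point $z_0=\tfrac2t-1$ where the hypothesis enters, and invoke the maximum modulus principle to force equality throughout. The only cosmetic difference is that the paper normalizes $\|x\|_t=1$, $\|y\|_1=\delta$ so the majorant is identically $1$, whereas you divide by a non-vanishing holomorphic function realizing the log-affine majorant $M(z)$.
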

\begin{proof}
Suppose that $\|x\|_t=1$ and $\|y\|_1=\delta$.
When $1<t<2$, we define a complex function $F_1(z)$ given by
$$F_1(z)=tr_2((w_x|x|^{t\frac{1+z}{2}}*y)|x*y|^{t\frac{1-z}{2}}w_{x*y}^*).$$
\begin{eqnarray*}
|F_1(z)|&\leq &\|w_x|x|^{t\frac{1+z}{2}}*y\|_{\frac{2}{1+\Re z}}\||x*y|^{t\frac{1-z}{2}}w_{x*y}^*\|_{\frac{2}{1-\Re z}}\\
&\leq &\frac{1}{\delta}\||x|^{t\frac{1+z}{2}}\|_{\frac{2}{1+\Re z}}\|y\|_1 tr_2(|x*y|^t)^{\frac{1-\Re z}{2}}=1.
\end{eqnarray*}
Hence $F_1(z)$ is a bounded analytic function on $0<\Re z<1$.
Note that
$$F_1(\frac{2}{t}-1)=tr_2((x*y)|x*y|^{t-1}w_{x*y}^*)=1.$$
Therefore $F_1(z)\equiv 1$ on $0\leq \Re z\leq 1$ by the maximum modulus theorem.

When $2<t<\infty$, we consider the function $F_2(z)$ given by
$$F_2(z)=tr_2((w_x|x|^{t\frac{1-z}{2}}*y)|x*y|^{t\frac{1+z}{2}}w_{x*y}^*).$$
Similarly, we can have the proposition proved.
\end{proof}

\begin{proposition}\label{rts}
Suppose $\mathscr{P}$ is an irreducible subfactor planar algebra.
Let $x,y$ be in $\mathscr{P}_{2,\pm}$.
If $\|x*y\|_r=\frac{1}{\delta}\|x\|_t\|y\|_s$ for some $1<r,t,s<\infty$ such that $\frac{1}{r}+1=\frac{1}{t}+\frac{1}{s}$, then for any $-r+1 \leq \Re z\leq r-1$
$$\|w_x|x|^{t\frac{r+1-z}{2r}}*w_y|y|^{s\frac{r+1+z}{2r}}\|_r=\frac{1}{\delta}\|w_x|x|^{t\frac{r+1-z}{2r}}\|_{\frac{2r}{r+1-\Re z}}\|w_y|y|^{s\frac{r+1+z}{2r}}\|_{\frac{2r}{r+1+\Re z}}.$$
\end{proposition}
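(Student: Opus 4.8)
The plan is to run a three-line (Stein-type) complex interpolation argument on the strip $-(r-1)\le \Re z\le r-1$, in parallel with the proof of Proposition \ref{p1p}, but deforming both $x$ and $y$ simultaneously. By homogeneity I may normalize $\|x\|_t=\|y\|_s=1$, so that the hypothesis reads $\|x*y\|_r=\frac1\delta$. Writing $u(z)=w_x|x|^{t\frac{r+1-z}{2r}}$ and $v(z)=w_y|y|^{s\frac{r+1+z}{2r}}$ (functional calculus on the supports of $x$ and $y$), I would set
$$F(z)=tr_2\big((u(z)*v(z))\,|x*y|^{r-1}w_{x*y}^*\big).$$
Since each of $|x|^{cz},|y|^{cz}$ is an entire operator-valued function and $*$, $tr_2$ are bounded and (bi)linear, $F$ is entire in $z$.

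Next I would bound $F$ on the strip. Proposition \ref{holder} (H\"older's inequality) with the conjugate pair $(r,\tfrac{r}{r-1})$ gives $|F(z)|\le \|u(z)*v(z)\|_r\,\||x*y|^{r-1}w_{x*y}^*\|_{r/(r-1)}=\|u(z)*v(z)\|_r\,\|x*y\|_r^{r-1}$. The crucial observation is that the exponents $p(z)=\frac{2r}{r+1-\Re z}$ and $q(z)=\frac{2r}{r+1+\Re z}$ satisfy Young's relation $\frac1{p(z)}+\frac1{q(z)}=\frac{r+1}{r}=1+\frac1r$ for every value of $\Re z$, and that they lie in $[1,r]$ precisely on this strip. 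Hence Proposition \ref{young} (Young's inequality) yields $\|u(z)*v(z)\|_r\le \frac1\delta\|u(z)\|_{p(z)}\|v(z)\|_{q(z)}$. A direct functional-calculus computation gives $\|u(z)\|_{p(z)}=\|x\|_t^{t(r+1-\Re z)/(2r)}=1$ and $\|v(z)\|_{q(z)}=1$ under the normalization, so altogether $|F(z)|\le (\tfrac1\delta)^r$ on the closed strip.

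Then I would locate the point where the bound is saturated. Setting $z_0=r+1-\frac{2r}{t}$, one checks directly that $u(z_0)=x$ and $v(z_0)=y$, so $F(z_0)=tr_2((x*y)|x*y|^{r-1}w_{x*y}^*)=tr_2(|x*y|^r)=\|x*y\|_r^r=(\tfrac1\delta)^r$. Because $t>1$ gives $z_0>-(r-1)$, while $s>1$ forces $\frac1t=\frac1r+(1-\frac1s)>\frac1r$, i.e. $t<r$ and hence $z_0<r-1$, the point $z_0$ is strictly interior to the strip. Thus the entire function $F$, bounded by $(\tfrac1\delta)^r$ there, attains this maximum modulus at an interior point; by the maximum modulus principle $F\equiv(\tfrac1\delta)^r$ on the strip. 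Equality in $|F(z)|\le\|u(z)*v(z)\|_r\,\|x*y\|_r^{r-1}$ then forces $\|u(z)*v(z)\|_r=\frac1\delta=\frac1\delta\|u(z)\|_{p(z)}\|v(z)\|_{q(z)}$ for every $z$, which is the asserted identity; undoing the normalization by homogeneity gives the general statement.

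The main obstacle I anticipate is bookkeeping rather than conceptual. One must verify that $z_0$ is \emph{strictly} interior, so that the maximum modulus principle genuinely forces $F$ to be constant rather than merely attaining a boundary maximum; this rests on the strict inequalities $1<t<r$ extracted from $1<s<\infty$. One must also treat the operator powers $|x|^{cz}$ carefully on the kernel of $x$ (working on the support projection, with $w_x$ the partial isometry of the polar decomposition) so that $F$ is genuinely analytic and the norm identities $\|u(z)\|_{p(z)}=1$, $\|v(z)\|_{q(z)}=1$ hold with no degeneration at the boundary of the strip.
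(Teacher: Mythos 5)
Your proposal is correct and follows essentially the same route as the paper: normalize $\|x\|_t=\|y\|_s=1$, form the same analytic family $F(z)=tr_2\bigl((u(z)*v(z))|x*y|^{r-1}w_{x*y}^*\bigr)$ on the strip (the paper uses the sign convention $z\mapsto -z$), bound it by $\delta^{-r}$ via H\"older and Young, evaluate at the interior point where $u,v$ reduce to $x,y$, and invoke the maximum modulus principle to force constancy and hence equality throughout. Your explicit verification that $z_0$ is strictly interior (from $1<t<r$) is a detail the paper leaves implicit but is indeed needed.
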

\begin{proof}
Suppose that $\|x\|_t=\|y\|_s=1$.
We define a function $F(z)$ on $-r+1\leq \Re z\leq r-1$ given by
$$F(z)=tr_2((w_x|x|^{t\frac{r+1+z}{2r}}*w_y|y|^{s\frac{r+1-z}{2r}})|x*y|^{r-1}w_{x*y}^*).$$
\begin{eqnarray*}
|F(z)|&\leq &\|w_x|x|^{t\frac{r+1+z}{2r}}*w_y|y|^{s\frac{r+1-z}{2r}}\|_r\||x*y|^{r-1}\|_{\frac{r}{r-1}}\\
&\leq&\frac{1}{\delta}\|w_x|x|^{t\frac{r+1+z}{2r}}\|_{\frac{2r}{r+1+\Re z}}\|w_y|y|^{s\frac{r+1-z}{2r}}\|_{\frac{2r}{r+1-\Re z}} tr_2(|x*y|^r)^{\frac{r-1}{r}}=\delta^{-r}.
\end{eqnarray*}
Hence $F(z)$ is a bounded analytic function on $-r+1\leq \Re z\leq r-1$.
Since
$$F(\frac{2r}{t}-r-1)=tr_2((x*y)|x*y|^{r-1}w_{x*y}^*)=tr_2(|x*y|^r)=\delta^{-r},$$
we have that $F(z)\equiv 1$ on $-r+1\leq \Re z\leq r-1$ by the maximum modulus theorem.
Therefore we have the proposition proved.
\end{proof}

\begin{proposition}\label{212}
Suppose $\mathscr{P}$ is an irreducible subfactor planar algebra and $x,y\in\mathscr{P}_{2,\pm}$ are positive.
Let $B_1=B_1(x)$ be the biprojection generated by $\overline{x}*x$ and $B_2=B_2(y)$ the spectral projection of $y*\overline{y}$ corresponding to $\frac{\|y\|_2^2}{\delta}$.
Then $\|x*y\|_2=\frac{1}{\delta}\|x\|_1\|y\|_2$ if and only if $B_1\leq B_2$.
Moreover, if $y=\overline{x}$, then $B_1=B_2$ and $x$ is a left shift of $B_1$.
\end{proposition}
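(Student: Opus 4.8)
The plan is to turn the $r=2$ Young equality into a single trace inequality living entirely on $\mathscr{P}_{2,\pm}$, so that both $B_1$ and $B_2$ surface directly. The first step is to record the identity
\[
\|x*y\|_2^2=tr_2\big((\overline{x}*x)(y*\overline{y})\big).
\]
To obtain it, use that the SFT is an $L^2$-isometry, so $\|x*y\|_2=\|\mathfrak{F}^{-1}(x)\mathfrak{F}^{-1}(y)\|_2$; writing $a=\mathfrak{F}^{-1}(x)$ and $b=\mathfrak{F}^{-1}(y)$ and using $x,y\ge 0$ (whence $a^*=\mathfrak{F}(x)$, $b^*=\mathfrak{F}(y)$) gives $\|x*y\|_2^2=tr_2(a^*a\,bb^*)$. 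Multiplicativity $\mathfrak{F}(uv)=\mathfrak{F}(u)*\mathfrak{F}(v)$ identifies $a^*a=\mathfrak{F}^{-1}(\overline{x}*x)$ and $bb^*=\mathfrak{F}^{-1}(y*\overline{y})$, and a second use of the rotation-invariance of $tr_2$ collapses $tr_2(\mathfrak{F}^{-1}(U)\mathfrak{F}^{-1}(V))$ to $tr_2(UV)$.

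Now set $U=\overline{x}*x\ge 0$ and $V=y*\overline{y}\ge 0$. The key move is to bound \emph{asymmetrically}:
\[
\|x*y\|_2^2=tr_2(UV)\le tr_2(U)\,\|V\|_\infty,
\]
using $tr_2(U)=\tfrac1\delta tr_2(x)^2=\tfrac1\delta\|x\|_1^2$ and, by the identity $\|y*\overline{y}\|_\infty=\tfrac1\delta\|y\|_2^2$ from the remark following Proposition \ref{1pyoung}, $\|V\|_\infty=\tfrac1\delta\|y\|_2^2$. This recovers $\|x*y\|_2\le\tfrac1\delta\|x\|_1\|y\|_2$, and since $U,V\ge 0$ equality holds iff $(\|V\|_\infty\mathbf 1-V)U=0$, i.e. iff $\mathcal{R}(U)\le B_2$, because $B_2$ is precisely the spectral projection of $V$ at its top eigenvalue $\|V\|_\infty$. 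It is the choice of the $tr_2(U)\,\|V\|_\infty$ split, rather than $\|U\|_\infty\,tr_2(V)$, that matches the exponents $t=1,\,s=2$ and makes the generated biprojection $B_1$ and the spectral biprojection $B_2$ appear on the same side; I expect this asymmetric estimate to be the conceptual crux.

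It then remains to identify $\mathcal{R}(U)\le B_2$ with $B_1\le B_2$. Since $\mathcal{R}(U)\le B_1$ always, $B_1\le B_2$ gives one direction. Conversely $B_2$ is a biprojection (Proposition \ref{spec}), so $\mathcal{R}(U)\le B_2$ forces $B_2UB_2=U$, and minimality of the biprojection $B_1$ generated by $U$ yields $B_1\le B_2$. This closes the main equivalence.

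For the moreover, put $y=\overline{x}$, so $V=\overline{x}*x=U$ and the equality condition reads $\mathcal{R}(U)\le\,$(top spectral projection of $U$); hence $U$ has a single nonzero eigenvalue, so $U=\|U\|_\infty\mathcal{R}(U)$ is a multiple of a biprojection and $B_1=B_2=\mathcal{R}(U)$ follows at once. Finally, from $\overline{x}*x$ being a multiple of a biprojection I would deduce that $\mathfrak{F}^{-1}(x)$ is a multiple of a partial isometry, pass to $x*\overline{x}$ (also a multiple of a biprojection), and apply Corollary \ref{Cor:leftshift}, with the scalar normalized by rescaling $x$, to conclude that $x$ is a multiple of a left shift of $B_1$. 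The one delicate point here is the biprojection-duality bookkeeping (the passage between $\widetilde{\,\cdot\,}$ and $\overline{\,\cdot\,}$) needed to move from $\overline{x}*x$ to $x*\overline{x}$ and to match the resulting biprojection with $B_1$.
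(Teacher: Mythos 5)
Your main equivalence is proved by essentially the paper's own argument: the same identity $\|x*y\|_2^2=tr_2\bigl((\overline{x}*x)(y*\overline{y})\bigr)$, the same asymmetric bound $tr_2(UV)\leq\|U\|_1\|V\|_\infty$ with $\|U\|_1=\tfrac{1}{\delta}\|x\|_1^2$ and $\|V\|_\infty=\tfrac{1}{\delta}\|y\|_2^2$, and the same equality analysis forcing $\mathcal{R}(\overline{x}*x)\leq B_2$, then $B_1\leq B_2$ by minimality of the generated biprojection. The only divergence is how you obtain the starting identity: the paper applies Lemma \ref{permu} to $tr_2((x^**y^*)(x*y))$, while you pass through $\mathfrak{F}^{-1}$ and the $L^2$-isometry. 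That route works, but your phrase ``rotation-invariance of $tr_2$'' for the collapse $tr_2(\mathfrak{F}^{-1}(U)\mathfrak{F}^{-1}(V))=tr_2(UV)$ is not literally correct ($tr_2\circ\mathfrak{F}\neq tr_2$; e.g.\ $tr_2(\mathfrak{F}(e_1))=\delta\neq 1=tr_2(e_1)$); the honest identity produces $tr_2(U\,\overline{V})$, and you need the observation that $U=\overline{x}*x$ and $V=y*\overline{y}$ are self-contragredient ($\overline{a*b}=\overline{b}*\overline{a}$) to land on $tr_2(UV)$. For the ``moreover'' clause, note that the paper's proof simply omits it, so your sketch is the more complete account: $V=U$ forces $U=\|U\|_\infty\mathcal{R}(U)$ and $B_1=B_2=\mathcal{R}(U)$, which is right. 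But the ``delicate point'' you flag is a genuine gap, not mere bookkeeping: Corollary \ref{Cor:leftshift} takes as hypothesis $z*\overline{z}=\tfrac{tr_2(B)}{\delta}B$, and the relation you hold is $\overline{x}*x=cB_1$, i.e.\ the hypothesis for $z=\overline{x}$ (after the rescaling you mention); the corollary then gives that $\overline{x}$ is a multiple of a left shift, hence $x$ a multiple of a \emph{right} shift, since the contragredient of a left shift of $B$ is a right shift of $B$ ($\overline{B}=B$). Passing instead to $x*\overline{x}$, as you propose, is not automatic: from $\overline{x}*x=cB_1$ one only gets that $\mathfrak{F}(x)$ is a multiple of a partial isometry with source $\mathcal{R}(\mathfrak{F}(B_1))$, and $x*\overline{x}=\mathfrak{F}^{-1}(|\mathfrak{F}(x)^*|^2)$ need not visibly be a multiple of a biprojection without further argument. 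The paper is itself inconsistent here (compare the ``right shift'' conclusion in Proposition \ref{1tyoungeq} with the ``left shift'' conclusions in Propositions \ref{212} and \ref{pp} from the same hypothesis), so you should either prove the left-shift claim directly from the definition $p*B=\tfrac{tr_2(B)}{\delta}p$ or accept the right-shift conclusion and track the convention through the later applications.
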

\begin{proof}
Recall that $B_2$ is a biprojection by Proposition \ref{spec}.
Note that
\begin{eqnarray*}
\|x*y\|_2^2&=&tr_2((x^**y^*)(x*y))\\
&=&tr_2((\overline{x^*}*x)(y^**\overline{y}))\\
&\leq &\|y^**\overline{y}\|_\infty\|\overline{x^*}*x\|_1\\
&=&\frac{\|y\|_2^2}{\delta}\frac{\|x\|_1^2}{\delta}.
\end{eqnarray*}
If $\|x*y\|_2=\frac{1}{\delta}\|x\|_1\|y\|_2$, we have
$tr_2((\overline{x}*x)(y*\overline{y}))=\|y*\overline{y}\|_\infty\|\overline{x}*x\|_1.$

By Proposition \ref{holder} (H\"{o}lder's inequality), we have that $\mathcal{R}(\overline{x}*x)\leq B_2$.
Hence $B_1\leq B_2$.

If $B_1\leq B_2$, we have $\mathcal{R}(\overline{x}*x)\leq B_2$.
Therefore, we obtain that $\|x*y\|_2=\frac{1}{\delta}\|x\|_1\|y\|_2$ by the argument above.
\end{proof}

\begin{proposition}\label{pp}
Suppose $\mathscr{P}$ is an irreducible subfactor planar algebra.
Let $x,y\in\mathscr{P}_{2,\pm}$ be nonzero positive elements.
If $\|x*y\|_r=\frac{1}{\delta}\|x\|_t\|y\|_s$ for some $1<r,t,s<\infty$ and $\frac{1}{r}+1=\frac{1}{t}+\frac{1}{s}$, then there exists a biprojection $B$ such that $x$ is a multiple of a left shift of $B$ and $y$ is a multiple of a right shift of $B$.
\end{proposition}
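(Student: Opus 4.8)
The plan is to reduce the general positive case to the projection case already settled in Proposition \ref{pyoung}. Concretely, it suffices to establish two facts: that $x$ and $y$ have flat spectra, i.e. $x=c_x\mathcal{R}(x)$ and $y=c_y\mathcal{R}(y)$ for positive scalars $c_x,c_y$; and that $\mathcal{R}(x)$ and $\mathcal{R}(y)$ are a left shift and a right shift of one common biprojection. Granting flatness, the hypothesis $\|x*y\|_r=\frac{1}{\delta}\|x\|_t\|y\|_s$ rescales to $\|\mathcal{R}(x)*\mathcal{R}(y)\|_r=\frac{1}{\delta}\|\mathcal{R}(x)\|_t\|\mathcal{R}(y)\|_s$, and Proposition \ref{pyoung}, $(1)\Rightarrow(3)$, produces the desired biprojection $B$ together with the two shifts. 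So the whole difficulty is concentrated in the flatness statement and in matching the two one-sided supports to a single $B$.

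After normalizing $\|x\|_t=\|y\|_s=1$, I would first invoke Proposition \ref{rts}: the extremal hypothesis forces the analytic function built there to be constant on the strip, so the Young equality propagates across the entire family. Evaluating at the endpoints $z=\pm(r-1)$ and using that $x,y$ are positive (so $w_x|x|^a=x^a$) yields the degenerate equalities $\|x^{t/r}*y^s\|_r=\frac{1}{\delta}\|x^{t/r}\|_r\|y^s\|_1$ and $\|x^t*y^{s/r}\|_r=\frac{1}{\delta}\|x^t\|_1\|y^{s/r}\|_r$, in which one factor is measured in $\|\cdot\|_1$. Feeding the first into Proposition \ref{p1p} (with $u=x^{t/r}$, $w=y^s$, outer exponent $r$) propagates the equality in the remaining exponent down to the $L^2$ level, giving $\|x^{t/2}*y^s\|_2=\frac{1}{\delta}\|x^{t/2}\|_2\|y^s\|_1$; the symmetric manipulation treats the second endpoint.

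At the $L^2$ level Proposition \ref{212} applies and converts the extremal equality into a containment of biprojections $B_1(y)\leq B_2(x)$ (after reordering the convolution by the symmetry $\mathcal{S}(a*b)=\mathcal{S}(\overline{b}*\overline{a})$), while the mirror computation gives $B_1(x)\leq B_2(y)$. These are exactly the extremal sum-set condition (5) of Theorem \ref{lowerm} for $\mathcal{R}(x)$ against a spectral projection of $y$ and vice versa, and they pin down $\mathcal{R}(x)$ as a left shift and $\mathcal{R}(y)$ as a right shift of a common biprojection $B$. For the flatness upgrade I would track the equality cases of H\"older's inequality (Proposition \ref{holder}) carried along the constant analytic family: the rigidity clause of Proposition \ref{holder} forces the relevant positive operators to be proportional, which should amount to showing $x*\overline{x}$ is a scalar multiple of $B$, whereupon Corollary \ref{Cor:leftshift} directly delivers that $x$ is a multiple of a left shift of $B$; the identical argument applied to $\overline{y}*y$ gives that $y$ is a multiple of a right shift.

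The main obstacle is this flatness step together with reconciling the two one-sided containments into a single biprojection. The difficulty is structural: convolution does not respect the spectral decompositions, since the supports of $\mathcal{R}(x)*q_j$ for the spectral projections $q_j$ of $y$ overlap, so one cannot split the $L^r$-norm and read off flatness term by term. The rigidity must instead be extracted from the equality cases of H\"older and the Hausdorff-Young inequality propagated by Propositions \ref{rts} and \ref{p1p}, rather than from any single inequality, and checking that the biprojections produced on the two sides actually coincide, so that the range projections are shifts of the \emph{same} $B$, is the remaining delicate point.
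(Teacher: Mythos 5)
Your proposal follows essentially the same route as the paper's proof: Proposition \ref{rts} to spread the extremal equality across the analytic family, Proposition \ref{p1p} to descend to the $L^2$ level, Proposition \ref{212} to produce the two biprojection containments, and Corollary \ref{Cor:leftshift} to conclude. The flatness step you flag as the main obstacle is in fact immediate from what Proposition \ref{212} already delivers: once the two containments are sandwiched (the paper uses Lemma \ref{preceq}) to give $B_1=B_2=B=\mathcal{R}(\overline{x}*x)=\mathcal{R}(y*\overline{y})$, the spectral condition $(\overline{x}^{t/2}*x^{t/2})B=\|\overline{x}^{t/2}*x^{t/2}\|_\infty B$ combined with $\mathcal{R}(\overline{x}^{t/2}*x^{t/2})=B$ forces $\overline{x}^{t/2}*x^{t/2}$ to be a scalar multiple of $B$, so Corollary \ref{Cor:leftshift} applies directly to $x^{t/2}$ and flatness of $x$ falls out --- no separate H\"older rigidity argument and no reduction to Proposition \ref{pyoung} is needed.
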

\begin{proof}
By Proposition \ref{rts}, we have that
$$\|x^{\frac{t}{r}}*y^s\|_r=\frac{1}{\delta}\|x^{\frac{t}{r}}\|_r\|y^s\|_1,\quad
\|x^{t}*y^{\frac{s}{r}}\|_r=\frac{1}{\delta}\|x^{t}\|_1\|y^{\frac{s}{r}}\|_r.
$$
By Proposition \ref{p1p}, we have that
$$\|x^{\frac{t}{2}}*y^s\|_2=\frac{1}{\delta}\|x^{\frac{t}{2}}\|_2\|y^s\|_1,
\quad
\|x^{t}*y^{\frac{s}{2}}\|_2=\frac{1}{\delta}\|y^{\frac{s}{2}}\|_2\|x^t\|_1.
$$
By Proposition \ref{212}, we have that there exist biprojections $B_1$ and $B_2$ in $\mathscr{P}_{2,\pm}$ such that
\begin{equation}\label{eqran1}
\mathcal{R}(y^s*\overline{y}^s)\leq B_1,\quad (\overline{x}^{\frac{t}{2}}*x^{\frac{t}{2}})B_1=\|\overline{x}^{\frac{t}{2}}*x^{\frac{t}{2}}\|_\infty B_1
\end{equation}
and
\begin{equation}\label{eqran2}
\mathcal{R}(\overline{x}^t*x^t)\leq B_2,\quad (y^{\frac{s}{2}}*\overline{y}^{\frac{s}{2}})B_2=\|y^{\frac{s}{2}}*\overline{y}^{\frac{s}{2}}\|_\infty B_2.
\end{equation}
Therefore, by Lemma \ref{preceq},
$$B_1\leq \mathcal{R}(\overline{x}*x)\leq B_2,\quad B_2\leq \mathcal{R}(y*\overline{y})\leq B_1,$$
i.e. $B_1=B_2=B=\mathcal{R}(\overline{x}*x)=\mathcal{R}(y*\overline{y}).$
From Equation (\ref{eqran1}), we have
$$\overline{x}^{\frac{t}{2}}*x^{\frac{t}{2}}=\|\overline{x}^{\frac{t}{2}}*x^{\frac{t}{2}}\|_\infty B.$$
By Corollary \ref{Cor:leftshift}, we have that $x$ is a left shift of $B$.
Similarly, we obtain that $y$ is a right shift of $B$ from Equation (\ref{eqran2}).
\end{proof}

We characterize the extremal pairs of Young's inequality for subfactor planar algebras.
\begin{theorem}\label{youngeq}
Suppose $\mathscr{P}$ is an irreducible subfactor planar algebra.
Let $x,y$ be in $\mathscr{P}_{2,\pm}$.
Then the following are equivalent:
\begin{itemize}
\item[(1)] $\|x*y\|_r=\frac{1}{\delta}\|x\|_t\|y\|_s$ for some $1<r,t,s<\infty$ such that $\frac{1}{r}+1=\frac{1}{t}+\frac{1}{s}$;
\item[(2)] $\|x*y\|_r=\frac{1}{\delta}\|x\|_t\|y\|_s$ for any $1\leq r,t,s\leq \infty$ such that $\frac{1}{r}+1=\frac{1}{t}+\frac{1}{s}$;
\item[(3)] both $x$ and $y$ are bi-shifts of biprojections, and $\mathcal{R}((\mathfrak{F}^{-1}(x))^*)=\mathcal{R}(\mathfrak{F}^{-1}(y))$;
\item[(4)]there exists a biprojection $B$ such that $x=(a_x\ _hB)*\mathfrak{F}(\widetilde{B}_g)$ and $y=\mathfrak{F}(\widetilde{B}_g)*(a_y B_f)$, where $B_g,B_f$ are right shifts of $B$, $_hB$ is left shift of $B$ and $a_x, a_y$ are elements in $\mathscr{P}_{2,\pm}$ such that $x,y$ are nonzero.
\end{itemize}
\end{theorem}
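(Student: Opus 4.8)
The plan is to reduce the general statement to the partial isometry case already settled in Proposition \ref{partyoung}. The crucial observation is that under any one of the four conditions the elements $x$ and $y$ are forced to be scalar multiples of partial isometries; once this is known, I would write $x=c_x v$ and $y=c_y w$ with $v,w$ partial isometries and $c_x,c_y>0$, and note that the $\|\cdot\|_p$-norms are homogeneous of degree one in each variable while the range-projection and bi-shift conditions are invariant under nonzero scalars. Thus each condition for $x,y$ is equivalent to the corresponding condition for $v,w$, and Proposition \ref{partyoung} supplies the equivalences among the latter.

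First I would dispose of $(2)\Rightarrow(1)$, which is immediate by specialization. For $(3)$ and $(4)$ I would invoke that a bi-shift of a biprojection is a multiple of a partial isometry, which is part of the structure theory of bi-shifts in \cite{JLW}; here one uses that the explicit form in $(4)$ is exactly the third form of a bi-shift employed in the proof of Proposition \ref{partyoung}, so $(4)$ describes bi-shifts as well. Consequently, assuming $(3)$ or $(4)$, the elements $x,y$ are multiples of partial isometries, and after normalizing to $v,w$ one applies Proposition \ref{partyoung} to recover $(2)$, which rescales back to $(2)$ for $x,y$.

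The main work, and the principal obstacle, is to show that $(1)$ by itself forces $x,y$ to be multiples of partial isometries. For this I would apply Lemma \ref{absolute}: the hypothesis $\|x*y\|_r=\frac{1}{\delta}\|x\|_t\|y\|_s$ yields $\||x|*|y|\|_r=\frac{1}{\delta}\||x|\|_t\||y|\|_s$, so that $(|x|,|y|)$ is an extremal pair of nonzero positive operators. Proposition \ref{pp} then produces a biprojection $B$ for which $|x|$ is a positive multiple of a left shift of $B$ and $|y|$ a positive multiple of a right shift of $B$. Since a shift of $B$ is a projection, $|x|$ and $|y|$ are multiples of projections, equivalently $x=c_x w_x$ and $y=c_y w_y$ with $w_x,w_y$ partial isometries. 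Lemma \ref{absolute} is precisely what bridges the general extremal condition to the positive extremal condition that Proposition \ref{pp} can consume; everything after this is bookkeeping.

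Finally I would assemble the equivalences. Normalizing to the partial isometries $v=x/c_x$ and $w=y/c_y$, condition $(1)$ for $x,y$ is equivalent to condition $(1)$ for $v,w$ by homogeneity, so Proposition \ref{partyoung} delivers $(2)$, $(3)$, $(4)$ for $v,w$. These rescale back to $x,y$: $(2)$ by homogeneity; $(3)$ because $\mathcal{R}((\mathfrak{F}^{-1}(x))^*)=\mathcal{R}((\mathfrak{F}^{-1}(v))^*)$ and $\mathcal{R}(\mathfrak{F}^{-1}(y))=\mathcal{R}(\mathfrak{F}^{-1}(w))$ are insensitive to the scalars $c_x,c_y$; and $(4)$ by absorbing the scalars, taking $a_x=c_x y_v$ and $a_y=c_y y_w$. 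The one point needing care throughout is to verify that the scalar normalization is compatible with each condition, but this is clear once the reduction to multiples of partial isometries is in place.
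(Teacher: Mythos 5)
Your proposal is correct and follows essentially the same route as the paper: Lemma \ref{absolute} transfers the extremal condition in (1) to the pair $(|x|,|y|)$, Proposition \ref{pp} then shows $|x|,|y|$ are multiples of projections so that $x,y$ are multiples of partial isometries, and Proposition \ref{partyoung} supplies the remaining equivalences. The extra bookkeeping you spell out about scalar normalization is left implicit in the paper but is the same argument.
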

\begin{proof}
$(1)\Rightarrow (4)$:
By Lemma \ref{absolute}, we have that $\||x|*|y|\|_r=\frac{1}{\delta}\|x\|_t\|y\|_s$.
By Propostion \ref{pp}, we have that $|x|, |y|$ are multiples of projections.
By Proposition \ref{partyoung}, we see $(3)$ is true.

$(4)\Leftrightarrow (3) \Rightarrow (2)$: It is true from Proposition \ref{partyoung}.

$(2)\Rightarrow (1)$: It is obvious.
\end{proof}

%\begin{remark}
For the infinite dimensional case, Kusterman and Vaes introduced locally compact quantum groups \cite{KuVaes}.
Young's inequality for locally compact quantum groups was proved in \cite{LWW}. It would be interesting to characterize the extremal pairs.
People have considered other generalizations of Young's inequalities.
Bobkov and Madiman and Wang conjectured a fractional generalizations of Young inequalities in \cite{BobMadWan}.

%We characterize the extremal pairs for unimodular Kac algebras in \cite{LW15}.
%The proofs for subfactor planar algebras and unimodular Kac algebras require different techniques.
%\end{remark}

\section{Extremal Operators of the Hausdorff-Young Inequality}\label{Sec:Hausdorff-Young}
On unimodular locally compact groups, Russo showed that extremal operators of the Hausdorff-Young inequality are translations of subcharacters \cite{Russo}.

In this section we characterize extremal operators of the Hausdorff-Young inequality for subfactor planar algebras in terms of bi-shifts of biprojections.

\begin{proposition}\label{1tyoungeq}
Suppose $\mathscr{P}$ is an irreducible subfactor planar algebra.
Let $x$ be in $\mathscr{P}_{2,\pm}$.
Then the following are equivalent:
\begin{itemize}
\item[(1)] $\|x*\overline{x^*}\|_r=\frac{1}{\delta}\|x\|_1\|x\|_r$ for some $1<r<\infty$;
\item[(2)] $\|x*\overline{x^*}\|_r=\frac{1}{\delta}\|x\|_1\|x\|_r$ for any $1\leq r\leq \infty$ ;
\item[(3)] $x$ is a bi-shift of a biprojection.
\end{itemize}
\end{proposition}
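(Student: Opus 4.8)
The plan is to prove the chain $(2)\Rightarrow(1)$ trivially, $(3)\Rightarrow(2)$ by direct computation using the structure of bi-shifts, and the essential implication $(1)\Rightarrow(3)$ by reducing to Proposition \ref{212} and Corollary \ref{Cor:leftshift}. The key observation is that $x*\overline{x^*}$ is intimately tied to the self-convolution appearing in the Hausdorff--Young extremal problem, since $\mathfrak{F}(x*\overline{x^*})$ is a positive multiple of $|\mathfrak{F}(x)|^2$. Thus the equality in (1) should force $\overline{x}*x$ (equivalently $\mathfrak{F}^{-1}$ of a suitable projection-like object) to be supported on a biprojection, which is exactly the hypothesis needed to conclude that $x$ is a left shift, and hence a bi-shift, of a biprojection.

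First I would establish $(3)\Rightarrow(2)$. Assuming $x$ is a bi-shift of a biprojection $B$, I would use the explicit form $x=\mathfrak{F}(\widetilde{B}_h)*(yB_g)$ together with the already-proven extremal characterizations (Main Theorem 2 in \cite{JLW}) to compute $\|x*\overline{x^*}\|_r$ exactly. The point is that for a bi-shift, the range projections of $x$ and $\mathfrak{F}(x)$ are genuine shifts of $B$ and $\widetilde{B}$, so $x*\overline{x^*}$ becomes a scalar multiple of a shift of $B$; the $L^r$ norms then factor through the trace of that shift, giving the sharp constant $\frac{1}{\delta}\|x\|_1\|x\|_r$ for every $1\le r\le\infty$. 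This is a bookkeeping argument on norms, parallel to the $(3)\Rightarrow(2)$ direction of Proposition \ref{pyoung}.

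The main work is $(1)\Rightarrow(3)$. Starting from $\|x*\overline{x^*}\|_r=\frac{1}{\delta}\|x\|_1\|x\|_r$ for one $1<r<\infty$, I would first pass to the polar parts: applying Lemma \ref{absolute} with the pair $(x,\overline{x^*})$ shows that the corresponding equality holds for $|x|*|x|$ (up to identifying $|\overline{x^*}|=\overline{|x|^*}$ appropriately), reducing to the positive case. I would then feed this into Proposition \ref{p1p} (analytic interpolation in the exponent) to descend to the $L^2$ endpoint, obtaining an equality of the form $\|x^{t/2}*\overline{x^*}\cdots\|_2=\frac{1}{\delta}\|\cdot\|_2\|\cdot\|_1$. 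At that point Proposition \ref{212} applies: the $L^2$ equality forces $B_1(x)\le B_2(x)$, where $B_1(x)$ is the biprojection generated by $\overline{x}*x$ and $B_2(x)$ is the spectral projection of $x*\overline{x}$ at its top eigenvalue. Because here the two factors are contragredient to one another, the ``Moreover'' clause of Proposition \ref{212} gives $B_1=B_2=:B$ and directly yields that $x$ is a (multiple of a) left shift of $B$. Finally Corollary \ref{Cor:leftshift}, combined with the uniqueness theorem of \cite{JLW}, upgrades ``left shift'' to ``bi-shift of a biprojection,'' completing (3).

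The hard part will be the bookkeeping in the reduction of $(1)$ to the symmetric $L^2$ equality, precisely because the convolution $x*\overline{x^*}$ mixes $x$ with its contragredient adjoint rather than with an independent second operand; one must track the polar decomposition $w_x|x|$ carefully through Lemma \ref{absolute} and Proposition \ref{p1p} so that the hypotheses of Proposition \ref{212} are met with $y=\overline{x}$, which is exactly the case where that proposition produces the equality $B_1=B_2$ rather than merely an inclusion. I expect the identity $\mathfrak{F}(x*\overline{x^*})=\tfrac{1}{\delta}|\mathfrak{F}(x)|^2\ge 0$ (the self-adjoint positivity noted in the remark preceding the statement) to be the structural fact that makes all the equalities collapse onto a single biprojection, so I would verify that positivity first and use it to justify each application of H\"older's equality condition from Proposition \ref{holder}.
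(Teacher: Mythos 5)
Your proposal follows essentially the same route as the paper: Lemma \ref{absolute} to reduce to $\||x|*\overline{|x|}\|_r$, Proposition \ref{p1p} to interpolate down to the $L^2$ endpoint, Proposition \ref{212} and Corollary \ref{Cor:leftshift} to produce the biprojection and show $|x|$ is a multiple of a projection, and finally the partial-isometry case (Proposition \ref{partyoung}) to conclude. The one detail to adjust is that after interpolation the operands are $|x|^{r/2}$ and $\overline{|x|}$ rather than an exact contragredient pair, so the ``Moreover'' clause of Proposition \ref{212} does not apply verbatim; the paper instead sandwiches $B_1(\overline{|x|})\leq B_2(|x|^{r/2})\leq B_1(\overline{|x|}^{r/2})=B_1(\overline{|x|})$, using that $B_1$ depends only on the range projection --- which is exactly the bookkeeping you flagged.
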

\begin{proof}
$(1)\Rightarrow (3)$: By Lemma \ref{absolute}, we have
$$\||x|*\left|\overline{x}^*\right|\|_r=\||x|*\overline{|x|}\|_r=\frac{1}{\delta}\|x\|_1\|x\|_r.$$
By Proposition \ref{p1p}, we obtain
$$\||x|^{r/2}*\overline{|x|}|\|_2=\frac{1}{\delta}\|x\|_1\||x|^{r/2}\|_2.$$
By Proposition \ref{212}, we see that
$$B_1(|\overline{x}|)\leq B_2(|x|^{r/2}).$$
Since $B_2(|x|^{r/2})\leq B_1(\overline{|x|}^{r/2})=B_1(\overline{|x|})(=B)$,
we have that $\overline{|x|}^{r/2}*|x|^{r/2}$ is a multiple of $B$.
By Corollary \ref{Cor:leftshift}, $|x|^{r/2}$ is a multiple of a right shift of $B$.
Therefore $x$ is a multiple of a partial isometry.
By Proposition \ref{partyoung}, $x$ is a bi-shift of a biprojection.
\end{proof}

\begin{proposition}\label{hausde}
Suppose $\mathscr{P}$ is an irreducible subfactor planar algebra.
If $\|\mathfrak{F}(x)\|_{\frac{t}{t-1}}=\delta^{1-\frac{2}{t}}\|x\|_t$ for some $1<t<2$, then for any complex number $z$, $0\leq \Re z\leq 1$, we have
$$\|\mathfrak{F}(w_x|x|^{t(1+z)/2})\|_{\frac{2}{1-\Re z}}=\delta^{-\Re z}\|w_x|x|^{t(1+z)/2}\|_{\frac{2}{1+\Re z}}.$$
\end{proposition}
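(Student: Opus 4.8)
The plan is to run the complex-interpolation argument of Propositions \ref{p1p} and \ref{rts}: build a single bounded holomorphic function on the strip $0\le\Re z\le1$, show it attains the modulus of its a priori bound at one interior point, and invoke the maximum modulus theorem. After normalizing $\|x\|_t=1$, the hypothesis becomes $\|\mathfrak{F}(x)\|_{t/(t-1)}=\delta^{1-2/t}$, i.e. $x$ extremizes the Hausdorff--Young inequality (Proposition \ref{hausyoung}) at the exponent $t/(t-1)$. Writing $u_z=w_x|x|^{t(1+z)/2}$, the asserted identity is exactly the equality case of Hausdorff--Young for $u_z$ at the exponent $\tfrac{2}{1-\Re z}$, since $\tfrac{1-\Re z}{2}+\tfrac{1+\Re z}{2}=1$ and the Hausdorff--Young constant at $\tfrac{2}{1-\Re z}$ is $\delta^{-\Re z}$. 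The decisive feature is that the interior point $z_0=\tfrac2t-1\in(0,1)$ gives $u_{z_0}=x$ and turns the pair of exponents $\tfrac{2}{1-\Re z},\tfrac{2}{1+\Re z}$ into $\tfrac{t}{t-1},t$, so the conclusion at $z_0$ is precisely the hypothesis.

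Writing $p=\tfrac{t}{t-1}$ and $M=\|\mathfrak{F}(x)\|_p$, and using the polar decomposition $\mathfrak{F}(x)=w_{\mathfrak{F}(x)}|\mathfrak{F}(x)|$, I would define on the strip the entire function
\[
F(z)=tr_2\!\left(\mathfrak{F}\!\left(w_x|x|^{t\frac{1+z}{2}}\right)\,|\mathfrak{F}(x)|^{\,p\frac{1+z}{2}}\,w_{\mathfrak{F}(x)}^{*}\right),
\]
which is analytic because $|x|^{s}$ and $|\mathfrak{F}(x)|^{s}$ are entire in $s$ through the functional calculus. Estimating by Proposition \ref{holder} (H\"older) with the conjugate exponents $\tfrac{2}{1-\Re z}$ and $\tfrac{2}{1+\Re z}$, and then Proposition \ref{hausyoung} (Hausdorff--Young) on the first factor, gives
\[
|F(z)|\le \|\mathfrak{F}(u_z)\|_{\frac{2}{1-\Re z}}\,\big\|\,|\mathfrak{F}(x)|^{\,p\frac{1+z}{2}}\big\|_{\frac{2}{1+\Re z}}\le \delta^{-\Re z}\,\|u_z\|_{\frac{2}{1+\Re z}}\,M^{\,p\frac{1+\Re z}{2}}.
\]
Here $\|u_z\|_{\frac{2}{1+\Re z}}=\|x\|_t^{t(1+\Re z)/2}=1$, and the exponent $p\tfrac{1+z}{2}$ is chosen so that $\tfrac{2}{1+\Re z}\cdot p\tfrac{1+\Re z}{2}=p$, whence the second norm equals $M^{p(1+\Re z)/2}$. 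Thus $|F(z)|\le|g(z)|$ for the entire, nonvanishing function $g(z)=\delta^{-z}M^{\,p\frac{1+z}{2}}$, and $\widetilde{F}:=F/g$ satisfies $|\widetilde{F}|\le1$ on the whole strip.

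I would then evaluate at $z_0=\tfrac2t-1$, where $u_{z_0}=x$ and $p\tfrac{1+z_0}{2}=p-1$. There the second factor is the genuine norming functional $|\mathfrak{F}(x)|^{p-1}w_{\mathfrak{F}(x)}^{*}$ of $\mathfrak{F}(x)$, so $F(z_0)=tr_2(|\mathfrak{F}(x)|^{p})=M^{p}$ and the H\"older step is automatically tight; the Hausdorff--Young step is tight at $z_0$ exactly because the hypothesis gives $M=\delta^{1-2/t}$. A direct computation yields $\widetilde{F}(z_0)=M^{p}/g(z_0)=M\,\delta^{2/t-1}=1$. Since $\widetilde{F}$ reaches the modulus of its bound at the interior point $z_0$, the maximum modulus theorem forces $\widetilde{F}\equiv1$, i.e. $|F(z)|=|g(z)|$ throughout the strip. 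Consequently both inequalities above are equalities for every $z$, and the Hausdorff--Young equality is precisely $\|\mathfrak{F}(w_x|x|^{t(1+z)/2})\|_{\frac{2}{1-\Re z}}=\delta^{-\Re z}\|w_x|x|^{t(1+z)/2}\|_{\frac{2}{1+\Re z}}$, which is the claim.

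The main obstacle is pinning down the exponent $p\tfrac{1+z}{2}$ in the second slot so that three constraints hold simultaneously: its H\"older-conjugate exponent is $\tfrac{2}{1+\Re z}$ (matching the element side of Hausdorff--Young), its $L^{2/(1+\Re z)}$-norm is a power of $M$ that can be absorbed into an analytic, zero-free factor $g$, and at $z_0$ it degenerates to the norming functional $|\mathfrak{F}(x)|^{p-1}w_{\mathfrak{F}(x)}^{*}$. Once these are aligned, the tightness at $z_0$ splits cleanly into an automatic H\"older equality and a Hausdorff--Young equality supplied by the hypothesis, and everything else is the routine three-lines bookkeeping already used in Propositions \ref{p1p} and \ref{rts}.
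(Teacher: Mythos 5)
Your proposal is correct and follows essentially the same route as the paper: the identical test function $F(z)=tr_2(\mathfrak{F}(w_x|x|^{t(1+z)/2})|\mathfrak{F}(x)|^{t'(1+z)/2}w_{\mathfrak{F}(x)}^*)$, the same H\"older-plus-Hausdorff--Young bound on the strip, evaluation at the interior point $z_0=\tfrac2t-1$, and the maximum modulus theorem to force equality throughout. The only difference is cosmetic: the paper normalizes $\|x\|_t=\delta^{1/t}$ so that the bound is the constant $\delta$, whereas you normalize $\|x\|_t=1$ and divide by the zero-free analytic factor $g(z)$; the two are equivalent.
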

\begin{proof}
We assume that $\|x\|_t=\delta^{1/t}$, $t'=\frac{t}{t-1}$, and consider the function $F(z)$ given by
$$F(z)=tr_2(\mathfrak{F}(w_x|x|^{t(1+z)/2})|\mathfrak{F}(x)|^{t'(1+z)/2}w_{\mathfrak{F}(x)}^*).$$
Since
\begin{eqnarray*}
|F(z)|&\leq &\|\mathfrak{F}(w_x|x|^{t(1+z)/2})\|_{\frac{2}{1-\Re z}}\|\||\mathfrak{F}(x)|^{t'(1+z)/2}w_{\mathfrak{F}(x)}^*\|_{\frac{2}{1+\Re z}}\\
&\leq& \delta^{-\Re z}\|w_x|x|^{t(1+z)/2}\|_{\frac{2}{1+\Re z}}\||\mathfrak{F}(x)|^{t'(1+z)/2}\|_{\frac{2}{1+\Re z}}\\
&=&\delta^{-\Re z}\delta^{\frac{1+\Re z}{2}}\delta^{\frac{1+\Re z}{2}}=\delta,
\end{eqnarray*}
we see that $F(z)$ is a bounded analytic function on $0\leq \Re z\leq 1$.
Note that
$$F(\frac{2}{t}-1)=tr_2(\mathfrak{F}(x)|\mathfrak{F}(x)|^{\frac{1}{t-1}}w_{\mathfrak{F}(x)}^*)=\|\mathfrak{F}(x)\|_{t'}^{t'}=\delta.$$
By the maximum modulus theorem, we have that $F(z)\equiv \delta$ on $0\leq \Re z\leq 1$ and the proposition is proved.
\end{proof}

\begin{theorem}\label{hauseq}
Suppose $\mathscr{P}$ is an irreducible subfactor planar algebra.
Let $x$ be nonzero in $\mathscr{P}_{2,\pm}$.
Then the following are equivalent:
\begin{itemize}
\item[(1)] $\|\mathfrak{F}(x)\|_{\frac{t}{t-1}}=\left(\frac{1}{\delta}\right)^{\frac{2}{t}-1}\|x\|_t$ for some $1< t<2$;
\item[(2)] $\|\mathfrak{F}(x)\|_{\frac{t}{t-1}}=\left(\frac{1}{\delta}\right)^{\frac{2}{t}-1}\|x\|_t$ for any $1\leq t\leq 2$;
\item[(3)] $x$ is a bi-shift of a biprojection.
\end{itemize}
\end{theorem}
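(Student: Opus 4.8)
The plan is to prove $(1)\Rightarrow(3)\Rightarrow(2)\Rightarrow(1)$, the implication $(2)\Rightarrow(1)$ being trivial (just specialize $t$). Condition $(1)$ says that $x$ is an extremizer of the Hausdorff-Young inequality (Proposition \ref{hausyoung}) at a \emph{single} exponent $t\in(1,2)$, and the whole point is to bootstrap one extremal exponent into the full structural conclusion. The argument runs parallel to the proof of Proposition \ref{1tyoungeq}, with Proposition \ref{hausde} playing the role that Proposition \ref{p1p} played there: it is the complex-analytic device (maximum modulus on a strip) that propagates a single equality to a one-parameter family and, crucially, records the phase information carried by the complex variable.

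For $(1)\Rightarrow(3)$ I would first normalize $\|x\|_t=\delta^{1/t}$ and apply Proposition \ref{hausde}. Evaluating its conclusion at real $z=a\in[0,1]$ gives, for every such $a$, Hausdorff-Young equality for the element $w_x|x|^{t(1+a)/2}$ at the exponent $\frac{2}{1+a}\in[1,2]$ (the constants match since $1-\frac{2}{\tau}=-a$ for $\tau=\frac{2}{1+a}$); the genuinely new content over the real slice is that the equality persists for complex $z$, which is what forces the phase $w_x$ to be rigid rather than merely forcing $|x|$ to have projection-like modulus. The endpoint $a=1$ yields that $w_x|x|^{t}$ is extremal in the sense $\|\mathfrak{F}(w_x|x|^t)\|_\infty=\frac{1}{\delta}\|w_x|x|^t\|_1$. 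Using Hölder's inequality together with Plancherel and the identity $\mathfrak{F}(x)^*\mathfrak{F}(x)=\mathfrak{F}^{-1}(x^**\overline{x})$ exactly as in the step $(2)\Rightarrow(3)$ of Proposition \ref{partyoung}, I would convert the intermediate equalities into an $L^2$ convolution equality of the form $\|u^{\alpha}*\overline{u}^{\beta}\|_2=\frac{1}{\delta}\|u^{\alpha}\|_1\|\overline{u}^{\beta}\|_2$ with $u=|x|$, which is precisely the hypothesis of Proposition \ref{212}.

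Proposition \ref{212} then produces an inclusion $B_1\le B_2$ of the associated biprojections, and—just as in Proposition \ref{1tyoungeq}—the reverse inclusion $B_2(|x|^{s})\le B_1(\overline{|x|})$ holds for trivial reasons (monotonicity of $B_1$ and $B_2$ under taking powers), so the two biprojections coincide and $\overline{|x|}^{s}*|x|^{s}$ is forced to be a scalar multiple of a single biprojection $B$. Corollary \ref{Cor:leftshift} now applies and shows $|x|^{s}$, hence $|x|$, is a scalar multiple of a left shift of $B$; in particular $|x|$ is a multiple of a projection and $x$ is a multiple of a partial isometry. Feeding this partial isometry into Proposition \ref{partyoung} (equivalently Proposition \ref{1tyoungeq}) concludes that $x$ is a bi-shift of a biprojection. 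I expect the main obstacle to be exactly this conversion step: the Hausdorff-Young equality relates $x$ to its transform $\mathfrak{F}(x)$, whereas the biprojection machinery of Proposition \ref{212} and Corollary \ref{Cor:leftshift} consumes convolution (Young-type) equalities, so the crux is passing from one to the other while retaining enough of the complex-analytic phase information in Proposition \ref{hausde} to exclude partial isometries whose modulus, but not phase, is adapted to $B$.

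For $(3)\Rightarrow(2)$ I would argue by direct computation. A bi-shift of a biprojection is a scalar multiple of a partial isometry, and so is its Fourier transform, by the extremizer analysis in \cite{JLW}; writing $p=\mathcal{R}(x^*)$ and $p'=\mathcal{R}(\mathfrak{F}(x)^*)$ one gets $\|x\|_t=|\lambda|\,tr_2(p)^{1/t}$ and $\|\mathfrak{F}(x)\|_{t'}=|\mu|\,tr_2(p')^{1/t'}$ for $t'=\frac{t}{t-1}$. Plancherel ($t=2$, valid for every element) fixes the ratio $|\mu|/|\lambda|=(tr_2(p)/tr_2(p'))^{1/2}$, and Main Theorem 2 in \cite{JLW} gives $tr_2(p)\,tr_2(p')=\delta^2$. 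Substituting these two relations into the norm expressions, the exponents collapse so that $\|\mathfrak{F}(x)\|_{t'}=\delta^{1-2/t}\|x\|_t$ holds simultaneously for every $t\in[1,2]$, which is $(2)$; this computation is the reason equality spreads from the single extremal exponent of $(1)$ to the whole interval.
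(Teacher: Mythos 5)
Your proposal is correct and is built from the same two ingredients as the paper's proof: Proposition \ref{hausde} to propagate the single extremal exponent, and the identity $\mathfrak{F}^{-1}(y^**\overline{y})=|\mathfrak{F}(y)|^2$ (with Plancherel) to convert a Hausdorff--Young equality into a Young-type convolution equality. The difference is in how the conversion is exploited. The paper evaluates Proposition \ref{hausde} at the single point $z=\tfrac12$, sets $y=w_x|x|^{3t/4}$, computes in one line $\|y^**\overline{y}\|_2=\||\mathfrak{F}(y)|^2\|_2=\|\mathfrak{F}(y)\|_4^2=\delta^{-1}\|y\|_{4/3}^2$, and then simply cites Theorem \ref{youngeq} for the pair $(y^*,\overline{y})$ at $(r,t,s)=(2,\tfrac43,\tfrac43)$ to conclude $y$, hence $x$, is a bi-shift. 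You instead re-run the internal machinery of that theorem by hand (Lemma \ref{absolute} to strip the phase, Propositions \ref{p1p}/\ref{rts} to move the exponents to $(2,1,2)$, then Proposition \ref{212}, Corollary \ref{Cor:leftshift}, and Proposition \ref{partyoung}), mirroring the proof of Proposition \ref{1tyoungeq}; this works but is strictly longer than invoking Theorem \ref{youngeq}, which is already available at this point in the paper. One caution on the step you flag as the "main obstacle": Hausdorff--Young equalities for two operators separately do \emph{not} in general reverse the H\"{o}lder step $\|\mathfrak{F}^{-1}(v)\mathfrak{F}^{-1}(w)\|_2\leq\|\mathfrak{F}^{-1}(v)\|_{t'}\|\mathfrak{F}^{-1}(w)\|_{s'}$; the reversal is legitimate here only because the pair is $(y^*,\overline{y})$, so the product is $|\mathfrak{F}(y)|^2$ and the H\"{o}lder step is an identity with $t'=s'=4$ -- which is exactly what your cited identity provides, so no actual gap remains. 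Your $(3)\Rightarrow(2)$ computation (partial-isometry norms, Plancherel fixing $|\mu|/|\lambda|$, and $tr_2(p)\,tr_2(p')=\delta^2$ from Main Theorem 2 of \cite{JLW}) is a correct fleshing-out of the paper's ``can be checked directly.''
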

\begin{proof}
$(1)\Rightarrow (3)$:
By Proposition \ref{hausde}, we have that
$$\|\mathfrak{F}(w_x|x|^{\frac{3t}{4}})\|_4=\delta^{-1/2}\|w_x|x|^{\frac{3t}{4}}\|_{4/3}.$$
Let $y=w_x|x|^{\frac{3t}{4}}$.
Then
\begin{eqnarray*}
\|y^**\overline{y}\|_2=\||\mathfrak{F}(y)|^2\|_2=\|\mathfrak{F}(y)\|_4^{2}=\delta^{-1}\|y\|_{4/3}^2.
\end{eqnarray*}
By Theorem \ref{youngeq}, we have that $y$ is a bi-shift of a biprojection and  so is $x$.

$(3)\Rightarrow (2)$: It can be checked directly.

$(2)\Rightarrow (1)$: It is obvious.
\end{proof}

\section{Block Maps}\label{Sec:Block Maps}
\subsection{Block Maps for subfactors}
In this section, we introduce block maps for subfactors motivated by the renormalization group of 2D lattice models and by the square relation of bi-shifts of biprojections. 
We study their dynamic systems and prove that the limit points are all multiples of biprojections and zero. The asymptotical phenomenon of block maps coincides with the scaling limit of 2D lattice models as explained in the introduction.

We briefly recall the 2D Ising model as a supplement to the explanation in the introduction. 
We do not give the detailed computation here.  Instead, we explain it as a motivation of the definition of the block maps.

A configuration in the 2D Ising model is an assignment of the spins $\pm$ to vertices of a 2D lattice.
For the ferromagnet Ising model, the nearest neighborhood interaction $J$ is defined as 
\begin{align*}
J(++)&=J(--)=-1, \\
J(+-)&=J(-+)=1.
\end{align*}
 The total energy $H$ is the sum of $J$ over all edges.
 The partition function is 
 \begin{align*}
 Z=\sum_\sigma e^{-\beta H},
 \end{align*}
 summing over all configurations $\sigma$, $\beta=T^{-1}$ is the inverse temperature. 

In the graph planar algebra of the bipartite graph $A_3$, \scalebox{.5}{
\tikz{
\draw (0,0)--(2,0);
\fill (0,0) circle (.2);
\fill[white] (1,0) circle (.2);
\draw (1,0) circle (.2);
\fill (2,0) circle (.2);
}},
the partition function can be represented by a planar diagram $D$ shown in Fig~\ref{Lattice to Graph}.
The shaded/unshaded regions are assigned to a black/white vertex in $A_3$, corresponding to the  spins $\pm$.
The crossing in $D$ are labelled by the 2-box $B(T,J)=(e^{\beta}-e^{-\beta})\ID+e^{-\beta} \sqrt{2} \JP$, which represents the interaction. More precisely, when the two black boundaries of $B(T,J)$  are assigned to the same spin, then $B(T,J)$ contributes to a scalar $e^{\beta}$. If the spins are different, then the scalar is $e^{-\beta}$. Given an assignment, the product of those scalars is $e^{-\beta H}$. In total, the diagram $D$ defines the partition function $Z$ multiplying $2^{\frac{n}{2}}$, where $n$ is the number of vertices in the lattice. 

The renormalization group for lattice models is used to rescale the size of the 2D lattice by a factor 2 as follows:
\ben
\begin{tikzpicture}
\foreach \x in {0,1,2,3}
{
\draw (\x,-1)--+(0,5);
\foreach \y in {0,1,2,3}
{
\draw (-1,\y)--+(5,0);
\node at (\x,\y) {$\bullet$};
}}

\foreach \x in {0,2}
{
\foreach \y in {0,2}
{
\draw[dashed] (\x-1/4,\y-1/4) rectangle +(3/2,3/2);
}}

\draw[thick,red] (1,0)--(2,0);
\draw[thick,red] (1,0)--(1,1);
\draw[thick,red] (1,1)--(2,1);
\draw[thick,red] (2,0)--(2,1);

\begin{scope}[shift={(6.3,1.5)},xscale=.8,yscale=.8]
%\node at (5,3/2) {${\longrightarrow}$};
\draw (0,0)--+(-2,0);
\draw (0,0)--+(-.3,.3);
\draw (0,0)--+(-.3,-.3);
\end{scope}
\begin{scope}[shift={(7,0)}]
\foreach \x in {1,2}
{
\draw (\x,0)--+(0,3);
\foreach \y in {1,2}
{
\draw (0,\y)--+(3,0);
\node at (\x,\y) {$\bullet$};
}
\draw[thick,red] (2,1)--(1,1);
}
\end{scope}
\node at (11,1) {.};
\end{tikzpicture}
\een 
An edge in the rescaled lattice is coming from four edges of a square in the original lattice. In planar algebras, the renormalization procedure combines four 2-boxes to one.

The square relation of a bi-shift of a biprojection $w$ (Theorem 6.11 in \cite{JLW}) says that:
\begin{align*}
(\overline{w^*}*w)(\overline{w}*w^*)&=\frac{\|w\|_2^2}{\delta}(\overline{w}\overline{w^*})*(w^*w).
\end{align*}
Diagrammatically,
\begin{align*}
\raisebox{-1.5cm}{
\begin{tikzpicture}
\draw (0,0) rectangle (1,1);
\node at (0.5, 0.5) {$\overline{w^*}$};
\begin{scope}[shift={(1.5,0)}]
\draw (0,0) rectangle (1,1);
\node at (0.5, 0.5) {$w$};
\end{scope}
\begin{scope}[shift={(0,1.5)}]
\draw (0,0) rectangle (1,1);
\node at (0.5, 0.5) {$\overline{w}$};
\end{scope}
\begin{scope}[shift={(1.5,1.5)}]
\draw (0,0) rectangle (1,1);
\node at (0.5, 0.5) {$w^*$};
\end{scope}
\draw (.2,0)--++(0,-.5);
\draw (.2,2.5)--++(0,.5);
\draw (2.3,0)--++(0,-.5);
\draw (2.3,2.5)--++(0,.5);
\draw (.2,1)--++(0,.5);
\draw(2.3,1)--++(0,.5);
\draw (.8,0) to [bend left=-30] (1.7,0);
\draw (.8,2.5) to [bend left=30] (1.7,2.5);
\draw (.8,1) to [bend left=30] (1.7,1);
\draw (.8,1.5) to [bend left=-30] (1.7,1.5);
\end{tikzpicture}}
&=\frac{\|w\|_2^2}{\delta}
\raisebox{-1.5cm}{
\begin{tikzpicture}
\draw (0,0) rectangle (1,1);
\node at (0.5, 0.5) {$\overline{w^*}$};
\begin{scope}[shift={(1.5,0)}]
\draw (0,0) rectangle (1,1);
\node at (0.5, 0.5) {$w$};
\end{scope}
\begin{scope}[shift={(0,1.5)}]
\draw (0,0) rectangle (1,1);
\node at (0.5, 0.5) {$\overline{w}$};
\end{scope}
\begin{scope}[shift={(1.5,1.5)}]
\draw (0,0) rectangle (1,1);
\node at (0.5, 0.5) {$w^*$};
\end{scope}
\draw (.2,0)--++(0,-.5);
\draw (.2,2.5)--++(0,.5);
\draw (2.3,0)--++(0,-.5);
\draw (2.3,2.5)--++(0,.5);
\draw (.2,1)--++(0,.5);
\draw(2.3,1)--++(0,.5);
\draw (.8,0) to [bend left=-30] (1.7,0);
\draw (.8,2.5) to [bend left=30] (1.7,2.5);
\draw (.8,1)--++(0,.5);
\draw (1.7,1)--++(0,.5);
\end{tikzpicture}}
\end{align*}
Both sides combine four 2-boxes to one and we consider them as block maps.

\begin{definition}\label{Def:RM}
We define block maps $B_{\lambda}$, $0\leq \lambda \leq 1$, on $x \in \mathscr{P}_{2,\pm}$ by
\begin{align*}
B_{cm}(x)&=\frac{\delta^2}{\|x\|_1\|x\|_2^2}(\overline{x^*}*x)(\overline{x}*x^*) \;,\\
B_{mc}(x)&=\frac{\delta}{\|x\|_\infty\|x\|_2^2}(\overline{x}\overline{x^*})*(x^*x) \;,\\
B_{\lambda}&=\lambda B_{cm}+(1-\lambda) B_{mc} \;.
\end{align*}
\end{definition}

It is mentioning that Jones studied the renormalization procedure on 1D quantum spin chains while investigating the reconstruction program from subfactors to CFT \cite{Jon14}. Our motivation and definition are different.

\begin{definition}
An positive operator is called bi-positive, if its Fourier transform is also positive.
\end{definition}

Note that $B(T,J)$ are bi-positive operators.

\begin{proposition}\label{Prop:bi-positive}
For a non-zero element $x$ in $\mathscr{P}_{2,\pm}$, $B_{\lambda}(x)$ is bi-positive, $0\leq \lambda \leq 1$.
Therefore the space of bi-positive operators is invariant under the action of block maps.
\end{proposition}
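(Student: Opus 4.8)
The plan is to reduce the statement to the two generating maps $B_{cm}$ and $B_{mc}$ and treat each separately, exploiting the Schur Product Theorem (Theorem \ref{Thm:Schur}) together with the elementary transform rules of the string Fourier transform. Since $B_\lambda=\lambda B_{cm}+(1-\lambda)B_{mc}$ is, for $0\le\lambda\le 1$, a convex combination with nonnegative coefficients, and since the cone of bi-positive operators (those $z$ with $z\ge 0$ and $\F(z)\ge 0$) is closed under sums and multiplication by nonnegative scalars, it suffices to show that $B_{cm}(x)$ and $B_{mc}(x)$ are each bi-positive for every nonzero $x$; note the normalizing scalars are strictly positive. Throughout I would use $\overline{z}=\F^2(z)$ and $(\F(z))^*=\F^{-1}(z^*)$ (hence $\F(z^*)=(\F^{-1}(z))^*$ and $\F(\overline z)=\F^{-1}(z)$ from $\F^4=\mathrm{id}$), the defining relation $\F(zw)=\F(z)*\F(w)$, and the fact that the vertical reflection realizing the adjoint preserves the left--right order of a coproduct, so that $(a*b)^*=a^**b^*$ and dually $\F(a*b)=\F(b)\F(a)$.

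For $B_{cm}$, write $A=\overline{x^*}*x$. Using $(a*b)^*=a^**b^*$ and $(\overline{x^*})^*=\overline{x}$ one gets $\overline{x}*x^*=A^*$, whence $B_{cm}(x)=\frac{\delta^2}{\|x\|_1\|x\|_2^2}AA^*\ge 0$; thus positivity of $B_{cm}(x)$ is automatic. For the Fourier side, $\F(B_{cm}(x))=\frac{\delta^2}{\|x\|_1\|x\|_2^2}\,\F(\overline{x^*}*x)*\F(\overline{x}*x^*)$, and a direct computation with the rules above gives $\F(\overline{x^*}*x)=\F(x)\F(x)^*\ge 0$ and $\F(\overline{x}*x^*)=(\overline{\F(x)})^*\,\overline{\F(x)}\ge 0$; the convolution of these two positive operators is positive by the Schur Product Theorem, so $\F(B_{cm}(x))\ge 0$.

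For $B_{mc}$, positivity is immediate: $\overline{x}\,\overline{x^*}=\overline{x}(\overline{x})^*\ge 0$ and $x^*x\ge 0$, so $B_{mc}(x)$ is a positive multiple of a convolution of two positive operators, hence positive by Theorem \ref{Thm:Schur}. For $\F(B_{mc}(x))$, I would compute $\F((\overline{x}\,\overline{x^*})*(x^*x))=\F(x^*x)\,\F(\overline{x}\,\overline{x^*})$ and simplify, with $X:=\F(x)$, to $(\overline{X^*}*X)(\overline{X}*X^*)$; that is, $\F(B_{mc}(x))$ is a positive multiple of $B_{cm}(\F(x))$, which is $\ge 0$ by the previous paragraph. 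Hence $B_{mc}(x)$ is bi-positive as well, and the invariance of the bi-positive cone follows at once, since a bi-positive $x$ is in particular a nonzero element mapped to a bi-positive operator.

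The only genuine obstacle is bookkeeping: one must keep the planar adjoint $(\cdot)^*$ and the contragredient $\overline{(\cdot)}=\F^2$ straight, and in particular establish the two structural identities $\overline{x}*x^*=(\overline{x^*}*x)^*$ and $\F(a*b)=\F(b)\F(a)$. Once these are in place the argument is purely formal: $B_{cm}(x)$ is visibly of the form $AA^*$, $B_{mc}(x)$ is a convolution of two positives, and the Fourier transform interchanges the two shapes, so that each remaining positivity is supplied either by $AA^*\ge 0$ or by the Schur Product Theorem.
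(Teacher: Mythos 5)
Your proof is correct and follows essentially the same route as the paper's: positivity of $B_{cm}(x)$ from its manifest $AA^*$ form, positivity of $B_{mc}(x)$ from the Schur product theorem, the observation that the string Fourier transform interchanges the two shapes (so each dual positivity reduces to the other case), and convexity for general $\lambda$. You merely spell out the bookkeeping identities $(a*b)^*=a^**b^*$ and $\F(a*b)=\F(b)\F(a)$ that the paper leaves implicit.
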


\begin{proof}
Note that $\overline{x^*}*x=(\overline{x}*x^*)^*$, so $B_{cm}(x)$ is positive.
By Schur product Theorem \ref{Thm:Schur}, $B_{mc}(x)$ is positive.
Then $\F B_{cm}(x)$ is a positive scalar multiple of $B_{mc} (\overline{\F(x)^*}) $, so it is positive and  $B_{cm}(x)$ is bi-positive.
Similarly  $B_{mc}(x)$ is bi-positive. So $B_{\lambda}(x)$ is bi-positive.
\end{proof}

\begin{proposition}\label{ReFour}
For any bi-positive operator $x\in\mathscr{P}_{2,\pm}$, $0\leq \lambda \leq 1$, we have
\begin{align*}
\F B_{\lambda}(x)=B_{1-\lambda}\F(x).
\end{align*}
In particular, $\F B_{1/2}(x)=B_{1/2}\F(x)$.
\end{proposition}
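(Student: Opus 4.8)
The plan is to exploit the fact that the string Fourier transform $\mathfrak{F}$ interchanges the two products on $\mathscr{P}_{2,\pm}$. Besides the given relation $\mathfrak{F}(ab)=\mathfrak{F}(a)*\mathfrak{F}(b)$, one has the dual relation $\mathfrak{F}(a*b)=\mathfrak{F}(a)\mathfrak{F}(b)$ (apply the given identity with $a=\mathfrak{F}(u)$, $b=\mathfrak{F}(v)$ and use that $\mathfrak{F}$ is a bijection). Since $B_{cm}$ convolves and then multiplies while $B_{mc}$ multiplies and then convolves, $\mathfrak{F}$ ought to convert one into the other. Concretely I would prove $\mathfrak{F}B_{cm}(x)=B_{mc}(\mathfrak{F}(x))$ and $\mathfrak{F}B_{mc}(x)=B_{cm}(\mathfrak{F}(x))$ for every nonzero bi-positive $x$, and then conclude by linearity that $\mathfrak{F}B_{\lambda}(x)=\lambda\mathfrak{F}B_{cm}(x)+(1-\lambda)\mathfrak{F}B_{mc}(x)=\lambda B_{mc}(\mathfrak{F}(x))+(1-\lambda)B_{cm}(\mathfrak{F}(x))=B_{1-\lambda}(\mathfrak{F}(x))$, which yields the case $\lambda=\tfrac12$ at once.

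First I would record the simplifications forced by bi-positivity. Writing $y=\mathfrak{F}(x)$, positivity of $x$ gives $x^{*}=x$, and from $\mathfrak{F}(z)^{*}=\mathfrak{F}^{-1}(z^{*})$ one gets $\mathfrak{F}^{-1}(x)=\mathfrak{F}(x^{*})^{*}=y^{*}=y$, hence $\overline{x}=\mathfrak{F}^{2}(x)=x$ and likewise $\overline{y}=y$. Thus for bi-positive $x$ all the bars and stars collapse, so $B_{cm}(x)=\frac{\delta^{2}}{\|x\|_{1}\|x\|_{2}^{2}}(x*x)^{2}$ and $B_{mc}(x)=\frac{\delta}{\|x\|_{\infty}\|x\|_{2}^{2}}(x^{2}*x^{2})$. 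Applying the two product--coproduct relations then gives $\mathfrak{F}((x*x)^{2})=y^{2}*y^{2}$ and $\mathfrak{F}(x^{2}*x^{2})=(y*y)^{2}$, so $\mathfrak{F}B_{cm}(x)$ and $B_{mc}(y)$ are the same operator $y^{2}*y^{2}$ up to a positive scalar, and $\mathfrak{F}B_{mc}(x)$ and $B_{cm}(y)$ are the same operator $(y*y)^{2}$ up to a positive scalar. This is exactly the content already isolated in the proof of Proposition \ref{Prop:bi-positive}, where the argument $\overline{\mathfrak{F}(x)^{*}}$ appearing there now equals $\mathfrak{F}(x)$.

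The remaining, and principal, point is to verify that these scalars are exactly $1$, which reduces to two norm identities: the Plancherel identity $\|\mathfrak{F}(x)\|_{2}=\|x\|_{2}$, and the extremality identity $\|x\|_{1}=\delta\|\mathfrak{F}(x)\|_{\infty}$ for bi-positive $x$. Plancherel follows from the Hausdorff--Young inequality (Proposition \ref{hausyoung}) applied to $\mathfrak{F}$ and to $\mathfrak{F}^{-1}$ at $t=2$. For extremality, the bound $\delta\|\mathfrak{F}(x)\|_{\infty}\le\|x\|_{1}$ is the $t=\infty$ case of Proposition \ref{hausyoung}, and the matching reverse bound comes from pairing with the Jones projection: using the standard identity $tr_{2}(x)=\delta\,tr_{2}(\mathfrak{F}(x)e_{1})$ (a direct diagrammatic consequence of $\mathfrak{F}(\mathrm{id})=\delta e_{1}$) and $y=\mathfrak{F}(x)\ge 0$, one has $\|x\|_{1}=tr_{2}(x)=\delta\,tr_{2}(ye_{1})\le\delta\|y\|_{\infty}tr_{2}(e_{1})=\delta\|y\|_{\infty}$. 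Substituting $\|y\|_{2}=\|x\|_{2}$ and $\delta\|y\|_{\infty}=\|x\|_{1}$, together with their analogues for $y$ (which give $\|y\|_{1}=\delta\|x\|_{\infty}$), into the two scalars shows each equals $1$. Here one must first note that $y$ is again bi-positive so that the identities apply to it: indeed $\mathfrak{F}(y)=\mathfrak{F}^{2}(x)=\overline{x}=x\ge 0$, and $\|\overline{x}\|_{\infty}=\|x\|_{\infty}$ since $\mathfrak{F}^{2}$ preserves the $C^{*}$-norm.

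I expect the main obstacle to be precisely this normalization bookkeeping, and in particular the sub-claim that every bi-positive operator is extremal, rather than the algebraic duality itself, which is essentially automatic once the bar and star symbols have been removed. If the trace--Jones-projection identity is deemed to need justification, it can be proved by a short capping computation or extracted from the identities already used in the $r=\infty$ remark following Proposition \ref{1pyoung}.
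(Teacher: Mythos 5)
Your proof is correct and follows essentially the same route as the paper's: $\F$ exchanges product and coproduct, and the normalizations match because a bi-positive $x$ satisfies $\|\F(x)\|_2=\|x\|_2$, $\|\F(x)\|_\infty=\|x\|_1/\delta$ and $\|\F(x)\|_1=\delta\|x\|_\infty$ (the paper simply asserts the latter two identities, whereas you supply a proof via the pairing $tr_2(x)=\delta\,tr_2(\F(x)e_1)$, which is a worthwhile addition). One small caution: with the paper's conventions the dual relation should read $\F(a*b)=\F(b)\F(a)$ rather than $\F(a)\F(b)$, since $\F^2$ reverses the order of both operations (compare $\overline{p*q}=\overline{q}*\overline{p}$ as used in the proof of Theorem \ref{lowerb}), but this is harmless in your argument because you only ever apply the relation with $a=b$.
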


\begin{proof}
Since $x$ is bi-positive, we have that
\begin{align*}
\|\F(x)\|_{\infty}&=\frac{\|x\|_1}{\delta} \;,\\
\frac{\|\F(x)\|_1}{\delta}&=\|x\|_{\infty} \;.
\end{align*}
Note that the SFT $\F$ on $\mathscr{P}_{2,\pm}$ is a $90^\circ$ rotation, and $\|\F(x)\|_2=\|x\|_2$, so
the conjugation of $\F$ switches $B_{cm}$ and $B_{mc}$. Therefore,
$\F B_{\lambda}(x)=B_{1-\lambda}\F(x)$.
\end{proof}

\begin{proposition}\label{DyIneq}
Suppose $\mathscr{P}$ is an irreducible subfactor planar algebra.
Then for any $1\leq t\leq \infty$, $0\leq \lambda \leq 1$,
\begin{align*}
\|B_{\lambda}(x)\|_t\leq \|x\|_t.
\end{align*}
\end{proposition}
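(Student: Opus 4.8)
The plan is to use the positivity established in Proposition~\ref{Prop:bi-positive} to reduce the statement to its two constituent maps, and then to estimate each of them by a single application of Young's inequality (Proposition~\ref{young}) followed by the log-convexity of $p\mapsto\|x\|_p$ (Lyapunov's inequality, a consequence of H\"older's inequality, Proposition~\ref{holder}). First I would observe that $B_{cm}(x)$ and $B_{mc}(x)$ are positive, so $B_{\lambda}(x)=\lambda B_{cm}(x)+(1-\lambda)B_{mc}(x)$ is a genuine convex combination and the triangle inequality gives
\[
\|B_{\lambda}(x)\|_t\le\lambda\,\|B_{cm}(x)\|_t+(1-\lambda)\,\|B_{mc}(x)\|_t .
\]
Hence it suffices to prove $\|B_{cm}(x)\|_t\le\|x\|_t$ and $\|B_{mc}(x)\|_t\le\|x\|_t$ separately for every $1\le t\le\infty$, after which the bound for $B_{\lambda}$ is immediate.

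For the $cm$-term I would set $a=\overline{x^*}*x$, so that $a^*=\overline{x}*x^*$ and $B_{cm}(x)=\frac{\delta^2}{\|x\|_1\|x\|_2^2}\,aa^*$. Since $aa^*=|a^*|^2$, this yields $\|B_{cm}(x)\|_t=\frac{\delta^2}{\|x\|_1\|x\|_2^2}\|a\|_{2t}^{2}$. I would then apply Young's inequality to $\|\overline{x^*}*x\|_{2t}$ with the exponents $p=2$ and $q=\frac{2t}{t+1}$ (which satisfy $\tfrac1p+\tfrac1q=1+\tfrac1{2t}$), using $\|\overline{x^*}\|_p=\|x\|_p$, to get $\|B_{cm}(x)\|_t\le\|x\|_q^{2}/\|x\|_1$. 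Because $\tfrac1q=\tfrac12\bigl(1+\tfrac1t\bigr)$, log-convexity of the norms gives $\|x\|_q^{2}\le\|x\|_1\|x\|_t$, which closes this case (the endpoints $t=1,\infty$ reduce to $\|x\|_2^2\le\|x\|_1\|x\|_\infty$).

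For the $mc$-term I would write $p_1=\overline{x}\,\overline{x^*}=\overline{x}(\overline{x})^*\ge0$ and $p_2=x^*x\ge0$, so $B_{mc}(x)=\frac{\delta}{\|x\|_\infty\|x\|_2^2}\,p_1*p_2$. Here Young's inequality with the exponents $1$ and $t$ (so $\tfrac11+\tfrac1t=1+\tfrac1t$), together with $\|p_1\|_1=\|x\|_2^2$ and $\|p_2\|_t=\|x\|_{2t}^2$ (using that contragredient and adjoint preserve each $\|\cdot\|_s$), gives $\|B_{mc}(x)\|_t\le\|x\|_{2t}^{2}/\|x\|_\infty$. Since $\tfrac1{2t}=\tfrac12\cdot0+\tfrac12\cdot\tfrac1t$, log-convexity now yields $\|x\|_{2t}^{2}\le\|x\|_\infty\|x\|_t$, and the two estimates combine to the claim.

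The hard part is not any individual estimate but the \emph{choice of Young exponents}: they must be arranged so that the leftover norm factor is exactly the geometric mean of the two normalizing norms and $\|x\|_t$, so that log-convexity can absorb it. A useful guide that pins down the split is that every inequality above must become an equality when $x$ is a projection (a group element); this forces precisely the midpoint relations $\tfrac1q=\tfrac12(1+\tfrac1t)$ and $\tfrac1{2t}=\tfrac12(0+\tfrac1t)$, and the final absorption step is then exactly the midpoint convexity of $u\mapsto\log\|x\|_{1/u}$. Once the exponents are identified in this way, each verification is a routine computation.
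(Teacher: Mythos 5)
Your proof is correct and follows essentially the same route as the paper: reduce to $B_{cm}$ and $B_{mc}$ by convexity of the norm, then control each term by Young's inequality (Proposition \ref{young}) combined with a H\"older-type interpolation. The only real difference is cosmetic and concerns the $B_{cm}$ term: the paper bounds $\|(\overline{x^*}*x)(\overline{x}*x^*)\|_t\le\|\overline{x^*}*x\|_\infty\,\|\overline{x}*x^*\|_t$ and applies Young twice (with exponent pairs $(2,2)$ and $(1,t)$), whereas you write $\|aa^*\|_t=\|a\|_{2t}^2$ and apply Young once at the interpolated exponent $\tfrac{2t}{t+1}$ followed by Lyapunov's inequality --- both yield the same bound, and your $B_{mc}$ estimate is the paper's up to which factor carries the $t$-norm.
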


\begin{proof}

By Proposition \ref{holder} (H\"{o}lder's inequality) and Proposition \ref{young} (Young's inequality), we have
\begin{eqnarray}
\|B_{cm}(x)\|_t
&\leq &\frac{\delta^2}{\|x\|_1\|x\|_2^2}\|\overline{x^*}*x\|_\infty\|\overline{x}*x^*\|_t \notag\\
&= &\frac{\delta^2}{\|x\|_1\|x\|_2^2}\frac{\|x\|_2^2}{\delta}\|\overline{x}*x^*\|_t \notag\\
&\leq &\frac{\delta}{\|x\|_1}\frac{\|\overline{x}\|_1\|x^*\|_t}{\delta}=\|x\|_t.\label{last}
\end{eqnarray}
and
\begin{eqnarray*}
\|B_{mc}(x)\|_t&\leq &\frac{\delta}{\|x\|_\infty\|x\|_2}\||x|\|_t\||x|^2\|_1\\
&\leq &\frac{\delta}{\|x\|_\infty}\|x\|_\infty\|x\|_t=\|x\|_t.
\end{eqnarray*}
Therefore
\begin{align}\label{Equ:1}
\|B_{\lambda}(x)\|_t \leq \lambda \|B_{cm}(x)\|_t + (1-\lambda)\|B_{mc}(x)\|_t \leq \|x\|_t.
\end{align}
\end{proof}

\begin{proposition}\label{teq}
Suppose $\mathscr{P}$ is an irreducible subfactor planar algebra and $x$ is nonzero in $\mathscr{P}_{2,\pm}$.
Then $\|B_{cm}(x)\|_t=\|x\|_t$ (or $\|B_{mc}(x)\|_t=\|x\|_t$) for some $1<t<\infty$ if and only if $x$ is an extremal bi-partial isometry.
Moreover, $B_{\lambda}(x)=x$, for some $0\leq \lambda \leq 1$, if and only if $x$ is a positive multiple of a biprojection.
\end{proposition}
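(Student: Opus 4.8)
The plan is to read both assertions off the equality cases of the two inequalities that are chained together in the proof of Proposition~\ref{DyIneq}. Consider first the condition $\|B_{cm}(x)\|_t=\|x\|_t$ for some $1<t<\infty$. Put $A=\overline{x}*x^*$, so that $A^*=\overline{x^*}*x$ and $B_{cm}(x)=\frac{\delta^2}{\|x\|_1\|x\|_2^2}A^*A$. The bound of Proposition~\ref{DyIneq} factors as the H\"older step $\|A^*A\|_t\le\|A^*\|_\infty\|A\|_t$ followed by the Young step $\|A\|_t\le\frac1\delta\|x\|_1\|x\|_t$, together with the identity $\|\overline{x^*}*x\|_\infty=\|x\|_2^2/\delta$. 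Since the product of the two upper bounds is exactly $\|x\|_t$, the hypothesis forces equality in each step. The Young equality reads $\|\overline{x}*x^*\|_t=\frac1\delta\|x\|_1\|x\|_t$; using $\overline{x}*x^*=\overline{x*\overline{x^*}}$ and the fact that the contragredient $\mathfrak{F}^2$ is isometric for every $\|\cdot\|_t$, this is precisely condition~(1) of Proposition~\ref{1tyoungeq} applied to $x$. Hence $x$ is a bi-shift of a biprojection, which is the same thing as an extremal bi-partial isometry (Main Theorem~2 of \cite{JLW}).

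For the parallel hypothesis $\|B_{mc}(x)\|_t=\|x\|_t$ I would trace the second chain of Proposition~\ref{DyIneq}. Equality in the H\"older step $\|\overline{x}\,\overline{x^*}\|_t\le\|x\|_\infty\|x\|_t$ forces all nonzero singular values of $x$ to coincide, i.e. $x$ is a scalar multiple of a partial isometry, while equality in the accompanying Young step $\|(\overline{x}\,\overline{x^*})*(x^*x)\|_t=\frac1\delta\|\overline{x}\,\overline{x^*}\|_t\|x^*x\|_1$ upgrades this to extremality by the argument of Proposition~\ref{pp} (via Propositions~\ref{p1p} and~\ref{212}); together these again give that $x$ is an extremal bi-partial isometry. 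Conversely, if $x$ is an extremal bi-partial isometry then Proposition~\ref{1tyoungeq} supplies the Young equality for all exponents, and the structure theory of \cite{JLW} shows that $\overline{x}*x^*$ is itself a multiple of a partial isometry, so its square saturates the H\"older step as well; hence $\|B_{cm}(x)\|_t=\|B_{mc}(x)\|_t=\|x\|_t$ for every $1\le t\le\infty$.

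For the fixed-point statement, the easy direction is a direct computation: if $B$ is a biprojection and $c>0$, then $B=B^*=\overline{B}$ and $B*B=\frac{tr_2(B)}{\delta}B$, so substituting $x=cB$ and cancelling the normalizing constants yields $B_{cm}(cB)=B_{mc}(cB)=cB$, whence $B_\lambda(cB)=cB$. For the converse, suppose $B_\lambda(x)=x$. Taking $\|\cdot\|_t$ and invoking \eqref{Equ:1}, the squeeze $\|x\|_t=\|B_\lambda(x)\|_t\le\lambda\|B_{cm}(x)\|_t+(1-\lambda)\|B_{mc}(x)\|_t\le\|x\|_t$ forces $\|B_{cm}(x)\|_t=\|x\|_t$ when $\lambda>0$ and $\|B_{mc}(x)\|_t=\|x\|_t$ when $\lambda<1$; in either case the first part of the proposition shows that $x$ is an extremal bi-partial isometry, so both $x$ and $\mathfrak{F}(x)$ are scalar multiples of partial isometries. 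On the other hand Proposition~\ref{Prop:bi-positive} shows that $x=B_\lambda(x)$ is bi-positive, i.e. $x\ge 0$ and $\mathfrak{F}(x)\ge 0$. A nonnegative scalar multiple of a partial isometry that is itself positive is a positive multiple of a projection, so $x=c\,p$ and $\mathfrak{F}(x)=c'\,p'$ with $p,p'$ projections and $c,c'>0$; then $\mathfrak{F}(p)=(c'/c)\,p'$ is a multiple of a projection, so $p$ is a biprojection and $x$ is a positive multiple of a biprojection.

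The main obstacle throughout is the $B_{mc}$ forward implication together with the saturation of the H\"older step in the converse: extracting genuine extremality (and not merely the partial-isometry property) from the degenerate Young equality is the real work, and it must be borrowed from the sum-set estimates of \S\ref{ssumset} and the bi-shift structure of \cite{JLW}. The remaining steps---the bookkeeping of the normalizing constants in $B_{cm},B_{mc}$, and passing from ``some $t$'' to ``all $t$'' including the boundary exponents $t\in\{1,\infty\}$---are routine and are handled exactly as in Propositions~\ref{1pyoung} and~\ref{1tyoungeq}.
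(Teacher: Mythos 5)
Your treatment of the $B_{cm}$ equivalence and of the fixed-point statement is essentially the paper's own argument: equality in the chain of Proposition \ref{DyIneq} isolates the Young equality $\|\overline{x}*x^*\|_t=\frac{1}{\delta}\|x\|_1\|x\|_t$, Proposition \ref{1tyoungeq} converts this into the bi-shift property, the converse is Theorem 6.13 of \cite{JLW}, and the fixed-point claim follows from the squeeze in \eqref{Equ:1} together with bi-positivity (Proposition \ref{Prop:bi-positive}). That part is fine.

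The gap is exactly where you locate it, in the forward implication for $B_{mc}$, and it cannot be closed along the route you propose. Observe that $B_{mc}(x)$ and $\|x\|_t$ depend only on $|x|$ and $|x^*|$, so the hypothesis $\|B_{mc}(x)\|_t=\|x\|_t$ carries no information about $\mathfrak{F}(x)$ and cannot by itself force extremality. Concretely, in the group planar algebra of $\mathbb{Z}_4$ (so $\mathscr{P}_{2,+}\cong\ell^\infty(\mathbb{Z}_4)$ with pointwise multiplication, $\delta=2$) take the unitary $x=1_{\{0,2\}}+i\,1_{\{1,3\}}$: then $x^*x=\overline{x}\,\overline{x^*}=1$, hence $B_{mc}(x)=1$ and $\|B_{mc}(x)\|_t=(\delta^2)^{1/t}=\|x\|_t$ for every $t$, yet $\mathcal{S}(x)\mathcal{S}(\mathfrak{F}(x))=4\cdot 2\neq\delta^2$, so $x$ is not an extremal bi-partial isometry. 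The machinery you invoke does not apply in any case: the Young equality produced by the $B_{mc}$ chain is the boundary case $s=1$, which Propositions \ref{pp}, \ref{p1p} and \ref{212} exclude, and even its sharp form (Proposition \ref{1pyoung} and Theorem \ref{lowerm}) constrains only $\mathcal{R}(x)$ and $\mathcal{R}(x^*)$, never $\mathcal{R}(\mathfrak{F}(x))$. (The paper's own proof disposes of this case with the word ``Similarly'', so you are not overlooking an argument it supplies.) This also leaves a hole at $\lambda=0$ in your fixed-point argument; it can be patched by using the full equation $B_{mc}(x)=x$ rather than the norm identity alone: bi-positivity and the H\"older equality give $x=cp$ with $p$ a projection and $c>0$, whence $\overline{p}*p=\frac{tr_2(p)}{\delta}p$, and Theorems \ref{lowerb} and \ref{lowerm} then force $p$ to be a biprojection.
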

\begin{proof}

Then $\|B_{\lambda}(x)\|_t=\|x\|_t$ for some $1<t<\infty$, $0\leq \lambda \leq 1$, if and only if $x$ is an extremal bi-partial isometry.

By Theorem 6.13 in \cite{JLW}, we have that if $x$ is an extremal bi-partial isometry, then $B_{cm}(x)$ is a multiple of a biprojection and $\|B_{cm}(x)\|_t=\|x\|_t$ for any $1\leq t\leq \infty$.

If $\|B_{cm}(x)\|_t=\|x\|_t$ for some $1<t<\infty$, by Inequality \eqref{last} in Lemma \ref{DyIneq}, we have
$$\|\overline{x}*x^*\|_t=\frac{1}{\delta}\|x\|_1\|x\|_t.$$
By Proposition \ref{1tyoungeq}, we have that $x$ is a bi-shift of a biprojection.

Hence we have that $\|B_{cm}(x)\|_t=\|x\|_t$ for some $1<t<\infty$ if and only if $x$ is a bi-shift of a biprojection.
Similarly, we have that $\|B_{mc}(x)\|_t=\|x\|_t$ for some $1<t<\infty$ if and only if $x$ is a bi-shift of a biprojection.

If $x$ is a multiple of a biprojection, we have $B_{\lambda}(x)=x$ by definitions.
Conversely if $B_{\lambda}(x)=x$, then $\|B_{cm}(x)\|_t=\|x\|_t$ or $\|B_{mc}(x)\|_t=\|x\|_t$, for any $1\leq t\leq \infty$, by Inequality \eqref{Equ:1} in Proposition \ref{DyIneq}. So $x$ is an extremal bi-partial isometry.
Moreover, $x$ is bi-positive by Proposition \ref{Prop:bi-positive}. So $x$ is a multiple of a biprojection.
\end{proof}

\begin{theorem}\label{Thm:RM}
Suppose $\mathscr{P}$ is an irreducible subfactor planar algebra.
Then for any $x\in\mathscr{P}_{2,\pm}$ and $B_\lambda$ on $\mathscr{P}_{2,\pm}$, $0\leq \lambda\leq 1$, the sequence $\{B^b_{\lambda}(x)\}_{n\geq 1}$ converges to $0$ or a multiple of a biprojection.
\end{theorem}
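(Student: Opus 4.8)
The plan is to combine the norm contraction of Proposition~\ref{DyIneq} with the equality-case rigidity packaged in Proposition~\ref{teq}, and then to upgrade subsequential convergence to genuine convergence. Write $z_n=B_\lambda^n(x)$ (the exponent $b$ in the statement should read $n$); if $x=0$ there is nothing to prove, so assume $x\neq 0$. By Proposition~\ref{Prop:bi-positive} every $z_n$ with $n\ge 1$ is bi-positive, and since bi-positivity is a closed condition every accumulation point is bi-positive as well; hence I may assume from the outset that $x$ is bi-positive. By Proposition~\ref{DyIneq}, for each fixed $1\le t\le\infty$ the sequence $\|z_n\|_t$ is non-increasing, so it converges to some $L_t\ge 0$. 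In particular $\{z_n\}$ is norm-bounded, and since $\mathscr{P}_{2,\pm}$ is finite dimensional its orbit is relatively compact. If $L_2=0$ then $\|z_n\|_2\to 0$, and by equivalence of norms on the finite dimensional space $\mathscr{P}_{2,\pm}$ we get $z_n\to 0$, the first alternative. So assume $L_2>0$; then every accumulation point is non-zero.

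Next I would identify the accumulation points. Let $y=\lim_k z_{n_k}\neq 0$. Since $y\neq 0$ the map $B_\lambda$ is continuous at $y$ (its normalizing denominators are continuous and non-vanishing there), so $z_{n_k+1}=B_\lambda(z_{n_k})\to B_\lambda(y)$; because $\|z_m\|_t\to L_t$ along the whole sequence, this yields $\|B_\lambda(y)\|_t=L_t=\|y\|_t$ for every $t$. Fix some $1<t<\infty$. Reading the chain in the proof of Proposition~\ref{DyIneq}, equality in $\|B_\lambda(y)\|_t\le\lambda\|B_{cm}(y)\|_t+(1-\lambda)\|B_{mc}(y)\|_t\le\|y\|_t$ forces $\|B_{cm}(y)\|_t=\|y\|_t$ and $\|B_{mc}(y)\|_t=\|y\|_t$ when $0<\lambda<1$ (for $\lambda\in\{0,1\}$ only the relevant one of these appears, which still suffices). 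By Proposition~\ref{teq}, $y$ is then an extremal bi-partial isometry. Being also bi-positive, $y$ is a positive operator that is a multiple of a partial isometry, hence a positive multiple of a projection $p$, while $\F(y)\ge 0$ is likewise a multiple of a projection; so $p$ is a biprojection. Thus every non-zero accumulation point is a positive multiple of a biprojection, and in particular is a fixed point $B_\lambda(y)=y$.

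It remains to prove that $\{z_n\}$ converges rather than merely clustering at several such fixed points. First I would show $\|z_{n+1}-z_n\|_2\to 0$: otherwise some subsequence satisfies $\|z_{n_k+1}-z_{n_k}\|_2\ge\varepsilon$, but passing to a convergent sub-subsequence $z_{n_{k_j}}\to y$ gives $z_{n_{k_j}+1}=B_\lambda(z_{n_{k_j}})\to B_\lambda(y)=y$ by continuity and the fixed-point property, whence $\|z_{n_{k_j}+1}-z_{n_{k_j}}\|_2\to 0$, a contradiction. Second, the limits $L_t$ pin down the common value $c=L_\infty$ and the common trace $tr_2(p)=(L_t/L_\infty)^t$ of the underlying biprojections, so every accumulation point has the form $c\,B_i$ with $B_i$ a biprojection of one fixed trace. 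Since the lattice of biprojections of an irreducible finite-index subfactor is finite (it is the finite lattice of intermediate subfactors), the accumulation set is finite. A bounded sequence with $\|z_{n+1}-z_n\|_2\to 0$ and a finite accumulation set must converge, since once the consecutive gaps drop below the minimal separation of the finitely many mutually separated cluster values the index of the nearest cluster point can no longer change. Hence $z_n$ converges to a positive multiple of a biprojection.

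The hard part is precisely this last uniqueness step: norm monotonicity together with Proposition~\ref{teq} only delivers subsequential limits that are multiples of biprojections, and excluding oscillation among distinct biprojections of equal trace is what forces the extra input of finiteness of the biprojection lattice (or, in its place, a rigidity/attractivity argument showing that the orbit cannot jump between two fixed points once $\|z_{n+1}-z_n\|_2$ is small). Everything preceding it is a routine assembly of the contraction estimate of Proposition~\ref{DyIneq} and the equality-case analysis of Proposition~\ref{teq}.
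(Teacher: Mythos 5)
Your proof is correct and follows essentially the same route as the paper: monotonicity of the norms from Proposition~\ref{DyIneq}, identification of every accumulation point as a positive multiple of a biprojection via bi-positivity (Proposition~\ref{Prop:bi-positive}) together with the equality case of Proposition~\ref{teq}, and finiteness of the set of biprojections (finitely many intermediate subfactors) to force uniqueness of the limit. The only difference is in the last step and is cosmetic: where you prove $\|z_{n+1}-z_n\|_2\to 0$ and invoke the standard fact that a bounded sequence with vanishing consecutive differences and finitely many cluster points converges, the paper runs an equivalent annulus-crossing argument using uniform continuity of $B_\lambda$ on the $2$-norm unit ball.
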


\begin{proof}
By Proposition \ref{DyIneq}, take $\displaystyle \ell=\lim_{n\to \infty}\|B_{\lambda}^n(x)\|_2$, the limit of the decreasing sequence.
If $\ell=0$, then $B_{\lambda}^n (x)$ converges to $0$.

If $\ell\neq 0$, we assume that $\ell=1$ by the linearity.
Since $\mathscr{P}_{2,\pm}$ is finite dimensional,
there is an accumulation point $z$ of $\{B_{\lambda}^n (x)\}_{n\in \mathbb{N}}$.
Then $B_{\lambda}(z)$ is also an accumulation point and $\|B_{\lambda}z\|_2=\|z\|_2=\ell$.
By Proposition \ref{Prop:bi-positive}, $B_{\lambda}^n(x)$ is bi-positive, so is $z$. Hence $z$ is a multiple of a biprojection by Proposition \ref{teq}.

Let us prove the uniqueness of the accumulation point.
It is known that there are finitely many intermediate subfactors of an irreducible subfactor \cite{Longo, Xu16}.
That means there are finitely many biprojections in $\mathscr{P}_{2,\pm}$. Take
\begin{align*}
m&=\min\left\{ \left. \left\|\frac{B}{\|B\|_2} - \frac{B'}{\|B'\|_2} \right\|_2 \right| B , B' \text{ are distinct biprojections} \right\}.
\end{align*}

Since $\mathscr{P}_{2,\pm}$ is finite dimensional, so $t$-norm topologies are equivalent for $1\leq t\leq \infty$.
By the continuity of multiplication and coproduct, $B_{\lambda}$ is uniformly continuous in the $2$-norm unit ball of $\mathscr{P}_{2,\pm}$.
Hence there exists $\kappa>0$, such that
\begin{align}
\|B_{\lambda}(a)-B_{\lambda}(b)\|_2 &< \frac{m}{2},
\end{align}
whenever $\|a-b\|_2\leq \kappa$ , $\|a\|_2\leq 1$ and $\|b\|_2\leq 1$.

If there is an accumulation points different from $z$,
then there exist a subsequence $\{n_k\}_{k\in \mathbb{N}}$ such that
\begin{align*}
\|B_{\lambda}^{n_k}(x) -z\|_2   &\leq \epsilon \;,\\
\|B_{\lambda}^{n_k+1}(x)-z \|_2 &>\epsilon \;.
\end{align*}
Then
\begin{align*}
\|B_{\lambda}^{n_k+1}(x)-B_{\lambda}(z) \|_2 &< \frac{m}{2}.
\end{align*}
Recall that the accumulation point $z$ is a multiple of a biprojection, so $B_{\lambda}(z)=z$ and
\begin{align*}
\epsilon<\|B_{\lambda}^{n_k+1}(x)-z \|_2< \frac{m}{2}.
\end{align*}

Therefore there is an accumulation point $z'$ of $\{B_{\lambda}^{n_k+1}(x)\}_{k\in \mathbb{N}}$, such that
\begin{align*}
\epsilon\leq \|z'-z\|_2 \leq \frac{m}{2}.
\end{align*}
By the above argument, $z'$ is also a multiple of a biprojection, and $\|z'\|_2=1$. It is a contradiction. Therefore the accumulation point $z$ of $\{B_{\lambda}^n (x)\}_{n\in \mathbb{N}}$ is unique.
\end{proof}

Those limit points are minimizers the Hirschman-Beckner uncertainty principle, see Theorem 5.5 in  \cite{JLW}. We conjecture that 
\begin{conjecture}
The block maps reduce the Hirschman-Beckner entropy of $x$, i.e., the von-Neumann entropy of $|x|^2 \oplus |\F(x)|^2$ .
\end{conjecture}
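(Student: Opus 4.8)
The plan is to convert the entropy inequality into a statement about the Hausdorff--Young inequality, and then into a single pointwise interpolation bound. Normalize $\|x\|_2=1$; since the Hirschman--Beckner entropy $\mathcal{H}(x)=-tr_2(|x|^2\log|x|^2)-tr_2(|\mathfrak{F}(x)|^2\log|\mathfrak{F}(x)|^2)$ is scale invariant (and agrees, up to an affine constant, with the von Neumann entropy of $|x|^2\oplus|\mathfrak{F}(x)|^2$ in the conjecture) this costs nothing. Differentiating $\log\|z\|_t$ at $t=2$ gives $\frac{d}{dt}\log\|z\|_t\big|_{t=2}=\tfrac14 tr_2(|z|^2\log|z|^2)$, so writing $D_z(t)=(1-\tfrac2t)\log\delta+\log\|z\|_t-\log\|\mathfrak{F}(z)\|_{t/(t-1)}$ for the defect in Proposition \ref{hausyoung}, I obtain the de~Bruijn-type identity $\mathcal{H}(z)=2\log\delta-4D_z'(2^-)$. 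Because $D_z\ge 0$ on $[1,2]$ with $D_z(2)=0$, the inequality $\mathcal{H}(B_\lambda x)\le\mathcal{H}(x)$ is exactly the one-sided derivative comparison $D_{B_\lambda x}'(2^-)\ge D_x'(2^-)$.

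The key reduction is the pointwise domination $D_{B_\lambda x}(t)\le D_x(t)$ for all $t\in[1,2]$, equivalently $\frac{\|B_\lambda x\|_t}{\|\mathfrak{F}(B_\lambda x)\|_{t/(t-1)}}\le\frac{\|x\|_t}{\|\mathfrak{F}(x)\|_{t/(t-1)}}$ (the $\log\delta$ terms cancel). Indeed, if $0\le D_{B_\lambda x}\le D_x$ on $[1,2]$ and both vanish at $t=2$, then dividing by $2-t$ and letting $t\to2^-$ yields the required derivative comparison. By Proposition \ref{Prop:bi-positive} the target $B_\lambda x$ is bi-positive, so its endpoint norms are pinned down: at $t=2$ Plancherel gives ratio $1$ on both sides, and at $t=1$ the identity $\|\mathfrak{F}(B_\lambda x)\|_\infty=\|B_\lambda x\|_1/\delta$ gives ratio exactly $\delta$, whereas for general $x$ the left-hand side at $t=1$ is only $\ge\delta$. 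Thus the desired inequality is automatically tight at both endpoints, and what must genuinely be proved is an interior interpolation bound.

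To attack the interior estimate I would use the Fourier intertwiner $\mathfrak{F}B_\lambda=B_{1-\lambda}\mathfrak{F}$ (Proposition \ref{ReFour}) to rewrite the Fourier-side norm as a block-map norm, together with the explicit factorization $B_{cm}(x)=\frac{\delta^2}{\|x\|_1\|x\|_2^2}PP^*$ with $P=\overline{x^*}*x$ and the exact bound $\|P\|_\infty=\|x\|_2^2/\delta$, which turns the relevant norm into $\|B_{cm}(x)\|_t=\frac{\delta^2}{\|x\|_1\|x\|_2^2}\|P\|_{2t}^2$ and gives a parallel expression for $B_{mc}$. The ratio inequality then becomes a pair of Young/Hölder estimates whose equality conditions match, which I would upgrade to the full range in $t$ by the same three-lines (maximum modulus) argument used in Propositions \ref{p1p} and \ref{rts}.

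The main obstacle is the convex combination $B_\lambda=\lambda B_{cm}+(1-\lambda)B_{mc}$: the norm $\|B_\lambda x\|_t$ is not computable from the two factorizations above, and because von Neumann entropy is \emph{concave} one cannot deduce the intermediate cases from $\lambda\in\{0,1\}$ by convexity — concavity pushes the wrong way. I would therefore first settle $\lambda\in\{0,1\}$ and the self-dual case $\lambda=\tfrac12$ (where the commutation $\mathfrak{F}B_{1/2}=B_{1/2}\mathfrak{F}$ makes the estimate self-dual), and then attempt to interpolate in $\lambda$; the genuinely hard point is a \emph{lower} bound on the Fourier-side norm $\|\mathfrak{F}(B_\lambda x)\|_{t/(t-1)}$ for intermediate $\lambda$, since Proposition \ref{DyIneq} only contracts it. As a consistency check, equality throughout in the pointwise bound forces, via Proposition \ref{teq}, that $x$ is a positive multiple of a biprojection — precisely the fixed points of $B_\lambda$ and the global minimizers of $\mathcal{H}$.
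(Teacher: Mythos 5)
You should first note that this statement appears in the paper as a \emph{conjecture}: the paper offers no proof, so there is no argument of the authors to compare yours against, and your proposal, by your own admission, does not close it either. That said, your reduction is correct and worth recording. The defect $D_z(t)=(1-\tfrac{2}{t})\log\delta+\log\|z\|_t-\log\|\mathfrak{F}(z)\|_{t/(t-1)}$ is indeed scale invariant and nonnegative on $[1,2]$ by Proposition \ref{hausyoung}, your differentiation at $t=2$ is right (for $\|z\|_2=1$ one gets $\mathcal{H}(z)=2\log\delta-4D_z'(2^-)$, consistent with item (4) of Theorem \ref{Thm:bishifts}, where extremizers have $D_z\equiv 0$ and entropy $2\log\delta$), and the difference-quotient argument correctly shows that pointwise domination $D_{B_\lambda x}(t)\le D_x(t)$ on $[1,2]$, with both defects vanishing at $t=2$, implies the conjectured inequality $\mathcal{H}(B_\lambda x)\le\mathcal{H}(x)$. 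Your endpoint checks are also accurate: bi-positivity of $B_\lambda(x)$ (Proposition \ref{Prop:bi-positive}) forces $D_{B_\lambda x}(1)=0$ while $D_x(1)\ge 0$ always, and the factorization $B_{cm}(x)=\frac{\delta^2}{\|x\|_1\|x\|_2^2}PP^*$ with $\|P\|_\infty=\|x\|_2^2/\delta$ is correct.

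The genuine gap is that the entire content of the conjecture now sits in the interior bound $D_{B_\lambda x}(t)\le D_x(t)$ for $1<t<2$, and nothing in your sketch produces it. All the tools you invoke — Proposition \ref{DyIneq}, Lemma \ref{absolute}, and the three-lines arguments of Propositions \ref{p1p} and \ref{rts} — run in the contractive direction: they bound $\|B_\lambda x\|_t$ and $\|\mathfrak{F}(B_\lambda x)\|_{t/(t-1)}$ from \emph{above}, whereas the ratio inequality requires a \emph{lower} bound on the Fourier-side norm, which you correctly flag as unresolved; the convex combination for $0<\lambda<1$ compounds this, since $\|B_\lambda x\|_t$ is not computable from the two factorizations and entropy concavity pushes the wrong way, as you note. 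Two further cautions. First, your pointwise domination is strictly stronger than the conjectured entropy decrease (it asks for monotonicity of the defect at every $t$, not merely of its one-sided derivative at $t=2$), so this route could fail even if the conjecture is true; targeting the derivative inequality directly may be necessary. Second, Proposition \ref{ReFour} requires bi-positivity, so the intertwining $\mathfrak{F}B_\lambda=B_{1-\lambda}\mathfrak{F}$ is unavailable at the first application to a general $x\in\mathscr{P}_{2,\pm}$; you would need to treat that first step separately or restrict to bi-positive inputs. In short: a sound reformulation with correct endpoint and fixed-point consistency checks (via Proposition \ref{teq}), but the statement remains open, exactly as the paper presents it.
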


By the linearity, the block map $B_{\lambda}$ is well-defined on the projective space $\mathscr{P}_{2,\pm}/\mathbb{C}$.
For the $\mathbb{Z}_2$ case, a bi-positive 2-box is given by $a \ID +b \JP$, for $a,b\geq 0$, and at least one is positive. The projective space is parameterized by $t=\frac{b}{a}$ and $t\in[0,\infty]$.

We show that there are three fixed points of $B_{1/2}$ in the projective space.
Among the three, $t=0$ and $t=\infty$ are stable corresponding to the two biprojections and the two limit cases of the Ising model. The third one $t=1$ is not stable corresponding to critical temperature of the Ising model. 

Kramers and Wannier observed that if the critical temperature is unique, then it has to be the fixed point of the Kramers-Wannier duality. This duality switches vertical edges and horizontal edges in the lattice. In planar algebras, it becomes a $90^{\circ}$ rotation of the 2-box $B(T,J)$, namely the SFT of $B(T,J)$. Thus $B(T,J)$ has to be invariant under the action of SFT at the critical temperature, if it is unique. That means $t=1$. 

Furthermore, we study the dynamic system of $B_{1/2}$ on $\mathscr{P}_{2,\pm}$. We find a gap between the limit and zero when $t\neq 1$. It is gapless when $t=1$. This result is stated in theorem \ref{Thm:mass gap} and we prove it here.

\begin{proof}[Proof of Theorem \ref{Thm:mass gap}]
Let us define $B_{1/2}^n(a\ID+b\JP)=a_n\ID+b_n\JP$, $\displaystyle t=\frac{b}{a}$ and $\displaystyle t_n=\frac{b_n}{a_n}$.

By the definition of $B_{1/2}$,  
\begin{align*}
%B_{cm}(a\ID+b\JP)&=\frac{(a^2\delta+2ab)^2}{(a\delta+b)(a^2\delta+b^2\delta+2ab)}\ID
%+ \frac{b^4\delta+2a^2b^2\delta+4ab^3}{(a\delta+b)(a^2\delta+b^2\delta+2ab)}\JP
%\\
%B_{cm}(a\ID+b\JP)&=\frac{a^4\delta+2a^2b^2\delta+4a^3b}{(b\delta+a)(a^2\delta+b^2\delta+2ab)}\ID
%+ \frac{(b^2\delta+2ab)^2}{(b\delta+a)(a^2\delta+b^2\delta+2ab)}\JP
%\\
B_{1/2}(a\ID+b\JP)&=\frac{(a^2\delta+2ab)^2(b\delta+a)+(a^4\delta+2a^2b^2\delta+4a^3b)(a\delta+b)}{2(a\delta+b)(b\delta+a)(a^2\delta+b^2\delta+2ab)}\ID \\
&+ \frac{(b^4\delta+2a^2b^2\delta+4ab^3)(b\delta+a)+(b^2\delta+2ab)^2 (a\delta+b)}{2(a\delta+b)(b\delta+a)(a^2\delta+b^2\delta+2ab)}\JP .
\end{align*}

Then 
\begin{align*}
t_1&=\frac{(t^4\delta+2t^2\delta+4t^3)(t\delta+1)+(t^2\delta+2t)^2 (\delta+t)}{(\delta+2t)^2(t\delta+1)+(\delta+2t^2\delta+4t)(\delta+t)}\\
%&=\frac{2\delta^2 t^5 +(\delta^3+9\delta)t^4+(6\delta^2+8)t^3+6\delta t^2}{2\delta^2 +(\delta^3+9\delta)t+(6\delta^2+8)t^2+6\delta t^3}\\
%&=t+t(t^2-1)(2\delta^2(t^2+1)+(\delta^3+2\delta)t)\\
&= t+t(t^2-1)\frac{2\delta^2(t^2+1)+(\delta^3+3\delta )t}{2\delta^2 +(\delta^3+9\delta)t+(6\delta^2+8)t^2+6\delta t^3} .
\end{align*}

If $t=1$, then $t_n=1$, for $n=1,2,\ldots$. By theorem \ref{Thm:RM}, $\displaystyle \lim_{n\to \infty} B_{1/2}^n(a\ID+b\JP)=0$. 

If $0<t<1$, then $t_n\to 0$. When $t$ is small, we have $a'=a+t (4-\delta-\delta^{-1}) +o(t^2)>a$. Thus $\displaystyle \lim_{n\to \infty} B_{1/2}^n(a\ID+b\JP)= c_1 \ID$, for some $c_1>0$.

If $t>1$, then $t_n\to \infty$. By Proposition \ref{ReFour}, we have 
$\displaystyle \lim_{n\to \infty} B_{1/2}^n(a\ID+b\JP)= c_2 \JP$, for some $c_2>0$.

\end{proof}

\subsection{Infinite dimensional cases}

Note that we can also define the block maps on $\mathbb{R}^n$, locally compact groups, or locally compact quantum groups in general.
We give a short discussion here about the block maps on $\mathbb{R}^n$.
We propose some questions related to $\mathbb{R}^n$ and general cases.

For subfactors, bi-shifts of biprojections are all minimizers of the Hirschman-Beckner uncertainty principles.
The bi-positive ones are (scalar multiples of) biprojections. They are limit points of the block maps.

On $\mathbb{R}^n$, the minimizers of the Hirschman-Beckner uncertainty principles are Gaussian functions. The bi-positive ones are $g=ce^{\frac{-a|x|^2}{2}}$, $a, c > 0$, on $\mathbb{R}^n$.
We observe that they are eigenfunctions of the block maps:
\begin{align*}
B_{\lambda}(g)=\frac{1}{2^n}g.
\end{align*}
The eigenvalue $\frac{1}{2^n}$ tells the dimension of $\mathbb{R}^{n}$.
The rate of convergence of the renormalization procedure is fast by a computer test, but we do not have a mathematical control.
We conjecture that

\begin{conjecture}
For any $f\in L^{\infty}(\mathbb{R}^n) \cap L^{1}(\mathbb{R}^n) \cap L^{2}(\mathbb{R}^n)$,
$f$ converges to a Gaussian function under the action of the block map $2^nB_{\lambda}$.
\end{conjecture}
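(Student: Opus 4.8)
The plan is to mirror the dynamical-systems argument of Theorem \ref{Thm:RM}, replacing the finite-dimensional ingredients by their $\mathbb{R}^n$ analogues and replacing biprojections by Gaussians. First I would record the explicit form of the block maps on $\mathbb{R}^n$: for a real, bi-positive $f$ one computes $B_{cm}(f)$ to be proportional to $(f*f)^2$ and $B_{mc}(f)$ proportional to $f^2 * f^2$, so that $B_{cm}$ is convolve-then-multiply and $B_{mc}$ is multiply-then-convolve, matching the subfactor definitions. A direct Gaussian computation then confirms $B_\lambda(c\,e^{-a|x|^2/2}) = 2^{-n} c\,e^{-a|x|^2/2}$, so that the Gaussians are exactly the fixed points of $2^n B_\lambda$. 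Since each of $B_{cm},B_{mc}$ is homogeneous of degree one (the numerators scale like $c^4$ and the denominators like $c^3$), so is $2^n B_\lambda$; hence it descends to the projective space and I may normalize the orbit, say by $\|f\|_2 = 1$.

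Next I would install a Lyapunov functional playing the role of the $t$-norm monotonicity of Proposition \ref{DyIneq}. The natural choice is the Hirschman--Beckner entropy $h(f) = -\int |f|^2 \log |f|^2 - \int |\widehat{f}|^2 \log |\widehat{f}|^2$, whose monotone decrease under $B_\lambda$ is precisely the content of the preceding conjecture and which, by Beckner's sharp Hausdorff--Young inequality, attains its minimum on the normalized bi-positive functions exactly at Gaussians. Deriving this monotonicity from the $\mathbb{R}^n$ analogues of Young's inequality (Proposition \ref{young}) and the Hausdorff--Young inequality (Proposition \ref{hausyoung}), with the equality case characterized by the Gaussian extremizers, is the direct analogue of the equality analysis in Proposition \ref{teq}. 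With this monotonicity, the argument proceeds as in Theorem \ref{Thm:RM}: any accumulation point $z$ of the normalized orbit $\{(2^n B_\lambda)^k f\}$ satisfies $2^n B_\lambda(z)=z$, is bi-positive by the $\mathbb{R}^n$ analogue of Proposition \ref{Prop:bi-positive}, and extremizes $h$, hence is a Gaussian.

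The step with no finite-dimensional shortcut, and which I expect to be the main obstacle, is compactness. I would need to prove that the orbit stays tight and uniformly integrable, i.e.\ does not spread to infinity or concentrate to a point in either the position or the frequency variable. Here I would track the covariance matrices of $|f|^2$ and $|\widehat{f}|^2$ through the iteration and show they remain bounded and uniformly nondegenerate; the nonlinearity introduced by the pointwise squaring is exactly what makes this moment bookkeeping delicate, since squaring does not act simply on cumulants the way convolution does. The self-duality $\F B_{1/2} = B_{1/2}\F$ of Proposition \ref{ReFour} should help, since it lets one control the position and frequency sides symmetrically for $\lambda=1/2$.

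Finally, uniqueness of the limit must be handled differently from Theorem \ref{Thm:RM}, where discreteness of the set of biprojections furnished a gap. On $\mathbb{R}^n$ the Gaussian fixed points form a continuous family parameterized by amplitude and covariance, so I would instead pin down the limiting Gaussian through the invariant data the orbit carries, namely its limiting covariance, and then upgrade subsequential convergence to genuine convergence. This I expect to do either by linearizing at the Gaussian fixed point, whose transverse spectrum is indexed by Hermite modes and lies strictly inside the unit disc after the $2^n$ normalization, to obtain local contraction with an explicit rate, or by a continuity argument of the type used at the end of Theorem \ref{Thm:RM}. Combining the local contraction with the global Lyapunov--compactness argument then yields convergence of $(2^n B_\lambda)^k f$ to a single Gaussian.
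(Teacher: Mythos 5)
The statement you are proving is labelled a \emph{conjecture} in the paper, and the paper offers no proof of it: the authors explicitly say they have only numerical evidence (``the rate of convergence \dots is fast by a computer test, but we do not have a mathematical control''). So there is no argument in the text to compare yours against; what matters is whether your proposal closes the question, and it does not. Your preliminary computations are correct (for real bi-positive $f$, which is necessarily even, $B_{cm}(f)\propto (f*f)^2$ and $B_{mc}(f)\propto f^2*f^2$; Gaussians are eigenvectors with eigenvalue $2^{-n}$; the maps are homogeneous of degree one), and the overall strategy of transplanting the Lyapunov-plus-accumulation-point argument of Theorem \ref{Thm:RM} is the natural one. But the plan has at least three genuine gaps. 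First, your Lyapunov functional is the Hirschman--Beckner entropy, whose monotone decrease under $B_\lambda$ is itself only the \emph{preceding} conjecture of the paper (stated for subfactors and equally unproven on $\mathbb{R}^n$); the subfactor proof of Proposition \ref{DyIneq} controls $t$-norms, not entropy, and the equality analysis of Proposition \ref{teq} does not transfer to give entropy dissipation. A conditional argument resting on an open conjecture is not a proof. Second, the compactness/tightness of the normalized orbit --- which you correctly identify as the step with no finite-dimensional analogue --- is left entirely open; without it even subsequential convergence fails, and nothing rules out the orbit spreading out, concentrating, or having its $L^2$ norm decay to $0$ (the analogue of the Julia set / critical-temperature behaviour in Theorem \ref{Thm:mass gap}), in which case the conjecture as stated would require a degenerate limit that is not a Gaussian.

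Third, the uniqueness step is qualitatively harder than in Theorem \ref{Thm:RM}: there the limit set is finite (finitely many biprojections), which furnishes the gap $m$ used in the proof, whereas on $\mathbb{R}^n$ the fixed points form a continuous manifold parameterized by covariance and amplitude. Your proposed linearization at a Gaussian has neutral directions along this manifold, so ``transverse spectrum strictly inside the unit disc'' must be supplemented by a center-manifold or modulation argument to rule out slow drift of the limiting covariance, and you have not verified the transverse spectral claim for the actual linearization of $(f*f)^2$ and $f^2*f^2$ in Hermite coordinates. In short: your proposal is a reasonable research program, consistent with how the authors frame the problem, but each of its three pillars (entropy monotonicity, tightness, local contraction modulo the Gaussian family) is unestablished, so the conjecture remains open.
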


We expect that the block maps provide a mechanism to approximate ``Gaussian functions'' on locally compact (quantum) groups.
We propose the following questions:

(1) Whether the bi-positive minimizers of the Hirschman-Beckner uncertainty principles are eigenvectors of the block map, if they exist?

(2) What are the possible eigenvalues?

(3) Whether these bi-positive minimizers are the only stable limit points in the projective space under the dynamic action of the block map?

%
%\begin{align*}
%(e^{\frac{x^2}{2}})^2*(e^{\frac{x^2}{2}})^2
%&= e^{x^2}*e^{x^2}\\
%&=\int_{\mathbb{R}} e^{(x-s)^2+s^2} ds\\
%&=e^{\frac{x^2}{2}} \int_{\mathbb{R}} e^{\frac{(x-s)^2}{2}} d (s/2)\\
%&=\frac{1}{2} e^{\frac{x^2}{2}}.
%\end{align*}
%

\section{Concluding Remarks}\label{Sec:Concluding Remarks}

\subsection{A comparisons between commutative cases and non-commutative cases}

The functions on a finite group form a commutative algebra.
Different from the group case, the pair of $C^*$-algebras arising from a subfactor are noncommutative (and non-cocommutative) in general. The topology is discrete for finite groups, but not for the pair of $C^*$-algebras.
Moreover, the minimal projections in the $C^*$-algebra could be non-group-like.

Because of these differences, it is not obvious how to formalize the concepts from the group case to subfactors. Many methods on commutative algebras are not suitable for subfactors, since these methods rely on the commutative condition and the group-like property.

For groups, Young's inequality, uncertainty principles, and sum set estimates were proved independently.
This is not the case for subfactors. Instead the three topics are mixed together as shown in Fig.~\ref{Fig:Fourier}.

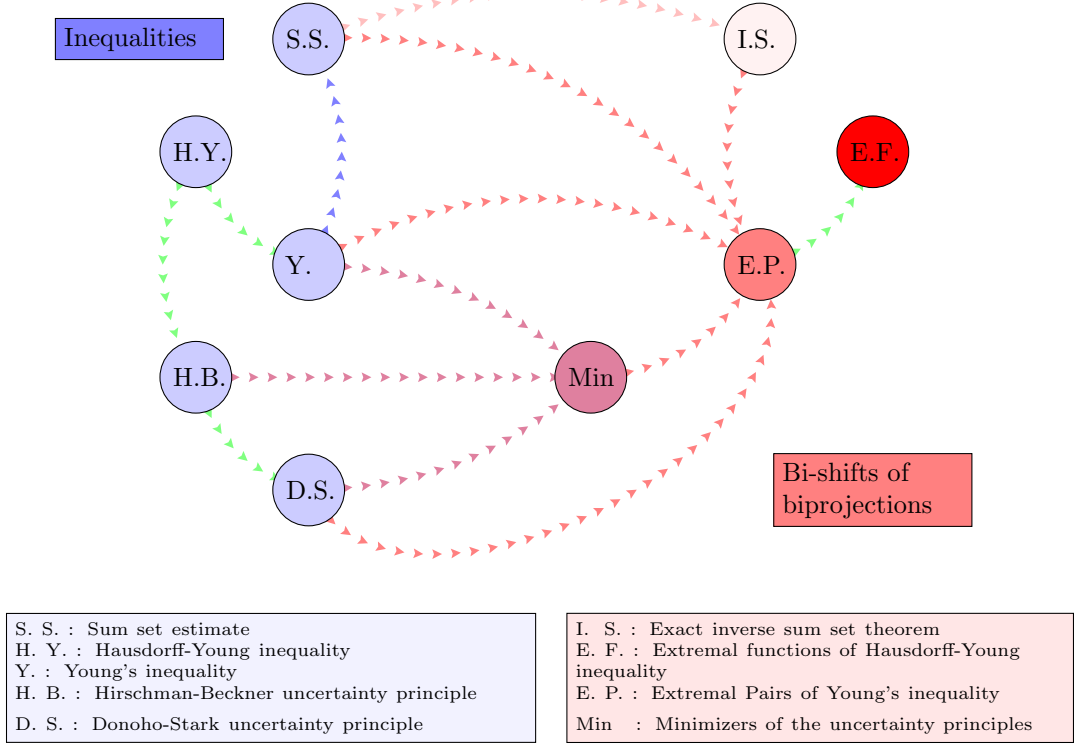
\begin{figure}[h]
\begin{tikzpicture}
\begin{scope}[shift={(1,0)},xscale=.75,yscale=.75]
\node at (0,0) {0};
\coordinate (HY) at (0,0);
\coordinate (Y) at (2,-2);
\coordinate (HB) at (0,-4);
\coordinate (DS) at (2,-6);
\coordinate (Min) at (7,-4);
\coordinate (SS) at (2,2);
\coordinate (IS) at (10,2);
\coordinate (EP) at (10,-2);
\coordinate (EF) at (12,0);
%\path [decorate,decoration={text along path,
        %text={      Derivation  }}] (HY) -- (HB);
%\draw [decorate with=dart, paint=red] (0,1.5) to[bend left] (3,1.5);
\fill[decoration={shape backgrounds,shape=dart,shape size=1.3mm}, fill=green!50] decorate{ (HY) to[bend right] (HB)};
\fill[decoration={shape backgrounds,shape=dart,shape size=1.3mm}, fill=green!50] decorate{ (HY) to[bend right] (Y)};
\fill[decoration={shape backgrounds,shape=dart,shape size=1.3mm}, fill=green!50] decorate{ (HB) to[bend right] (DS)};
\fill[decoration={shape backgrounds,shape=dart,shape size=1.3mm}, fill=blue!50] decorate{ (Y) to[bend right] (SS)};
\fill[decoration={shape backgrounds,shape=dart,shape size=1.3mm}, fill=red!50] decorate{ (IS) to[bend right] (EP)};
\fill[decoration={shape backgrounds,shape=dart,shape size=1.3mm}, fill=red!50] decorate{ (Min) to[bend right] (EP)};
\fill[decoration={shape backgrounds,shape=dart,shape size=1.3mm}, fill=green!50] decorate{ (EP) to[bend right] (EF)};
\fill[decoration={shape backgrounds,shape=dart,shape size=1.3mm}, fill=red!50] decorate{ (Y) to[bend right=-30] (EP)};
\fill[decoration={shape backgrounds,shape=dart,shape size=1.3mm}, fill=red!50] decorate{ (SS) to[bend right=-30] (EP)};
\fill[decoration={shape backgrounds,shape=dart,shape size=1.3mm}, fill=pink!100] decorate{ (SS) to[bend right=-20] (IS)};
\fill[decoration={shape backgrounds,shape=dart,shape size=1.3mm}, fill=purple!50] decorate{ (Y) to[bend right=-20] (Min)};
\fill[decoration={shape backgrounds,shape=dart,shape size=1.3mm}, fill=purple!50] decorate{ (HB) to[bend right=0] (Min)};
\fill[decoration={shape backgrounds,shape=dart,shape size=1.3mm}, fill=purple!50] decorate{ (DS) to[bend right=20] (Min)};
\fill[decoration={shape backgrounds,shape=dart,shape size=1.3mm}, fill=red!50] decorate{ (DS) to[bend right=90] (EP)};
\node[text width=.6cm, decorate, fill=blue!20,draw,circle] at (HY) {H.Y.};
\node[text width=.6cm, decorate, fill=blue!20,draw,circle] at (Y)  {Y.};
\node[text width=.6cm, decorate, fill=blue!20,draw,circle] at (HB) {H.B.};
\node[text width=.6cm, decorate, fill=blue!20,draw,circle] at (DS) {D.S.};
\node[text width=.6cm, decorate, fill=purple!50,draw,circle] at (Min) {Min};
\node[text width=.6cm, decorate, fill=blue!20,draw,circle] at (SS) {S.S.};
\node[text width=.6cm, decorate, fill=pink!20,draw,circle] at (IS) {I.S.};
\node[text width=.6cm, decorate, fill=red!50,draw,circle] at (EP) {E.P.};
\node[text width=.6cm, decorate, fill=red!100,draw,circle] at (EF) {E.F.};
\node[text width=2cm, decorate, fill=blue!50,draw,rectangle] at (-1,2) {Inequalities};
\node[text width=2.4cm, decorate, fill=red!50,draw,rectangle] at (12,-6) {Bi-shifts of biprojections};
\end{scope}
\node[text width=6.8cm, decorate, fill=blue!5,draw,rectangle] at (2,-7)
{
\scriptsize
S. S. : Sum set estimate \\
H. Y. : Hausdorff-Young inequality \\
Y.~~~ : Young's inequality \\
H. B. : Hirschman-Beckner uncertainty principle \\
D. S. : Donoho-Stark uncertainty principle
};
\node[text width=6.5cm, decorate, fill=red!10,draw,rectangle] at (9.3,-7)
{
\scriptsize
I.\; S. : Exact inverse sum set theorem
\\ E. F. : Extremal functions of Hausdorff-Young inequality
\\ E. P. : Extremal Pairs of Young's inequality
\\ Min~\; : Minimizers of the uncertainty principles
};
\end{tikzpicture}

\caption{It shows the logic chain of our proofs of the results about the Fourier analysis on subfactors.} \label{Fig:Fourier}
\end{figure}

\subsection{Applications}
One can recover groups and their duals from group subfactors. 
One can also obtain results about subgroups and double cosets from group-subgroup subfactors.
Some other examples that are not group-like are from finite dimensional $C^*$-Hopf algebras (or Kac algebras), quantum doubles \cite{Dri86,Ocn91,Pop94,LonReh95,Mug03II}, and Jones-Wassermann subfactors of unitary modular tensor categories \cite{Xu00,LiuXu}.

A simple result on finite abelian group may be non-trivial in other cases. The sum set estimate \eqref{Sumsetestimate1} is obvious on the set of group elements, but it is non-trivial on the representations of a finite group.
If we apply the sum set estimate to group subfactors and take $p,q$ to be central projections of the group algebra acting on the regular representation, then Equation \eqref{Sumsetestimate} is equivalent to the following result about representations of a finite group:

\begin{corollary}\label{Cor:sumset}
  Suppose $G$ is a finite group, and $V,W$ are finite dimensional representations of $G$.
  Then $|V|\leq |V\otimes W| \leq |V| |W|$, where $|V|$ is the sum of the square of the dimension of irreducible sub representations of $V$.
\end{corollary}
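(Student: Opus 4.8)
The plan is to specialize the sum set estimate \eqref{Sumsetestimate1} to the group subfactor planar algebra $\mathscr{P}^G$, in which $\mathscr{P}_{2,+}^G$ is the group algebra $\mathbb{C}[G]$ acting on the regular representation, as recalled in \S\ref{Sec:Preliminary}. Write $\mathbb{C}[G]\cong\bigoplus_{\pi\in\widehat{G}}M_{d_\pi}(\mathbb{C})$ for the Wedderburn decomposition and let $z_\pi$ denote the minimal central projection onto the $\pi$-isotypic block, where $d_\pi=\dim\pi$. First I would pin down the normalization of the Markov trace: since the Jones projection is $e_1=z_{\mathrm{triv}}$ with $tr_2(e_1)=1$, and the identity is $1=\sum_\pi z_\pi$ with $tr_2(1)=\delta^2=|G|=\sum_\pi d_\pi^2$, one is forced to have $tr_2(z_\pi)=d_\pi^2$; equivalently, $tr_2$ of a central projection equals the complex dimension of the corresponding two-sided ideal. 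This is precisely the normalization under which the $S_3$ example in the Remark following Proposition \ref{upperm} has $tr_2(z_{\mathrm{triv}})=tr_2(z_{\mathrm{sign}})=1$.

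Next I would set up the dictionary between representations and central projections. To a finite-dimensional representation $V$ I associate the central projection $p_V=\sum_{\pi}z_\pi$, the sum taken over the distinct irreducibles occurring in $V$; then $tr_2(p_V)=\sum_\pi d_\pi^2=|V|$ by the definition of $|V|$. The heart of the matter is to identify $\mathcal{S}(p_V*p_W)$ with $|V\otimes W|$, for which I would show $\mathcal{R}(p_V*p_W)=p_{V\otimes W}$. To compute $z_\pi*z_\sigma$, I would use that the SFT interchanges multiplication and convolution: by $\mathfrak{F}(xy)=\mathfrak{F}(x)*\mathfrak{F}(y)$ and its dual, convolution on $\mathscr{P}_{2,+}^G$ corresponds to the pointwise product on $\mathscr{P}_{2,-}^G$, identified with functions on $G$, under which $z_\pi$ corresponds to a positive multiple of the character $\overline{\chi_\pi}$. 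Since $\chi_\pi\chi_\sigma=\chi_{\pi\otimes\sigma}=\sum_\tau N_{\pi\sigma}^{\tau}\chi_\tau$, transporting back gives $z_\pi*z_\sigma=\sum_\tau c_{\pi\sigma}^{\tau}z_\tau$ with $c_{\pi\sigma}^{\tau}>0$ precisely when $N_{\pi\sigma}^{\tau}>0$. Positivity of these coefficients, so that no fusion channel is lost upon passing to the range projection, is guaranteed by the Schur product theorem, Theorem \ref{Thm:Schur}. Summing over the irreducibles of $V$ and $W$ then yields $\mathcal{R}(p_V*p_W)=\sum_{\tau\in\mathrm{supp}(V\otimes W)}z_\tau=p_{V\otimes W}$, whence $\mathcal{S}(p_V*p_W)=\sum_{\tau\in\mathrm{supp}(V\otimes W)}d_\tau^2=|V\otimes W|$.

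With the dictionary established, the conclusion is immediate. Applying \eqref{Sumsetestimate1} with $p=p_V$ and $q=p_W$ gives
\[
\max\{|V|,|W|\}\leq |V\otimes W|\leq |V|\,|W|,
\]
and in particular $|V|\leq |V\otimes W|\leq |V|\,|W|$, the asserted estimate; the normalization $tr_2(z_\pi)=d_\pi^2$ is exactly what makes the right-hand constant equal to $1$.

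The step I expect to be the main obstacle is the identity $\mathcal{S}(p_V*p_W)=|V\otimes W|$, namely making rigorous the statement that convolution of central projections realizes the tensor (fusion) product at the level of supports. This rests on tracking the SFT carefully, since it is the SFT that relates the noncommutative convolution to the commutative pointwise product of characters, together with the Schur product theorem to ensure that the range projection detects every constituent of $\pi\otimes\sigma$. A secondary point to handle with care is that $|V|$ must be read as the multiplicity-free sum $\sum_{\pi\in\mathrm{supp}(V)}(\dim\pi)^2$, since both $tr_2(p_V)$ and $\mathcal{S}(p_V*p_W)$ depend only on which Wedderburn blocks are met, not on multiplicities.
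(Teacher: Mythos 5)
Your proposal is correct and follows exactly the route the paper intends: the paper's entire ``proof'' is the one-line remark that one applies the sum set estimate \eqref{Sumsetestimate1} to central projections of the group algebra acting on the regular representation, and you have simply filled in the details (the normalization $tr_2(z_\pi)=d_\pi^2$, the dictionary $p_V=\sum_{\pi\in\mathrm{supp}(V)}z_\pi$, and the identification $\mathcal{R}(p_V*p_W)=p_{V\otimes W}$ via characters). The only cosmetic remark is that the positivity of the coefficients $c_{\pi\sigma}^{\tau}$ already falls out of your character computation, since $N_{\pi\sigma}^{\tau}\geq 0$ are multiplicities, so the appeal to the Schur product theorem there is redundant rather than load-bearing.
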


The SFT has been applied to quantum information in \cite{JLWholo,LWJ-quon}.
\footnote{We call it the string Fourier transform to distinguish from other Fourier transforms in quantum information.}
It is shown that the SFT on quons for a unitary modular tensor category is the modular $S$ matrix in \cite{LiuXu,LWJ-quon}.
Therefore we can apply our results on Fourier analysis to quons and obtain new results about the modular tensor category and the $S$ matrix.
For example, the sumset estimate \eqref{Sumsetestimate} implies that 
\begin{corollary}\label{Cor:sumset}
  Suppose $\mathscr{C}$ is a unitary modular tensor category, and $V,W$ are two objects of $\C$.
  Then $|V|\leq |V\otimes W| \leq |V| |W|$, where $|V|$ is the sum of the square of the quantum dimension of irreducible subobjects of $V$.
\end{corollary}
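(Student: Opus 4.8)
The plan is to exhibit the statement as a direct specialization of the sum set estimate \eqref{Sumsetestimate1} through the quon dictionary between a unitary modular tensor category and its Jones--Wassermann subfactor planar algebra established in \cite{LiuXu,LWJ-quon}. Write the simple objects of $\mathscr{C}$ as $\{X_i\}$ with quantum dimensions $d_i$ and total dimension $\mathcal{D}=\left(\sum_i d_i^2\right)^{1/2}$. First I would recall that the associated subfactor is irreducible with loop parameter $\delta=\mathcal{D}$, that each simple object $X_i$ corresponds to a minimal projection $p_i\in\mathscr{P}_{2,\pm}$ with $tr_2(p_i)=d_i^2$ (so that $\sum_i tr_2(p_i)=\mathcal{D}^2=\delta^2=tr_2(1)$), that the string Fourier transform $\mathfrak{F}$ is the modular $S$-matrix, and that the coproduct $*$ implements the fusion rules $X_i\otimes X_j\cong\bigoplus_k N_{ij}^k X_k$.

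Given objects $V$ and $W$, I would set $p=\sum_{X_i\hookrightarrow V}p_i$ and $q=\sum_{X_j\hookrightarrow W}p_j$, the central projections onto the distinct simple subobjects. By the trace normalization, $tr_2(p)=\sum_{X_i\hookrightarrow V}d_i^2=|V|$ and $tr_2(q)=|W|$. The remaining identification to establish is $\mathcal{S}(p*q)=|V\otimes W|$. Since $p*q=\sum_{i,j}p_i*p_j$ and, by the Schur Product Theorem (Theorem \ref{Thm:Schur}), each summand $p_i*p_j$ is a positive operator supported exactly on those $p_k$ with $N_{ij}^k>0$, no cancellation occurs; hence the range projection of $p*q$ is the join of the $\mathcal{R}(p_i*p_j)$, which is precisely the central projection onto the simple objects occurring in $V\otimes W$. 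Therefore $\mathcal{S}(p*q)=tr_2(\mathcal{R}(p*q))=|V\otimes W|$.

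With this dictionary in place, the sum set estimate \eqref{Sumsetestimate1} applied to the projections $p,q$ reads $\max\{|V|,|W|\}\leq|V\otimes W|\leq|V|\,|W|$, and in particular $|V|\leq|V\otimes W|\leq|V|\,|W|$, which is the claim; here the lower bound is Theorem \ref{lowerb} and the upper bound is Proposition \ref{upperb}. The only genuine obstacle is the bookkeeping of the dictionary itself: verifying that the minimal projections $p_i$ carry Markov trace $d_i^2$ and that the support of the coproduct $p_i*p_j$ matches the fusion rules. Once the Schur positivity step rules out cancellation in $p*q$, nothing further is required, and the two inequalities follow immediately from the general subfactor sum set estimate.
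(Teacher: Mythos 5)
Your proposal is correct and follows exactly the route the paper intends: the paper simply asserts that the corollary follows from the sum set estimate \eqref{Sumsetestimate1} applied to the Jones--Wassermann subfactor of the modular tensor category, and your write-up supplies precisely the dictionary (minimal projections of trace $d_i^2$, coproduct realizing fusion, Schur positivity precluding cancellation in the range projection) that the paper leaves implicit via the references \cite{LiuXu,LWJ-quon}. No gaps; this matches the paper's argument, just with the bookkeeping made explicit.
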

We will discuss the Fourier analysis of the $S$ matrix in forthcoming work.

\subsection{Characterizations of bi-shifts of biprojections}

Combining the results in \cite{JLW}, we list numbers of characterization of a bi-shift of a biprojection in an irreducible subfactor planar algebra $\mathscr{P}$.

\begin{theorem}\label{Thm:bishifts}
Let $x$ be nonzero in $\mathscr{P}_{2,\pm}$.
Then the following are equivalent:
\begin{itemize}
\item[(1)] $x$ is a bi-shift of a biprojection;
\item[(2)] $x$ is an extremal bi-partial isometry;
\item[(3)] $\mathcal{S}(x)\mathcal{S}(\mathfrak{F}(x))=\delta^2$;
\item[(4)] $H(|x|^2)+H(|\mathfrak{F}(x)|^2)=\|x\|_2^2(2\log \delta-4\log\|x\|_2)$;
\item[(5)] $\|x*\overline{x^*}\|_r=\frac{1}{\delta}\|x\|_t\|x\|_s$ for some $1<r,t,s<\infty$ such that $\frac{1}{r}+1=\frac{1}{t}+\frac{1}{s}$;
\item[(6)] $\|x*\overline{x^*}\|_r=\frac{1}{\delta}\|x\|_1\|x\|_r$ for some $1<r<\infty$;
\item[(7)] $\|x*\overline{x^*}\|_r=\frac{1}{\delta}\|x\|_t\|x\|_s$ for any $1\leq r,t,s\leq \infty$ such that $\frac{1}{r}+1=\frac{1}{t}+\frac{1}{s}$;
\item[(8)] $\|\mathfrak{F}(x)\|_{\frac{t}{t-1}}=\left(\frac{1}{\delta}\right)^{\frac{2}{t}-1}\|x\|_t$ for some $1< t<2$;
\item[(9)] $\|\mathfrak{F}(x)\|_{\frac{t}{t-1}}=\left(\frac{1}{\delta}\right)^{\frac{2}{t}-1}\|x\|_t$ for any $1\leq t\leq 2$;
\item[(10)] $x$ is a multiple of a partial isometry, $\mathcal{S}(x*\overline{x^*})=\mathcal{S}(x)$, and $\|x*\overline{x^*}\|_1=\|x\|_1^2$.
\end{itemize}
\end{theorem}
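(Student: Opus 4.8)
The plan is to show that every one of the conditions (2)--(10) is equivalent to (1), that $x$ is a bi-shift of a biprojection, so the entire list forms a single equivalence class; the theorem is then an assembly of results already in hand. First I would record the ``uncertainty'' block (1)$\Leftrightarrow$(2)$\Leftrightarrow$(3)$\Leftrightarrow$(4). The equivalence of being a bi-shift (1), being an extremal bi-partial isometry (2), and saturating the Donoho--Stark bound $\mathcal{S}(x)\mathcal{S}(\mathfrak{F}(x))=\delta^2$ (3) is Main Theorem 2 of \cite{JLW}; the Hirschman--Beckner identity (4) is precisely the equality case of Theorem 5.5 of \cite{JLW}, whose minimizers are identified with bi-shifts again through Main Theorem 2. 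The Hausdorff--Young block (1)$\Leftrightarrow$(8)$\Leftrightarrow$(9) is exactly Theorem \ref{hauseq}, and (1)$\Leftrightarrow$(6) is exactly Proposition \ref{1tyoungeq}.

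The substantive step is handling (5), (7) and (10), which are the two-variable equality cases of Young's inequality and of the sum set estimate specialized to the self-paired input $y=\overline{x^*}$. For (5) and (7) I would apply Theorem \ref{youngeq} to the pair $(x,\overline{x^*})$. Since the adjoint and the contragredient preserve every Schatten norm, $\|\overline{x^*}\|_s=\|x\|_s$, so (5) and (7) are verbatim conditions (1) and (2) of Theorem \ref{youngeq} for this pair. The range compatibility $\mathcal{R}((\mathfrak{F}^{-1}(x))^*)=\mathcal{R}(\mathfrak{F}^{-1}(y))$ required by Theorem \ref{youngeq}(3) is automatic here: from $(\mathfrak{F}^{-1}(x))^*=\mathfrak{F}(x^*)$ and $\mathfrak{F}^{-1}(\overline{x^*})=\mathfrak{F}(x^*)$ both range projections equal $\mathcal{R}(\mathfrak{F}(x^*))$. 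Hence Theorem \ref{youngeq} gives (5)$\Leftrightarrow$(7)$\Leftrightarrow$``both $x$ and $\overline{x^*}$ are bi-shifts''; and because condition (2), extremal bi-partial isometry, is invariant under $x\mapsto x^*$ and $x\mapsto\overline{x}$, the element $x$ is a bi-shift if and only if $\overline{x^*}$ is, so this collapses to (1).

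For (10) I would invoke the Corollary of \S\ref{ssumset} stating that a partial isometry $v$ with $\mathcal{S}(v*\overline{v^*})=tr_2(|v|)$ and $\|v*\overline{v^*}\|_1=\tfrac{1}{\delta}\|v\|_1^2$ is a bi-shift. Writing $x=\|x\|_\infty v$ with $v$ a partial isometry and tracking the scalar $\|x\|_\infty$ turns the two numerical constraints of (10) into the hypotheses of that Corollary, yielding (10)$\Rightarrow$(1). For the reverse implication I would check directly that a bi-shift satisfies (10): by Theorem 6.13 of \cite{JLW} together with the square relation (Theorem 6.11 of \cite{JLW}), $x*\overline{x^*}$ is a positive multiple of a biprojection whenever $x$ is an extremal bi-partial isometry, and the two constraints in (10) then follow by a direct trace computation.

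The main obstacle is not a single hard estimate but the bookkeeping of the specialization $y=\overline{x^*}$, and in particular the reconciliation of normalization constants: one must confirm that the automatic range-matching identity above really holds and that the scalar in the partial-isometry Corollary of \S\ref{ssumset} matches the constant appearing in (10) after $x$ is scaled to a partial isometry. A second delicate point is the strength of the ``for some exponent'' hypotheses in (5), (6) and (8): upgrading a single equality at one exponent to the full bi-shift conclusion is exactly what the analytic interpolation arguments of Propositions \ref{p1p}, \ref{rts}, \ref{212} and \ref{hausde} accomplish inside Theorems \ref{youngeq} and \ref{hauseq} and Proposition \ref{1tyoungeq}; the present proof can cite them, but I would verify that each such implication remains valid for the self-paired input and is not restricted to generic pairs.
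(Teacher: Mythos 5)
Your proposal is correct and assembles exactly the ingredients the paper intends for this summary theorem (which it states without an explicit proof): Main Theorem 2 and Theorem 5.5 of \cite{JLW} for (1)--(4), Theorem \ref{youngeq} applied to the pair $(x,\overline{x^*})$ for (5) and (7) with the automatic range identity $\mathcal{R}((\mathfrak{F}^{-1}(x))^*)=\mathcal{R}(\mathfrak{F}(x^*))=\mathcal{R}(\mathfrak{F}^{-1}(\overline{x^*}))$, Proposition \ref{1tyoungeq} for (6), Theorem \ref{hauseq} for (8)--(9), and the unnamed corollary of \S\ref{ssumset} for (10). The only friction you flag, the factor $\tfrac{1}{\delta}$ in the $1$-norm condition of (10) versus that corollary, is a normalization slip in the paper's own statement (Young's inequality forces $\|x*\overline{x^*}\|_1\leq\tfrac{1}{\delta}\|x\|_1^2$), not a gap in your argument.
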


\end{document}